  \let\fiverm\fivrm
\def\@picture(#1,#2)(#3,#4){%
  \@picht #2\unitlength
  \setbox\@picbox\hbox to #1\unitlength\bgroup 
  \let\endpicture=\!latexendpicture
  \let\frame=\!latexframe
  \let\linethickness=\!latexlinethickness
  \let\multiput=\!latexmultiput
  \let\put=\!latexput
  \hskip -#3\unitlength \lower #4\unitlength \hbox\bgroup}
\def\PiC{P\kern-.12em\lower.5ex\hbox{I}\kern-.075emC}
\def\PiCTeX{\PiC\kern-.11em\TeX}
\def\!ifnextchar#1#2#3{%
  \let\!testchar=#1%
  \def\!first{#2}%
  \def\!second{#3}%
  \futurelet\!nextchar\!testnext}
\def\!testnext{%
  \ifx \!nextchar \!spacetoken 
    \let\!next=\!skipspacetestagain
  \else
    \ifx \!nextchar \!testchar
      \let\!next=\!first
    \else 
      \let\!next=\!second 
    \fi 
  \fi
  \!next}
\def\\{\!skipspacetestagain} 
\def\\ {\futurelet\!nextchar\!testnext} 
\def\\{\let\!spacetoken= } \\  
\def\!tfor#1:=#2\do#3{%
  \edef\!fortemp{#2}%
  \ifx\!fortemp\!empty 
    \else
    \!tforloop#2\!nil\!nil\!!#1{#3}%
  \fi}
\def\!tforloop#1#2\!!#3#4{%
  \def#3{#1}%
  \ifx #3\!nnil
    \let\!nextwhile=\!fornoop
  \else
    #4\relax
    \let\!nextwhile=\!tforloop
  \fi 
  \!nextwhile#2\!!#3{#4}}
\def\!etfor#1:=#2\do#3{%
  \def\!!tfor{\!tfor#1:=}%
  \edef\!!!tfor{#2}%
  \expandafter\!!tfor\!!!tfor\do{#3}}
\def\!cfor#1:=#2\do#3{%
  \edef\!fortemp{#2}%
  \ifx\!fortemp\!empty 
  \else
    \!cforloop#2,\!nil,\!nil\!!#1{#3}%
  \fi}
\def\!cforloop#1,#2\!!#3#4{%
  \def#3{#1}%
  \ifx #3\!nnil
    \let\!nextwhile=\!fornoop 
  \else
    #4\relax
    \let\!nextwhile=\!cforloop
  \fi
  \!nextwhile#2\!!#3{#4}}
\def\!ecfor#1:=#2\do#3{%
  \def\!!cfor{\!cfor#1:=}%
  \edef\!!!cfor{#2}%
  \expandafter\!!cfor\!!!cfor\do{#3}}
\def\!empty{}
\def\!nnil{\!nil}
\def\!fornoop#1\!!#2#3{}
\def\!ifempty#1#2#3{%
  \edef\!emptyarg{#1}%
  \ifx\!emptyarg\!empty
    #2%
  \else
    #3%
  \fi}
\def\!getnext#1\from#2{%
  \expandafter\!gnext#2\!#1#2}%
\def\!gnext\\#1#2\!#3#4{%
  \def#3{#1}%
  \def#4{#2\\{#1}}%
  \ignorespaces}
\def\!getnextvalueof#1\from#2{%
  \expandafter\!gnextv#2\!#1#2}%
\def\!gnextv\\#1#2\!#3#4{%
  #3=#1%
  \def#4{#2\\{#1}}%
  \ignorespaces}
\def\!copylist#1\to#2{%
  \expandafter\!!copylist#1\!#2}
\def\!!copylist#1\!#2{%
  \def#2{#1}\ignorespaces}
\def\!wlet#1=#2{%
  \let#1=#2 
  \wlog{\string#1=\string#2}}
\def\!listaddon#1#2{%
  \expandafter\!!listaddon#2\!{#1}#2}
\def\!!listaddon#1\!#2#3{%
  \def#3{#1\\#2}}
\def\!rightappend#1\withCS#2\to#3{\expandafter\!!rightappend#3\!#2{#1}#3}
\def\!!rightappend#1\!#2#3#4{\def#4{#1#2{#3}}}
\def\!leftappend#1\withCS#2\to#3{\expandafter\!!leftappend#3\!#2{#1}#3}
\def\!!leftappend#1\!#2#3#4{\def#4{#2{#3}#1}}
\def\!lop#1\to#2{\expandafter\!!lop#1\!#1#2}
\def\!!lop\\#1#2\!#3#4{\def#4{#1}\def#3{#2}}
\def\!loop#1\repeat{\def\!body{#1}\!iterate}
\def\!iterate{\!body\let\!next=\!iterate\else\let\!next=\relax\fi\!next}
\def\!!loop#1\repeat{\def\!!body{#1}\!!iterate}
\def\!!iterate{\!!body\let\!!next=\!!iterate\else\let\!!next=\relax\fi\!!next}
\def\!removept#1#2{\edef#2{\expandafter\!!removePT\the#1}}
{\catcode`p=12 \catcode`t=12 \gdef\!!removePT#1pt{#1}}
\def\placevalueinpts of <#1> in #2 {%
  \!removept{#1}{#2}}
\def\!mlap#1{\hbox to 0pt{\hss#1\hss}}
\def\!vmlap#1{\vbox to 0pt{\vss#1\vss}}
\def\!not#1{%
  #1\relax
    \!switchfalse
  \else
    \!switchtrue
  \fi
  \if!switch
  \ignorespaces}
\def\wlog#1{}    
\newdimen\headingtoplotskip     
\newdimen\linethickness         
\newdimen\longticklength        
\newdimen\plotsymbolspacing     
\newdimen\shortticklength       
\newdimen\stackleading          
\newdimen\tickstovaluesleading  
\newdimen\totalarclength        
\newdimen\valuestolabelleading  
\newbox\!boxA                   
\newbox\!boxB                   
\newbox\!picbox                 
\newbox\!plotsymbol             
\newbox\!putobject              
\newbox\!shadesymbol            
\newdimen\!Xleft                
\newdimen\!Xright               
\newdimen\!Xsave                
\newdimen\!Ybot                 
\newdimen\!Ysave                
\newdimen\!Ytop                 
\newdimen\!angle                
\newdimen\!arclength            
\newdimen\!areabloc             
\newdimen\!arealloc             
\newdimen\!arearloc             
\newdimen\!areatloc             
\newdimen\!bshrinkage           
\newdimen\!checkbot             
\newdimen\!checkleft            
\newdimen\!checkright           
\newdimen\!checktop             
\newdimen\!dimenA               
\newdimen\!dimenB               
\newdimen\!dimenC               
\newdimen\!dimenD               
\newdimen\!dimenE               
\newdimen\!dimenF               
\newdimen\!dimenG               
\newdimen\!dimenH               
\newdimen\!dimenI               
\newdimen\!distacross           
\newdimen\!downlength           
\newdimen\!dp                   
\newdimen\!dshade               
\newdimen\!dxpos                
\newdimen\!dxprime              
\newdimen\!dypos                
\newdimen\!dyprime              
\newdimen\!ht                   
\newdimen\!leaderlength         
\newdimen\!lshrinkage           
\newdimen\!midarclength         
\newdimen\!offset               
\newdimen\!plotheadingoffset    
\newdimen\!plotsymbolxshift     
\newdimen\!plotsymbolyshift     
\newdimen\!plotxorigin          
\newdimen\!plotyorigin          
\newdimen\!rootten              
\newdimen\!rshrinkage           
\newdimen\!shadesymbolxshift    
\newdimen\!shadesymbolyshift    
\newdimen\!tenAa                
\newdimen\!tenAc                
\newdimen\!tenAe                
\newdimen\!tshrinkage           
\newdimen\!uplength             
\newdimen\!wd                   
\newdimen\!wmax                 
\newdimen\!wmin                 
\newdimen\!xB                   
\newdimen\!xC                   
\newdimen\!xE                   
\newdimen\!xM                   
\newdimen\!xS                   
\newdimen\!xaxislength          
\newdimen\!xdiff                
\newdimen\!xleft                
\newdimen\!xloc                 
\newdimen\!xorigin              
\newdimen\!xpivot               
\newdimen\!xpos                 
\newdimen\!xprime               
\newdimen\!xright               
\newdimen\!xshade               
\newdimen\!xshift               
\newdimen\!xtemp                
\newdimen\!xunit                
\newdimen\!xxE                  
\newdimen\!xxM                  
\newdimen\!xxS                  
\newdimen\!xxloc                
\newdimen\!yB                   
\newdimen\!yC                   
\newdimen\!yE                   
\newdimen\!yM                   
\newdimen\!yS                   
\newdimen\!yaxislength          
\newdimen\!ybot                 
\newdimen\!ydiff                
\newdimen\!yloc                 
\newdimen\!yorigin              
\newdimen\!ypivot               
\newdimen\!ypos                 
\newdimen\!yprime               
\newdimen\!yshade               
\newdimen\!yshift               
\newdimen\!ytemp                
\newdimen\!ytop                 
\newdimen\!yunit                
\newdimen\!yyE                  
\newdimen\!yyM                  
\newdimen\!yyS                  
\newdimen\!yyloc                
\newdimen\!zpt                  
\newif\if!axisvisible           
\newif\if!gridlinestoo          
\newif\if!keepPO                
\newif\if!placeaxislabel        
\newif\if!switch                
\newif\if!xswitch               
\newtoks\!axisLaBeL             
\newtoks\!keywordtoks           
\newwrite\!replotfile           
\def\!cosrotationangle{1}      
\def\!sinrotationangle{0}      
\def\!xpivotcoord{0}           
\def\!xref{0}                  
\def\!xshadesave{0}            
\def\!ypivotcoord{0}           
\def\!yref{0}                  
\def\!yshadesave{0}            
\def\!zero{0}                  
\let\wlog=\!!!wlog
\def\normalgraphs{%
  \longticklength=.4\baselineskip
  \shortticklength=.25\baselineskip
  \tickstovaluesleading=.25\baselineskip
  \valuestolabelleading=.8\baselineskip
  \linethickness=.4pt
  \stackleading=.17\baselineskip
  \headingtoplotskip=1.5\baselineskip
  \visibleaxes
  \ticksout
  \nogridlines
  \unloggedticks}
\def\setplotarea x from #1 to #2, y from #3 to #4 {%
  \!arealloc=\!M{#1}\!xunit \advance \!arealloc -\!xorigin
  \!areabloc=\!M{#3}\!yunit \advance \!areabloc -\!yorigin
  \!arearloc=\!M{#2}\!xunit \advance \!arearloc -\!xorigin
  \!areatloc=\!M{#4}\!yunit \advance \!areatloc -\!yorigin
  \!initinboundscheck
  \!xaxislength=\!arearloc  \advance\!xaxislength -\!arealloc
  \!yaxislength=\!areatloc  \advance\!yaxislength -\!areabloc
  \!plotheadingoffset=\!zpt
  \!dimenput {{\setbox0=\hbox{}\wd0=\!xaxislength\ht0=\!yaxislength\box0}}
     [bl] (\!arealloc,\!areabloc)}
\def\visibleaxes{%
  \def\!axisvisibility{\!axisvisibletrue}}
\def\!fixkeyword#1{%
  \errhelp=\!keywordhelp
  \errmessage{Unrecognized keyword `#1': \the\!keywordtoks{NEW KEYWORD}'}}
\def\fixkeyword#1{%
  \!nextkeyword#1 }
\def\axis {%
  \def\!nextkeyword##1 {%
    \expandafter\ifx\csname !axis##1\endcsname \relax
      \def\!next{\!fixkeyword{##1}}%
    \else
      \def\!next{\csname !axis##1\endcsname}%
    \fi
    \!next}%
  \!offset=\!zpt
  \!axisvisibility
  \!placeaxislabelfalse
  \!nextkeyword}
\def\!axisbottom{%
  \!axisylevel=\!areabloc
  \def\!tickxsign{0}%
  \def\!tickysign{-}%
  \def\!axissetup{\!axisxsetup}%
  \def\!axislabeltbrl{t}%
  \!nextkeyword}
\def\!axistop{%
  \!axisylevel=\!areatloc
  \def\!tickxsign{0}%
  \def\!tickysign{+}%
  \def\!axissetup{\!axisxsetup}%
  \def\!axislabeltbrl{b}%
  \!nextkeyword}
\def\!axisleft{%
  \!axisxlevel=\!arealloc
  \def\!tickxsign{-}%
  \def\!tickysign{0}%
  \def\!axissetup{\!axisysetup}%
  \def\!axislabeltbrl{r}%
  \!nextkeyword}
\def\!axisright{%
  \!axisxlevel=\!arearloc
  \def\!tickxsign{+}%
  \def\!tickysign{0}%
  \def\!axissetup{\!axisysetup}%
  \def\!axislabeltbrl{l}%
  \!nextkeyword}
\def\!axisshiftedto#1=#2 {%
  \if 0\!tickxsign
    \!axisylevel=\!M{#2}\!yunit
    \advance\!axisylevel -\!yorigin
  \else
    \!axisxlevel=\!M{#2}\!xunit
    \advance\!axisxlevel -\!xorigin
  \fi
  \!nextkeyword}
\def\!axisvisible{%
  \!axisvisibletrue  
  \!nextkeyword}
\def\!axisinvisible{%
  \!axisvisiblefalse
  \!nextkeyword}
\def\!axislabel#1 {%
  \!axisLaBeL={#1}%
  \!placeaxislabeltrue
  \!nextkeyword}
\def\csname !axis/\endcsname{%
  \!axissetup 
  \if!placeaxislabel
    \!placeaxislabel
  \fi
  \if +\!tickysign 
    \!dimenA=\!axisylevel
    \advance\!dimenA \!offset 
    \advance\!dimenA -\!areatloc 
    \ifdim \!dimenA>\!plotheadingoffset
      \!plotheadingoffset=\!dimenA 
    \fi
  \fi}
\def\grid #1 #2 {%
  \!countA=#1\advance\!countA 1
  \axis bottom invisible ticks length <\!zpt> andacross quantity {\!countA} /
  \!countA=#2\advance\!countA 1
  \axis left   invisible ticks length <\!zpt> andacross quantity {\!countA} / }
\def\plotheading#1 {%
  \advance\!plotheadingoffset \headingtoplotskip
  \!dimenput {#1} [B] <.5\!xaxislength,\!plotheadingoffset>
    (\!arealloc,\!areatloc)}
\def\!axisxsetup{%
  \!axisxlevel=\!arealloc
  \!axisstart=\!arealloc
  \!axisend=\!arearloc
  \!axisLength=\!xaxislength
  \!!origin=\!xorigin
  \!!unit=\!xunit
  \!xswitchtrue
  \if!axisvisible 
    \!makeaxis
  \fi}
\def\!axisysetup{%
  \!axisylevel=\!areabloc
  \!axisstart=\!areabloc
  \!axisend=\!areatloc
  \!axisLength=\!yaxislength
  \!!origin=\!yorigin
  \!!unit=\!yunit
  \!xswitchfalse
  \if!axisvisible
    \!makeaxis
  \fi}
\def\!makeaxis{%
  \setbox\!boxA=\hbox{
    \beginpicture
      \!setdimenmode
      \setcoordinatesystem point at {\!zpt} {\!zpt}   
      \putrule from {\!zpt} {\!zpt} to
        {\!tickysign\!tickysign\!axisLength} 
        {\!tickxsign\!tickxsign\!axisLength}
    \endpicturesave <\!Xsave,\!Ysave>}%
    \wd\!boxA=\!zpt
    \!placetick\!axisstart}
\def\!placeaxislabel{%
  \advance\!offset \valuestolabelleading
  \if!xswitch
    \!dimenput {\the\!axisLaBeL} [\!axislabeltbrl]
      <.5\!axisLength,\!tickysign\!offset> (\!axisxlevel,\!axisylevel)
    \advance\!offset \!dp  
    \advance\!offset \!ht  
  \else
    \!dimenput {\the\!axisLaBeL} [\!axislabeltbrl]
      <\!tickxsign\!offset,.5\!axisLength> (\!axisxlevel,\!axisylevel)
  \fi
  \!axisLaBeL={}}
\def\arrow <#1> [#2,#3]{%
  \!ifnextchar<{\!arrow{#1}{#2}{#3}}{\!arrow{#1}{#2}{#3}<\!zpt,\!zpt> }}
\def\!arrow#1#2#3<#4,#5> from #6 #7 to #8 #9 {%
%
  \!xloc=\!M{#8}\!xunit   
  \!yloc=\!M{#9}\!yunit
  \!dxpos=\!xloc  \!dimenA=\!M{#6}\!xunit  \advance \!dxpos -\!dimenA
  \!dypos=\!yloc  \!dimenA=\!M{#7}\!yunit  \advance \!dypos -\!dimenA
  \let\!MAH=\!M
  \!setdimenmode
  \!xshift=#4\relax  \!yshift=#5\relax
  \!reverserotateonly\!xshift\!yshift
  \advance\!xshift\!xloc  \advance\!yshift\!yloc
%
  \!xS=-\!dxpos  \advance\!xS\!xshift
  \!yS=-\!dypos  \advance\!yS\!yshift
  \!start (\!xS,\!yS)
  \!ljoin (\!xshift,\!yshift)
%
  \!Pythag\!dxpos\!dypos\!arclength
  \!divide\!dxpos\!arclength\!dxpos  
  \!dxpos=32\!dxpos  \!removept\!dxpos\!!cos
  \!divide\!dypos\!arclength\!dypos  
  \!dypos=32\!dypos  \!removept\!dypos\!!sin
%
  \!halfhead{#1}{#2}{#3}
  \!halfhead{#1}{-#2}{-#3}
  \let\!M=\!MAH
  \ignorespaces}
  \def\!halfhead#1#2#3{%
    \!dimenC=-#1%
    \divide \!dimenC 2 
    \!dimenD=#2\!dimenC
    \!rotate(\!dimenC,\!dimenD)by(\!!cos,\!!sin)to(\!xM,\!yM)
    \!dimenC=-#1
    \!dimenD=#3\!dimenC
    \!dimenD=.5\!dimenD
    \!rotate(\!dimenC,\!dimenD)by(\!!cos,\!!sin)to(\!xE,\!yE)
    \!start (\!xshift,\!yshift)
    \advance\!xM\!xshift  \advance\!yM\!yshift
    \advance\!xE\!xshift  \advance\!yE\!yshift
    \!qjoin (\!xM,\!yM) (\!xE,\!yE) 
    \ignorespaces}
\def\betweenarrows #1#2 from #3 #4 to #5 #6 {%
  \!xloc=\!M{#3}\!xunit  \!xxloc=\!M{#5}\!xunit%
  \!yloc=\!M{#4}\!yunit  \!yyloc=\!M{#6}\!yunit%
  \!dxpos=\!xxloc  \advance\!dxpos by -\!xloc
  \!dypos=\!yyloc  \advance\!dypos by -\!yloc
  \advance\!xloc .5\!dxpos
  \advance\!yloc .5\!dypos
  \let\!MBA=\!M
  \!setdimenmode
  \ifdim\!dypos=\!zpt
    \ifdim\!dxpos<\!zpt \!dxpos=-\!dxpos \fi
    \put {\!lrarrows{\!dxpos}{#1}}#2{} at {\!xloc} {\!yloc}
  \else
    \ifdim\!dxpos=\!zpt
      \ifdim\!dypos<\!zpt \!dypos=-\!zpt \fi
      \put {\!udarrows{\!dypos}{#1}}#2{} at {\!xloc} {\!yloc}
    \fi
  \fi
  \let\!M=\!MBA
  \ignorespaces}
\def\!lrarrows#1#2{
  {\setbox\!boxA=\hbox{$\mkern-2mu\mathord-\mkern-2mu$}%
   \setbox\!boxB=\hbox{$\leftarrow$}\!dimenE=\ht\!boxB
   \setbox\!boxB=\hbox{}\ht\!boxB=2\!dimenE
   \hbox to #1{$\mathord\leftarrow\mkern-6mu
     \cleaders\copy\!boxA\hfil
     \mkern-6mu\mathord-$%
     \kern.4em $\vcenter{\box\!boxB}$$\vcenter{\hbox{#2}}$\kern.4em
     $\mathord-\mkern-6mu
     \cleaders\copy\!boxA\hfil
     \mkern-6mu\mathord\rightarrow$}}}
\def\!udarrows#1#2{
  {\setbox\!boxB=\hbox{#2}%
   \setbox\!boxA=\hbox to \wd\!boxB{\hss$\vert$\hss}%
   \!dimenE=\ht\!boxA \advance\!dimenE \dp\!boxA \divide\!dimenE 2
   \vbox to #1{\offinterlineskip
      \vskip .05556\!dimenE
      \hbox to \wd\!boxB{\hss$\mkern.4mu\uparrow$\hss}\vskip-\!dimenE
      \cleaders\copy\!boxA\vfil
      \vskip-\!dimenE\copy\!boxA
      \vskip\!dimenE\copy\!boxB\vskip.4em
      \copy\!boxA\vskip-\!dimenE
      \cleaders\copy\!boxA\vfil
      \vskip-\!dimenE \hbox to \wd\!boxB{\hss$\mkern.4mu\downarrow$\hss}
      \vskip .05556\!dimenE}}}
\def\putbar#1breadth <#2> from #3 #4 to #5 #6 {%
  \!xloc=\!M{#3}\!xunit  \!xxloc=\!M{#5}\!xunit%
  \!yloc=\!M{#4}\!yunit  \!yyloc=\!M{#6}\!yunit%
  \!dypos=\!yyloc  \advance\!dypos by -\!yloc
  \!dimenI=#2  
  \ifdim \!dimenI=\!zpt 
    \putrule#1from {#3} {#4} to {#5} {#6} 
  \else 
    \let\!MBar=\!M
    \!setdimenmode 
    \divide\!dimenI 2
    \ifdim \!dypos=\!zpt             
      \advance \!yloc -\!dimenI 
      \advance \!yyloc \!dimenI
    \else
      \advance \!xloc -\!dimenI 
      \advance \!xxloc \!dimenI
    \fi
    \putrectangle#1corners at {\!xloc} {\!yloc} and {\!xxloc} {\!yyloc}
    \let\!M=\!MBar 
  \fi
  \ignorespaces}
\def\setbars#1breadth <#2> baseline at #3 = #4 {%
  \edef\!barshift{#1}%
  \edef\!barbreadth{#2}%
  \edef\!barorientation{#3}%
  \edef\!barbaseline{#4}%
  \def\!bardobaselabel{\!bardoendlabel}%
  \def\!bardoendlabel{\!barfinish}%
  \let\!drawcurve=\!barcurve
  \!setbars}
\def\!setbars{%
  \futurelet\!nextchar\!!setbars}
\def\!!setbars{%
  \if b\!nextchar
    \def\!!!setbars{\!setbarsbget}%
  \else 
    \if e\!nextchar
      \def\!!!setbars{\!setbarseget}%
    \else
      \def\!!!setbars{\relax}%
    \fi
  \fi
  \!!!setbars}
\def\!setbarsbget baselabels (#1) {%
  \def\!barbaselabelorientation{#1}%
  \def\!bardobaselabel{\!!bardobaselabel}%
  \!setbars}
\def\!setbarseget endlabels (#1) {%
  \edef\!barendlabelorientation{#1}%
  \def\!bardoendlabel{\!!bardoendlabel}%
  \!setbars}
\def\!barcurve #1 #2 {%
  \if y\!barorientation
    \def\!basexarg{#1}%
    \def\!baseyarg{\!barbaseline}%
  \else
    \def\!basexarg{\!barbaseline}%
    \def\!baseyarg{#2}%
  \fi
  \expandafter\putbar\!barshift breadth <\!barbreadth> from {\!basexarg}
    {\!baseyarg} to {#1} {#2}
  \def\!endxarg{#1}%
  \def\!endyarg{#2}%
  \!bardobaselabel}
\def\!!bardobaselabel "#1" {%
  \put {#1}\!barbaselabelorientation{} at {\!basexarg} {\!baseyarg}
  \!bardoendlabel}
\def\!!bardoendlabel "#1" {%
  \put {#1}\!barendlabelorientation{} at {\!endxarg} {\!endyarg}
  \!barfinish}
\def\!barfinish{%
  \!ifnextchar/{\!finish}{\!barcurve}}
\def\putrectangle{%
  \!ifnextchar<{\!putrectangle}{\!putrectangle<\!zpt,\!zpt> }}
\def\!putrectangle<#1,#2> corners at #3 #4 and #5 #6 {%
%
  \!xone=\!M{#3}\!xunit  \!xtwo=\!M{#5}\!xunit%
  \!yone=\!M{#4}\!yunit  \!ytwo=\!M{#6}\!yunit%
  \ifdim \!xtwo<\!xone
    \!dimenI=\!xone  \!xone=\!xtwo  \!xtwo=\!dimenI
  \fi
  \ifdim \!ytwo<\!yone
    \!dimenI=\!yone  \!yone=\!ytwo  \!ytwo=\!dimenI
  \fi
  \!dimenI=#1\relax  \advance\!xone\!dimenI  \advance\!xtwo\!dimenI
  \!dimenI=#2\relax  \advance\!yone\!dimenI  \advance\!ytwo\!dimenI
  \let\!MRect=\!M
  \!setdimenmode
%
  \!shaderectangle
%
  \!dimenI=.5\linethickness
  \advance \!xone  -\!dimenI
  \advance \!xtwo   \!dimenI
  \putrule from {\!xone} {\!yone} to {\!xtwo} {\!yone} 
  \putrule from {\!xone} {\!ytwo} to {\!xtwo} {\!ytwo} 
%
  \advance \!xone   \!dimenI
  \advance \!xtwo  -\!dimenI%
  \advance \!yone  -\!dimenI
  \advance \!ytwo   \!dimenI
  \putrule from {\!xone} {\!yone} to {\!xone} {\!ytwo} 
  \putrule from {\!xtwo} {\!yone} to {\!xtwo} {\!ytwo} 
  \let\!M=\!MRect
  \ignorespaces}
\def\shaderectanglesoff{%
  \def\!shaderectangle{}%
  \ignorespaces}
\def\!!shaderectangle{%
  \!dimenA=\!xtwo  \advance \!dimenA -\!xone
  \!dimenB=\!ytwo  \advance \!dimenB -\!yone
  \ifdim \!dimenA<\!dimenB
    \!startvshade (\!xone,\!yone,\!ytwo)
    \!lshade      (\!xtwo,\!yone,\!ytwo)
  \else
    \!starthshade (\!yone,\!xone,\!xtwo)
    \!lshade      (\!ytwo,\!xone,\!xtwo)
  \fi
  \ignorespaces}
\def\frame{%
  \!ifnextchar<{\!frame}{\!frame<\!zpt> }}
\long\def\!frame<#1> #2{%
  \beginpicture
    \setcoordinatesystem units <1pt,1pt> point at 0 0 
    \put {#2} [Bl] at 0 0 
    \!dimenA=#1\relax
    \!dimenB=\!wd \advance \!dimenB \!dimenA
    \!dimenC=\!ht \advance \!dimenC \!dimenA
    \!dimenD=\!dp \advance \!dimenD \!dimenA
    \let\!MFr=\!M
    \!setdimenmode
    \putrectangle corners at {-\!dimenA} {-\!dimenD} and {\!dimenB} {\!dimenC}
    \!setcoordmode
    \let\!M=\!MFr
  \endpicture
  \ignorespaces}
\def\rectangle <#1> <#2> {%
  \setbox0=\hbox{}\wd0=#1\ht0=#2\frame {\box0}}
\def\plot{%
  \!ifnextchar"{\!plotfromfile}{\!drawcurve}}
\def\!plotfromfile"#1"{%
  \expandafter\!drawcurve \input #1 /}
\def\setquadratic{%
  \let\!drawcurve=\!qcurve
  \let\!!Shade=\!!qShade
  \let\!!!Shade=\!!!qShade}
\def\setlinear{%
  \let\!drawcurve=\!lcurve
  \let\!!Shade=\!!lShade
  \let\!!!Shade=\!!!lShade}
\def\sethistograms{%
  \let\!drawcurve=\!hcurve}
\def\!qcurve #1 #2 {%
  \!start (#1,#2)
  \!Qjoin}
\def\!Qjoin#1 #2 #3 #4 {%
  \!qjoin (#1,#2) (#3,#4)             
  \!ifnextchar/{\!finish}{\!Qjoin}}
\def\!lcurve #1 #2 {%
  \!start (#1,#2)
  \!Ljoin}
\def\!Ljoin#1 #2 {%
  \!ljoin (#1,#2)                    
  \!ifnextchar/{\!finish}{\!Ljoin}}
\def\!finish/{\ignorespaces}
\def\!hcurve #1 #2 {%
  \edef\!hxS{#1}%
  \edef\!hyS{#2}%
  \!hjoin}
\def\!hjoin#1 #2 {%
  \putrectangle corners at {\!hxS} {\!hyS} and {#1} {#2}
  \edef\!hxS{#1}%
  \!ifnextchar/{\!finish}{\!hjoin}}
\def\vshade #1 #2 #3 {%
  \!startvshade (#1,#2,#3)
  \!Shadewhat}
\def\hshade #1 #2 #3 {%
  \!starthshade (#1,#2,#3)
  \!Shadewhat}
\def\!Shadewhat{%
  \futurelet\!nextchar\!Shade}
\def\!Shade{%
  \if <\!nextchar
    \def\!nextShade{\!!Shade}%
  \else
    \if /\!nextchar
      \def\!nextShade{\!finish}%
    \else
      \def\!nextShade{\!!!Shade}%
    \fi
  \fi
  \!nextShade}
\def\!!lShade<#1> #2 #3 #4 {%
  \!lshade <#1> (#2,#3,#4)                 
  \!Shadewhat}
\def\!!!lShade#1 #2 #3 {%
  \!lshade (#1,#2,#3)
  \!Shadewhat} 
\def\!!qShade<#1> #2 #3 #4 #5 #6 #7 {%
  \!qshade <#1> (#2,#3,#4) (#5,#6,#7)      
  \!Shadewhat}
\def\!!!qShade#1 #2 #3 #4 #5 #6 {%
  \!qshade (#1,#2,#3) (#4,#5,#6)
  \!Shadewhat} 
\def\setdashpattern <#1>{%
  \def\!Flist{}\def\!Blist{}\def\!UDlist{}%
  \!countA=0
  \!ecfor\!item:=#1\do{%
    \!dimenA=\!item\relax
    \expandafter\!rightappend\the\!dimenA\withCS{\\}\to\!UDlist%
    \advance\!countA  1
    \ifodd\!countA
      \expandafter\!rightappend\the\!dimenA\withCS{\!Rule}\to\!Flist%
      \expandafter\!leftappend\the\!dimenA\withCS{\!Rule}\to\!Blist%
    \else 
      \expandafter\!rightappend\the\!dimenA\withCS{\!Skip}\to\!Flist%
      \expandafter\!leftappend\the\!dimenA\withCS{\!Skip}\to\!Blist%
    \fi}%
  \!leaderlength=\!zpt
  \def\!Rule##1{\advance\!leaderlength  ##1}%
  \def\!Skip##1{\advance\!leaderlength  ##1}%
  \!Flist%
  \ifdim\!leaderlength>\!zpt 
  \else
    \def\!Flist{\!Skip{24in}}\def\!Blist{\!Skip{24in}}\ignorespaces
    \def\!UDlist{\\{\!zpt}\\{24in}}\ignorespaces
    \!leaderlength=24in
  \fi
  \!dashingon}
\def\!dashingon{%
  \def\!advancedashing{\!!advancedashing}%
  \def\!drawlinearsegment{\!lineardashed}%
  \def\!puthline{\!putdashedhline}%
  \def\!putvline{\!putdashedvline}%
  \ignorespaces}%
\def\!dashingoff{%
  \def\!advancedashing{\relax}%
  \def\!drawlinearsegment{\!linearsolid}%
  \def\!puthline{\!putsolidhline}%
  \def\!putvline{\!putsolidvline}%
  \ignorespaces}
\def\setdots{%
  \!ifnextchar<{\!setdots}{\!setdots<5pt>}}
\def\!setdots<#1>{%
  \!dimenB=#1\advance\!dimenB -\plotsymbolspacing
  \ifdim\!dimenB<\!zpt
    \!dimenB=\!zpt
  \fi
\setdashpattern <\plotsymbolspacing,\!dimenB>}
\def\setdotsnear <#1> for <#2>{%
  \!dimenB=#2\relax  \advance\!dimenB -.05pt  
  \!dimenC=#1\relax  \!countA=\!dimenC 
  \!dimenD=\!dimenB  \advance\!dimenD .5\!dimenC  \!countB=\!dimenD
  \divide \!countB  \!countA
  \ifnum 1>\!countB 
    \!countB=1
  \fi
  \divide\!dimenB  \!countB
  \setdots <\!dimenB>}
\def\setdashes{%
  \!ifnextchar<{\!setdashes}{\!setdashes<5pt>}}
\def\!setdashes<#1>{\setdashpattern <#1,#1>}
\def\setdashesnear <#1> for <#2>{%
  \!dimenB=#2\relax  
  \!dimenC=#1\relax  \!countA=\!dimenC 
  \!dimenD=\!dimenB  \advance\!dimenD .5\!dimenC  \!countB=\!dimenD
  \divide \!countB  \!countA
  \ifodd \!countB 
  \else 
    \advance \!countB  1
  \fi
  \divide\!dimenB  \!countB
  \setdashes <\!dimenB>}
\def\setsolid{%
  \def\!Flist{\!Rule{24in}}\def\!Blist{\!Rule{24in}}%
  \def\!UDlist{\\{24in}\\{\!zpt}}%
  \!dashingoff}  
\def\!divide#1#2#3{%
  \!dimenB=#1
  \!dimenC=#2
  \!dimenD=\!dimenB
  \divide \!dimenD \!dimenC
  \!dimenA=\!dimenD
  \multiply\!dimenD \!dimenC
  \advance\!dimenB -\!dimenD
  \!dimenD=\!dimenC
    \ifdim\!dimenD<\!zpt \!dimenD=-\!dimenD 
  \fi
  \ifdim\!dimenD<64pt
    \!divstep[\!tfs]\!divstep[\!tfs]%
  \else 
    \!!divide
  \fi
  #3=\!dimenA\ignorespaces}
\def\!!divide{%
  \ifdim\!dimenD<256pt
    \!divstep[64]\!divstep[32]\!divstep[32]%
  \else 
    \!divstep[8]\!divstep[8]\!divstep[8]\!divstep[8]\!divstep[8]%
    \!dimenA=2\!dimenA
  \fi}
\def\!divstep[#1]{
  \!dimenB=#1\!dimenB
  \!dimenD=\!dimenB
    \divide \!dimenD by \!dimenC
  \!dimenA=#1\!dimenA
    \advance\!dimenA by \!dimenD%
  \multiply\!dimenD by \!dimenC
    \advance\!dimenB by -\!dimenD}
\def\Divide <#1> by <#2> forming <#3> {%
  \!divide{#1}{#2}{#3}}
\def\ellipticalarc axes ratio #1:#2 #3 degrees from #4 #5 center at #6 #7 {%
  \!angle=#3pt\relax
  \ifdim\!angle>\!zpt 
    \def\!sign{}
  \else 
    \def\!sign{-}\!angle=-\!angle
  \fi
  \!xxloc=\!M{#6}\!xunit
  \!yyloc=\!M{#7}\!yunit     
  \!xxS=\!M{#4}\!xunit
  \!yyS=\!M{#5}\!yunit
  \advance\!xxS -\!xxloc
  \advance\!yyS -\!yyloc
  \!divide\!xxS{#1pt}\!xxS 
  \!divide\!yyS{#2pt}\!yyS 
  \let\!MC=\!M
  \!setdimenmode
  \!xS=#1\!xxS  \advance\!xS\!xxloc
  \!yS=#2\!yyS  \advance\!yS\!yyloc
  \!start (\!xS,\!yS)%
  \!loop\ifdim\!angle>14.9999pt
    \!rotate(\!xxS,\!yyS)by(\!cos,\!sign\!sin)to(\!xxM,\!yyM) 
    \!rotate(\!xxM,\!yyM)by(\!cos,\!sign\!sin)to(\!xxE,\!yyE)
    \!xM=#1\!xxM  \advance\!xM\!xxloc  \!yM=#2\!yyM  \advance\!yM\!yyloc
    \!xE=#1\!xxE  \advance\!xE\!xxloc  \!yE=#2\!yyE  \advance\!yE\!yyloc
    \!qjoin (\!xM,\!yM) (\!xE,\!yE)
    \!xxS=\!xxE  \!yyS=\!yyE 
    \advance \!angle -15pt
  \repeat
  \ifdim\!angle>\!zpt
    \!angle=100.53096\!angle
    \divide \!angle 360 
    \!sinandcos\!angle\!!sin\!!cos
    \!rotate(\!xxS,\!yyS)by(\!!cos,\!sign\!!sin)to(\!xxM,\!yyM) 
    \!rotate(\!xxM,\!yyM)by(\!!cos,\!sign\!!sin)to(\!xxE,\!yyE)
    \!xM=#1\!xxM  \advance\!xM\!xxloc  \!yM=#2\!yyM  \advance\!yM\!yyloc
    \!xE=#1\!xxE  \advance\!xE\!xxloc  \!yE=#2\!yyE  \advance\!yE\!yyloc
    \!qjoin (\!xM,\!yM) (\!xE,\!yE)
  \fi
  \let\!M=\!MC
  \ignorespaces}
\def\!rotate(#1,#2)by(#3,#4)to(#5,#6){%
  \!dimenA=#3#1\advance \!dimenA -#4#2
  \!dimenB=#3#2\advance \!dimenB  #4#1
  \divide \!dimenA 32  \divide \!dimenB 32 
  #5=\!dimenA  #6=\!dimenB
  \ignorespaces}
\def\!sin{4.17684}
\def\!cos{31.72624}
\def\!sinandcos#1#2#3{%
 \!dimenD=#1
 \!dimenA=\!dimenD
 \!dimenB=32pt
 \!removept\!dimenD\!value
 \!dimenC=\!dimenD
 \!dimenC=\!value\!dimenC \divide\!dimenC by 64 
 \advance\!dimenB by -\!dimenC
 \!dimenC=\!value\!dimenC \divide\!dimenC by 96 
 \advance\!dimenA by -\!dimenC
 \!dimenC=\!value\!dimenC \divide\!dimenC by 128 
 \advance\!dimenB by \!dimenC%
 \!removept\!dimenA#2
 \!removept\!dimenB#3
 \ignorespaces}
\def\putrule#1from #2 #3 to #4 #5 {%
  \!xloc=\!M{#2}\!xunit  \!xxloc=\!M{#4}\!xunit%
  \!yloc=\!M{#3}\!yunit  \!yyloc=\!M{#5}\!yunit%
  \!dxpos=\!xxloc  \advance\!dxpos by -\!xloc
  \!dypos=\!yyloc  \advance\!dypos by -\!yloc
  \ifdim\!dypos=\!zpt
    \def\!!Line{\!puthline{#1}}\ignorespaces
  \else
    \ifdim\!dxpos=\!zpt
      \def\!!Line{\!putvline{#1}}\ignorespaces
    \else 
       \def\!!Line{}
    \fi
  \fi
  \let\!ML=\!M
  \!setdimenmode
  \!!Line%
  \let\!M=\!ML
  \ignorespaces}
\def\!putsolidhline#1{%
  \ifdim\!dxpos>\!zpt 
    \put{\!hline\!dxpos}#1[l] at {\!xloc} {\!yloc}
  \else 
    \put{\!hline{-\!dxpos}}#1[l] at {\!xxloc} {\!yyloc}
  \fi
  \ignorespaces}
\def\!putsolidvline#1{%
  \ifdim\!dypos>\!zpt 
    \put{\!vline\!dypos}#1[b] at {\!xloc} {\!yloc}
  \else 
    \put{\!vline{-\!dypos}}#1[b] at {\!xxloc} {\!yyloc}
  \fi
  \ignorespaces}
\def\!hline#1{\hbox to #1{\leaders \hrule height\linethickness\hfill}}
\def\!vline#1{\vbox to #1{\leaders \vrule width\linethickness\vfill}}
\def\!putdashedhline#1{%
  \ifdim\!dxpos>\!zpt 
    \!DLsetup\!Flist\!dxpos
    \put{\hbox to \!totalleaderlength{\!hleaders}\!hpartialpattern\!Rtrunc}
      #1[l] at {\!xloc} {\!yloc} 
  \else 
    \!DLsetup\!Blist{-\!dxpos}
    \put{\!hpartialpattern\!Ltrunc\hbox to \!totalleaderlength{\!hleaders}}
      #1[r] at {\!xloc} {\!yloc} 
  \fi
  \ignorespaces}
\def\!putdashedvline#1{%
  \!dypos=-\!dypos
  \ifdim\!dypos>\!zpt 
    \!DLsetup\!Flist\!dypos 
    \put{\vbox{\vbox to \!totalleaderlength{\!vleaders}
      \!vpartialpattern\!Rtrunc}}#1[t] at {\!xloc} {\!yloc} 
  \else 
    \!DLsetup\!Blist{-\!dypos}
    \put{\vbox{\!vpartialpattern\!Ltrunc
      \vbox to \!totalleaderlength{\!vleaders}}}#1[b] at {\!xloc} {\!yloc} 
  \fi
  \ignorespaces}
\def\!DLsetup#1#2{
  \let\!RSlist=#1
  \!countB=#2
  \!countA=\!leaderlength
  \divide\!countB by \!countA
  \!totalleaderlength=\!countB\!leaderlength
  \!Rresiduallength=#2%
  \advance \!Rresiduallength by -\!totalleaderlength
  \!Lresiduallength=\!leaderlength
  \advance \!Lresiduallength by -\!Rresiduallength
  \ignorespaces}
\def\!hleaders{%
  \def\!Rule##1{\vrule height\linethickness width##1}%
  \def\!Skip##1{\hskip##1}%
  \leaders\hbox{\!RSlist}\hfill}
\def\!hpartialpattern#1{%
  \!dimenA=\!zpt \!dimenB=\!zpt 
  \def\!Rule##1{#1{##1}\vrule height\linethickness width\!dimenD}%
  \def\!Skip##1{#1{##1}\hskip\!dimenD}%
  \!RSlist}
\def\!vleaders{%
  \def\!Rule##1{\hrule width\linethickness height##1}%
  \def\!Skip##1{\vskip##1}%
  \leaders\vbox{\!RSlist}\vfill}
\def\!vpartialpattern#1{%
  \!dimenA=\!zpt \!dimenB=\!zpt 
  \def\!Rule##1{#1{##1}\hrule width\linethickness height\!dimenD}%
  \def\!Skip##1{#1{##1}\vskip\!dimenD}%
  \!RSlist}
\def\!Rtrunc#1{\!trunc{#1}>\!Rresiduallength}
\def\!Ltrunc#1{\!trunc{#1}<\!Lresiduallength}
\def\!trunc#1#2#3{%
  \!dimenA=\!dimenB         
  \advance\!dimenB by #1%
  \!dimenD=\!dimenB  \ifdim\!dimenD#2#3\!dimenD=#3\fi
  \!dimenC=\!dimenA  \ifdim\!dimenC#2#3\!dimenC=#3\fi
  \advance \!dimenD by -\!dimenC}
\def\!start (#1,#2){%
  \!plotxorigin=\!xorigin  \advance \!plotxorigin by \!plotsymbolxshift
  \!plotyorigin=\!yorigin  \advance \!plotyorigin by \!plotsymbolyshift
  \!xS=\!M{#1}\!xunit \!yS=\!M{#2}\!yunit
  \!rotateaboutpivot\!xS\!yS
  \!copylist\!UDlist\to\!!UDlist
  \!getnextvalueof\!downlength\from\!!UDlist
  \!distacross=\!zpt
  \!intervalno=0 
  \global\totalarclength=\!zpt
  \ignorespaces}
\def\!ljoin (#1,#2){%
  \advance\!intervalno by 1
  \!xE=\!M{#1}\!xunit \!yE=\!M{#2}\!yunit
  \!rotateaboutpivot\!xE\!yE
  \!xdiff=\!xE \advance \!xdiff by -\!xS
  \!ydiff=\!yE \advance \!ydiff by -\!yS
  \!Pythag\!xdiff\!ydiff\!arclength
  \global\advance \totalarclength by \!arclength%
  \!drawlinearsegment
  \!xS=\!xE \!yS=\!yE
  \ignorespaces}
\def\!linearsolid{%
  \!npoints=\!arclength
  \!countA=\plotsymbolspacing
  \divide\!npoints by \!countA
  \ifnum \!npoints<1 
    \!npoints=1 
  \fi
  \divide\!xdiff by \!npoints
  \divide\!ydiff by \!npoints
  \!xpos=\!xS \!ypos=\!yS
  \loop\ifnum\!npoints>-1
    \!plotifinbounds
    \advance \!xpos by \!xdiff
    \advance \!ypos by \!ydiff
    \advance \!npoints by -1
  \repeat
  \ignorespaces}
\def\!lineardashed{%
  \ifdim\!distacross>\!arclength
    \advance \!distacross by -\!arclength  
  \else
    \loop\ifdim\!distacross<\!arclength
      \!divide\!distacross\!arclength\!dimenA
      \!removept\!dimenA\!t
      \!xpos=\!t\!xdiff \advance \!xpos by \!xS
      \!ypos=\!t\!ydiff \advance \!ypos by \!yS
      \!plotifinbounds
      \advance\!distacross by \plotsymbolspacing
      \!advancedashing
    \repeat  
    \advance \!distacross by -\!arclength
  \fi
  \ignorespaces}
\def\!!advancedashing{%
  \advance\!downlength by -\plotsymbolspacing
  \ifdim \!downlength>\!zpt
  \else
    \advance\!distacross by \!downlength
    \!getnextvalueof\!uplength\from\!!UDlist
    \advance\!distacross by \!uplength
    \!getnextvalueof\!downlength\from\!!UDlist
  \fi}
\def\inboundscheckoff{%
  \def\!plotifinbounds{\!plot(\!xpos,\!ypos)}%
  \def\!initinboundscheck{\relax}\ignorespaces}
\def\!!plotifinbounds{%
  \ifdim \!xpos<\!checkleft
  \else
    \ifdim \!xpos>\!checkright
    \else
      \ifdim \!ypos<\!checkbot
      \else
         \ifdim \!ypos>\!checktop
         \else
           \!plot(\!xpos,\!ypos)
         \fi 
      \fi
    \fi
  \fi}
\def\!!initinboundscheck{%
  \!checkleft=\!arealloc     \advance\!checkleft by \!xorigin
  \!checkright=\!arearloc    \advance\!checkright by \!xorigin
  \!checkbot=\!areabloc      \advance\!checkbot by \!yorigin
  \!checktop=\!areatloc      \advance\!checktop by \!yorigin}
\def\!logten#1#2{%
  \expandafter\!!logten#1\!nil
  \!removept\!dimenF#2%
  \ignorespaces}
\def\!!logten#1#2\!nil{%
  \if -#1%
    \!dimenF=\!zpt
    \def\!next{\ignorespaces}%
  \else
    \if +#1%
      \def\!next{\!!logten#2\!nil}%
    \else
      \if .#1%
        \def\!next{\!!logten0.#2\!nil}%
      \else
        \def\!next{\!!!logten#1#2..\!nil}%
      \fi
    \fi
  \fi
  \!next}
\def\!!!logten#1#2.#3.#4\!nil{%
  \!dimenF=1pt 
  \if 0#1%
    \!!logshift#3pt 
  \else 
    \!logshift#2/
    \!dimenE=#1.#2#3pt 
  \fi 
  \ifdim \!dimenE<\!rootten
    \multiply \!dimenE 10 
    \advance  \!dimenF -1pt
  \fi
  \!dimenG=\!dimenE
    \advance\!dimenG 10pt
  \advance\!dimenE -10pt 
  \multiply\!dimenE 10 
  \!divide\!dimenE\!dimenG\!dimenE
  \!removept\!dimenE\!t
  \!dimenG=\!t\!dimenE
  \!removept\!dimenG\!tt
  \!dimenH=\!tt\!tenAe
    \divide\!dimenH 100
  \advance\!dimenH \!tenAc
  \!dimenH=\!tt\!dimenH
    \divide\!dimenH 100   
  \advance\!dimenH \!tenAa
  \!dimenH=\!t\!dimenH
    \divide\!dimenH 100 
  \advance\!dimenF \!dimenH}
\def\!logshift#1{%
  \if #1/%
    \def\!next{\ignorespaces}%
  \else
    \advance\!dimenF 1pt 
    \def\!next{\!logshift}%
  \fi 
  \!next}
 \def\!!logshift#1{%
   \advance\!dimenF -1pt
   \if 0#1%
     \def\!next{\!!logshift}%
   \else
     \if p#1%
       \!dimenF=1pt
       \def\!next{\!dimenE=1p}%
     \else
       \def\!next{\!dimenE=#1.}%
     \fi
   \fi
   \!next}
\def\beginpicture{%
  \setbox\!picbox=\hbox\bgroup%
  \!xleft=\maxdimen  
  \!xright=-\maxdimen
  \!ybot=\maxdimen
  \!ytop=-\maxdimen}
\def\endpicture{%
  \ifdim\!xleft=\maxdimen
    \!xleft=\!zpt \!xright=\!zpt \!ybot=\!zpt \!ytop=\!zpt 
  \fi
  \global\!Xleft=\!xleft \global\!Xright=\!xright
  \global\!Ybot=\!ybot \global\!Ytop=\!ytop
  \egroup%
  \ht\!picbox=\!Ytop  \dp\!picbox=-\!Ybot
  \ifdim\!Ybot>\!zpt
  \else 
    \ifdim\!Ytop<\!zpt
      \!Ybot=\!Ytop
    \else
      \!Ybot=\!zpt
    \fi
  \fi
  \hbox{\kern-\!Xleft\lower\!Ybot\box\!picbox\kern\!Xright}}
\def\endpicturesave <#1,#2>{%
  \endpicture \global #1=\!Xleft \global #2=\!Ybot \ignorespaces}
\def\setcoordinatesystem{%
  \!ifnextchar{u}{\!getlengths }
    {\!getlengths units <\!xunit,\!yunit>}}
\def\!getlengths units <#1,#2>{%
  \!xunit=#1\relax
  \!yunit=#2\relax
  \!ifcoordmode 
    \let\!SCnext=\!SCccheckforRP
  \else
    \let\!SCnext=\!SCdcheckforRP
  \fi
  \!SCnext}
\def\!SCccheckforRP{%
  \!ifnextchar{p}{\!cgetreference }
    {\!cgetreference point at {\!xref} {\!yref} }}
\def\!cgetreference point at #1 #2 {%
  \edef\!xref{#1}\edef\!yref{#2}%
  \!xorigin=\!xref\!xunit  \!yorigin=\!yref\!yunit  
  \!initinboundscheck 
  \ignorespaces}
\def\!SCdcheckforRP{%
  \!ifnextchar{p}{\!dgetreference}%
    {\ignorespaces}}
\def\!dgetreference point at #1 #2 {%
  \!xorigin=#1\relax  \!yorigin=#2\relax
  \ignorespaces}
\long\def\put#1#2 at #3 #4 {%
  \!setputobject{#1}{#2}%
  \!xpos=\!M{#3}\!xunit  \!ypos=\!M{#4}\!yunit  
  \!rotateaboutpivot\!xpos\!ypos%
  \advance\!xpos -\!xorigin  \advance\!xpos -\!xshift
  \advance\!ypos -\!yorigin  \advance\!ypos -\!yshift
  \kern\!xpos\raise\!ypos\box\!putobject\kern-\!xpos%
  \!doaccounting\ignorespaces}
\long\def\multiput #1#2 at {%
  \!setputobject{#1}{#2}%
  \!ifnextchar"{\!putfromfile}{\!multiput}}
\def\!putfromfile"#1"{%
  \expandafter\!multiput \input #1 /}
\def\!multiput{%
  \futurelet\!nextchar\!!multiput}
\def\!!multiput{%
  \if *\!nextchar
    \def\!nextput{\!alsoby}%
  \else
    \if /\!nextchar
      \def\!nextput{\!finishmultiput}%
    \else
      \def\!nextput{\!alsoat}%
    \fi
  \fi
  \!nextput}
\def\!finishmultiput/{%
  \setbox\!putobject=\hbox{}%
  \ignorespaces}
\def\!alsoat#1 #2 {%
  \!xpos=\!M{#1}\!xunit  \!ypos=\!M{#2}\!yunit  
  \!rotateaboutpivot\!xpos\!ypos%
  \advance\!xpos -\!xorigin  \advance\!xpos -\!xshift
  \advance\!ypos -\!yorigin  \advance\!ypos -\!yshift
  \kern\!xpos\raise\!ypos\copy\!putobject\kern-\!xpos%
  \!doaccounting
  \!multiput}
\def\!alsoby*#1 #2 #3 {%
  \!dxpos=\!M{#2}\!xunit \!dypos=\!M{#3}\!yunit 
  \!rotateonly\!dxpos\!dypos
  \!ntemp=#1%
  \!!loop\ifnum\!ntemp>0
    \advance\!xpos by \!dxpos  \advance\!ypos by \!dypos
    \kern\!xpos\raise\!ypos\copy\!putobject\kern-\!xpos%
    \advance\!ntemp by -1
  \repeat
  \!doaccounting 
  \!multiput}
\def\accountingon{\def\!doaccounting{\!!doaccounting}\ignorespaces}
\def\!!doaccounting{%
  \!xtemp=\!xpos  
  \!ytemp=\!ypos
  \ifdim\!xtemp<\!xleft 
     \!xleft=\!xtemp 
  \fi
  \advance\!xtemp by  \!wd 
  \ifdim\!xright<\!xtemp 
    \!xright=\!xtemp
  \fi
  \advance\!ytemp by -\!dp
  \ifdim\!ytemp<\!ybot  
    \!ybot=\!ytemp
  \fi
  \advance\!ytemp by  \!dp
  \advance\!ytemp by  \!ht 
  \ifdim\!ytemp>\!ytop  
    \!ytop=\!ytemp  
  \fi}
\long\def\!setputobject#1#2{%
  \setbox\!putobject=\hbox{#1}%
  \!ht=\ht\!putobject  \!dp=\dp\!putobject  \!wd=\wd\!putobject
  \wd\!putobject=\!zpt
  \!xshift=.5\!wd   \!yshift=.5\!ht   \advance\!yshift by -.5\!dp
  \edef\!putorientation{#2}%
  \expandafter\!SPOreadA\!putorientation[]\!nil%
  \expandafter\!SPOreadB\!putorientation<\!zpt,\!zpt>\!nil\ignorespaces}
\def\!SPOreadA#1[#2]#3\!nil{\!etfor\!orientation:=#2\do\!SPOreviseshift}
\def\!SPOreadB#1<#2,#3>#4\!nil{\advance\!xshift by -#2\advance\!yshift by -#3}
\def\!SPOreviseshift{%
  \if l\!orientation 
    \!xshift=\!zpt
  \else 
    \if r\!orientation 
      \!xshift=\!wd
    \else 
      \if b\!orientation
        \!yshift=-\!dp
      \else 
        \if B\!orientation 
          \!yshift=\!zpt
        \else 
          \if t\!orientation 
            \!yshift=\!ht
          \fi 
        \fi
      \fi
    \fi
  \fi}
\long\def\!dimenput#1#2(#3,#4){%
  \!setputobject{#1}{#2}%
  \!xpos=#3\advance\!xpos by -\!xshift
  \!ypos=#4\advance\!ypos by -\!yshift
  \kern\!xpos\raise\!ypos\box\!putobject\kern-\!xpos%
  \!doaccounting\ignorespaces}
\def\!setdimenmode{%
  \let\!M=\!M!!\ignorespaces}
\def\!setcoordmode{%
  \let\!M=\!M!\ignorespaces}
\def\!ifcoordmode{%
  \ifx \!M \!M!}
\def\!ifdimenmode{%
  \ifx \!M \!M!!}
\def\!M!#1#2{#1#2} 
\def\!M!!#1#2{#1}
\let\setdimensionmode=\!setdimenmode
\let\setcoordinatemode=\!setcoordmode
\def\!stack[#1]{%
  \let\!lglue=\hfill \let\!rglue=\hfill
  \expandafter\let\csname !#1glue\endcsname=\relax
  \!ifnextchar<{\!!stack}{\!!stack<\stackleading>}}
\def\!!stack<#1>#2{%
  \vbox{\def\!valueslist{}\!ecfor\!value:=#2\do{%
    \expandafter\!rightappend\!value\withCS{\\}\to\!valueslist}%
    \!lop\!valueslist\to\!value
    \let\\=\cr\lineskiplimit=\maxdimen\lineskip=#1%
    \baselineskip=-1000pt\halign{\!lglue##\!rglue\cr \!value\!valueslist\cr}}%
  \ignorespaces}
\def\!lines[#1]#2{%
  \let\!lglue=\hfill \let\!rglue=\hfill
  \expandafter\let\csname !#1glue\endcsname=\relax
  \vbox{\halign{\!lglue##\!rglue\cr #2\crcr}}%
  \ignorespaces}
\def\!Lines[#1]#2{%
  \let\!lglue=\hfill \let\!rglue=\hfill
  \expandafter\let\csname !#1glue\endcsname=\relax
  \vtop{\halign{\!lglue##\!rglue\cr #2\crcr}}%
  \ignorespaces}
\def\setplotsymbol(#1#2){%
  \!setputobject{#1}{#2}
  \setbox\!plotsymbol=\box\!putobject%
  \!plotsymbolxshift=\!xshift 
  \!plotsymbolyshift=\!yshift 
  \ignorespaces}
\font\fiverm=cmr5
\def\!!plot(#1,#2){%
  \!dimenA=-\!plotxorigin \advance \!dimenA by #1
  \!dimenB=-\!plotyorigin \advance \!dimenB by #2
  \kern\!dimenA\raise\!dimenB\copy\!plotsymbol\kern-\!dimenA%
  \ignorespaces}
\def\!!!plot(#1,#2){%
  \!dimenA=-\!plotxorigin \advance \!dimenA by #1
  \!dimenB=-\!plotyorigin \advance \!dimenB by #2
  \kern\!dimenA\raise\!dimenB\copy\!plotsymbol\kern-\!dimenA%
  \!countE=\!dimenA
  \!countF=\!dimenB
  \immediate\write\!replotfile{\the\!countE,\the\!countF.}%
  \ignorespaces}
\def\savelinesandcurves on "#1" {%
  \immediate\closeout\!replotfile
  \immediate\openout\!replotfile=#1%
  \let\!plot=\!!!plot}
\def\dontsavelinesandcurves {%
  \let\!plot=\!!plot}
\xdef\!Commentsignal{
\def\writesavefile#1 {%
  \immediate\write\!replotfile{\!Commentsignal #1}%
  \ignorespaces}

\def\replot"#1" {%
  \expandafter\!replot\input #1 /}
\def\!replot#1,#2. {%
  \!dimenA=#1sp
  \kern\!dimenA\raise#2sp\copy\!plotsymbol\kern-\!dimenA
  \futurelet\!nextchar\!!replot}
\def\!!replot{%
  \if /\!nextchar 
    \def\!next{\!finish}%
  \else
    \def\!next{\!replot}%
  \fi
  \!next}


 
 
\def\!Pythag#1#2#3{%
  \!dimenE=#1\relax                                     
  \ifdim\!dimenE<\!zpt 
    \!dimenE=-\!dimenE 
  \fi
  \!dimenF=#2\relax
  \ifdim\!dimenF<\!zpt 
    \!dimenF=-\!dimenF 
  \fi
  \advance \!dimenF by \!dimenE
  \ifdim\!dimenF=\!zpt 
    \!dimenG=\!zpt
  \else 
    \!divide{8\!dimenE}\!dimenF\!dimenE
    \advance\!dimenE by -4pt
      \!dimenE=2\!dimenE
    \!removept\!dimenE\!!t
    \!dimenE=\!!t\!dimenE
    \advance\!dimenE by 64pt
    \divide \!dimenE by 2
    \!dimenH=7pt
    \!!Pythag\!!Pythag\!!Pythag
    \!removept\!dimenH\!!t
    \!dimenG=\!!t\!dimenF
    \divide\!dimenG by 8
  \fi
  #3=\!dimenG
  \ignorespaces}

\def\!!Pythag{
  \!divide\!dimenE\!dimenH\!dimenI
  \advance\!dimenH by \!dimenI
    \divide\!dimenH by 2}

\def\placehypotenuse for <#1> and <#2> in <#3> {%
  \!Pythag{#1}{#2}{#3}}

 
 
 
\def\!qjoin (#1,#2) (#3,#4){%
  \advance\!intervalno by 1
  \!ifcoordmode
    \edef\!xmidpt{#1}\edef\!ymidpt{#2}%
  \else
    \!dimenA=#1\relax \edef\!xmidpt{\the\!dimenA}%
    \!dimenA=#2\relax \edef\!ymidpt{\the\!dimenA}%
  \fi
  \!xM=\!M{#1}\!xunit  \!yM=\!M{#2}\!yunit   \!rotateaboutpivot\!xM\!yM
  \!xE=\!M{#3}\!xunit  \!yE=\!M{#4}\!yunit   \!rotateaboutpivot\!xE\!yE
%
  \!dimenA=\!xM  \advance \!dimenA by -\!xS
  \!dimenB=\!xE  \advance \!dimenB by -\!xM
  \!xB=3\!dimenA \advance \!xB by -\!dimenB
  \!xC=2\!dimenB \advance \!xC by -2\!dimenA
%
  \!dimenA=\!yM  \advance \!dimenA by -\!yS%
  \!dimenB=\!yE  \advance \!dimenB by -\!yM%
  \!yB=3\!dimenA \advance \!yB by -\!dimenB%
  \!yC=2\!dimenB \advance \!yC by -2\!dimenA%
%
  \!xprime=\!xB  \!yprime=\!yB
  \!dxprime=.5\!xC  \!dyprime=.5\!yC
  \!getf \!midarclength=\!dimenA
  \!getf \advance \!midarclength by 4\!dimenA
  \!getf \advance \!midarclength by \!dimenA
  \divide \!midarclength by 12
%
  \!arclength=\!dimenA
  \!getf \advance \!arclength by 4\!dimenA
  \!getf \advance \!arclength by \!dimenA
  \divide \!arclength by 12
  \advance \!arclength by \!midarclength
  \global\advance \totalarclength by \!arclength
%
%
  \ifdim\!distacross>\!arclength 
    \advance \!distacross by -\!arclength
  \else
    \!initinverseinterp
    \loop\ifdim\!distacross<\!arclength
      \!inverseinterp
      \!xpos=\!t\!xC \advance\!xpos by \!xB
        \!xpos=\!t\!xpos \advance \!xpos by \!xS
      \!ypos=\!t\!yC \advance\!ypos by \!yB
        \!ypos=\!t\!ypos \advance \!ypos by \!yS
      \!plotifinbounds
      \advance\!distacross \plotsymbolspacing
      \!advancedashing
    \repeat  
    \advance \!distacross by -\!arclength
  \fi
  \!xS=\!xE
  \!yS=\!yE
  \ignorespaces}

\def\!getf{\!Pythag\!xprime\!yprime\!dimenA%
  \advance\!xprime by \!dxprime
  \advance\!yprime by \!dyprime}

\def\!initinverseinterp{%
  \ifdim\!arclength>\!zpt
    \!divide{8\!midarclength}\!arclength\!dimenE
    \ifdim\!dimenE<\!wmin \!setinverselinear
    \else 
      \ifdim\!dimenE>\!wmax \!setinverselinear
      \else
        \def\!inverseinterp{\!inversequad}\ignorespaces
%
%
         \!removept\!dimenE\!Ew
         \!dimenF=-\!Ew\!dimenE
         \advance\!dimenF by 32pt
         \!dimenG=8pt 
         \advance\!dimenG by -\!dimenE
         \!dimenG=\!Ew\!dimenG
         \!divide\!dimenF\!dimenG\!beta
         \!gamma=1pt
         \advance \!gamma by -\!beta
      \fi
    \fi
  \fi
  \ignorespaces}

\def\!inversequad{%
  \!divide\!distacross\!arclength\!dimenG
  \!removept\!dimenG\!v
  \!dimenG=\!v\!gamma
  \advance\!dimenG by \!beta
  \!dimenG=\!v\!dimenG
  \!removept\!dimenG\!t}

\def\!setinverselinear{%
  \def\!inverseinterp{\!inverselinear}%
  \divide\!dimenE by 8 \!removept\!dimenE\!t
  \!countC=\!intervalno \multiply \!countC 2
  \!countB=\!countC     \advance \!countB -1
  \!countA=\!countB     \advance \!countA -1
  \wlog{\the\!countB th point (\!xmidpt,\!ymidpt) being plotted 
    doesn't lie in the}%
  \wlog{ middle third of the arc between the \the\!countA th 
    and \the\!countC th points:}%
  \wlog{ [arc length \the\!countA\space to \the\!countB]/[arc length 
    \the \!countA\space to \the\!countC]=\!t.}%
  \ignorespaces}
 
\def\!inverselinear{%
  \!divide\!distacross\!arclength\!dimenG
  \!removept\!dimenG\!t}

 

\def\startrotation{%
  \let\!rotateaboutpivot=\!!rotateaboutpivot
  \let\!rotateonly=\!!rotateonly
  \!ifnextchar{b}{\!getsincos }%
    {\!getsincos by {\!cosrotationangle} {\!sinrotationangle} }}
\def\!getsincos by #1 #2 {%
  \edef\!cosrotationangle{#1}%
  \edef\!sinrotationangle{#2}%
  \!ifcoordmode 
    \let\!ROnext=\!ccheckforpivot
  \else
    \let\!ROnext=\!dcheckforpivot
  \fi
  \!ROnext}
\def\!ccheckforpivot{%
  \!ifnextchar{a}{\!cgetpivot}%
    {\!cgetpivot about {\!xpivotcoord} {\!ypivotcoord} }}
\def\!cgetpivot about #1 #2 {%
  \edef\!xpivotcoord{#1}%
  \edef\!ypivotcoord{#2}%
  \!xpivot=#1\!xunit  \!ypivot=#2\!yunit
  \ignorespaces}
\def\!dcheckforpivot{%
  \!ifnextchar{a}{\!dgetpivot}{\ignorespaces}}
\def\!dgetpivot about #1 #2 {%
  \!xpivot=#1\relax  \!ypivot=#2\relax
  \ignorespaces}

\def\stoprotation{%
  \let\!rotateaboutpivot=\!!!rotateaboutpivot
  \let\!rotateonly=\!!!rotateonly
  \ignorespaces}
 
\def\!!rotateaboutpivot#1#2{%
  \!dimenA=#1\relax  \advance\!dimenA -\!xpivot
  \!dimenB=#2\relax  \advance\!dimenB -\!ypivot
  \!dimenC=\!cosrotationangle\!dimenA
    \advance \!dimenC -\!sinrotationangle\!dimenB
  \!dimenD=\!cosrotationangle\!dimenB
    \advance \!dimenD  \!sinrotationangle\!dimenA
  \advance\!dimenC \!xpivot  \advance\!dimenD \!ypivot
  #1=\!dimenC  #2=\!dimenD
  \ignorespaces}

\def\!!rotateonly#1#2{%
  \!dimenA=#1\relax  \!dimenB=#2\relax 
  \!dimenC=\!cosrotationangle\!dimenA
    \advance \!dimenC -\!rotsign\!sinrotationangle\!dimenB
  \!dimenD=\!cosrotationangle\!dimenB
    \advance \!dimenD  \!rotsign\!sinrotationangle\!dimenA
  #1=\!dimenC  #2=\!dimenD
  \ignorespaces}
\def\!rotsign{}
\def\!!!rotateaboutpivot#1#2{\relax}
\def\!!!rotateonly#1#2{\relax}
\stoprotation

\def\!reverserotateonly#1#2{%
  \def\!rotsign{-}%
  \!rotateonly{#1}{#2}%
  \def\!rotsign{}%
  \ignorespaces}

\def\!getspan span <#1>{%
  \!dshade=#1\relax
  \!ifcoordmode 
    \let\!GRnext=\!GRccheckforAP
  \else
    \let\!GRnext=\!GRdcheckforAP
  \fi
  \!GRnext}
\def\!GRccheckforAP{%
  \!ifnextchar{p}{\!cgetanchor }
    {\!cgetanchor point at {\!xshadesave} {\!yshadesave} }}
\def\!cgetanchor point at #1 #2 {%
  \edef\!xshadesave{#1}\edef\!yshadesave{#2}%
  \!xshade=\!xshadesave\!xunit  \!yshade=\!yshadesave\!yunit
  \ignorespaces}
\def\!GRdcheckforAP{%
  \!ifnextchar{p}{\!dgetanchor}%
    {\ignorespaces}}
\def\!dgetanchor point at #1 #2 {%
  \!xshade=#1\relax  \!yshade=#2\relax
  \ignorespaces}

\def\setshadesymbol{%
  \!ifnextchar<{\!setshadesymbol}{\!setshadesymbol<,,,> }}

\def\!setshadesymbol <#1,#2,#3,#4> (#5#6){%
  \!setputobject{#5}{#6}%
  \setbox\!shadesymbol=\box\!putobject%
  \!shadesymbolxshift=\!xshift \!shadesymbolyshift=\!yshift
%
  \!dimenA=\!xshift \advance\!dimenA \!smidge
  \!override\!dimenA{#1}\!lshrinkage%
  \!dimenA=\!wd \advance \!dimenA -\!xshift
    \advance\!dimenA \!smidge
    \!override\!dimenA{#2}\!rshrinkage
  \!dimenA=\!dp \advance \!dimenA \!yshift
    \advance\!dimenA \!smidge
    \!override\!dimenA{#3}\!bshrinkage
  \!dimenA=\!ht \advance \!dimenA -\!yshift
    \advance\!dimenA \!smidge
    \!override\!dimenA{#4}\!tshrinkage
  \ignorespaces}
\def\!smidge{-.2pt}%

\def\!override#1#2#3{%
  \edef\!!override{#2}%
  \ifx \!!override\empty
    #3=#1\relax
  \else
    \if z\!!override
      #3=\!zpt
    \else
      \ifx \!!override\!blankz
        #3=\!zpt
      \else
        #3=#2\relax
      \fi
    \fi
  \fi
  \ignorespaces}
\def\!blankz{ z}

\setshadesymbol ({\fiverm .})

\def\!startvshade#1(#2,#3,#4){%
  \let\!!xunit=\!xunit%
  \let\!!yunit=\!yunit%
  \let\!!xshade=\!xshade%
  \let\!!yshade=\!yshade%
  \def\!getshrinkages{\!vgetshrinkages}%
  \let\!setshadelocation=\!vsetshadelocation%
  \!xS=\!M{#2}\!!xunit
  \!ybS=\!M{#3}\!!yunit
  \!ytS=\!M{#4}\!!yunit
  \!shadexorigin=\!xorigin  \advance \!shadexorigin \!shadesymbolxshift
  \!shadeyorigin=\!yorigin  \advance \!shadeyorigin \!shadesymbolyshift
  \ignorespaces}
 
\def\!starthshade#1(#2,#3,#4){%
  \let\!!xunit=\!yunit%
  \let\!!yunit=\!xunit%
  \let\!!xshade=\!yshade%
  \let\!!yshade=\!xshade%
  \def\!getshrinkages{\!hgetshrinkages}%
  \let\!setshadelocation=\!hsetshadelocation%
  \!xS=\!M{#2}\!!xunit
  \!ybS=\!M{#3}\!!yunit
  \!ytS=\!M{#4}\!!yunit
  \!shadexorigin=\!xorigin  \advance \!shadexorigin \!shadesymbolxshift
  \!shadeyorigin=\!yorigin  \advance \!shadeyorigin \!shadesymbolyshift
  \ignorespaces}

\def\!lattice#1#2#3#4#5{%
  \!dimenA=#1
  \!dimenB=#2
  \!countB=\!dimenB
%
  \!dimenC=#3
  \advance\!dimenC -\!dimenA
  \!countA=\!dimenC
  \divide\!countA \!countB
  \ifdim\!dimenC>\!zpt
    \!dimenD=\!countA\!dimenB
    \ifdim\!dimenD<\!dimenC
      \advance\!countA 1 
    \fi
  \fi
  \!dimenC=\!countA\!dimenB
    \advance\!dimenC \!dimenA
  #4=\!countA
  #5=\!dimenC
  \ignorespaces}

\def\!qshade#1(#2,#3,#4)#5(#6,#7,#8){%
  \!xM=\!M{#2}\!!xunit
  \!ybM=\!M{#3}\!!yunit
  \!ytM=\!M{#4}\!!yunit
  \!xE=\!M{#6}\!!xunit
  \!ybE=\!M{#7}\!!yunit
  \!ytE=\!M{#8}\!!yunit
  \!getcoeffs\!xS\!ybS\!xM\!ybM\!xE\!ybE\!ybB\!ybC
  \!getcoeffs\!xS\!ytS\!xM\!ytM\!xE\!ytE\!ytB\!ytC
  \def\!getylimits{\!qgetylimits}%
  \!shade{#1}\ignorespaces}
 
\def\!lshade#1(#2,#3,#4){%
  \!xE=\!M{#2}\!!xunit
  \!ybE=\!M{#3}\!!yunit
  \!ytE=\!M{#4}\!!yunit
  \!dimenE=\!xE  \advance \!dimenE -\!xS
  \!dimenC=\!ytE \advance \!dimenC -\!ytS
  \!divide\!dimenC\!dimenE\!ytB
  \!dimenC=\!ybE \advance \!dimenC -\!ybS
  \!divide\!dimenC\!dimenE\!ybB
  \def\!getylimits{\!lgetylimits}%
  \!shade{#1}\ignorespaces}
 
\def\!getcoeffs#1#2#3#4#5#6#7#8{%
  \!dimenC=#4\advance \!dimenC -#2
  \!dimenE=#3\advance \!dimenE -#1
  \!divide\!dimenC\!dimenE\!dimenF
  \!dimenC=#6\advance \!dimenC -#4
  \!dimenH=#5\advance \!dimenH -#3
  \!divide\!dimenC\!dimenH\!dimenG
  \advance\!dimenG -\!dimenF
  \advance \!dimenH \!dimenE
  \!divide\!dimenG\!dimenH#8
  \!removept#8\!t
  #7=-\!t\!dimenE
  \advance #7\!dimenF
  \ignorespaces}

\def\!shade#1{%
  \!getshrinkages#1<,,,>\!nil
  \advance \!dimenE \!xS
  \!lattice\!!xshade\!dshade\!dimenE
    \!parity\!xpos
  \!dimenF=-\!dimenF
    \advance\!dimenF \!xE
  \!loop\!not{\ifdim\!xpos>\!dimenF}
    \!shadecolumn%
    \advance\!xpos \!dshade
    \advance\!parity 1
  \repeat
  \!xS=\!xE
  \!ybS=\!ybE
  \!ytS=\!ytE
  \ignorespaces}

\def\!vgetshrinkages#1<#2,#3,#4,#5>#6\!nil{%
  \!override\!lshrinkage{#2}\!dimenE
  \!override\!rshrinkage{#3}\!dimenF
  \!override\!bshrinkage{#4}\!dimenG
  \!override\!tshrinkage{#5}\!dimenH
  \ignorespaces}
\def\!hgetshrinkages#1<#2,#3,#4,#5>#6\!nil{%
  \!override\!lshrinkage{#2}\!dimenG
  \!override\!rshrinkage{#3}\!dimenH
  \!override\!bshrinkage{#4}\!dimenE
  \!override\!tshrinkage{#5}\!dimenF
  \ignorespaces}

\def\!shadecolumn{%
  \!dxpos=\!xpos
  \advance\!dxpos -\!xS
  \!removept\!dxpos\!dx
  \!getylimits
  \advance\!ytpos -\!dimenH
  \advance\!ybpos \!dimenG
  \!yloc=\!!yshade
  \ifodd\!parity 
     \advance\!yloc \!dshade
  \fi
  \!lattice\!yloc{2\!dshade}\!ybpos%
    \!countA\!ypos
  \!dimenA=-\!shadexorigin \advance \!dimenA \!xpos
  \loop\!not{\ifdim\!ypos>\!ytpos}
    \!setshadelocation
    \!rotateaboutpivot\!xloc\!yloc%
    \!dimenA=-\!shadexorigin \advance \!dimenA \!xloc
    \!dimenB=-\!shadeyorigin \advance \!dimenB \!yloc
    \kern\!dimenA \raise\!dimenB\copy\!shadesymbol \kern-\!dimenA
    \advance\!ypos 2\!dshade
  \repeat
  \ignorespaces}
 
\def\!qgetylimits{%
  \!dimenA=\!dx\!ytC              
  \advance\!dimenA \!ytB
  \!ytpos=\!dx\!dimenA
  \advance\!ytpos \!ytS
  \!dimenA=\!dx\!ybC              
  \advance\!dimenA \!ybB
  \!ybpos=\!dx\!dimenA
  \advance\!ybpos \!ybS}
 
\def\!lgetylimits{%
  \!ytpos=\!dx\!ytB
  \advance\!ytpos \!ytS
  \!ybpos=\!dx\!ybB
  \advance\!ybpos \!ybS}
 
\def\!vsetshadelocation{
  \!xloc=\!xpos
  \!yloc=\!ypos}
\def\!hsetshadelocation{
  \!xloc=\!ypos
  \!yloc=\!xpos}





\def\!axisticks {%
  \def\!nextkeyword##1 {%
    \expandafter\ifx\csname !ticks##1\endcsname \relax
      \def\!next{\!fixkeyword{##1}}%
    \else
      \def\!next{\csname !ticks##1\endcsname}%
    \fi
    \!next}%
  \!axissetup
    \def\!axissetup{\relax}%
  \edef\!ticksinoutsign{\!ticksinoutSign}%
  \!ticklength=\longticklength
  \!tickwidth=\linethickness
  \!gridlinestatus
  \!setticktransform
  \!maketick
  \!tickcase=0
  \def\!LTlist{}%
  \!nextkeyword}

\def\ticksout{%
  \def\!ticksinoutSign{+}}

\ticksout

\def\nogridlines{%
  \def\!gridlinestatus{\!gridlinestoofalse}}
\nogridlines

\def\loggedticks{%
  \def\!setticktransform{\let\!ticktransform=\!logten}}
\def\unloggedticks{%
  \def\!setticktransform{\let\!ticktransform=\!donothing}}
\def\!donothing#1#2{\def#2{#1}}
\unloggedticks

\expandafter\def\csname !ticks/\endcsname{%
  \!not {\ifx \!LTlist\empty}
    \!placetickvalues
  \fi
  \def\!tickvalueslist{}%
  \def\!LTlist{}%
  \expandafter\csname !axis/\endcsname}

\def\!maketick{%
  \setbox\!boxA=\hbox{%
    \beginpicture
      \!setdimenmode
      \setcoordinatesystem point at {\!zpt} {\!zpt}   
      \linethickness=\!tickwidth
      \ifdim\!ticklength>\!zpt
        \putrule from {\!zpt} {\!zpt} to
          {\!ticksinoutsign\!tickxsign\!ticklength}
          {\!ticksinoutsign\!tickysign\!ticklength}
      \fi
      \if!gridlinestoo
        \putrule from {\!zpt} {\!zpt} to
          {-\!tickxsign\!xaxislength} {-\!tickysign\!yaxislength}
      \fi
    \endpicturesave <\!Xsave,\!Ysave>}%
    \wd\!boxA=\!zpt}
  
\def\!ticksin{%
  \def\!ticksinoutsign{-}%
  \!maketick
  \!nextkeyword}

\def\!ticksout{%
  \def\!ticksinoutsign{+}%
  \!maketick
  \!nextkeyword}

\def\!tickslength<#1> {%
  \!ticklength=#1\relax
  \!maketick
  \!nextkeyword}

\def\!tickslong{%
  \!tickslength<\longticklength> }

\def\!ticksshort{%
  \!tickslength<\shortticklength> }

\def\!tickswidth<#1> {%
  \!tickwidth=#1\relax
  \!maketick
  \!nextkeyword}

\def\!ticksandacross{%
  \!gridlinestootrue
  \!maketick
  \!nextkeyword}

\def\!ticksbutnotacross{%
  \!gridlinestoofalse
  \!maketick
  \!nextkeyword}

\def\!tickslogged{%
  \let\!ticktransform=\!logten
  \!nextkeyword}

\def\!ticksunlogged{%
  \let\!ticktransform=\!donothing
  \!nextkeyword}

\def\!ticksunlabeled{%
  \!tickcase=0
  \!nextkeyword}

\def\!ticksnumbered{%
  \!tickcase=1
  \!nextkeyword}

\def\!tickswithvalues#1/ {%
  \edef\!tickvalueslist{#1! /}%
  \!tickcase=2
  \!nextkeyword}

\def\!ticksquantity#1 {%
  \ifnum #1>1
    \!updatetickoffset
    \!countA=#1\relax
    \advance \!countA -1
    \!ticklocationincr=\!axisLength
      \divide \!ticklocationincr \!countA
    \!ticklocation=\!axisstart
    \loop \!not{\ifdim \!ticklocation>\!axisend}
      \!placetick\!ticklocation
      \ifcase\!tickcase
          \relax 
        \or
          \relax 
        \or
          \expandafter\!gettickvaluefrom\!tickvalueslist
          \edef\!tickfield{{\the\!ticklocation}{\!value}}%
          \expandafter\!listaddon\expandafter{\!tickfield}\!LTlist%
      \fi
      \advance \!ticklocation \!ticklocationincr
    \repeat
  \fi
  \!nextkeyword}

\def\!ticksat#1 {%
  \!updatetickoffset
  \edef\!Loc{#1}%
  \if /\!Loc
    \def\next{\!nextkeyword}%
  \else
    \!ticksincommon
    \def\next{\!ticksat}%
  \fi
  \next}    
      
\def\!ticksfrom#1 to #2 by #3 {%
  \!updatetickoffset
  \edef\!arg{#3}%
  \expandafter\!separate\!arg\!nil
  \!scalefactor=1
  \expandafter\!countfigures\!arg/
  \edef\!arg{#1}%
  \!scaleup\!arg by\!scalefactor to\!countE
  \edef\!arg{#2}%
  \!scaleup\!arg by\!scalefactor to\!countF
  \edef\!arg{#3}%
  \!scaleup\!arg by\!scalefactor to\!countG
  \loop \!not{\ifnum\!countE>\!countF}
    \ifnum\!scalefactor=1
      \edef\!Loc{\the\!countE}%
    \else
      \!scaledown\!countE by\!scalefactor to\!Loc
    \fi
    \!ticksincommon
    \advance \!countE \!countG
  \repeat
  \!nextkeyword}

\def\!updatetickoffset{%
  \!dimenA=\!ticksinoutsign\!ticklength
  \ifdim \!dimenA>\!offset
    \!offset=\!dimenA
  \fi}

\def\!placetick#1{%
  \if!xswitch
    \!xpos=#1\relax
    \!ypos=\!axisylevel
  \else
    \!xpos=\!axisxlevel
    \!ypos=#1\relax
  \fi
  \advance\!xpos \!Xsave
  \advance\!ypos \!Ysave
  \kern\!xpos\raise\!ypos\copy\!boxA\kern-\!xpos
  \ignorespaces}

\def\!gettickvaluefrom#1 #2 /{%
  \edef\!value{#1}%
  \edef\!tickvalueslist{#2 /}%
  \ifx \!tickvalueslist\!endtickvaluelist
    \!tickcase=0
  \fi}
\def\!endtickvaluelist{! /}

\def\!ticksincommon{%
  \!ticktransform\!Loc\!t
  \!ticklocation=\!t\!!unit
  \advance\!ticklocation -\!!origin
  \!placetick\!ticklocation
  \ifcase\!tickcase
    \relax 
  \or 
    \ifdim\!ticklocation<-\!!origin
      \edef\!Loc{$\!Loc$}%
    \fi
    \edef\!tickfield{{\the\!ticklocation}{\!Loc}}%
    \expandafter\!listaddon\expandafter{\!tickfield}\!LTlist%
  \or 
    \expandafter\!gettickvaluefrom\!tickvalueslist
    \edef\!tickfield{{\the\!ticklocation}{\!value}}%
    \expandafter\!listaddon\expandafter{\!tickfield}\!LTlist%
  \fi}

\def\!separate#1\!nil{%
  \!ifnextchar{-}{\!!separate}{\!!!separate}#1\!nil}
\def\!!separate-#1\!nil{%
  \def\!sign{-}%
  \!!!!separate#1..\!nil}
\def\!!!separate#1\!nil{%
  \def\!sign{+}%
  \!!!!separate#1..\!nil}
\def\!!!!separate#1.#2.#3\!nil{%
  \def\!arg{#1}%
  \ifx\!arg\!empty
    \!countA=0
  \else
    \!countA=\!arg
  \fi
  \def\!arg{#2}%
  \ifx\!arg\!empty
    \!countB=0
  \else
    \!countB=\!arg
  \fi}
 
\def\!countfigures#1{%
  \if #1/%
    \def\!next{\ignorespaces}%
  \else
    \multiply\!scalefactor 10
    \def\!next{\!countfigures}%
  \fi
  \!next}

\def\!scaleup#1by#2to#3{%
  \expandafter\!separate#1\!nil
  \multiply\!countA #2\relax
  \advance\!countA \!countB
  \if -\!sign
    \!countA=-\!countA
  \fi
  #3=\!countA
  \ignorespaces}

\def\!scaledown#1by#2to#3{%
  \!countA=#1\relax
  \ifnum \!countA<0 
    \def\!sign{-}
    \!countA=-\!countA
  \else
    \def\!sign{}%
  \fi
  \!countB=\!countA
  \divide\!countB #2\relax
  \!countC=\!countB
    \multiply\!countC #2\relax
  \advance \!countA -\!countC
  \edef#3{\!sign\the\!countB.}
  \!countC=\!countA 
  \ifnum\!countC=0 
    \!countC=1
  \fi
  \multiply\!countC 10
  \!loop \ifnum #2>\!countC
    \edef#3{#3\!zero}%
    \multiply\!countC 10
  \repeat
  \edef#3{#3\the\!countA}
  \ignorespaces}

\def\!placetickvalues{%
  \advance\!offset \tickstovaluesleading
  \if!xswitch
    \setbox\!boxA=\hbox{%
      \def\\##1##2{%
        \!dimenput {##2} [B] (##1,\!axisylevel)}%
      \beginpicture 
        \!LTlist
      \endpicturesave <\!Xsave,\!Ysave>}%
    \!dimenA=\!axisylevel
      \advance\!dimenA -\!Ysave
      \advance\!dimenA \!tickysign\!offset
      \if -\!tickysign
        \advance\!dimenA -\ht\!boxA
      \else
        \advance\!dimenA  \dp\!boxA
      \fi
    \advance\!offset \ht\!boxA 
      \advance\!offset \dp\!boxA
    \!dimenput {\box\!boxA} [Bl] <\!Xsave,\!Ysave> (\!zpt,\!dimenA)
  \else
    \setbox\!boxA=\hbox{%
      \def\\##1##2{%
        \!dimenput {##2} [r] (\!axisxlevel,##1)}%
      \beginpicture 
        \!LTlist
      \endpicturesave <\!Xsave,\!Ysave>}%
    \!dimenA=\!axisxlevel
      \advance\!dimenA -\!Xsave
      \advance\!dimenA \!tickxsign\!offset
      \if -\!tickxsign
        \advance\!dimenA -\wd\!boxA
      \fi
    \advance\!offset \wd\!boxA
    \!dimenput {\box\!boxA} [Bl] <\!Xsave,\!Ysave> (\!dimenA,\!zpt)
  \fi}

\normalgraphs
\catcode`!=12 


 
\catcode`@=11 \catcode`!=11
  
\let\!pictexendpicture=\endpicture 
\let\!pictexframe=\frame
\let\!pictexlinethickness=\linethickness
\let\!pictexmultiput=\multiput
\let\!pictexput=\put

\def\beginpicture{%
  \setbox\!picbox=\hbox\bgroup%
  \let\endpicture=\!pictexendpicture
  \let\frame=\!pictexframe
  \let\linethickness=\!pictexlinethickness
  \let\multiput=\!pictexmultiput
  \let\put=\!pictexput
  \let\input=\@@input   
  \!xleft=\maxdimen  
  \!xright=-\maxdimen
  \!ybot=\maxdimen
  \!ytop=-\maxdimen}

\let\frame=\!latexframe

\let\pictexframe=\!pictexframe

\let\linethickness=\!latexlinethickness
\let\pictexlinethickness=\!pictexlinethickness

\let\\=\@normalcr
\catcode`@=12 \catcode`!=12

\textheight=215mm
\topmargin=5mm
\oddsidemargin=10mm
\evensidemargin=10mm
\textwidth=150mm
\parindent=0cm
\parskip=3mm

\def\bitem{\begin{itemize}[topsep=0.1cm,itemsep=0.05ex,leftmargin=0.4cm]}

\newtheorem{theorem}{Theorem}[section]

\newtheorem*{theoremn}{Theorem}
\newtheorem*{theoremmain}{Main Theorem}
\newtheorem*{theorema}{Theorem A} 
\newtheorem{remark}[theorem]{Remark}
\newtheorem{conjecture}[theorem]{Conjecture}
\newtheorem*{conjecturen}{Conjecture}

\theoremstyle{definition} 
\newtheorem{example}[theorem]{Example}
\theoremstyle{plain}
\newtheorem{definition}[theorem]{Definition}
\newtheorem{proposition}[theorem]{Proposition}
\newtheorem{lemma}[theorem]{Lemma}

\def\EC{\PH}
\def\PartHyp{\mathcal{KS}}
\newenvironment{proofof}[1]{
\noindent{\em Proof of #1.}}{ \hfill\qed\\ }

\def\ii{\underline i}
\def\ie{{\em i.e.,\ }}
\def\eg{{\em e.g.\ }}
 
\def\PH{{\mathcal{PH}}} 
\def\T{{\mathcal{H}}}
\def\ST{\Psi} 
\def\SS{{\mathcal S}} 
\def\shape{\epsilon}
\newfont\bbf{msbm10 at 12pt}

\def\PB{P^b}
\def\lh0{h_0^-}
\def\gh0{h_0^+}
\def\bh0{\partial h_0}

\def\naturall{{\bbf \flat}}
\def\cell#1{{\langle #1  \rangle }}
\def\cellN#1{{[#1]_\naturall}}
\def\eps{\varepsilon}
\def\phi{\varphi}
\def\R{{\mathbb R}}
\def\Z{{\mathbb Z}}
\def\C{{\mathbb C}}

\def\B{W}
\def\basin{preplateau}

\newcommand{\I}{\mbox{\bf I}}
\newcommand{\J}{\mathcal{J}}

\newcommand{\D}{\mathbb{D}}

\newcommand{\bbb}{{\rm b}}
\newcommand{\aaa}{{\rm a}}

\newcommand{\interior}{\mbox{{\rm int}}}

\newcommand{\bfA}{\mbox{\bf A}}

\def\N{{\mathbb N}}

\def\parabolic{{\mathcal A}_\naturall}

\def\Crit{\mbox{Crit}}

\def\dist{\mbox{dist}}

\def\orb{\mbox{\rm orb}}
\def\supp{\mbox{supp}}

\def\st{\mbox{ \ ; \ }}
\def\sgn{\mbox{sgn}}
\def\hsgn{\widehat{\mbox{sgn}}}

\def\Ei{\Gamma}
\def\ei{\gamma}
\def\ed{\delta}
\def\hed{\hat\delta}
\def\edh{\hat\delta}

\def\Eih{\hat\Gamma}
\def\Edh{\Delta}

\begin{document}
\bibliographystyle{plain}
\title{Monotonicity of entropy for real multimodal maps}
\author{Henk Bruin  and Sebastian van Strien}
\address{Henk Bruin,  Faculty of Mathematics,  University of Vienna,  Oskar Morgenstern Platz 1, A-1090 Vienna, Austria. \newline
Sebastian van Strien,  Department of Mathematics, Imperial College, 180 Queen's Gate, London SW7 2AZ, UK.}
\email{henk.bruin@univie.ac.at \mbox{\it and\, }s.van-strien@imperial.ac.uk}
\urladdr{http://www.mat.univie.ac.at/\~{}bruin/  \mbox{\it and\, }http://www2.imperial.ac.uk/~svanstri/}
\thanks{HB was supported by
 EPSRC grants  GR/S91147/01 and EP/F037112/1, 
SvS was supported  by a Royal Society Leverhulme Trust Senior Research Fellowship, a Visitor's Travel grant from the  Netherlands Organisation for Scientific Research (NWO)
and the  Marie Curie grant MRTN-CT-2006-035651 (CODY)}
\subjclass[2000]{
37E05 (37B40)}
\date{To appear in the Journal of the AMS (submitted 20 May 2009, accepted 5 Dec 2013)}
\begin{abstract}
In 1992,  Milnor \cite{Mil} posed the {\em Monotonicity Conjecture}
that within a family of real multimodal polynomial interval maps with only real critical points,
the {\em isentropes}, \ie 
the sets of parameters for which the topological entropy is constant,
are connected. This conjecture was already proved in the mid-1980s for quadratic maps by a number
of different methods, see \cite{MT,DH,Dou,MS,Tsu}.  
In 2000, Milnor \& Tresser \cite{MTr}, provided a proof for the case of 
cubic maps. 
In this paper we will prove the general case of this 20 year old conjecture.
\end{abstract}
\maketitle

\section{Introduction and Statement of Results.}

Given a family of continuous maps $f$ from an interval $I$ to itself, 
one can ask how its 
{\lq}dynamical complexity{\rq} depends on $f$. Let us assume that $I$ can be 
decomposed into finitely many subintervals $I_0,\dots,I_b$  on which $f$ is monotone. 
The smallest number $b+1$ of such intervals is called the  {\em lap number}  $\ell(f)$ of $f$.
Note that $b$ is the number of extrema of $f$, and is often called the  {\em modality} of $f$. 
Maps for which $b$ is equal to one or two are called {\em  unimodal} and {\em bimodal} respectively. 
One natural measurement 
of the dynamical complexity of
$f$ is the rate of exponential growth of the lap numbers $\ell(f^n)$ where $f^n$
denotes the $n$-th iterate of $f$. This growth rate
$\lim_{n\to \infty} \frac1n \log {\ell(f^n)}$ exists
and is equal to the usual notion of {\em topological entropy} $h_{top}(f)$ of $f$, see
\cite{MSz} and also \cite[Section II.7]{MS}.  
Topological entropy can be used to
classify maps with finite modality, up to semi-conjugacy, see \cite{MT} -- 
a bit like the rotation number enables a classification up to semi-conjugacy 
of degree one circle maps.
Conventionally, a map is called {\em chaotic} if and only if $h_{top}(f)>0$.

For continuous interval maps, $h_{top}(f)$ coincides with the exponential growth 
rate of the number of $n$-periodic orbits.
Therefore if we consider a family of continuous interval maps $f_t$, $t\in [0,1]$ 
and $h_{top}(f_1)>h_{top}(f_0)$, then many periodic orbits
are created as the parameter $t$ increases from $0$ to $1$.
However, it should be noted that entropy is only a coarse indicator of the birth
of periodic orbits, because periodic orbits can both appear and disappear in
parameter ranges of constant entropy.
This is clearly true in modality $\geq 2$ (since entropy is only
a one-dimensional observable in higher dimensional parameter space),
but already in modality one, 
entropy is constant on any parameter interval within 
a period doubling cascade. 
In order to obtain a complete picture on the emergence 
and disappearance of periodic orbits, it is therefore necessary to
combine monotonicity of entropy with so-called Thurston Rigidity, which we
explain later on in this introduction.

In this paper we consider the space of $b$-modal continuous interval maps. 
Obviously, $h_{top}(f)\in [0,\log(b+1)]$ for a $b$-modal map. It turns out 
that $f\mapsto h_{top}(f)$ is not continuous on the entire space
of $b$-modal maps, but
if we restrict to $C^1$-smooth maps then it is, see \cite{MT,Y,Mis,MS}.

The question whether $h_{top}(f)$ {\lq}increases{\rq} with $f$ goes at least back to 
the early 70s, see \cite{MetSS}. In the unimodal situation, 
one of the simplest ways of asking this question is 
as follows. Let $I=[0,1]$ and consider a smooth unimodal map $f\colon [0,1]\to [0,1]$ with $f(0)=f(1)=0$, $f(1/2)=1$
and the family $f_a(x)=af(x)$, $a\in [0,1]$.
\begin{equation}
\label{eq:incent}
\text{Does the topological entropy of $f_a(x)=af(x)$ increase with $a\in [0,1]$?}
\end{equation}
As mentioned before, entropy {\em cannot} be {\em strictly} increasing with $a$.
It has been conjectured  in the 90's that if a $C^3$ unimodal convex map $f\colon [0,1]\to [0,1]$ 
as above has negative Schwarzian and is symmetric around the critical point, then the answer to 
\eqref{eq:incent} is positive.  
This conjecture is subtle: there are $C^3$ close maps $f,g\colon [0,1]\to [0,1]$ of this type for which $f\le g$ yet 
$h_{top}(f)> h_{top}(g)$, see \cite{Bru}. Moreover, none of the assumptions can be dropped,  
see the examples in \cite{Zdu,Kol,NY} and also  
\cite[Section II.10]{MS}.
 
\begin{figure}
\begin{center}
\unitlength=4.3mm
\begin{picture}(25,15)
\put(3,0){\resizebox{10cm}{7cm}{\includegraphics{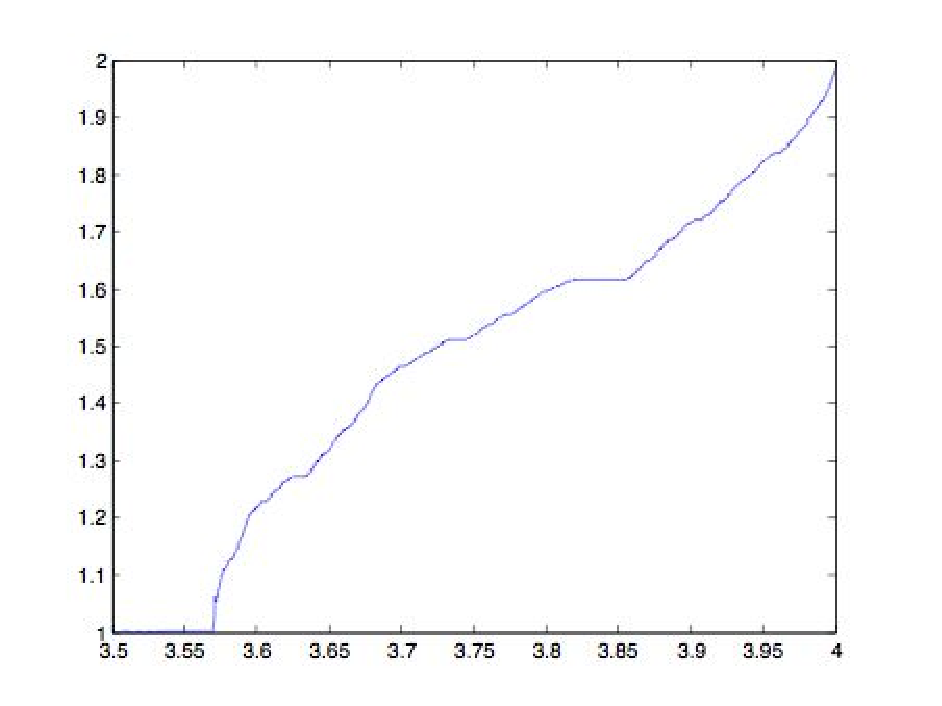}}}
\put(27,1.5){$a$}
\put(-2,12){$\exp(h_{top}(f_a))$}
\end{picture}
\end{center}
\vskip -0.4cm
\caption{\label{fig:mono} Monotonicity of entropy for the maps
$f_a(x) = ax(1-x)$, $a\in [3.5,4]$.}
\end{figure}

It was shown in the mid 1980's by Douady \& Hubbard \cite{Dou,DH} and 
Milnor \& Thurston \cite{MT}
that for the quadratic family $f_a(x)=4ax(1-x)$ the
entropy $h_{top}(f_a)$ depends monotonically  on $a\in [0,1]$. All known proofs of this use that 
the quadratic map can be extended to the complex plane and require
tools from complex analysis, see the above references and
also Tsujii's proof \cite{Tsu} and
\cite[Section II.10]{MS}. 
To show how subtle this question is, let us note that it was only very recently shown that the topological entropy of $f_a(x)=a\sin(\pi x)$ depends monotonically on $a$, 
 see \cite{RvS}. In fact, that paper shows that \eqref{eq:incent} holds for each unimodal  $f\colon [0,1]\to [0,1]$ with $f(0)=f(1)=0$
which extends to an entire transcendental map on the complex plane, with a finite number of singular values and satisfying the so-called sector condition. 

The above proofs not only show that
the topological entropy of $f_a(x)=4ax(1-x)$ increases with $a$,
but also that periodic orbits do not disappear when $a$ increases. 
In other words,  as $a$ increases, only new periodic orbits
are created  (by period doubling and saddle-node
bifurcations). 
That this is true, follows essentially from the following property:
\begin{quote}
{\bf Thurston Rigidity (combinatorially equivalent critically finite maps 
are unique):} Consider $f_a$ and $f_{a'}$ for which their critical 
points $c=1/2$ have finite orbits $O$ and $O'$.
If there exists an order preserving homeomorphism 
$h\colon I\to I$ with $h(O)=O'$ and $h\circ f_a=f_{a'}\circ h$, 
then $a=a'$. 
\end{quote}
In fact, much more is known: whenever $f_a$ and $f_{a'}$ have the
same {\lq}kneading invariant{\rq} and $f_a$ has no periodic attractor, then $a=a'$. 
This result was proven in  \cite{GS,Lyu} and is usually called the 
{\em density of  hyperbolicity for real quadratic maps}. 
This result implies that  there exists a dense set
$H\subset [0,\log 2]$ so that for any $h_0\in H$ there exists 
{\em precisely one} $a\in [0,1]$ with $h_{top}(f_a)=h_0$. 
It follows that $a\mapsto h_{top}(f_a)$ is a devil's staircase, the 
plateaus of which correspond
to intervals of parameters containing a periodic attractor and the subsequent period doubling cascade. 
By density of hyperbolicity, 
such parameters form a dense set, and so there exists no interval of parameters
on which $a\mapsto h_{top}(f_a)$ is strictly increasing. 

Douady \& Hubbard, see \cite{DH}, showed the following monotonicity result:
\begin{quote}
{\bf Bifurcations are monotone in the quadratic family:}
Assume that $(a_-,a_+)$ is a parameter range so that the quadratic family 
$f_a(x)=4ax(1-x)$ has a hyperbolic periodic attractor $p_a$ of period $n$ for each 
$a\in (a_-,a_+)$, then $a \mapsto Df_a^n(p_a)\in (-1,1)$
is differentiable and strictly decreasing.
\end{quote}
The corresponding parameter space for higher degree polynomials
is parametrized by Blaschke products,  see \cite{Mil1} and 
Theorems~\ref{thm:defspace1} and \ref{thm:defspace2} below.
Combining Thurston Rigidity with the previous property 
shows that period doubling cascades are traversed monotonically in the quadratic family.
(We should point out that there are additional results on the transversality of bifurcations
in polynomial families, see \cite{Str1, Epstein, Levin}.)

Let us turn to real cubic maps.  Take $I=[-1,1]$ and cubic maps $f\colon I\to I$ with exactly two 
critical points, both in the interior of $I$. Since this space consists
of two connected components, it makes sense to separate the cases where $f(-1)=-1, f(1)=1$
and where $f(-1)=1, f(1)=-1$.  In the former case, such cubic maps can be written in the form
$f_{\aaa,\bbb}(x)=\aaa x^3+ \bbb x^2+(1-\aaa)x-\bbb$ where
$(\aaa,\bbb)\in \R^2$  are contained in a simply-connected region bounded four  algebraic curves
(this follows as in \cite{Mil}).     It is not hard to show that for a smooth one-parameter family $f_t$ of such cubic maps,   $t\mapsto h_{top}(f_t)$ need
not be monotone, see Remark~\ref{remark:nonmono}. 
 Perhaps this is not too surprising, as the level sets of $(\aaa,\bbb)\mapsto h_{top}(f_{\aaa,\bbb})$ are 
very complicated fractal-like sets.  
Related to this is the result by Kan, Ko\c{c}ak \& Yorke \cite{KKY} that within the H\'enon family 
$F_{\aaa,\bbb}(x,y)=(1-\aaa x^2+\bbb y,x)$,
the entropy of $F_{\aaa,\bbb}$ does not depend monotonically on $\aaa$ for fixed $\bbb$. 

Yet a compelling question is whether, within the space of all real cubic maps, 
those with a given topological entropy form a connected set. 
In the early 1990's Milnor made this question precise, by defining the following space. 
Consider the space $\PB_\shape$ of real polynomials $f$ with 
\begin{enumerate}[topsep=0cm,itemsep=0.05ex,leftmargin=1cm]
\item precisely $b$ distinct  critical points, {\em all of which} are real, non-degenerate and contained in the interior of $I$;
\item  $f(\partial I)\subset \partial I$;
\item with {\em shape} $\epsilon=\epsilon(f)$, where 
$$
\epsilon(f) =\left\{ \begin{array}{ll}  +1 &\mbox{ if $f$ is increasing at the left endpoint of $I$,}\\
     -1  &\mbox{ otherwise.}\end{array}\right.
$$
\end{enumerate}
Note that $\PB_\shape$  consists of polynomials of  degree $d=b+1$. 

Milnor's conjecture essentially asserts that within this space, bifurcations are {\lq}as efficient as possible{\rq}:

\begin{quote} 
{\bf Milnor's Monotonicity of Entropy Conjecture \cite{Mil}:}
For each $\shape\in \{-,+\}$, $b\in \N$ and $h_0\ge 0$, the isentrope
$$
\{f\in \PB_\shape \st h_{top}(f)=h_0\}
$$ 
is connected.
\end{quote}

\begin{figure}
\begin{center}
\unitlength=4.3mm
\begin{picture}(25,16)
\put(1,0){\resizebox{10cm}{7cm}{\includegraphics{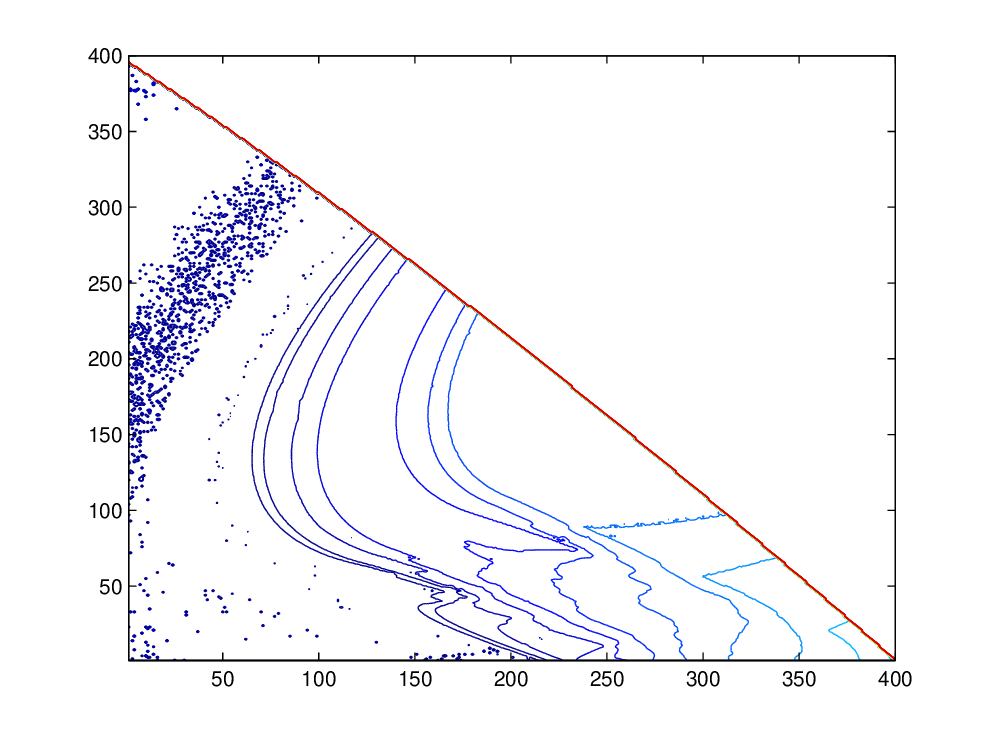}}}
\end{picture}
\end{center}
\caption{\label{fig:cubic_mono} Isentropes for cubic maps
$f_{\aaa,\bbb}(x) = \aaa x^3+\bbb x^2+(1-\aaa)x-\bbb$. The horizontal axis gives $a \in [2.5, 4]$ 
and the vertical axis $\bbb \in [0, \sqrt{4\aaa}-\aaa]$.
The maps $f_{\aaa, -\bbb}$ and $f_{\aaa,\bbb}$ are conjugate, and $f_{\aaa,\bbb}([-1,1]) \not\subset [-1,1]$ for 
$|\bbb| > \sqrt{4\aaa}-\aaa$.}
\end{figure}
 
\medskip
This conjecture was motivated  by numerical experiments, made for the
family of real cubic maps,
of the isentropes and also by considering the {\em {\lq}bones{\rq}} for 
this family.  These bones are curves within the parameter space such that  one critical point is periodic with a specified order type,
 and were introduced by MacKay and Tresser in 
a study of the boundary of chaos for bimodal maps of the interval \cite{MacTr}.
A few years later it was shown in \cite{DGMT} that in the cubic case $b=2$, 
this conjecture follows from another conjecture (density of hyperbolicity for cubic maps).
We should emphasize that although Thurston Rigidity holds for polynomials of any degree, 
this is {\em not} sufficient for proving the monotonicity conjecture for degree $d>2$.
In 2000,  Milnor \& Tresser \cite{MTr} showed that one does not quite need density of hyperbolicity for real cubic maps. 
More precisely, they showed that on some curves in the parameter space, the bimodal family behaves essentially like a one-parameter family of unimodal maps. 
Combining this with an extension of density of hyperbolicity in the quadratic case, due to Heckman \cite{Hec} and  using the Jordan theorem, they were able to conclude the cubic case. 
Using similar techniques as Milnor \& Tresser, 
Radulescu \cite{Radu} proved monotonicity of entropy for
a two-parameter family of quartic polynomials made up of the 
composition of two quadratic maps. 

In this paper we solve this conjecture in full generality: 

\begin{theoremmain}[Milnor's Monotonicity of Entropy Conjecture]
For each $\shape\in \{+,-\}$, $b\in \N$ and $h_0\ge 0$, the isentrope
$$I(h_0)=\{f\in  \PB_\shape \st h_{top}(f)=h_0\}$$
and the set
$$I(h_0^+):= I(h_0)\cap  \mbox{closure}( \{f\in  \PB_\shape \st h_{top}(f)>h_0\})$$ 
are both connected. 
\end{theoremmain}

In particular, the boundary of {\lq}chaos{\rq}, \ie the boundary of the
set of maps in $\PB_\shape$ with positive entropy is connected.
In fact, the proof of the theorem also shows that for each $h_1\le h_0$, 
the set  $$I=\{f\in  \PB_\shape \st h_1\le h_{top}(f)\le h_0\}$$
is connected. 

The set of maps $\PB_\shape \subset \PB$ with shape $\shape$
can be  parametrized by the coefficients of the polynomial, or
more suitably by the critical values of $f$,
see  \cite[Section II.4]{MS} or \cite{MT}. As mentioned, 
one should not expect
that the entropy depends monotonically on any of these parameters.

\begin{remark}
We should emphasise that we prove that the isentrope 
$I(h_0)=\{f\in  \PB_\shape \st h_{top}(f)=h_0\}$, rather than the weaker statement
that the closure of this space is connected. (So we prove connectedness within the space
of maps with non-degenerate critical points, rather than merely in the closure of this space.)
\end{remark}
 
As in Milnor \& Tresser's paper \cite{MTr},  our proof relies on 
stunted sawtooth maps. The other important ingredient is
density of hyperbolicity, but now for real polynomials of arbitrary degree, see \cite{KSS} and \cite{KSS1}.
More precisely, we use an analogue of Thurston Rigidity proved in \cite{KSS} which holds
for all real polynomials with real critical points {\em regardless} of whether the orbits
of the critical points are finite or not.

On the way to proving the Main Theorem, we will also prove the following result (see Theorem~\ref{thm:kneadconn} and Theorem~\ref{thm:Psiproper}):

\begin{theorema}\label{thm:A}
Fix $\shape\in \{+,-\}$, $b\in \N$,  let $f\in \PB_\shape$ and define 
$$\T(f)=\{g\in \PB_\shape \st  g \mbox{ has the same kneading invariants as }f\}.$$
Then $\T(f)$ is connected.
\end{theorema}

\subsection{Related results and some conjectures}
 
In this paper we will also consider the space of so-called admissible stunted sawtooth maps 
 $\SS_*^b$, and show that  isentropes within this space are contractible, see 
Theorem~\ref{Thm:Connected}.
In view of this,  we would like to propose the following
\begin{conjecturen} Any isentrope
$\{f\in  \PB_\shape \st h_{top}(f)=s\}$
is contractible.
\end{conjecturen}

Isentropes in $\PB_\shape$ are extremely complicated. Indeed, we prove in \cite{BSnon} the following

\begin{theoremn} When $b\ge 4$, there are infinitely 
many values for $s\ge 0$ for which $\{f\in  \PB_\shape \st h_{top}(f)=s\}$ is not locally connected. 
\end{theoremn}

In fact, it is not known whether there exists {\em any} value $s\in [0,\log(b+1)]$ so that the corresponding 
isentrope $\{f\in  \PB_\shape \st h_{top}(f)=s\}$
is locally connected. The methods used in the proof of the previous theorem
rely on $b\ge 4$, and it is possible that each isentrope is connected in the cubic case. 

In the survey \cite{Str3} a number of related questions and conjectures are discussed. 
In particular, the following question due to Tresser: 
Consider the space $Pol^d_\epsilon$ of real polynomials $f$ of degree $d$, {\em not necessarily
with all critical points on the real line}, but still with $f(\{\pm 1\})\subset \{\pm 1\}$ and $\epsilon(f)=\epsilon$
as in the definition of $\PB$. 

\begin{conjecture}[Tresser]
Fix $\epsilon\in \{-1,1\}$. Isentropes in $Pol^d_\epsilon$ are connected.
\end{conjecture}

Davoud Cheraghi and the second author have made progress towards this conjecture in 
the context of real unimodal polynomials of degree $4$ with at most one real critical point, 
see \cite{CS}.

\subsection{Acknowledgements}
The authors would like to thank Weixiao Shen who wrote 
Lemma \ref{weixiao2}
and made very helpful comments on earlier versions of the first half of this  paper.
We also would like to thank Charles Tresser, Oleg Kozlovski, Genadi Levin and the referees for their comments.

\section{Strategy of the Proof, organization of the paper and notation}

It is well-known that any multimodal map (with positive topological entropy) 
is entropy-preservingly semi-conjugate to a piecewise monotone map of constant slope,
\cite{Parry, MT}. However, such piecewise affine maps do not
exhibit all possible combinatorial types which exist for polynomials maps.
Instead, one of the ingredients in Milnor \& Tresser's proof  is to consider the space of stunted
sawtooth maps, all obtained from a single sawtooth map $S_0$ 
as in Figure~\ref{fig:sawtooth0}. For example, for each cubic map, there exists a 
stunted sawtooth map $T$ as in this figure  with the same combinatorics. This map $T$ 
is obtained by moving the two plateaus
up or down as appropriate - in a way which is made precise in Section~\ref{sec:Psi}.
In our paper, we will use the space $\SS^b_\shape$ of $b$-modal
stunted sawtooth maps to ``parametrize'' the space $\PB_\shape$ of $b$-modal polynomials.
Indeed, we introduce a map
$$
\ST\colon \PB_\shape \to \SS^b_\shape
$$
which  assigns to $f \in \PB_\shape$ the unique map $T \in  \SS^b_\shape$
which has the same {\lq}kneading invariant{\rq} (\ie symbolic itineraries
of critical points) as $f$.
We discuss the definition of $\ST$ in detail in Section~\ref{sec:Psi}.
An important property of $\ST$ is that $\ST(f)$ and $f$ have the same topological entropy.

\begin{figure}[ht]
\begin{center}
\unitlength=1mm
\begin{picture}(30,35)(-15,-17)
\thicklines
\put(-15,-15){\line(1,0){30}}
\put(-18,-18){$-e$}
\put(-15,-15){\line(0,1){30}}
\put(15,15){\line(-1,0){30}}
\put(15,15){\line(0,-1){30}}
\put(15.5,-18){$e$}
\thinlines
\put(-15,-15){\line(1,4){9.25}}
\put(-5.75,22){\line(1,-4){11}}
\put(15,15){\line(-1,-4){9.25}}
\put(-15.2,-15){\line(1,4){7}}
\put(-8,13){\line(1,0){4.2}}
\put(-3.6,13){\line(1,-4){6}}
\put(2.4,-11){\line(1,0){6}}
\put(8.5,-11){\line(1,4){6.5}}
\thicklines
\put(-9.8,8.2){\line(1,0){7}}
\put(-12,11.3){$T$}
\put(-0.7,6.4){$\tilde T$}
\put(-4.5,20){$S_0$}
\end{picture}
\end{center}
\caption{\label{fig:sawtooth0} Two bimodal stunted sawtooth maps $T$ and $\tilde T$
(drawn in bold lines) constructed from the same sawtooth map $S_0$ (drawn in thin lines).}
\end{figure}
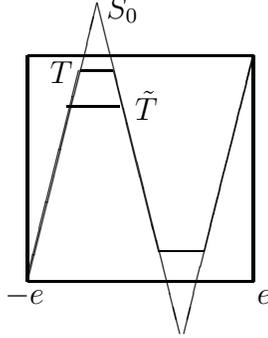

Since we shall fix the shape $\shape$ during the proof, we shall write from now on
mostly $\PB$ and $\SS^b$ instead of $\PB_\shape$ and $\SS^b_\shape$.
One of the crucial benefits of the space $\SS^b$ is that 
outside its plateaus, any map $T\in \SS^b$ agrees with 
the same map $S_0$. This means that all orbits of two stunted
sawtooth maps $T$ and $\tilde T$ agree except on the preimages of their plateaus. 
By decreasing the width of a plateau $Z_i$ (\ie by moving the image $T(Z_i)$
up or down depending on whether $T$ assumes a local maximum or 
minimum at $Z_i$), we create new orbits without destroying any others,
and hence the entropy can only increase. This means that within the space
$\SS^b$, entropy is a monotone function of each critical value separately,
a property which fails for $\PB$, see Remark \ref{remark:nonmono} and \cite{BSnon}. 
Using this idea, Milnor \& Tresser showed the following:

\begin{theorem}[\cite{MTr}] \label{thm:IsentropeSSd}
Isentropes in $\SS^b_\shape$ are connected and contractible. 
\end{theorem}

This result would imply the main theorem if $\ST\colon  \PB_\shape \to \SS^b_\shape$ was a homeomorphism,
but as we shall see that is unfortunately not the case. 

\subsection{Non-bijectivity of $\ST\colon \PB \to \SS^b$}
One of the reasons the map $\ST\colon \PB \to \SS^b$ is non-bijective
is simple to see: if  $f\in \PB$ is hyperbolic (\ie  if each critical point
is in the basin of a hyperbolic periodic attractor), then 
the itinerary of each critical point of $f$ is eventually periodic. 
From the definition of $\ST$ it then follows that the  endpoints of each plateau of $T=\ST(f)$
are also eventually periodic. Since there are uncountably many hyperbolic 
maps in $\PB$ and only countably many such maps $T$,
the map $\ST$ is obviously not injective.
Neither is $\ST$ surjective, see Example~\ref{ex:STdisc}. To overcome this we consider
equivalence classes in $\PB$ and $\SS^b$. 

\subsection{Equivalence classes in $\PB$: partial conjugacy}
The first ingredient aimed at overcoming the fact that  $\ST\colon  \PB \to \SS^b$
is neither injective nor surjective is to define a notion of equivalence classes within these
spaces,  corresponding to  
sets of maps which have the same dynamics except inside the basins of their attractors.
It turns out that we will need slightly different notions within the spaces 
 $\PB$ and $\SS^b$.
For $\PB$, two maps $f$ and $\tilde f$ will be taken to be equivalent if they are
{\em partially conjugate}.  For this to hold, we require that they are
conjugate  (on the real line)  outside the basins $B(f)$ and $B(\tilde f)$ of their periodic
attractors and that they have the same number of critical points
in corresponding components of $B(f)$ and $B(\tilde f)$. In other words, two interval maps $f,\tilde f\colon I\to I$
are partially conjugate, if there exists a homeomorphism $h\colon I\to I$ 
so that $h\circ f=\tilde f\circ h$ holds outside $B(f)$,
so that $h(B(f))=B(\tilde f)$ and so that $h$ maps critical points of $f$ to critical points of $\tilde f$.
Given $f\in \PB$, we define 
$$
\EC(f) \mbox{ to be the set of polynomials $\tilde f \in \PB$ which are partially
conjugate to $f$}.
$$

For more  precise definitions see Section~\ref{sec:defspace}.
Although the space $\mathcal{H}(f)$ (of maps with the same
kneading as $f$) and  $\EC(f)$ are closely related, neither is a subset of the other one, see
Example~\ref{ex:STdisc}.  Extending the rigidity theorems from \cite{KSS1} we obtain

\begin{theorem}[See Theorem~\ref{thm:defspace1}] For any $f\in \PB_\shape$, the set
$\EC(f)$ is connected.
\end{theorem} 
This result is a non-trivial extension of Douady \& Hubbard's result
that hyperbolic components with the space of (complex) quadratic polynomials  are topologically discs.
We emphasize that an important ingredient in the proof 
of this theorem is that all critical points of maps in $\PB$ are real. 
The situation when two real polynomials are conjugate on the real line, 
but have critical points which are outside the real line, is more subtle
and the subject of ongoing research,  see \cite{CS}.

\subsection{Preplateau equivalence in $\SS^b$}
The same definition can also be used to say when  $T,\tilde T\in \SS^b$ are 
partially conjugate. In Lemma~\ref{lem:phpsi} we will see that 
if $f,\tilde f\in \PB$ are partially conjugate then $T=\ST(f),\tilde T=\ST(\tilde f)$ are partially conjugate,
but unfortunately the reverse implication does not hold. This is why we also introduce
another equivalence class within the space $\SS^b$. 
Indeed, in  $\SS^b$ we will define a set $\B(T)$ which deviates slightly from
$B(T)$ and which is based on the preimages of plateaus,  see Section~\ref{sec:preplateaus}.
We then say that $T,\tilde T \in \SS^b$ are equivalent  if $\B(\tilde T) = \B(T)$
and define 
$$
\cell{T}=\{\tilde  T \st \B(\tilde T) = \B(T)\} \,\,  \mbox{ and }\,\, [T]=\mbox{closure}(\cell{T}).
$$
In Theorem~\ref{thm:<T>} we show that $\cell{T}$  and therefore its closure
$[T]$ is connected (in fact, it is a cell). 
From the definitions it follows that all maps within $\EC(f)$ (respectively within $[T]$) 
have the same topological entropy.  

\subsection{The set $\parabolic\subset \PB$ and a useful property of the map $\ST$} 
Unfortunately it is not true that $\ST(\EC(f))\subset [\ST(f)]$ for any $f\in \PB$. 
To address this issue and in order to relate $\EC(f)$ and $[T]$, we  
introduce a subset $\parabolic \subset \PB$,  see Definition~\ref{def_A}, 
which enables us to obtain the following property:

\begin{proposition}[See Proposition~\ref{prop:PsiPH}]
If $f \in \parabolic$ then $\ST(\EC(f))\subset [\ST(f)]$.
Within any $\EC(f)$ we can find special
maps $f_0\in \EC(f)\cap \parabolic$.
\end{proposition}

Because of this proposition we can morally view $\ST$ as a map
which sends equivalences classes (consisting of partially conjugate maps) in $\PB$ to equivalence classes
(consisting of preplateau equivalent maps) in $\SS^b$.

\subsection{Non-surjectivity of $\ST\colon \PB \to \SS^b$ because of wandering pairs}

There is an additional,   more serious way,  
in which $\ST\colon \PB \to \SS^b$ is not surjective, and this has to do with wandering intervals.
An interval $J \subset [-1,1]$ is called {\em wandering} for $f$ if all its iterates 
are disjoint and  $f^n(J)$ does not converge to an attracting periodic orbit as $n\to \infty$. 
It is well-known that polynomial interval maps (indeed $C^2$ interval map with non-flat critical points) 
have no wandering intervals.  This implies that, when $b\ge 3$, there are many stunted 
sawtooth maps $T$ in $\SS^b$ for which there is no
$f\in \PB$ with $T=\ST(f)$. Indeed, take $T$ with the property that 
there exists an interval containing two adjacent plateaus which is eventually mapped into 
a third plateau which is not eventually periodic. Then for any interval map $f$
with $\ST(f)=T$, the interval connecting the corresponding adjacent critical points 
would be wandering. Since a polynomial $f$ does not have wandering intervals,
these adjacent critical points coincide and so that $f$ has only two critical points.  
This in turn implies that $T\ne \ST(f)$.  Therefore by using merely the space $\SS^b$ we could at best 
prove that isentropes within $\PB$ are connected within the larger space of polynomial maps with $\le b$ critical points. 

\subsection{The space of non-degenerate stunted sawtooth maps $\SS^b_*$}
To overcome this problem we define the notion of {\em wandering pairs} of plateaus,
in Definition~\ref{def:wander}, and introduce the space $\SS^b_*$
of stunted sawtooth maps without wandering pairs.
In turns out that $\ST$ maps $\PB$ into $\SS^b_*$. 
The topology of $\SS^b_*$ is much more complicated than that 
of $\SS^b$, which makes it tricky to show that
isentropes in $\SS^b_*$ are contractible.
In $\SS^b$, this is much easier: within $\SS^b$ one can construct a retract
of an isentrope by moving plateaus with relatively great liberty.
To construct a retract for isentropes in $\SS^b_{*}$ we are forced
to move plateaus in exactly the right order and with exactly the right speed.
The description of this occupies most of Section~\ref{sec:connS*},
and leads to

\begin{theorem}[Connectivity of isentropes in  $\SS^b_{\shape,*}$, see
 Theorem~\ref{Thm:Connected}] Isentropes in $\SS^b_{\shape,*}$
are connected and even contractible.
\label{Thm:ConnectedI}  
\end{theorem}

\begin{remark}
The construction in Theorem~\ref{Thm:Connected} of this contraction is quite subtle.
Nevertheless it would be interesting to explore whether  one can 
use the same methodology to construct deformations within the space
$\PB$ (and show that isentropes within that space are contractible).
  \end{remark}

\subsection{The main steps in the proof}

With these notions in place,  we will obtain that $\ST$ is {\lq}almost{\rq} surjective and injective:

\begin{proposition}[$\ST$ is {\lq}almost{\rq} surjective, see Proposition~\ref{prop:realize}]
Take  $T\in \SS^b_*$ and let $\tilde T\in \cell{T}_{\natural}$.
Then there exists a polynomial $f\in \PB\cap \parabolic$
such that $\tilde T=\ST(f)$ and  $T\in [\ST(f)]$.
\end{proposition}

\begin{proposition}[$\ST$ is almost injective, see Proposition~\ref{prop:inj}]
The map $\ST\colon \PB \to \SS^b_*$
is `almost injective' in the sense that
if $f_1,f_2 \in \parabolic$ and $[\ST(f_1)]\cap [\ST(f_2)]\ne \emptyset$,
then $\overline{\EC(f_1)}\cap \overline{\EC(f_2)}\ne \emptyset$.
\end{proposition}

To prove the latter proposition, it is important to analyze how two
subsets $\EC(f)$ and $\EC(\tilde f)$ of the space of polynomials $\PB_\shape$  can intersect.  
It is for this reason that we require a description of what bifurcations occur at the 
boundary of these sets, see
Theorem~\ref{thm:defspace2}. A corresponding description 
for the boundary of $\cell{T}$ within the space  $\SS^b_\shape$
is also needed, and is given in  Theorem~\ref{thm:<T>}.

\begin{proposition}[$\ST$ is almost continuous, see Proposition~\ref{prop:cont}]
$\ST\colon \PB \to  \SS^b_*$ is `almost continuous'
 in the following sense.
Assume that $f_n\to f$ where $f_n,f\in \PB$ and $f_n\in \parabolic$.
Then there exists $T \in \SS^b_*$ so that $\ST(f_n)\to [T]$
and $\ST(f)\in [T]$.
\end{proposition}

Combined this gives the following: 

\begin{theorem}\label{thm:homeolike}
There exists a map $\ST\colon \PB_\shape \to \SS^b_{\shape,*}$
such that
\begin{itemize}[topsep=-1cm,itemsep=0.05ex,leftmargin=0.6cm]
\item $\ST$ is `almost continuous', `almost surjective' and
`almost injective' (as described in the previous three propositions);
\item There exists a connected set $[\ST(f)]\ni \ST(f)$
such that the topological entropy of
any map $T\in [\ST(f)]$ is equal to the topological entropy of
$f$;
\item If $K$ is closed and connected then
$\ST^{-1}(K)=\{f \st [\ST(f)]\cap K\ne \emptyset\}$ is connected.
\end{itemize}
\end{theorem}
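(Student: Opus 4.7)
The plan is to take $\ST$ to be the ``critical value'' parametrization map. Both $P^d_\shape$ and $\SS^d_\shape$ can be coordinatized by the ordered tuple of critical values $(v_1,\dots,v_d)$ of their members (see \cite[Section~II.4]{MS} for the polynomial side), so the natural definition is to let $\ST(f)$ be the unique stunted sawtooth map whose stunting levels coincide with the critical values of $f$. Away from the exceptional locus where the map $f\mapsto (v_1(f),\dots,v_d(f))$ fails to be a bijection onto an open subset of admissible tuples --- for instance, where critical values coincide, where a critical relation creates ramification of the coefficient parametrization, or where a stunting level hits the envelope of the underlying full sawtooth --- the map $\ST$ is a local homeomorphism. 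The `almost' properties should quantify how small this exceptional locus is; I expect these to reduce to standard facts about the critical-value parametrization combined with careful bookkeeping near degenerations.

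For the second bullet, I would define $[\ST(f)]\subset \SS^d_\shape$ as the \emph{plateau} consisting of all stunted sawtooth maps sharing the kneading invariant of $\ST(f)$. Since stunted sawtooth maps are piecewise linear with a constant slope of maximal modulus on each unstunted piece, their topological entropy is an explicit function of the kneading invariant alone, and therefore every $T\in[\ST(f)]$ carries the same entropy. Equality with $h_{top}(f)$ is then forced, because the entropy of a smooth multimodal map with non-flat critical points is likewise computed from its kneading invariant, and $\ST$ is defined to preserve this invariant. Connectedness of the plateau follows from the monotonicity of entropy in each stunting coordinate separately: the set of parameter tuples producing a given kneading invariant is cut out by inequalities of the form $v_i\ge a_i$ or $v_i\le b_i$ (with the sign determined by the orientation of the $i$-th lap), and is therefore a product of intervals.

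The third bullet, which is the main topological reason for introducing $\ST$, is the most delicate. Given closed connected $K\subset\SS^d_\shape$, I plan to show that $\ST^{-1}(K)=\{f:[\ST(f)]\cap K\ne\emptyset\}$ is connected by first enlarging $K$ to its saturation $\widehat K=\bigcup_{T\in K}[T]$ (still closed and connected, since the plateaus $[T]$ are connected and each meets $K$), and then lifting paths in $\widehat K$ through $\ST$ back to $P^d_\shape$. The chief obstacle --- and the reason $\ST$ must be only an `almost' inverse --- is that such paths in $\SS^d_\shape$ may cross loci of critical relations where $\ST$ ceases to be a bijection, so that naive lifts become ambiguous or discontinuous. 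Here the density of Axiom~A maps in $P^d$ (Theorem~\ref{KSS}) should be crucial: perturbing a candidate lift into the Axiom~A locus renders the lift unambiguous, and hyperbolic continuation across small perturbations then identifies the components of $\ST^{-1}(K)$ produced on either side of the exceptional set. Making this lifting argument watertight, and checking that it is compatible with the quantitative `almost' estimates, is what I expect to occupy the bulk of Section~\ref{sec:proof}.
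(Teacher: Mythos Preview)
Your construction of $\ST$ is not the one the paper uses, and it would not have the required properties. You propose matching \emph{critical values}, but the paper defines $\ST$ by matching \emph{kneading invariants}: for each $i$ one locates the unique point $s_i$ in the $(i{+}1)$-st lap of the full sawtooth $S$ with $\lim_{y\downarrow s_i}\ii_S(y)=\nu_i(f)$, and takes $s_i$ as the right endpoint of the plateau $Z_i$. Matching critical values and matching kneading invariants are not the same thing, so your later claim that ``$\ST$ is defined to preserve this invariant'' does not follow from your definition. Without kneading preservation you lose the partial conjugacy between $f$ and $\ST(f)$, and with it the equality $h_{top}(f)=h_{top}(\ST(f))$; the paper's Example~1 already shows that entropy is not monotone in the critical-value coordinates on $P^d$, so a critical-value transfer cannot behave well.

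Your definition of $[\ST(f)]$ is also off. The paper does \emph{not} take the set of stunted sawtooth maps with the same kneading invariant (which, since $S$ is expanding, would be a single map), but rather the closure of $\langle T\rangle=\{\tilde T:\B(\tilde T)=\B(T)\}$, where $\B$ is the \emph{preplateau} (the set of points eventually mapped into the interior of the union of plateaus). Maps in $\langle T\rangle$ typically have different plateau endpoints and hence different kneading data; what they share is the basin structure, and Lemma~\ref{lem:<T>} shows $\langle T\rangle$ is a product of polygons whose boundary faces correspond to saddle-node, period-doubling, homoclinic, and touching-plateau bifurcations. Constancy of entropy on $[T]$ comes from the fact that the dynamics outside $\B(T)$ is literally $S$, not from constancy of the kneading invariant.

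Finally, your path-lifting strategy for the third bullet runs into the obstacle that $\ST$ is not a local homeomorphism anywhere near the interesting locus: the paper exhibits jump discontinuities of $\ST$ inside hyperbolic components (the example following Figure~\ref{fig:seesaw2}) and a discontinuous inverse (Example~\ref{ex:inversenotcont}). The paper's argument (Lemma~\ref{lem:Psiproper}) is instead a closed-set/connectedness argument: assuming $\ST^{-1}(K)=C_1\sqcup C_2$, one shows $[\ST(C_1)]\cup[\ST(C_2)]\supset K$ by almost-surjectivity, that each $[\ST(C_i)]\cap K$ is relatively closed by almost-continuity, hence they overlap by connectedness of $K$, and then almost-injectivity together with Theorem~\ref{thm:defspace} (connectedness of $\PH^o(f)$, which is where rigidity enters) yields the contradiction. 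Density of Axiom~A is not invoked to perturb lifts; rigidity is used to identify the partially hyperbolic deformation space with a ball.
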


Isentropes in  $\SS^b_{\shape,*}$ are contractible (and therefore connected), so
Theorem~\ref{thm:homeolike} implies that  isentropes in $\PB_{\shape,*}$ are connected, 
proving the Main Theorem.

\subsection{Organization of the paper}
Section~\ref{sec:defspace} discusses the notion of partial conjugacy 
and shows that partial conjugacy classes $\EC(f)$ within the space $\PB$ of polynomials 
are connected. It also 
describes  when   different sets $\EC(f)$ and $\EC(\tilde f)$ have common boundary points. 
 This section relies on methods which use complex analysis and results from the
 theory of holomorphic dynamics. In particular, this section it relies on a theorem on rigidity of
 real polynomials. The remainder of the paper only uses real methods. 
Section~\ref{sec:combinatorialmodel} discusses the space $\SS^b$ of stunted sawtooth maps
and properties of the equivalence classes $\cell{T}$. 
Section~\ref{sec:Psi} discusses the map $\ST\colon \PB\to \SS^b$
and its properties.  In Section~\ref{sec:proofofthm} the proof of the Main Theorem is provided. 
The technical result that isentropes in $\SS_*^b$ are connected (and even contractible) is proved in Section~\ref{sec:connS*}.

\subsection{Notation used in this paper}
\begin{itemize}[topsep=-0.2cm,itemsep=0.05ex,leftmargin=0.2cm]
\item[-] $B(f)$ is  the union of the basin of periodic attractors, see  Definition~\ref{def:basin}.
\item[-] $\D$ denotes the open unit disc in $\C$.
\item[-] $I$ is an interval in $\R$.
\item[-] $Z_i$ are (closed) plateaus of the stunted sawtooth map, see 
Section~\ref{sec:sawtooth}. 
\item[-] $\PB$ is the space of real polynomials of degree $b+1$ mapping $I$ (and $\partial I$) into itself,
with precisely $b$ non-degenerate  critical points each of which is contained in the interior of $I$. 
\item[-] $\PB_\shape\subset \PB$ is the space of maps which is increasing (respectively decreasing)
at the left endpoint of $I$ when $\shape=1$ (respectively $\shape=-1$). 
\item[-] $\T(f)$ is the space of maps $g\in \PB$ with the same kneading invariant as $f$, see Theorem A in the introduction.
\item[-]  $\EC(f)$ is the space of maps $g\in \PB_\shape$ which are partially conjugate  to $f$, see Definition~\ref{def:partiallyconjugate}.
\item[-] $\PartHyp(f)$ is the space of Kupka-Smale maps, see Definition~\ref{def:KS}. 
\item[-] $S_0$ is a sawtooth map of modality $b$, see Section~\ref{sec:sawtooth}.
\item[-] $\SS^b$ is the space of stunted sawtooth maps, see 
Section~\ref{sec:sawtooth}.
\item[-] $\B(T)$ is the set of points which are eventually mapped into the interior of a block of plateaus, see equation \eqref{eq:defBT}. 
\item[-] $\langle T \rangle$ is the set of maps with the same $\B(T)$, see equation 
\eqref{<T>}.
\item[-] $[T]$ is the closure of $\langle T \rangle$, see equation \eqref{<T>}.
\item[-] $\min [T]$ and $\cellN{T}$ are certain subsets of the boundary of $\langle T \rangle$, see Definition~\ref{def:naturalbullet}. 
\item[-] $\J:=[Z_i,Z_j]$ is the convex hull of plateaus $Z_i$ and $Z_j$,
see Definition~\ref{def:wander}. 
\item[-]$\SS^b_*$ is the space of non-degenerate maps in 
 $\SS^b$, see Definition~\ref{def:wander}. 
\item[-] $\ST\colon \PB\to \SS^b$ is the map which assigns to a polynomial
a stunted sawtooth map,  see equation \eqref{eq:defST} in Section~\ref{sec:Psi}.
\item[-] $\parabolic$ is a subset of polynomials with parabolic periodic points, see Definition~\ref{def_A}.
\item[-]$M_{n,\shape}$, $M_{n,\shape}^o,M_{n,\shape}^\Sigma$ are spaces of Blaschke products, see Definition~\ref{def:Mn}.
\item[-] $\Ei_t$, $\ei_t$, $\Eih_t$ are
entropy increasing deformations, see Section~\ref{subsec:increase}
and \ref{subsec:eih}.
\item[-] $\ed_t$, $\hed_t$ and $\Edh_t$ are entropy decreasing deformation, see
Sections~\ref{subsec:deltat} and \ref{subsec:careful}.
\item[-] $K_i$ and $\hat K_i$ are periodic intervals related to the $i$-th plateau of $T$, see Section~\ref{subsec:careful}.
\item[-] $\beta_t$ is an entropy preserving deformation, see Section~\ref{subsec:betat}
\item[-] $R_t$ and $r_t$ are retracts, see Sections~\ref{subsec:h0=0},
\ref{subsec:trouble} and \ref{subsec:deltat}. 
\end{itemize}

\label{sec:ingredients}
\section{The partial conjugacy class of maps in $\PB$ is connected}

\label{sec:defspace}

As usual, we say that a polynomial $f$ is {\em hyperbolic} if each of its periodic orbits
is hyperbolic and each of its critical points lies in the basin of a  periodic attractor. 
A well-known result due to Douady \& Hubbard  asserts that each connected component of the 
set $\{c \in \C; \,\, q_c(z)=z^2+c\mbox{ is hyperbolic}\}$ is topologically an open disc 
parametrised by the multiplier of the periodic attractor.
The corresponding case for polynomials of higher degree was considered in \cite{Mil1}.
In \cite{Epstein,Levin}  it was shown that the multipliers of non-repelling periodic points are 
independent parameters.  In this section we will generalize these results to polynomials of higher degree
with the crucial difference that we no longer assume that each critical point
is in the basin of hyperbolic periodic attractors and restrict  to partial conjugacy classes (defined below).
We shall only prove this  generalization for real 
polynomials, because one of the main ingredients we need is a  rigidity result 
which is only known in that context. 

Before stating this generalization we will introduce some terminology. 

\begin{definition}[Basin of an interval map]\label{def:basin}
We say that a periodic orbit $O$ of an interval map $f\colon I\to I$ is {\em attracting}
if its basin $B^s(O)=\{x;f^n(x)\to O\mbox{ as }n\to \infty\}$
contains a (possibly one-sided) neighborhood of $O$. 
Let $B(f)$ be the union of the basins of periodic attractors
of $f$, \ie $B(f)$ consists of all points $x$ so that
$f^n(x)$ tends to a (possibly one-sided) periodic attractor.
Note that if $f$ has a {\em neutral} periodic point (\ie a periodic point which is non-hyperbolic), $B(f)$ need not be open.
When $f$ is a polynomial we also will  consider $f$ as acting on the complex plane, and 
in order to emphasize this we sometimes write $B_\C(f)$ to denote
the basin of $f$ as a subset of $\C$.  
\end{definition}

\begin{definition}[Partially conjugate] \label{def:partiallyconjugate}
We say that two $b$-modal maps $f,g\colon I \to I$
are {\em partially conjugate} if
there is an orientation preserving homeomorphism $h\colon I\to I$
such that
\begin{itemize}[topsep=-0.2cm,itemsep=0.05ex,leftmargin=1cm]
\item $h$ maps $B(f)$ onto $B(g)$;
\item $h$ maps the $i$-th critical point of
$f$ to the $i$-th critical point of $g$;
\item
$h\circ f(x) = g \circ h(x) \mbox{ for all }x\notin B(f)$.
\end{itemize}
We denote by $\EC(f)$ the
set of maps $g\in \PB_\shape$ which are partially conjugate  to $f$. 
\end{definition}

Note that when a critical point
is eventually mapped to the boundary of a component of $B(f)$ that this property persists within $\EC(f)$.
\begin{definition}[Kupka-Smale maps]\label{def:KS}
Let $ \PartHyp$ be the set of $g \in \PB_\shape$  which are 
Kupka-Smale
in the sense that 
\begin{itemize}[topsep=-0.2cm,itemsep=0.05ex,parsep=0cm,leftmargin=1cm]
\item  $g$ has only hyperbolic periodic points and 
\item $g$ has no homoclinic orbits, \ie  no 
critical point of $g$ is mapped to the boundary of a component of
$B(g)$. 
\end{itemize}
\end{definition}
\vskip -0.2cm

The set 
$$
\EC^o(f):=\EC(f)\cap \PartHyp.
$$
generalizes the notion of hyperbolic component for quadratic maps
allowing, for the situation that some critical points are not attracted to periodic attractors provided
the dynamics of such critical points agrees for all maps within $\EC^o(f)$. 
Note that $f\in \PartHyp$ does {\bf not} imply $\EC^o(f)= \EC(f)$ 
because even in this case $\EC(f)$ can contain maps with neutral periodic orbits.

The main result in this section is the following theorem and its more detailed version
Theorem~\ref{thm:defspace2}.

\begin{theorem}[Connectedness of $\EC(f)$]\label{thm:defspace1}
Let $f\in \PB$. 

\begin{itemize}[topsep=-0.2cm,itemsep=0.05ex,parsep=0cm,leftmargin=1cm]
\item  If $f\in \PartHyp$ then  $\EC^o(f)$ is  homeomorphic
to an open ball of dimension equal to the number of critical points in $B(f)$.
\item   $\EC(f)\subset \overline{\EC^o(f)}$ and therefore the set $\EC(f)$ is connected. 
\end{itemize}
\end{theorem}

In fact, we shall also need  Theorem~\ref{thm:defspace2} which states that
for any $f\in \PB$ one can find a continuous family of maps $f_\mu$, $\mu\in [0,1]$
with $f_\mu\in \PartHyp$ for $\mu\in (0,1]$ and  $f_0=f$  with the crucial additional property 
that $f_\mu$ has the same dynamics as $f_0$ outside the basins.
Before stating that theorem more formally, 
let us clarify what the types of non-hyperbolic periodic points can occur for maps within the space $\PB$. 

\begin{lemma}
Let $f\in \PB$. Then each attracting or neutral periodic point of $f$ is real and contains 
a critical point in its basin. Moreover,  if $p$ is a neutral point (say of minimal period $n$) then
it attracts at least from one side and is of one of the following types:
\begin{enumerate}[topsep=0cm,itemsep=0.05ex,leftmargin=1cm]
\item[{\bf (pd)}]\,\, $p$ is \emph{attracting from both sides} with multiplier $-1$
and up to a change of coordinates $f^{2n}$ has the form 
$x\mapsto x-x^3+O(x^4)$ near $p$ . 
\item[{\bf (pf)}] $p$ is \emph{attracting from  both sides}  with multiplier $1$
and  up to a change of coordinates $f^{n}$ has the form 
$x\mapsto x-x^3+O(x^4)$  near $p$. 
\item[{\bf (sn)}] $p$ is  \emph{one-sided attracting} with multiplier $1$ and 
up to a change of coordinates $f^{n}$ has the form 
$x\mapsto x-x^2+O(x^3)$  near $p$. 
\end{enumerate}
\end{lemma}
\begin{proof}
Here we use that maps in $\PB$ are real and only have real critical points. 
This condition implies that if $p$ is a periodic orbit of period $n$
with $Df^n(p)=1$ then it must be attracting one from side (otherwise the attracting
petals will not intersect the real line, but this is impossible since all critical 
points lie in the real line). Using that each attracting forward invariant petal of a neutral periodic point 
contains a critical point, the result follows. Alternatively, one can use the fact that maps $f\in \PB_\shape$ 
have negative Schwarzian derivative, \ie 
$Sf=\left[ f'\cdot f''' -(3/2) (f'')^2\right] /(f')^2<0$. Since this implies that the Schwarzian derivative of $f^n$
and $f^{2n}$ are negative,  see Exercise IV.1.7 in \cite{MS}, the required statement follows.
\end{proof}

\begin{figure}[htp]
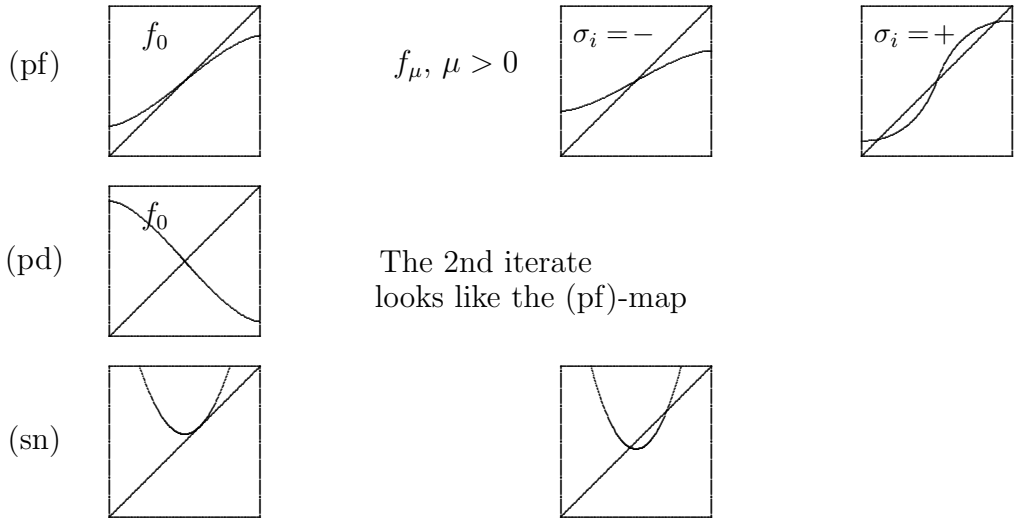
 \hfil
\beginpicture
\dimen0=0.2cm
\setcoordinatesystem units <\dimen0,\dimen0>  point at 60 0
\setplotarea x from 0 to 10, y from 0 to 10
\put {(pf)} at -5 6
\put {$f_0$} at 3 8 \setlinear
\plot 0 0 10 0 10 10 0 10 0 0 10 10 /
\setquadratic 
\plot 0 2 2 2.8 5 5 / 
\plot 10 8 8 7.2 5 5 / 
\setcoordinatesystem units <\dimen0,\dimen0>  point at 30 0
\setplotarea x from 0 to 10, y from 0 to 10
\put {$f_\mu$, $\mu>0$} at -7 6 
\put {{\small $\sigma_i=\! -$}} at 3.5 8 
\setlinear
\plot 0 0 10 0 10 10 0 10 0 0 10 10 /
\setquadratic 
\plot 0 3 2 3.5 5 5 / 
\plot 10 7 8 6.5 5 5 /
\setcoordinatesystem units <\dimen0,\dimen0>  point at 10 0
\setplotarea x from 0 to 10, y from 0 to 10
\put {{\small $\sigma_i=\! +$}} at 3.5 8 
\setlinear
\plot 0 0 10 0 10 10 0 10 0 0 10 10 /
\setquadratic 
\plot 0 1 3 2  5 5 / 
\plot 10 9 7 8 5 5 / 
\setcoordinatesystem units <\dimen0,\dimen0>  point at 60 12
\setplotarea x from 0 to 10, y from 0 to 10
\put {(pd)} at -5 5 
\put {$f_0$} at 3 8
\setlinear
\plot 0 0 10 0 10 10 0 10 0 0 10 10 /
\setquadratic 
\plot 0 9 2  8 5 5 8 2 10 1  / 
\put {The 2nd iterate} at 25 5 
\put {looks like the (pf)-map} at 28 2.5
\setcoordinatesystem units <\dimen0,\dimen0>  point at 60 24
\setplotarea x from 0 to 10, y from 0 to 10
\put {(sn)} at -5 5 
\setlinear
\plot 0 0 10 0 10 10 0 10 0 0 10 10 /
\setquadratic 
\plot 2 10  5 5.5  8 10 / 
\setcoordinatesystem units <\dimen0,\dimen0>  point at 30 24
\setplotarea x from 0 to 10, y from 0 to 10
\setlinear
\plot 0 0 10 0 10 10 0 10 0 0 10 10 /
\setquadratic 
\plot 2 10  5 4.5  8 10 / 
\endpicture
\caption{\label{fig:bifurcations}
{\small A map with a pf (pitchfork), pd (period doubling) respectively sn (saddle-node) fixed point and the corresponding unfoldings constructed in Theorem~\ref{thm:defspace2}.}}
\end{figure}

These case are described in Figure~\ref{fig:bifurcations}.
One of the main ingredients in this paper is the following theorem which shows that 
one can find a deformation $f_\mu, \mu\in [0,1]$ of a any map $f\in \PB\setminus \PartHyp$ 
so that $f_\mu\in \PartHyp$ for $\mu\in (0,1]$ and so that
$f_\mu\in \EC(f_1)$ for each $\mu \in (0,1]$. Therefore the only bifurcations of $f_\mu$ are
that basins can merge or split and that neutral orbits become hyperbolic. 

\begin{theorem}[Bifurcations of maps which are not in $\PartHyp$]\label{thm:defspace2}

Assume that $f\notin \PartHyp$ and let $O_1,\dots,O_k$
be the non-hyperbolic periodic orbits of $f$ and pick a periodic point $p_i\in O_i$ for each $i=1,\dots,k$. 
For each periodic orbit $O_i$, choose a sign $\sigma_i\in \{-,+\}$. 
Then there exists a family  $f_\mu\in \PB_\shape$ and periodic points $p_{i,\mu}$ 
all depending continuously on $\mu\in [0,1]$ so that 
$f_0=f, p_{i,0}=p_i$ and
$$
f_\mu\in \EC^o(f_1)\mbox{ for each }\mu\in (0,1].
$$
If $p_i$ has (minimal) period $n_i$ and 
\begin{enumerate}[topsep=0cm,itemsep=0.05ex,leftmargin=1cm]
\item $p_i$ is attracting from both sides and has multiplier $-1$ (the (pd)-case), then depending on the sign of 
$\sigma_i$ 
a   {\em period doubling} or a {\em period halving bifurcation} occurs as $\mu$
becomes positive;   \ie up to a parameter dependent 
coordinate change $f^{2n_i}_\mu$  has  for each $\mu\in [0,1]$ 
near $p_{i,\mu}$
the form $x\mapsto (1+\sigma_i\mu)x-x^3+\mbox{h.o.t.}$;
\item $p_i$ is attracting from  both sides with multiplier $1$ (the (pf)-case), then depending on the sign of 
$\sigma_i$  a {\em pitch-fork} or a 
{\em reverse pitch-form bifurcation} occurs as $\mu$
becomes positive; \ie  up to a  parameter dependent 
coordinate change $f^{n_i}_\mu$  has for each $\mu\in [0,1]$  near $p_{i,\mu}$ the 
form  $x\mapsto (1+\sigma_i \mu)x-x^3+\mbox{h.o.t.}$;
\item $p_i$ is  one-sided attracting and has multiplier $1$ (the (sn)-case), then one can {\em create a saddle-node pair}
for $f_\mu$ as $\mu$ becomes positive; \ie  up to a  parameter dependent coordinate change $f^{n_i}_\mu$  has for each $\mu\in [0,1]$ 
near $p_{i,\mu}$ 
the  form  $x\mapsto (1+\mu)x-x^2+\mbox{h.o.t}$.
\item If $p_i$ is  one-sided attracting and has multiplier $1$ and there exists a neighborhood $U$ of $p_i$
so that $U\setminus \{p_i\}$ is contained in $B(f)$, then  one can {\em create} or {\em destroy a saddle-node pair}
for $f_\mu$ as $\mu$ becomes positive; \ie  up to a  parameter dependent coordinate change $f^{n_i}_\mu$  has for each $\mu\in [0,1]$ 
near  $p_{i,\mu}$ the  form  $x\mapsto x-x^2+\sigma_i \mu + \mbox{h.o.t}$.
\item Moreover, if a critical point $c$ is eventually mapped into the boundary of a component of $B(f)$, 
then $f_\mu(c)$ is contained in the interior of $B(f_\mu)$ for each $\mu\in (0,1]$. 
\end{enumerate}
\end{theorem}

This theorem asserts that one can find a family of maps $f_\mu\in \PartHyp$
when $\mu\in (0,1]$  so that the two adjacent parabolic petals of $f=f_0$ at a neutral periodic point
$p_i$ of type (pf) and (pd) as in case (1) and (2) correspond for $\mu>0$ to two adjacent hyperbolic basins when $\sigma_i=+$
or to one hyperbolic basin when $\sigma_i=-$ (so the sign of $\sigma_i$ determines whether or not one 
has a reverse period doubling of pitchfork bifurcation).   
In case (4) two basins which touch at a 
a saddle-node orbit are deformed into a map where the basins touch at a repelling orbit (when $\sigma_i=+$)
or merge into one hyperbolic basin (when $\sigma_i=-$). This situation corresponds
to the fixed point with multiplier $=1$ in Figure~\ref{fig:twobasins}.

In order to clarify case (5) in the theorem,
consider the situation  that  $f(c)$ is contained in the boundary of a component $B$ of $B(f)$. 
Then the basin $B_\C(f)$ has two components $B_1,B_2$ which touch at $c$.
These components $B_1,B_2$ can lie symmetrically in the upper and lower half plane or 
to the left and right of $c$. Which case occurs depends on whether $f(c)$ is a left or right end point of $B\cap \R$ 
and whether $f$ has a maximum or a minimum at $c$. A situation with the latter case is shown
in Figure~\ref{fig:twobasins}, see also Figure~\ref{fig:step23} where $U$ plays the role of $B\cap \R$.

\begin{figure}[h!]
\begin{center}
\unitlength=4mm
\begin{picture}(30,7)(0,0)
\put(9.3,-2.5){\resizebox{4cm}{4cm}{\includegraphics{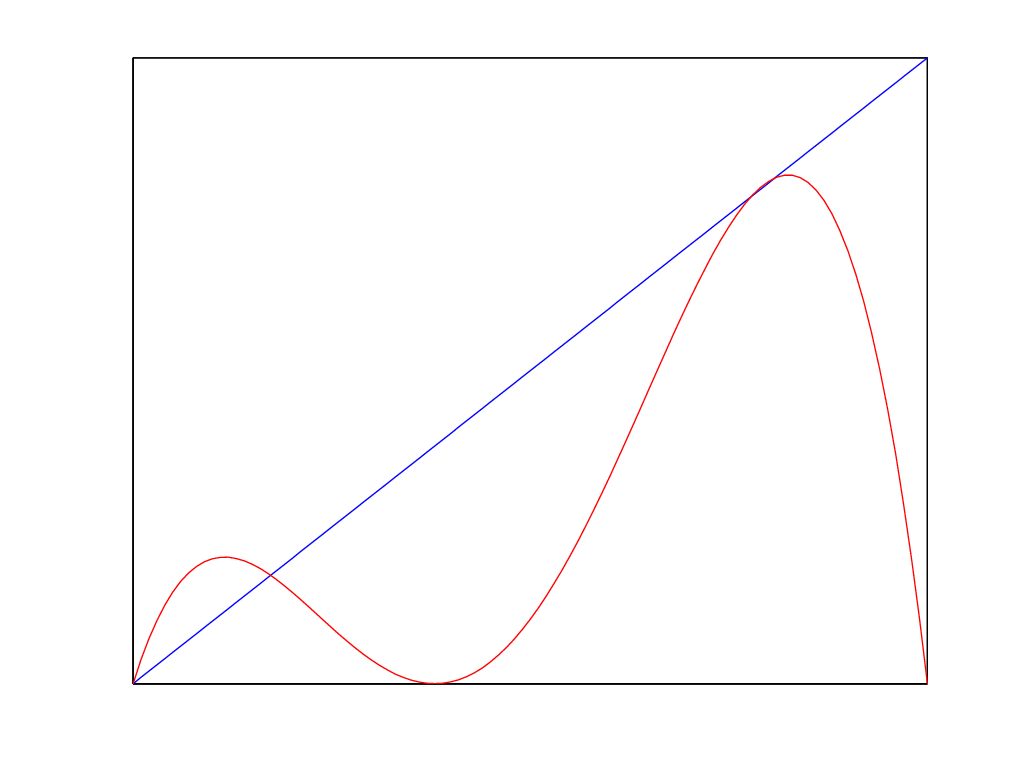}}}
\end{picture}
\end{center}
\vskip 0.3cm
\caption{\label{fig:twobasins}  For this map, there are three
degeneracies: two parabolic fixed points (one with multiplier $-1$ 
and another one with multiplier $1$) and a homoclinic orbit, \ie a  critical point which is mapped
to the boundary of $B(f)$. In this case, both sides of the one-sided attracting fixed point with multiplier $1$
are contained in $B(f)$. Theorem~\ref{thm:defspace2} shows that such a map can be 
embedded in a family of maps $f_\mu$, $\mu\in [0,1]$ so that for $\mu\in (0,1]$ 
the map $f_\mu$ only has hyperbolic periodic points (in the situation shown in the figure, a unique hyperbolic fixed point)
and so that the two basins are merged.}
\end{figure}

\subsection{Underlying rigidity theorems}

Before proving Theorem~\ref{thm:defspace1}, we should remark
that it is related to (and extends) the following result:

\begin{theorem}[Rigidity Theorem, see \cite{KSS}]\label{KSS}
Let $f\in \PB_\shape$. Assume that $f$ has no periodic attractors.
Then $\EC(f)=\{f\}$.
\end{theorem}

For the quadratic case, Theorem~\ref{KSS} was proved  independently
by Lyubich \cite{Lyu} and  Graczyk \& \'Swi\c{a}tek \cite{GS}.
Milnor \& Tresser used this result, or rather a version which
applies to certain cubic maps due to Heckman, a PhD student of \'Swi\c{a}tek (see  \cite{Hec}),
in their proof of the Main Theorem for the cubic case, see \cite{MTr}. 

Let us also note that Theorem~\ref{KSS} is related to {\em density of hyperbolicity}.
We say that an interval map $f$ is {\em hyperbolic} if each critical point of $f$ is in the basin 
of a hyperbolic periodic attractor. Building on Theorem~\ref{KSS} it was shown in \cite{KSS1} that one has
density of hyperbolicity:
each real polynomial  can be approximated by a hyperbolic real polynomial of the same degree. 
In fact,  each $C^\infty$ interval map can be approximated in the $C^\infty$ topology 
by  a hyperbolic $C^\infty$ map, see \cite{KSS1}; it follows for example that 
within generic one-parameter families of interval maps hyperbolic maps are dense, see \cite{Str2}.

The main ingredient in the proof of Theorems~\ref{thm:defspace1} and \ref{thm:defspace2}
is the following:

\begin{theorem}[Generalized Rigidity Theorem, see \cite{KSS}]\label{KSSg}
Let $f,g\in \PB_\shape$ and assume that $f,g$ are partially 
conjugate on the real line. Moreover, assume that for each periodic attractor (or parabolic point)
of $f$ there exists a conformal map $h\colon B^f_\C\to B^g_\C$ from the basin $B^f_\C$ of this periodic attractor 
to the basin $B^g_\C$ of the corresponding periodic attractor of $g$ so that $h\circ f=g\circ h$ on $B^f_\C$.
Then $f=g$.
\end{theorem} 

\begin{proof} 
This follows from the Rigidity Theorem' on page 751 of  \cite{KSS}
which states that
$f$ and $g$ are quasiconformally conjugate
(in this theorem parabolic periodic points are allowed.
By the assumption, we can modify this conjugacy
away from the Julia set to a quasiconformal homeomorphism which agrees with the conformal conjugacy 
outside a small neighborhood of the Julia set and with the original 
conjugacy on the forward orbits of the critical points (and which preserves the real line). 
Using the usual pullback argument, one then obtains a quasiconformally conjugacy which 
is conformal on the Fatou set. 
Since the Julia set of a map in $\PB_\shape$ does not carry an invariant line field, see Theorem 1 in \cite{Shen_C2}, 
it follows that the quasiconformal conjugacy must be conformal.
By the normalization imposed on maps in $\PB_\shape$ it follows that $f=g$.
\end{proof}

\subsection{The set of Blaschke products with real critical points forms a ball}\label{SecBlachke}
To show that $\EC(f)$ is connected, our strategy is to
prove first that $\EC^o(f)$ is  connected.
To this end we shall identify $\EC^o(f)$ with a space
of Blaschke products by means of quasi-conformal surgery.

\begin{definition}\label{def:Mn}
For any $n\ge 1$ and $\shape\in \{+, -\}$, let $M_{n,\shape}$
denote the set of all proper (\ie the inverse of a compact set is compact)
holomorphic maps $A: \D\to\D$ of degree $n$
of the open unit disc $\D$, preserving the real axis, such that
$A$ has $n-1$ distinct real critical points in $(-1, 1)$, and such that
the sign of $A'(-1)$ is $\shape$.
Since $A \in M_{n,\shape}$ maps $\R$ into itself, it can be written as $z\mapsto
\sigma \prod_{i=1}^{n} \frac{z-a_i}{1-\bar{a}_iz}$,
where $\sigma\in \{-1,1\}$ depending on $\shape$ and on the parity of $n$,
and where $\{a_i, 1\le i\le n\}$ is a subset of $\D$ which is symmetric
with respect to the real axis. 
Therefore $M_{n, \shape}$ can be considered as
a subset of $\D^n$
and is thus supplied with the induced topology.
\end{definition}

Not every map $A\in M_{n,\shape}$ has a fixed point, 
but by the Schwarz-Pick lemma any holomorphic map $A\colon \D\to \D$ has at most one fixed point. If 
$A$ has finite degree then it extends to maps $\partial \D$ to itself. If $A\colon \D \to \D$ 
does not have a fixed point in $\D$ then it follows from Denjoy-Wolff that there exists a unique fixed point on 
$\partial \D$ which attracts all points in $\D$.

\begin{definition}
Let $M_{n,\shape}^o$  (respectively $M_{n,\shape}^\Sigma$) be the set of maps $A\in M_{n,\shape}$ with the additional
property that $A(0)=0$ (respectively so that $c_1=0$) where
$-1<c_1<\dots<c_{n-1}<1$ are the critical points of $A$.
\end{definition}

When $n\ge 3$, the assumption that 
$\{a_i, 1\le i\le n\}$ is a subset of $\D$ which is symmetric
with respect to the real axis  does {\em not} imply that all critical points of $A$ are real. 

\begin{lemma}\label{lem:Mconn}
$M_{n,\shape}^o$ and $M_{n,\shape}^\Sigma$ are  
 homeomorphic to an open  Euclidean ball with (real) dimension equal to the number
 of critical points of maps in $M_{n,\shape}$, \ie equal to $n-1$. Moreover, 
$M_{n,\shape}$ is homeomorphic to an open Euclidean ball of (real) dimension $n$.
\end{lemma}
\begin{proof}
Using the same argument as the proof of Lemma 3.1 in \cite{MT} or of Corollary II.4.1 in \cite{MS}, 
one sees that maps in $M_{n,\shape}^o$ can be reparametrized by their critical values.
Since  for any map $A\in M_{n,\shape}^o$ there exists a unique
M\"obius transformation $M$ so that $A\circ M\in M_{n,\shape}^\Sigma$
it follows that the space $M_{n,\shape}^\Sigma$ also has dimension $n-1$.
Since any map $A \in M_{n,\shape}$ can be written in the form $B\circ M$
where $B\in M_{n,\shape}^o$ and $M$ is a M\"obius transformation, 
this implies that the space $M_{n,\shape}$ is homeomorphic to a Euclidean ball of dimension $n$.
Alternatively, this follows from the fact that $M_{n,\shape}^o$ can be parametrized
by their critical points, see  \cite{Za} and that any $A \in M_{n,\shape}$ can be written in the form $M\circ B$
where $B\in M_{n,\shape}^o$ and $M$ is a M\"obius transformation.
\end{proof}

\subsection{The  set $\EC^o(f)$ is homeomorphic to a ball}\label{SecPHo}

Take $f\in \PartHyp$ and let us associate spaces $\mathcal{M}(f)$ and $\mathcal{M}^o(f)$
to $\EC^o(f)$. For this we will consider $f\in \PB\cap \PartHyp$ as a map
acting on the complex plane, and define $B_\C(f)$
as the set of points in the complex plane
whose iterates converge to periodic attractors (or parabolic points)
of $f$. Let
$U_1, U_2, \dots, U_m$ be the components of $B_\C(f)\subset \C$
which contain critical points, and let $n_i$ be the number
of critical points in $U_i$. For each $i$ let $s_i$ be the
minimal positive integer
such that $f^{s_i}(U_i)=U_{i'}$ for some $1\le i'\le m$.
Note that it is conceivable that some components $U_i$ are backward iterates
of the immediate basin of the periodic attractor.
Let us consider the space $\mathcal{M}(f)=\prod_{i=1}^m M_{n_i, \shape_i}$,
where $\shape_i$ denotes the sign of $(f^{s_i})'$ at the left endpoint
of $B_i := U_i \cap \R$. (If this endpoint is a critical point,
then $\shape_i$ is the sign of
the second derivative at this point.)
An element $\bfA=(A_1, \ldots, A_m)\in \mathcal{M}(f)$
will be viewed as a dynamical system
on the disjoint union of $m$ copies of the unit disk,
$$
\bfA: \bigcup_{i=1}^m \D_i\to \bigcup_{i=1}^m \D_i,\ \mbox{ where }\D_i=\D\times \{i\}
$$
such that
$\bfA(z, i)=(A_i(z), i')$, where $i'$ is as above.
Let us say that $\bfA\sim\tilde\bfA$ if they are conjugate to each other
via a component-preserving conformal map
$\phi: \bigcup \D_i\to\bigcup \D_i$ such that
for each $1\le i\le m$, $\phi|\D_i$ is a real symmetric
(\ie $\phi(z) = \overline{\phi(\overline z)}$) map
whose restriction to the real line preserves the orientation.

Let $\mathcal{M}^o(f)$ denote the subset of $\mathcal{M}(f)$ consisting of
maps $\bfA=(A_1, A_2, \ldots, A_m)$ with the following property: if
$\bfA^k$  maps $\D_{i}$ onto itself, then $\bfA^k$ has a fixed
point in $\D_i$. In other words, if $U_{i_1}, U_{i_2}, \cdots, U_{i_k}$
is a cycle of attracting basins of $f$, then we require that
$A_{i_k}\circ \dots \circ A_{i_1}$ has a fixed point.
This means that up to a M\"obius transformation
we can assume that the periodic points in $U_i$ correspond to $0$.
It follows that
$\mathcal{M}^o(f)/\sim \ =\prod_{i=1}^m M^{\delta_i}_{n_i,\eps_i}$ where 
$M^{\delta_i}_{n_i,\eps_i}=M^o_{n_i,\eps_i}$ when $U_i$ contains a  periodic attractor
and $M^{\delta_i}_{n_i,\eps_i}=M^o_{n_i,\eps_i}$ otherwise.

Let us define a map
\begin{equation}\Theta: \EC^o(f) \to \mathcal{M}^o(f) / \sim 
\label{eq:theta0}\end{equation}
as follows.
For $g\in \EC^o(f)$, let $U_i(g)\subset \C$, $i=1,2,\ldots, m,$ be the
components of $B_\C(g)$ containing critical points corresponding to the sets $U_i$ from above. For each $i$, let $\phi_i: U_i(g)\to \D$ be some
real-symmetric conformal map whose restriction to the real axis is
orientation-preserving, and
let 
\begin{equation}
A_i(g)=\phi_{i'}\circ g^{s_i}\circ \phi_i^{-1}.
\label{eq:A}
\end{equation} Then define
\begin{equation}\Theta(g)=[(A_1(g), A_2(g), \ldots, A_m(g))],
\label{eq:theta}\end{equation}
where $[A]$ denotes the equivalence class of $A$.
Note that the space $\mathcal{M}^o(g)$ associated to any
map in $g\in \EC^o(f)$ is the same and so this definition makes sense.

\begin{lemma}\label{lem:thetamap}\label{weixiao2}
The map $\Theta$ defines a homeomorphism between $\EC^o(f)$ and
${\mathcal{M}}^o(f) / \sim$.
In particular, $\EC^o(f)$  is homeomorphic
to an open ball of dimension equal to the number of critical points in $B(f)$.
\end{lemma}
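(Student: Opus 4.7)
The plan is to verify well-definedness, then bijectivity, then continuity of $\Theta$ in both directions; the connectedness of $\PH^o(f)$ will follow because each factor $M^o_{n_i,\shape_i}$ of $\mathcal{M}^o(f)/\sim$ is connected.

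Well-definedness on the quotient is immediate: two real-symmetric orientation-preserving Riemann maps $\phi_i,\phi_i' : U_i(g)\to\D$ differ by composition with a real-symmetric orientation-preserving automorphism of $\D$, and substituting one for the other replaces $(A_1(g),\dots,A_m(g))$ by an equivalent tuple under $\sim$. For injectivity, suppose $\Theta(g)=\Theta(\tilde g)$. After choosing the Riemann maps $\phi_i^g, \phi_i^{\tilde g}$ so that $A_i(g)=A_i(\tilde g)$ for every $i$, the maps $(\phi_i^{\tilde g})^{-1}\circ\phi_i^g : U_i(g)\to U_i(\tilde g)$ assemble into a conformal conjugacy between $g$ and $\tilde g$ on the union of the immediate basins of their periodic attractors. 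Since $g,\tilde g\in\PH(f)$ are already partially conjugate, the Rigidity Theorem~\ref{KSS} forces $g=\tilde g$.

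For surjectivity I would use a quasiconformal surgery. By Lemma~\ref{weixiao1}, fix a reference $g_0\in\PH^o(f)$ with Riemann maps $\phi_i^0 : U_i(g_0)\to\D$ and first-return models $A_i^0=\phi_{i'}^0\circ g_0^{s_i}\circ(\phi_i^0)^{-1}$. Given $[\bfA]\in\mathcal{M}^o(f)/\sim$, choose in each factor an isotopy inside $M^o_{n_i,\shape_i}$ joining $A_i^0$ to $A_i$, use these to interpolate in a collar of $\partial\D$ pulled back by $\phi_i^0$, and define a quasiregular $\tilde g:\C\to\C$ that equals $g_0$ outside the immediate basins and realizes $\bfA$ on $U_i(g_0)$ via $\phi_i^0$. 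Pull the standard complex structure on $\D$ back through each $\phi_i^0$, spread it to all of $B(g_0)$ by $g_0$-pullback, and extend by the standard structure elsewhere; the result is a $\tilde g$-invariant Beltrami coefficient $\mu$ with $\|\mu\|_\infty<1$. Straightening $\mu$ by a real-symmetric quasiconformal $h:\C\to\C$, the map $g:=h\circ\tilde g\circ h^{-1}$ is a proper holomorphic self-map of $\C$ of the correct degree and shape, hence a polynomial in $P^d_\shape$, partially conjugate to $g_0$ (hence to $f$) via $h$. The hypothesis $\bfA\in\mathcal{M}^o$ ensures that each periodic cycle of immediate basins carries an attracting periodic point, so $g\in\PH^o(f)$ and $\Theta(g)=[\bfA]$.

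Continuity of $\Theta$ follows from Carath\'eodory continuous dependence of the immediate basins $U_i(g)$ on $g\in\PH^o(f)$, and hence of the Riemann maps $\phi_i^g$ on their domains. Continuity of $\Theta^{-1}$ follows from running the surgery with a fixed reference $g_0$ and varying $[\bfA]$, using continuous dependence of solutions of the Beltrami equation on the coefficient. Finally, each $M^o_{n_i,\shape_i}$ is connected (it is a product of $n_i-1$ open disks in $\D$), so $\mathcal{M}^o(f)/\sim$ is connected, and therefore so is $\PH^o(f)$. The main obstacle is the surjectivity step: one must guarantee that $\tilde g$ is genuinely quasiregular across each $\partial U_i(g_0)$ and that the dynamically spread Beltrami form keeps dilatation bounded away from $1$, which is precisely what the collar interpolation between $A_i^0$ and $A_i$ inside the connected space $M^o_{n_i,\shape_i}$ is designed to achieve.
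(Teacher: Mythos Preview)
Your overall strategy matches the paper's: well-definedness is immediate, injectivity comes from the Rigidity Theorem~\ref{KSS}, and surjectivity is obtained by quasiconformal surgery starting from a reference $g_0\in\PH^o(f)$. Two points deserve comment.

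First, for continuity of $\Theta^{-1}$ the paper takes a shorter route: once $\Theta$ is known to be a continuous bijection between spaces that are open subsets of Euclidean spaces of the same dimension, the Invariance of Domain Theorem gives that $\Theta$ is a homeomorphism. This sidesteps any need to track continuous dependence through the surgery construction. Your direct argument via continuous dependence of the Beltrami equation can be made to work, but it requires choosing the annular interpolation continuously in $\bfA$ and normalizing the straightening map; Invariance of Domain avoids all of that.

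Second, your description of the surgery has a technical slip. Pulling back the standard complex structure on $\D$ through the \emph{conformal} maps $\phi_i^0$ and then spreading by $g_0$-pullback (again conformal) yields the standard structure everywhere, i.e.\ $\mu\equiv 0$, which is not $\tilde g$-invariant. The correct construction, as in the paper, is to set $\mu=0$ on a forward-$\tilde g$-invariant core (the preimages under $\phi_i^0$ of small disks $\Delta(r_i)$ on which $\tilde A_i=A_i$ is holomorphic) and on the Julia set, then pull back by $\tilde g$. The key point is that the interpolation is supported on a \emph{fundamental annulus}, so each orbit meets the non-conformal locus at most once and $\|\mu\|_\infty<1$. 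The isotopy in $M^o_{n_i,\shape_i}$ you invoke is not needed for this; a direct smooth gluing of $A_i$ to $A_i^0$ across the annulus suffices.
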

\begin{proofof}{Lemma~\ref{lem:thetamap}}
Since the sets $U_i(g)$ move continuously (in the Carath\'edory topology)
with respect to $g\in \EC^o(f)$ 
the map $\Theta$ is continuous. Here we use the continuous dependence
of the Riemann mapping from $U_i(g)$ to $\D$ as the simply connected domain 
$U_i(g)$ moves continuously with $g$,  see the discussion in Section 5.1 in \cite{McM}. 
By the Rigidity Theorem~\ref{KSS},
$\Theta$ is injective. Indeed, if $\Theta(g)=\Theta(\tilde{g})$ then $g$ and $\tilde g$
 are topologically  conjugate on $\R$, and moreover they are conformally conjugate near
the corresponding periodic attractors. Therefore $g$ and
$\tilde{g}$ are affinely conjugate.

Because a continuous bijective map between open subsets of Euclidean spaces
is a homeomorphism (due to Brouwer's invariance of domain theorem),
it remains to prove that $\Theta$ is surjective.
Let $\bfA=(A_1, A_2, \ldots, A_m)$ be an element in $\mathcal{M}^o(f)$.
Our aim is to construct a map $g\in \EC^o(f)$ so that $\Theta(g)=[\bfA]$.
To do this, one applies quasi-conformal surgery techniques in a standard fashion.
Let us therefore be brief, and refer to the exposition given in Theorem VIII.2.1 of \cite{CG}
for details.
Choose $f_0\in \EC^o(f)$ and let $U_i$, $s_i$, $n_i$, $\eps_i$ be the
objects associated to $f_0$ as above.
Let $\phi_i\colon U_i\to \D$
be a real-symmetric conformal map sending the periodic attractor in $U_i$ to $0$. Then
$\phi_{i'}\circ f_0^{s_i}\circ \phi^{-1}_i \colon \D\to \D$ is a map $A_i^o$ in $M_{n_i,\eps_i}^o$
and $\bfA^o=(A_1^o,A_2^o,\dots,A_m^o)\in \mathcal{M}^o(f)$.   Define a new smooth covering
map $\tilde A_i\colon \D\to \D$ as follows.  Take discs
$\Delta(r_i)\subset \D$ with $r_i<1$ sufficiently close to $1$
so that $\cup_{i} \Delta(r_i)\times \{i\}$ is mapped into itself by
$\bfA$ and also by $\bfA^o$. Let
$\mbox{Ann}_i=((A_i^o)^{-1}\Delta(r_i))\setminus \Delta(r_i)$
so that $\mbox{Ann}_i$ is  a fundamental annulus of $A_i^o$.
Choose $\tilde A_i\colon \D\to \D$ so that it  agrees with  $A_i$ on $\Delta(r_i)$ and with
$A_i^o$ on $\D\setminus (\Delta(r_i)\cup \mbox{Ann}_i)$,
and so that it is a smooth  covering map on the fundamental annulus $\mbox{Ann}_i$.
Next define a smooth map $\tilde f$ which agrees with $f$ outside $\cup U_i$
and which is equal to $f^{-(s-1)}\circ  \phi^{-1}_i \circ \tilde A_i\circ  \phi_i$ on $U_i$,
where $f^{-(s-1)}$ stands for inverse of the conformal map $f^{s-1}\colon f(U_i) \to U_i'$.
The smooth map $\tilde f$  agrees with $f$ outside $\cup U_i$ and is conformal 
outside the annuli $\phi^{-1}_i ( \mbox{Ann}_i)$.
Since the $\tilde f$  orbit of each point only hits at most  once the fundamental annuli
we can choose an invariant ellipse field which agrees with the standard 
linefield in $\phi_i^{-1}(\Delta(r_i))$ and on the complement of $B(f)$.
Using the Measurable Riemann Mapping Theorem, we obtain a $K$-quasiconformal
homeomorphism $h$ so that $g:=\tilde h\circ \tilde f \circ \tilde h^{-1}$
is again holomorphic and therefore the required polynomial of the same degree as $f$.
Since $g$ and $f$ are conjugate outside $B(f)$, we have $g\in \EC^o(f)$.  

Let us now show that $\Theta(g)=[A]$. Let
$U^g_i:=h(U_i)$ be the components of $B(g)$.
Writing
$$H_i:=\phi_i \circ h^{-1}\colon h(U_i)\to  \D\mbox{ and }
H_{i'}:=\phi_{i'} \circ h^{-1}\colon h(U_{i'})\to  \D, $$ 
we have that $H_{i'} \circ  g^{s_i}\circ H_i^{-1}\colon \D\to \D$
agrees with the Blaschke product 
$A_i$ on $H_i(\Delta(r_i))$. Moreover, 
$H_i^{-1}$  is conformal on this set. Since the forwards iterates of critical points $A$ are
contained in this set, by pulling back via the dynamics, 
one obtains a sequence of $K$-quasiconformal maps $H_{i,n}\colon h(U_i)\to \D,
H_{i',n}\colon h(U_{i'})\to \D$  which are conformal on larger and larger
subsets of $h(U_i)$ and $h(U_{i'})$ respectively, and so that
 $H_{i',n} \circ  g^{s_i}\circ H_{i,n}^{-1}\colon \D\to \D$ agrees with 
 $A$ on corresponding increasing subsets of $\D$.
By taking limits, one obtains conformal maps $\hat H_i\colon h(U_i)\to \D,
\hat H_{i'}\colon h(U_{i'})\to \D$  so that 
$\hat H_{i'} \circ  g^{s_i}\circ H_{i}^{-1}=A_i$.
\end{proofof}

\subsection{The proof of Theorems~\ref{thm:defspace1} and \ref{thm:defspace2}}
Take $f\in \PB_\epsilon$. Assertion (i) in Theorem~\ref{thm:defspace1} was proved in the previous lemma. 
So let us prove that $\EC(f)\subset \overline{{\EC}^o(f)}$ and that there exists a family of maps as 
in Theorem~\ref{thm:defspace2}. 

In Steps 1-4 we find a polynomial map $P\in \PB_\shape$ which will essentially 
play the role of $f_1$.  To find such a polynomial, 
we will first approximate $f$ by a suitable continuous map $g$ whose dynamics is
the same as that of $f$ except {\lq}on the basins of periodic attractors{\rq}.

\noindent
{\bf Step 1.} In this step we find a family of piecewise smooth interval maps $g_t$, $t\in [0,1]$
with $g_0=f$  which undergoes  the required bifurcations at $t=0$, as $t$ becomes positive,  
for each of the periodic attractors as in case (1)-(4) of the assumption of 
Theorem~\ref{thm:defspace2}.
Here we ensure that $g_t$ agrees with $f$ outside
a small neighborhood in $B(f)\cap \R$ of the neutral periodic points $p_1,\dots,p_k$. 
Depending on the sign of $\sigma_i$ 
in the assumption of Theorem~\ref{thm:defspace2}, 
we choose $g_t$ so that as $t$ increases, the neutral point $p_i$ undergoes
a period-doubling or period-halving bifurcation in case (1),  a pitchfork
or a reverse pitchfork bifurcation in case (2) and a saddle-node or a reverse saddle-node bifurcation 
in case (4).  If $p_i$ is as in  case (3),  we choose $g_t$ so that it is merely piecewise smooth at $p_i$,
and $g^{n_i}_t(x)=p_i+(1+t)(x-p_i)-(x-p_i)^2+O(x-p_i)^3$ for $x$ in a one-sided attracting neighborhood 
of $p_i$. 

\begin{figure}[htp]
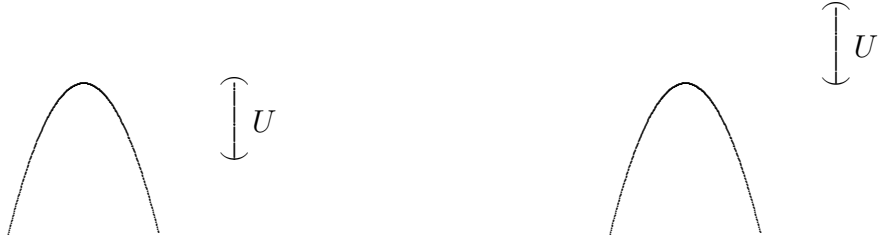
 \hfil
\beginpicture
\dimen0=0.2cm
\setcoordinatesystem units <\dimen0,\dimen0>  point at 0 0
\setplotarea x from 0 to 10, y from 0 to 10
\setlinear
\setquadratic 
\plot 0 0 5 10 10 0 / 
\setlinear \plot 15 10 15 15 / 
\put {$\smile$} at 15 10 
\put {$\frown$} at 15 15 
\put {$U$} at 17 12.5 
\setcoordinatesystem units <\dimen0,\dimen0>  point at 40 0
\setplotarea x from 0 to 10, y from 0 to 10
\setlinear
\setquadratic 
\plot 0 0 5 10 10 0 / 
\setlinear \plot 15 5 15 10 / 
\put {$\smile$} at 15 5 
\put {$\frown$} at 15 10 
\put {$U$} at 17 7.5 
\endpicture
\caption{\label{fig:step23}
{\small A critical point which is mapped to the boundary of a component of the basin of
a periodic attractor. The cases discussed in step 2 and 3 of the proof of Theorem 3.6 are shown on the left
respectively right.}}
\end{figure}

\noindent
{\bf Step 2.} First assume that $c$ is a  critical point as in case (5) so that $c$ is mapped to the 
boundary of a component $U$ of $B(f)$
and $f^{-1}(U)\cup \{c\}$ contains a (real)  neighborhood of $c$, see Figure~\ref{fig:step23} on the right.
Then we can choose the family $g_t$
  so that $g_t(c)$ is in the interior of $B(f)$ for each $t\in (0,1]$. 
 Let us denote $\hat B(f)=\cup \overline{J}$ where the union runs over all connected components $J$ of
 $B(f)$ and $\overline{J}$ is the closure of $J$.  Note that  
 the only difference between $g_t$ and $f$ is that some basins are merged or split in two
 and therefore $\hat B(g_t)= \hat B(f)$ for each $t\in [0,1]$.

\noindent
{\bf Step 3.} 
Pick $t_0>0$ small and let $\hat g=g_{t_0}$.   Next we need to take special care of 
case (5) in the situation when there exists a critical point $c$ so that $f(c)$ is mapped
into the boundary of a component $U$ of $B(f)$ so that  $f^{-1}(U)\cup \{c\}$ contains no (real) 
neighborhood of $c$, as is  drawn in Figure~\ref{fig:step23} on the right.
In this case choose a semi-conjugacy  $h\colon I\to I$ (\ie 
$h$ is continuous, monotone and surjective)   and a continuous $b$-modal interval map $g$ 
so that $h\circ g=\hat g\circ h$ where
$h^{-1}(x)$ is a point, except if $x\in \cup_{n\ge 0} \hat g^{-n}(c)$ for any $c$
as in the previous sentence. Therefore we {\lq}glue-in{\rq} intervals in the backward
orbit of $c$ allowing us to {\lq}move{\rq} $\hat g(c)$ into the interior of  $B(\hat g)$.
We can do this so that $g(c)$ now is mapped into the interior of the component of $B(g)$ corresponding 
to $U$. It is possible that a critical point $c'$ is eventually mapped to $c$.
In this case $\cup_{n\ge 0} \hat g^{-n}(c)$ contains $c'$, 
and we proceed in the same way. Also note that there exists an order preserving
homeomorphism $h_0$ of $I\setminus \hat B(f)$ to $I\setminus \hat B(g)$ 
so that $h_0\circ f=g\circ h_0$ on $I\setminus \hat B(f)$. 

\noindent
{\bf Step 4.} 
Note that $g$ has no wandering intervals and also no inessential periodic attractors.
Hence, by the Fullness Theorem II.4.1 in \cite{MS}, there exists a polynomial $P\in \PB_\shape$ which is topologically conjugate to  
$g$ and so that each of its periodic orbits is hyperbolic and  so that none of its critical point is 
mapped to the boundary of a component of the basin of $P$.
Note that  $f$ and $P$ are conjugate outside
their basins: there exists an order preserving
homeomorphism $h_1$ of $I\setminus \hat B(f)$ to $I\setminus \hat B(P)$ 
so that $h_0\circ f=P\circ h_0$ on $I\setminus \hat B(f)$. 
In other words, $f$ and $P$ are conjugate outside their basin of attractors
and components of $B_\C(f)$ and $B_\C(P)$ correspond to each other in the following manner:
\begin{enumerate}[topsep=0cm,itemsep=0.05ex,leftmargin=1cm]
\item Each component   of $B_\C(f)$ containing a hyperbolic periodic point 
corresponds to a unique component of $B_\C(P)$.
\item Each component  of $B_\C(f)$ of  the basin of
a neutral periodic point $p_i$ of $f$ (\ie of a petal) corresponds to a component of the basin of 
a hyperbolic periodic point for $P$. However,  two touching adjacent basins (petals)
of $B_\C(f)$ may correspond to one component of $B_\C(P)$ (so these petals are merged).
This happens in case (1), (2) and (4) when the corresponding sign $\sigma_i$ is negative.
\item Each critical point of $f$ which is eventually mapped into the boundary of the basin of $f$,
corresponds to a critical point of $P$ which is contained in the interior of $B_\C(P)\cap \R$.
\end{enumerate}
We will use $P$ to construct the required family $f_\mu$,
but we do not claim that the polynomial  $P$ is close to $f$, even in the $C^0$ topology.

Let $U_1,\dots,U_m$ be the components of $B_\C(P)$
which contain critical points. We will use the homeomorphism 
$\Theta\colon \EC^o(P)\to {\mathcal{M}}^o(P) / \sim$
defined in  Subsection~\ref{SecPHo} to construct a family of 
polynomials through $f$. To do this, we construct in Step 5
a family of polynomials $F_\mu$ which may have a much higher degree than $P$.
This family $F_\mu$ will be used to obtain a one-parameter family of Blaschke products, \ie 
a one-parameter family in ${\mathcal{M}}^o(P) / \sim$. 

\noindent
{\bf Step 5.}
Let $U_1^f,\dots,U_k^f$ be the components of $B_\C(f)$ which contain critical points
and let $U_{k+1}^f,\dots,U_{k'}^f$ be the components $U$ of $B_\C(f)$ for which 
there exists a critical point $c$ so that $f(c)\in \partial (U\cap \R)$. 
Note that $k'=m$ by the construction of $P$.
Let $V_1^f,\dots,V_n^f$ be the components of $B_\C(f)$ containing the 
forward iterates of the sets $U_1^f,\dots,U_{m}^f$. Let $X=\cup_i \partial (V_i^f\cap \R)$.
Moreover, for each critical point $c$ of $f$, let $n(c)$ be the 
smallest integer for which there exits a component $U$ of $B_\C(f)$
so that $f^{n(c)}(c)\in  \partial (U\cap \R)$ and if there exists no such integer let $n(c)=0$. 
Let $Y=\cup_c \{f^i(c); 0\le i\le n(c)-2\}$ 
where the union is taken over all critical points $c$ of $f$. 

Next consider a real polynomial map $Q$ which is zero on the set $X\cup Y$, so that $Q'$ is zero on $\Crit(f)$
and let  $F_\mu=f+\mu Q$. 
Note that the degree of $Q$ and therefore of  $F_\mu$ might be much larger than that of $f$.
By the choice of $Q$ each periodic point of $f$ on the boundary of $B_\C(f)$ is still a periodic point for 
$F_\mu$ and each critical point of $f$ is still a critical point of $F_\mu$. 
By  Theorem~VI.1.2 in \cite{CG} one can choose $Q$ so that as $\mu\in [0,1]$ becomes positive, 
$F_\mu$ undergoes all the bifurcations of neutral periodic points
and homoclinic orbits required in  cases (1)-(5) of the theorem.
In particular, each of the components $U_1,\dots,U_m$ of $B_\C(P)$ corresponds in a unique way to a component
$U_{1,\mu},\dots,U_{m,\mu}$ of $B_\C(F_\mu)$ when $\mu\in (0,1]$ is small. 
Note that $F_\mu$ can have many more attractors than $P$ and $F_\mu$ may not necessarily be conjugate to $f$
and we do not claim that  $U_{i,0}=U_i$ either.
Each of the attractors in $U_{1,\mu},\dots,U_{m,\mu}$ is hyperbolic
and $U_{i,\mu}$ depend continuously on $\mu\in (0,\mu_0]$ for $\mu_0>0$ small. 
As below  Lemma~\ref{lem:Mconn}, we have that $F_\mu^{s_i}(U_{i,\mu})=U_{i',\mu}$
and we can associate to each of the maps $F_\mu^{s_i}\colon U_{i,\mu}\to U_{i',\mu}$
a Blaschke product $A_i$ as in \eqref{eq:A}. In this way we obtain 
$$[(A_{1,\mu},\dots,A_{2,\mu})]\in \mathcal{M}^o(P) / \sim$$  as  in Lemma~\ref{lem:Mconn}.
Note that $[(A_{1,\mu},\dots,A_{2,\mu})]$ depends continuously on $\mu\in (0,\mu_0]$, 
because the domains $U_{i,\mu}$  vary continuously.

{\bf Step 6.} 
Next consider the map $$\Theta: \EC^o(P) \to \mathcal{M}^o(P) / \sim $$
from equation \eqref{eq:theta0} and \eqref{eq:theta}
and define 
$$f_\mu = \Theta^{-1} [(A_{1,\mu},\dots,A_{m,\mu}].$$
Therefore $f_\mu$ is obtained from $P$ by gluing  in the 
components $U_i$ Blaschke products which are obtained from the family
$F_\mu$. By definition $f_\mu\in \PB_\shape$ and 
by  construction, when $\mu>0$,  all periodic orbits of  $f_\mu$ are hyperbolic
 and no critical point point of $f_\mu$ is eventually mapped in the boundary 
 of a component of $f_\mu$. Moreover, $f_\mu\in \EC^o(f_{\mu_0})=\EC^o(P)$ for each $\mu\in (0,\mu_0]$.

\medskip
{\bf Step 7.}  Let us now show that 
$f_\mu$ tends to $f$ as $\mu\to 0$ (in the sense that the coefficients
of $f_\mu$ converge to those of $f$). 
To see this, let $V_{1,f_\mu},\dots, V_{n,f_\mu}$ be the components of $B(f_\mu)$
which contain forward iterates of critical points of $f_\mu$ and let $V_{1,\mu},\dots,V_{n,\mu}$
be the corresponding components of $B(F_\mu)$. By construction
there exists a family of conformal homeomorphisms
$$h_\mu\colon 
V_{1,f_\mu}\cup \dots \cup  V_{n,f_\mu} \to V_{1,\mu}\cup \dots \cup V_{n,\mu}$$
so that $h_\mu \circ f_\mu = F_\mu \circ h_\mu$ on this set.
Moreover, $h_\mu$ depends continuously on $\mu\in (0,\mu_0]$.
Moreover, even though some of these components pinch (at places where $f$ has a parabolic 
periodic point), the conformal homeomorphisms $h_\mu$ have a well-defined
conformal {\lq}limit{\rq} 
$$h\colon  V_{1,f_0}\cup \dots \cup  V_{n,f_0} \to V_{1}\cup \dots \cup V_{n}$$
so that $h \circ f_0 = F_0 \circ h$. It follows that $f_0$ and $F_0=f$
are conformally conjugate on the basin of periodic attractors. 
Hence by the  Generalized Rigidity Theorem~\ref{KSSg} we  obtain that 
$f_0=f$ and that $f_\mu\to f$ as $\mu\to 0$.

This completes the proof of  Theorems~\ref{thm:defspace1} and \ref{thm:defspace2}.
\medskip

Note that we do not state that the Julia set of $F_\mu$ is related to that of $f$.
This enables us to avoid using the techniques employed in \cite{Hai1, Hai2}.
Moreover, in general, it is not obvious how to deform a map with
an attracting and a repelling orbit to one with
a parabolic orbit, or vice versa to deform a map
with a parabolic point to a `subhyperbolic'
map  {\em in such a way  that the Julia set remains topologically
the same}, see \cite{Hai1,Hai2}. We are not concerned with this question.

\section{Partial conjugacy classes within the space $\SS^b$ of stunted sawtooth maps}\label{sec:combinatorialmodel}

\subsection{\boldmath Definition of the space of stunted sawtooth maps $\SS^b$.
\unboldmath}\label{sec:sawtooth}
Fix the number of
turning points $b$ and the shape $\shape$ of the polynomials
we will consider. From now
on we will drop the symbol $\shape$.
Following \cite{MTr},  it will be useful to introduce
a space of piecewise linear maps with $b$ (possibly touching) plateaus.
Fix the slope $\lambda = b+2$ and let $e = b \lambda/(\lambda - 1)$.
An elementary calculation shows that there exists 
a unique $b$-modal piecewise linear map $S_0$ (a {\lq}sawtooth map{\rq}) 
with shape $\epsilon$ and $b$ turning points 
 $c_1, \dots, c_b$ at $-b+1,-b+3,\dots,b-3,b-1$ with

%
\medskip
\unitlength=1mm
\begin{picture}(30,35)(-120,-15)
\put(-115,15){$\bullet$  $b+1$ intervals of monotonicity}
\put(-115,5){\quad \quad $I_0=[-e,c_1], I_1=[c_1,c_2],\dots,I_b=[c_{b},e]$;}
\put(-115,-5){$\bullet$  slope $\pm \lambda$ and extremal values 
$\pm \lambda$;}
\put(-115,-15){$\bullet$ and such that $S_0(\{-e,e\})\subset \{-e,e\}$.}
\thicklines
\put(-15,-15){\line(1,0){30}}
\put(-18,-18){$-e$}
\put(-15,-15){\line(0,1){30}}
\put(15,15){\line(-1,0){30}}
\put(15,15){\line(0,-1){30}}
\put(15.5,-18){$e$}
\put(17,3){$S_0$}
\thinlines
\put(-15,-15){\line(1,5){7}}
\put(-8,20){\line(1,-5){8}}
\put(0,-20){\line(1,5){8}}
\put(15,-15){\line(-1,5){7}}
\thinlines
\put(-15,-15){\line(1,1){30}}
\end{picture}

\bigskip

The space of $\SS^b$ of {\em stunted sawtooth} maps consists
of continuous maps $T$ with plateaus $Z_{i,T}$, $i=1,\dots,b$, which are obtained
from $S_0$ and satisfying

\unitlength=1mm
\begin{picture}(60,35)(-120,-15)
\put(-115,15){$\bullet$  $Z_{i,T}$ is a closed symmetric interval around $c_i$;}
\put(-115,5){$\bullet$  $T$ and $S_0$ agree outside $\bigcup_i Z_{i,T}$;}
\put(-115,-5){$\bullet$  $T|Z_{i,T}$ is constant and $T(Z_{i,T}) \in [-e,e]$;}
\put(-115,-15){$\bullet$  $Z_{i,T}$ have pairwise disjoint interiors.}
\thicklines
\put(-15,-15){\line(1,0){30}}
\put(-18,-18){$-e$}
\put(-15,-15){\line(0,1){30}}
\put(15,15){\line(-1,0){30}}
\put(15,15){\line(0,-1){30}}
\put(15.5,-18){$e$}
\thinlines
\put(-15,-15){\line(1,5){7}}
\put(-8,20){\line(1,-5){8}}
\put(0,-20){\line(1,5){8}}
\put(15,-15){\line(-1,5){7}}
\thinlines
\put(-15,-15){\line(1,1){30}}
\put(-8,20){\line(1,-5){8}}
\put(0,-20){\line(1,5){8}}
\put(15,-15){\line(-1,5){7}} 
\thinlines 
\put(-2,6){{\small $\zeta_2$}}
\put(0,10){\line(0,1){4.5}}
\put(0,4){\line(0,-1){13.5}}
\put(-10,1){{\small $\zeta_1$}}
\put(-8,8){\line(0,1){4}}
\put(-8,-3){\line(0,-1){11.5}}
\put(6,-4){{\small $\zeta_3$}}
\put(7,1){\line(0,1){3}}
\put(7,-7){\line(0,-1){7.5}}
\thicklines
\put(-15,-15){\line(1,5){5.5}} \put(-9.5,12.5){\line(1,0){3}}
\put(-6.5,12.5){\line(1,-5){4.5}}
  \put(11,5){\line(-1,0){6}}
\put(2,-10){\line(1,5){3}}
\put(-2,-10){\line(1,0){4}}
    \put(11,5){\line(1,-5){4}}
    \put(17.5,3.5){$T$}
\end{picture}

Maps in $\SS^b$ are allowed to have touching plateaus (\ie plateaus with one point in common). 
We allow  plateaus to touch
because, as we will see later on,  such maps $T$ correspond to polynomial maps
for which several critical points lie in one component of the basin of an attracting
periodic point. 

Note that if $T\in \SS^b$ has touching plateaus, then the union of these touching plateaus
is called a {\em block of plateaus}. If $T$ has touching plateaus, then it is constant on
at least one lap of $S_0$. In this case $T$ is $b$-modal only in a degenerate
sense. 

It is convenient to use the $b$ `signed' extremal values $\zeta \in [-e,e]^b$
to parametrize $\SS^b$:
\[
\zeta_i = \left\{ \begin{array}{ll}
T(Z_{i,T}) & \mbox{ if } S_0 \mbox{ assumes a maximum at }c_i,\\
- T(Z_{i,T}) & \mbox{ if } S_0 \mbox{ assumes a minimum at }c_i.
\end{array} \right.
\]
Sometimes we will denote by $T_\zeta$ the map $T$ with parameters
$\zeta = (\zeta_1, \dots, \zeta_b)$.  Note that decreasing $\zeta_i$ 
results in widening the corresponding plateau of $T_\zeta$
and that $\zeta_i+\zeta_{i+1}$ is equal to the length
of the convex hull of $T(Z_i)$ and $T(Z_{i+1})$. Hence
$$
\zeta_i \ge -\zeta_{i+1} \quad  \text{ for  } i = 1, \dots, b-1,
$$
with equality when the plateaus $Z_i$ and $Z_{i+1}$ touch.
Let us denote by $[Z_i,Z_{i+1}]$ the convex hull of the plateaus $Z_i$ and $Z_{i+1}$.
Thus we can identify $\SS^b$ with
\[
\{ \zeta = (\zeta_1, \dots, \zeta_b) \st \zeta_i \in [-e,e],
\ \zeta_i \ge -\zeta_{i+1} \}.
\]
We define $T<\tilde T$
if for the corresponding parameters $\zeta_i\le \tilde \zeta_i$ for all $i=1,\dots,b$ 
with at least one inequality.

\begin{proposition}\label{ex:nonmon}
The map $\zeta = (\zeta_1, \dots, \zeta_b) \to h_{top}(T_\zeta)$ 
is non-decreasing in each coordinate.
\end{proposition}

\begin{remark}\label{remark:nonmono}
In \cite{BSnon} we prove that the analogous statement is false for $\PB$ for $b \ge 2$.
That is, if the cubic family is parametrized by its critical values $a,b$,
then the map $(a,b)\mapsto h_{top}(f_{a,b})$ is not monotone in each of its parameters separately. 
\end{remark}

A consequence of this proposition is that
$\{T \in \SS^b \st h_{top}(T) = s\}$ is contractible, see Theorem 6.1 in \cite{MTr}.

\begin{proofof}{Proposition~\ref{ex:nonmon}}
Increasing a parameter $\zeta_i$ makes a plateau narrower, and affects none of
the orbits that never enter $Z_i$. Therefore only new orbits are created, and none destroyed. Hence entropy is non-decreasing in each $\zeta_i$.
\end{proofof}

\subsection{The \basin\, and the basin of a map $T$}\label{sec:preplateaus}
We define the basin $B(T)$ of a map $T\in \SS^b$ 
exactly as before, see Definition~\ref{def:basin}. 
Since maps $T$ have plateaus, we also introduce a related notion:
we define the {\em \basin} $\B(T)$ of a map $T$ to
be the set of points $x$ which eventually map into the {\em
interior} of the union of the plateaus of $T$, \ie
\begin{equation}
\B(T)= \bigcup_{k \ge 0} T^{-k}( \interior ( \cup_{i=1}^b Z_{i,T} ) ).
\label{eq:defBT}
\end{equation}
Because we allow the possibility of plateaus touching each other, we
take the interior of the union rather than the union of the
interiors. 
We say that a component $W$ of $\B(T)$ is {\em periodic} of {\em period} $s$
if $T^s(W)\subset \overline{W}$. A periodic point $p$ of $T$ is called
{\em hyperbolic} if its orbit enters the interior of a plateau of $T$.
The following elementary lemma explains how the sets
$\B(T)$ and $B(T)$ are related.

\vskip 0.4cm

\begin{lemma}\label{lem:descriptionWT}
Let $T\in \SS^b$. Then $\B(T)$ is open and dense. Moreover, for $\B'$ and $\B''$ are components of $\B(T)$,
\begin{enumerate}
\item if $T^n(\B')$ intersects a boundary point  $y$ of $\B''$, then 
$T^n(\B')=\{y\}$;
\item
if $T^n(\B') \cap \B'' \neq \emptyset$, then $T^n(\B') \subset \B''$;
\item if $W'$ is periodic of period $s$, then $T^s(\partial W')\subset \partial W'$
and either $T^s(W')\subset W'$ or $T^s|W'$ is constant;
\item 
$W'$ is either eventually mapped into a periodic component $W''$
of $\B(T)$ (with $T^s(W'')\subset W''$ for some $s$) or there exists $n$ so that $T^n(W')$ is equal to a point;
\item\label{item:basin}
 if $T^s(W')\subset W'$, then $W'$ contains precisely one periodic point $p$ 
(so that some forward iterate is contained in the interior of a plateau)
and $T^{ks}(x)\to p$ for every point in $x\in W'$;
\item\label{item:basin2} $B(T)$ is equal to the set of points which are eventually mapped into 
a periodic plateau. Moreover,  for each component $B$ of $B(T)$ 
\begin{enumerate}
\item there exists a sequence of touching components 
$W_i$ of $\B(T)$ so that $\bigcup_{i\in \mathcal{I}} W_i\subset B \subset \bigcup_{i\in \mathcal{I}} 
\overline{W}_i$,
where $\mathcal{I}$ is an at most  countable index set; 
\item if each periodic orbit of $T$ is hyperbolic 
(\ie disjoint from $\partial(\cup_i Z_i)$), then $B$ is equal to a component of $\B(T)$; 
\end{enumerate}
\item if $W'$ and $W''$ touch and one of them intersects a component $B$ of the basin of a periodic attractor,
then both of them are contained in $B$. 
\end{enumerate}
\end{lemma}
\begin{proof} Openness of $\B(T)$ follows from the definition.
Since the complement of $\B(T)$ is forward invariant and
$T$ is expanding on this complement, the set $\B(T)$ is dense.
To prove (1), take $x\in \B'$ so that $y:=T^n(x)\in \partial \B''$. Note that we can
assume that $n$ is {\lq}minimal{\rq}, \ie there exists no $0<k<n$ such that 
$T^k(x)$ is in the boundary of a component of $\B(T)$. 
Since $y\in \partial W''$ no iterate of $y$ is mapped into the interior of a plateau. 
Since $x\in W$ and $y=T^k(x)$ there exists $0\le l<n$ so that $f^l(x)$ is mapped in
the interior of a plateau. It follows that the interior $H$ of the component of $T^{-n}(y)$
containing $x$ is non-empty.
Since $x\in \B(T)$ and so $x$ is eventually mapped into the $\interior(\cup_{i=1}^b Z_{i,T})$, it follows 
that  $H\subset \B(T)$ (here we use the minimality of $n$). Let $x'$ be an endpoint of $H$. Then $T^n$ is not locally 
constant near $x'$ and so $x',T(x'),\dots,T^{n-1}(x')
\notin  \interior(\cup_{i=1}^b Z_{i,T})$. Since $T^n(x')=y\notin \B(T)$ 
and therefore $y,T(y),\dots \notin \interior(\cup_{i=1}^b Z_{i,T})$,
it follows that $T^k(x')\notin \interior(\cup_{i=1}^b Z_{i,T})$ for all $k\ge 0$
and therefore  $x'\notin \B(T)$. Hence $H=W'$ and so $T^n(W')=\{y\}$,
proving Assertion (1).  This implies (2), (3) and (4) because each component of $\B(T)$ is open,
because $\B(T)$ is backward invariant and because $T$ has only finitely many plateaus. 
Note that if $T^s(W') \subset W'$ then by Assertion (1) the one-sided slope of $T^s$ at the
endpoints of $W$ is $>1$.  Therefore
$T^s$ has a fixed point $p\in W'$ at which the map is locally constant.
If $T^{2s}|W'$ has another fixed point, then $T^{2s}|W'$ also has a repelling fixed point
which is impossible since $W' \subset \B(T)$.   Since $T^s$ 
is locally constant at $p$, there exists an interval
neighborhood  $U_0$ of $p$ so that 
$T^s(U_0)\subset U_0$ and so that $T^{n}(x)\to p$ for each $x\in U_0$. 
If we denote by $U_n$ the component of $T^{-sn}(U_0)$ containing $U_0$,
we have $U_{n+1}\supset U_n\supset \dots \supset U_0$. Therefore $T^s$ maps $U=\bigcup U_n$ into itself
and $T^s(\partial U)\subset \partial U$.  Since the only fixed point of $T^{2s}|J$ 
is $p$, it follows that $U=W$. This proves assertion (5).  
To prove (6) note that each attracting periodic orbit of $T$ necessarily
intersects  $\cup_{i=1}^b Z_{i,T}$ because $T$ and the unstunted sawtooth map $S_0$ agree outside this set.
If this periodic orbit intersects $\interior(\cup_{i=1}^b Z_{i,T})$, then 
each component $B$ of the basin of this periodic orbit coincides with a component
$W'$ of $\B(T)$. On the other hand, 
if this periodic orbit does {\em not} intersect the interior of $\cup_{i=1}^b Z_{i,T}$,
then the backward orbit of this periodic orbit is not contained in $\B(T)$
and then components of $B(T)$ are contained in $\bigcup \overline{W}_i$ 
where $W_i$ are adjacent components of $\B(T)$. This situation is clarified in Example~\ref{ex:seesawunimodal} 
below. The final statement holds because if $W'$ and $W''$ have a boundary point in common, 
then  $T^i(W\cup W')$ is a single point for some $i>0$. 
\end{proof}

\subsection{\boldmath The sets $\cell{T}$, $[T]$ and $\cellN{T}$.\unboldmath}
\label{sec:propertiesT}

As in Definition~\ref{def:partiallyconjugate} we say that $T$ and $\tilde T$ are {\em partially conjugate} if there exists an orientation preserving homeomorphism which maps $Z_i$ to $\tilde Z_i$, which maps $B(T)$
to $B(\tilde T)$ and which conjugates $T$ and $\tilde T$ outside these sets.  \
We define $\EC(T)$ to be the set of
$\tilde T\in \SS^b$ which are partially conjugate to $T$. Since such maps have plateaus, we also define
\begin{equation}\label{<T>}
\cell{T}
=\{\tilde T\in \SS^b \st \B(\tilde T)=\B(T)\} \,\, \mbox{ and } \,\,
[T]=\mbox{closure}(\cell{T} ).
\end{equation}
Note that if $\tilde T\in \cell{T}$ then $\cell{\tilde T}=\cell{T}$. 

Of course, $\cell{T}$ and $\EC(T)$ are closely related:
\begin{lemma}\label{lem:cellph} 
If each periodic orbit of $T$ is hyperbolic, then 
$\cell{T}\subset \EC(T)\subset [T]$. 
\end{lemma}

Example~\ref{ex:seesawunimodal} shows that the assumption that
all periodic orbits of $T$ are hyperbolic is required.

\begin{proof}
By Lemma~\ref{lem:descriptionWT}\eqref{item:basin2}(b),  
if each periodic point of $T$ is hyperbolic, then each component of $B(T)$
is a component of $\B(T)$. It follows that if
$\B(\tilde T)=\B(T)$ then the basins of $\tilde T$ and $T$ are the same,
and therefore $\tilde T$ and $T$ are partially conjugate.  
Now assume that $\tilde T\in \EC(T)$. Since $\tilde T$ and $T$ agree outside plateaus, and periodic points are dense outside the basins, 
the partial conjugacy outside $B(\tilde T)$ and $B(T)$ has to be the identity map.
It follows that if $\tilde T$ also has only hyperbolic periodic orbits, 
then $B(\tilde T)$ is also open and $\B(T)=\B(\tilde T)$.
If $\tilde T$ has one or more non-hyperbolic periodic orbits, then by widening the corresponding 
plateaus one obtains a sequence of maps $\hat T_n\in \cell{T}\cap \EC(T)$ with 
$\hat T_n\to \tilde T$. This implies the lemma. 
\end{proof}

Below we shall show that $\cell{T}$ is contained in a hyperplane $V_T$, and that each 
map  $[T]\setminus \cell{T}$ either has touching plateaus or 
an orbit of one of the following special types:

\begin{definition}[Homoclinic orbit]
We say that $T$ has a homoclinic orbit (hc), if some iterate of a plateau is mapped
to the boundary of a component of  $\B(T)$. 
\end{definition} 

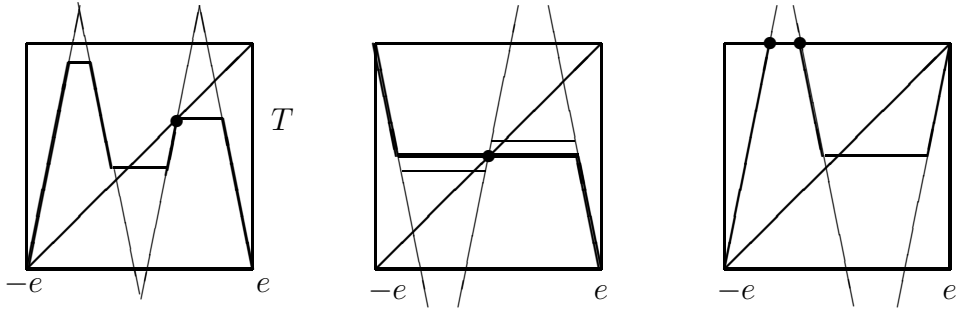
\begin{figure}[h!]
\begin{center}
\unitlength=1mm
\begin{picture}(30,30)(0,-15)
\thicklines
\put(-15,-15){\line(1,0){30}}
\put(-18,-18){$-e$}
\put(-15,-15){\line(0,1){30}}
\put(15,15){\line(-1,0){30}}
\put(15,15){\line(0,-1){30}}
\put(15.5,-18){$e$}
\thicklines
\put(-6.5,12.5){\line(1,-5){2.8}}
\put(-6.6,12.5){\line(1,-5){2.8}}
\put(15,-15){\line(-1,5){4}}
\put(15.1,-15){\line(-1,5){4}}
\put(-15,-15){\line(1,1){30}}
\put(-15,-15){\line(1,5){5.5}} 
\put(-14.8,-15){\line(1,5){5.5}} 
\put(-9.5,12.5){\line(1,0){3}}
\thinlines
\put (-9.5,12.){\line(1,5){1.6}}
\put (-6.6,12.5){\line(-1,5){1.6}}
\thicklines
  \put(11,5){\line(-1,0){6}}
\thinlines 
\put (11,5){\line(-1,5){3}}
\put (5,5){\line(1,5){3}}
\thicklines
  \put(3.5,-2.5){\line(1,5){1.6}}
    \put(3.7,-1.8){\line(1,5){1}}
\put(-3.5,-1.5){\line(1,0){7}}
\thinlines 
\put (-3.3,-1.8){\line(1,-5){3.3}}
\put (3.5,-2.5){\line(-1,-5){3.3}}
\thicklines
    \put(17.5,3.5){$T$}
    \put(3.9,3.7){$\bullet$}
  \end{picture}
  \begin{picture}(30,35)(-15,-15)   
\thicklines
\put(-15,-15){\line(1,0){30}}
\put(-15,-15){\line(1,1){30}}
\put(-15,-15){\line(0,1){30}}
\put(15,15){\line(-1,0){30}}
\put(15,15){\line(0,-1){30}}
\put(-16,-19){$-e$}\put(14,-19){$e$}
\thicklines
\put(-15,15){\line(1,-5){3}}
\put(-15.3,15){\line(1,-5){3}}
\put(15,-15){\line(-1,5){3}}
\put(14.7,-15){\line(-1,5){3}}
\put(-12,0){\line(1,0){24}}
\put(-12,0.2){\line(1,0){24}}
\thinlines
\put(-12,0){\line(1,-5){4}}
\put(0,0){\line(-1,-5){4}}
\put(0,0){\line(1,5){4}}
\put(15,-15){\line(-1,5){7}}
\thinlines 
\put(0.4,2){\line(1,0){11}}
\put(-0.4,-2){\line(-1,0){11}}
\put(-1,-1){$\bullet$}
\end{picture}
\begin{picture}(30,35)(-30,-15) 
\thicklines
\put(-15,-15){\line(1,0){30}}
\put(-15,-15){\line(1,1){30}}
\put(-15,-15){\line(0,1){30}}
\put(15,15){\line(-1,0){30}}
\put(15,15){\line(0,-1){30}}
\put(-16,-19){$-e$}\put(14,-19){$e$}
\thicklines
\put(-15,-15){\line(1,5){6}}
\put(15,15){\line(-1,-5){3}}
\put(-9.8,15){\line(1,0){5.8}}
\put(-5,15){\line(1,-5){3}}
\put(-1.6,0.1){\line(1,0){13.5}}
\thinlines
\put(-15,-15){\line(1,5){7}}
\put(-1.8,0.1){\line(-1,5){4}}
\put(-1.8,0.1){\line(1,-5){4}}
\put(15,15){\line(-1,-5){7}}
\put(-10,14){$\bullet$}
\put(-6,14){$\bullet$}
\end{picture}
\end{center}
\caption{\label{fig:saddle-node} Left: a saddle-node merging two basins. Middle:
a map with 
 two adjacent plateaus with a pitch-fork fixed point; in this case, there are nearby maps 
 with two attracting periodic points of the same period. Right: a map with a homoclinic orbit.}
\end{figure}

\begin{figure}[h!]
\begin{center}
\unitlength=1mm
\begin{picture}(30,30)(0,-15)
\thicklines
\put(-15,-15){\line(1,0){30}}
\put(-18,-18){$-e$}
\put(-15,-15){\line(0,1){30}}
\put(-15,-15){\line(1,1){30}}
\put(15,15){\line(-1,0){30}}
\put(15,15){\line(0,-1){30}}
\put(-15,-15){\line(1,3){8}}
\put(15,-15){\line(-1,3){8}}
\put(7.8,7.7){\line(-1,0){15.5}}
\put(15.5,-18){$e$}
\thicklines
    \put(17.5,3.5){$T$}
    \put(6.5,6.6){$\bullet$}
  \end{picture}
  \begin{picture}(30,30)(-30,-15)
\thicklines
\put(-15,-15){\line(1,0){30}}
\put(-18,-18){$-e$}
\put(-15,-15){\line(0,1){30}}
\put(15,15){\line(-1,0){30}}
\put(15,15){\line(0,-1){30}}
\put(15.5,-18){$e$}
\thicklines
\put(-6.5,12.5){\line(1,-5){3}}
\put(15,-15){\line(-1,5){4}}
\put(-15,-15){\line(1,1){30}}
\put(-15,-15){\line(1,5){5.5}} 
\put(-15,-14){\line(1,5){2.15}}
\thinlines 
\put (-9.5,12.5){\line(1,5){1.6}}
\put (-6.5,12.5){\line(-1,5){1.6}}
\thicklines
\thinlines 
\put (11,5){\line(-1,5){3}}
\put (5,5){\line(1,5){3}}
\thicklines
  \put(3.5,-2.5){\line(1,5){1.6}}
\thinlines 
\put (-3.3,-2.5){\line(1,-5){3.3}}
\put (-3.3,-2.5){\line(1,-5){3.3}}
\put (3.7,-2.5){\line(-1,-5){3.3}}
\thicklines
    \put(17.5,3.5){$T$}
    \put(-4.1,-4.1){$\bullet$}
    \put(-3.3,-3.3){\line(-1,0){9.5}}
        \put(-3.3,-3.2){\line(-1,0){9.5}}
      \put(-3.3,-3.3){\line(1,0){7}}
      \put(-3.3,-3.2){\line(1,0){7}}
      \put(3.7,-3.3){\line(1,5){2.5}}
      \put(15,-15){\line(-1,5){4.8}}
      \put(5.5,9){\line(1,0){5}}
  \end{picture}
\end{center}
\caption{\label{fig:period doubling} Left: a period doubling fixed point; nearby maps
have a periodic point of period two. Right: another period doubling fixed point, this time
between two plateaus.}
\end{figure}
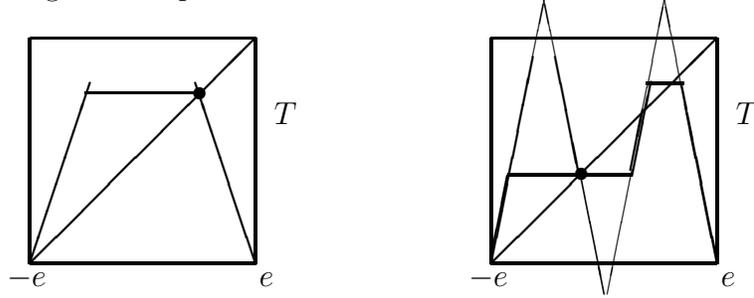

\begin{definition}[Hyperbolic/Saddle-node/Period Doubling/Pitchfork]\label{def:hypT}
Assume that $p$ is an attracting periodic point of $T$ (of minimal period $s$)
and therefore the orbit of $p$ enters a plateau of $T$. 
We say that $p$ is  {\em hyperbolic} if  its orbit enters the {\em interior} of a plateau of $T$.
Otherwise,  we say $p$ is {\em non-hyperbolic} and a 
\begin{enumerate}[topsep=-0.2cm,itemsep=0.05ex,leftmargin=1.2cm]
 \item {\em saddle-node} (sn) if $p$ is in the boundary of a plateau of $T^s$
 and $S_0^s$ is orientation preserving near $p$;
 if there exists a neighborhood $U$ of $p$ so that
 both components of $U\setminus \{p\}$ are in the immediate basin
 of a periodic attractor, then we say that $p$ is a {\em saddle-node merging two basins}, see the left panel of Figure~\ref{fig:saddle-node};
 \item  {\em period-doubling} (pd) if $p$ is in a plateau of $T^s$
 and $S^s_0$ is orientation reversing near $p$, see Figure~\ref{fig:period doubling}; 
 \item {\em pitch-fork} (pf) if $p$ is in the interior of a plateau of $T^s$ and 
 $S^s_0$ is orientation preserving near $p$, see the middle panel of Figure~\ref{fig:saddle-node}.
 \end{enumerate}
\end{definition}
In the latter case, $p$ is either the common boundary of two touching plateaus of $T$, 
or the orbit of $p$ hits at least twice a boundary point of a plateau.

\begin{definition}($\min [T]$ and $\cellN{T}$) \label{def:naturalbullet} 
Given $T \in \SS^b$ we define the sets 
$$
\min [T] = \{ T' \in [T] \st \mbox{there is no }
T'' \in [T] \mbox{ with }  T'' < T'\}.
$$
Here $<$ is the partial ordering
on the space $\SS^b$ defined above Proposition~\ref{ex:nonmon}. 
To deal with the situation that
plateaus of $T$ touch, we also define
$$
\cellN{T}  = 
\left\{  T' \in \min [T] \st  
\begin{array}{l} 
  \mbox{no plateau $Z'$ of $T'$ is eventually mapped  }   \\
  \mbox{into } \cup_{i=1}^b \interior(Z'_i) \mbox{ by some positive iterate.} 
 \end{array}
 \right\} 
$$
\end{definition}

Clearly 
$$\emptyset \ne
\cellN{T} \subset \min [T] \subset [T].
$$
We illustrate these definitions in 
Examples~\ref{ex:seesawunimodal}-\ref{ex:wandering}, 
which helps in obtaining a general description of the set $\cell{T}$. 
We should emphasize that some parts of the  boundary of $\cell{T}$ 
are contained in $\cell{T}$ whereas others are not. 
Figures~\ref{fig:seesawunimodal}, \ref{fig:adjacent_plateaus} 
and \ref{fig:<T>} give explicit descriptions of  $\cell{T}$ in a number of situations.

\begin{example}\label{ex:seesawunimodal} 
Figure~\ref{fig:seesawunimodal}
illustrates the definition of $\cell{T}$ in the {\em unimodal case}.
Let $p\in (-e,e)$ be the orientation reversing fixed point of $S_0$ and $q\in (0,e)$ 
the periodic point of $S_0$ of period two. 
For $\zeta\in [-e,p)$, we have 
$\B(T_\zeta)=(-e,e)$. For $\zeta\in [p,q)$, \, $\B(T_\zeta)$ 
consists of a countable number of adjacent intervals: $\B(T_\zeta)=(-e,e)\setminus \bigcup_{n\ge 0}T^{-n}(p)$.
When $\zeta=q$,\, $\B(T_\zeta)$ consists of a countable number of 
adjacent intervals:  $\B(T_\zeta)=(-e,e)\setminus \bigcup_{n\ge 0}(T^{-n}(p)\cup T^{-n}(q))$. 
In particular, 
$$
\cell{T_{-e}}= [-e,p)\mbox{ , }\EC(T_{-e})=\{-e\}\mbox{ , } \min[T_{-e}]=\cellN{T_{-e}}=\{-e\},
$$
where $[-e,p)$ stands for the set of maps $T_\zeta$ with $\zeta\in [-e,p)$. Moreover, 
$$
\cell{T_p}=[p,q)\mbox{ , }\EC(T_p)=(-e,p]\mbox{ , } \min [T_p]=\cellN{T}=\{p\}.
$$
This example shows that the assumption in Lemma~\ref{lem:cellph} 
that $T$ has only hyperbolic periodic orbits is essential.
\end{example}

\begin{figure}[h!]
\begin{center}
\unitlength=0.7mm
\begin{picture}(30,40)(20,-16)
\thinlines
\put(-15,-15){\line(1,0){30}}
\put(-15,-15){\line(0,1){30}}
\put(15,15){\line(-1,0){30}}
\put(15,15){\line(0,-1){30}}
\put(-15,-15){\line(1,1){30}}
\put(-20,-19){\scriptsize $-e$} \put(13,-19){\scriptsize $e$}
\put(7.5,-15){\line(0,-1){1}} 
 \put(7,-19){\scriptsize $p$}
\put(7.5,7.5){\line(-1,0){15}}\put(7.5,7.7){\line(-1,0){15}}
\put(-6,12){\line(1,0){18}}
\put(-6,12){\line(1,0){12}}
\put(-19,11.5){\scriptsize $q$}
\put(-19,6.5){\scriptsize $p$}
\put(-6,12){\line(0,-1){18}}
\put(-6,-6){\line(1,0){18}}
\put(12,-6){\line(0,1){18}}
\put(-7,-15){\line(0,-1){1}}\put(-10,-19){\scriptsize $-p$}
\thicklines
\put(-15,-15){\line(1,3){15}}
\put(0,30){\line(1,-3){15}}
\thinlines
\put(50,0){\line(1,0){50}}
\put(49.7,-1){[}
\put(78.7,-1){)}
\put(79,-7){$p$}
\put(80,-1){[}
\put(90,-1){)}
\put(90,-7){$q$}
\put(47,-7){$-e$}
\put(98,-7){$e$}
\end{picture}
\end{center}
\caption{\label{fig:seesawunimodal} The set $\cell{T}$ in the unimodal case, see 
Example~\ref{ex:seesawunimodal}.}
\end{figure}

\begin{example}\label{ex:adjacent_plateaus} In Figure~\ref{fig:adjacent_plateaus}
we illustrate the definition of $\cell{T}$ in the case  when there is a component $\B_i=(a,b)$
of the  $\B(T)$  containing two plateaus with the corresponding set $\cell{T}$
drawn in parameter space on the right. The map $T$  depicted in the figure,
is the first return map to $(a,b)$ and 
has the property that  $\B(T)\cap (a,b)=(a,b)$.
The set $\cell{T}$ for this map is shown in Figure~\ref{fig:adjacent_plateaus}
on the right, and is equal to the union of the two open triangles 
with the open interval $\Delta$ connecting the points marked $0$ and $2$
(corresponding to maps $T_0$ and $T_2$). To see this for $T_0$, note that 
$\B(T_0)\cap (a,b)$ is equal to $(a,b)\setminus Q$ where $Q$ is a countable set
made up of backward iterates of the left boundary point of the left plateau.
Therefore $\cell{T}$ is neither open nor closed. 
Note that $\min [T] = \overline\Delta$ 
and $\cellN{T}=\{T_0, T_1, T_2\}$.  
Furthermore, $\cell{T_2} = {T_2}$ since taking $\zeta_2>0$
(while $\zeta_1$ is left unchanged) results
in the right endpoint of $Z_1$ no longer belonging to $\B(T)$.
The open interval in Figure~\ref{fig:adjacent_plateaus} connecting
$2$ to $3$ is a single $\cell{\tilde T}$, whereas the line segment
connecting $1$ to $3$ consists of countably many different cells 
$\cell{\tilde T}$
consisting of half-open line segments with endpoints corresponding to maps 
for which one plateau is mapped into the boundary of the other plateau.
\end{example}

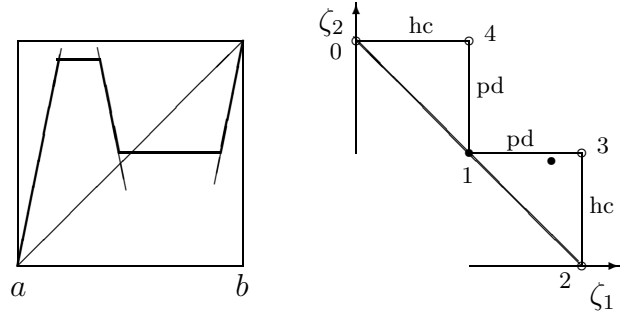
\begin{figure}[h!]
\begin{center}
\unitlength=1mm
\begin{picture}(30,35)(8,-17)
\thinlines
\put(-15,-15){\line(1,0){30}}
\put(-15,-15){\line(1,1){30}}
\put(-15,-15){\line(0,1){30}}
\put(15,15){\line(-1,0){30}}
\put(15,15){\line(0,-1){30}}
\put(-16,-19){$a$}\put(14,-19){$b$}
\thicklines
\put(-15,-15){\line(1,5){5.5}}
\put(15,15){\line(-1,-5){3}}
\put(-9.8,12.5){\line(1,0){5.8}}
\put(-4,12.5){\line(1,-5){2.5}}
\put(-1.6,0.1){\line(1,0){13.5}}
\thinlines
\put(-1.6,0.1){\line(1,-5){1}}
\put(-1.6,0.1){\line(-1,5){2.8}}
\put(15,15){\line(-1,-5){3.8}}
\put(-15,-15){\line(1,5){5.8}}
\thinlines
\put(45,-15){\vector(1,0){20}}\put(61,-20){$\zeta_1$}
\put(30,0){\vector(0,1){20}}\put(25,16.5){$\zeta_2$}
\thinlines
\put(60,-15){\line(-1,1){30}}
\put(59.6,-14.4){\line(-1,1){29.9}}
\put(60,-15){\line(0,1){15}}
\put(60,0){\line(-1,0){15}}
\put(45,0){\line(0,1){15}}
\put(45,15){\line(-1,0){15}}
\put(37,16){\scriptsize hc}\put(61,-8){\scriptsize hc}
\put(46,8){\scriptsize pd}\put(50,1){\scriptsize pd}
\put(56,-1){\circle*{1}}
\put(60,0){\circle{1}} \put(62,0){\scriptsize$3$}
\put(60,-15){\circle{1}} \put(57,-18){\scriptsize$2$}
\put(45,15){\circle{1}} \put(47,15){\scriptsize$4$}
\put(30,15){\circle{1}} 
\put(26.5,12.5){\scriptsize$0$}
\put(45,0){\circle*{1}} \put(44,-4){\scriptsize$1$}
\end{picture}
\end{center}
\caption{\label{fig:adjacent_plateaus}  The set $\cell{T}$ in the case where two plateaus lie in one component of the basin, 
see Example~\ref{ex:adjacent_plateaus}. The {\tiny \textbullet} on the right indicates the
parameter of the map $T$ on the left. 
}
\end{figure}

\begin{example}\label{ex:bimodal}
Figure~\ref{fig:<T>} illustrates the definition of $\cell{T}$ in the 
bimodal case when there exists a periodic component $W$ of $\B(T)$ of 
period $s_1+s_2$ so that $W$ and the component $W'$ of $\B(T)$ 
containing $T^{s_1}(W)$  both contain a plateau. 
In this case $\min[T]=\cellN{T}=\{ T_1 \}$ where $T_1$
is the map corresponding to $1$ in the figure. 
Note that only the left and bottom boundary
is contained in $\cell{T}$ (not including the endpoints of these lines).
\end{example}

\begin{example}\label{ex:wandering}
In this example we show why we consider $\min[T]$; it is possible that
 (with the analogous definition) $\min \cell{T}=\emptyset$.
Consider $T\in \SS^b$ so that there exists an interval $[Z_i,Z_{i+1}]$
which is mapped into another plateau $Z_k$ with $T(Z_i)\subset \partial Z_k$, with $T(Z_{i+1})$ contained in the interior of $Z_k$ and so that no iterate of 
$Z_k$ is contained in a plateau. 
Then $\partial Z_k$ is not contained $\B(T)$. This 
implies that the $i$-th projection of $\B(T)$ is a point while the $i+1$-th 
projection of $\B(T)$ is an open interval. Therefore $\min\cell{T}=\emptyset$. 
(However, $\min[T]\ne \emptyset$; it consists of maps
 for which $Z_i$ and $Z_{i+1}$ touch.
 \end{example}

\begin{figure}
\begin{center}
\unitlength=4.3mm
\begin{picture}(35,13)(-2.5,1.9)
\put(0,2){\circle*{0.2}} \put(-0.7,1.5){\scriptsize$1$}
\put(10,2){\circle{0.2}} \put(9.5,1.3){\scriptsize$2$}
\put(10,5){\circle{0.2}} \put(10.3,4.7){\scriptsize$3$}
\put(9,5){\circle{0.2}} \put(8.3,4.5){\scriptsize$4$}
\put(9,9){\circle{0.2}} \put(9.3,8.8){\scriptsize$5$}
\put(7,9){\circle{0.2}} \put(6.3,8.6){\scriptsize$6$}
\put(0,12){\circle{0.2}} 
\thicklines
\put(0,2){\line(0,1){9.8}}
\put(0.07,2){\line(0,1){9.8}}
\put(0,2){\line(1,0){9.8}}
\put(0,1.93){\line(1,0){9.8}}
\thinlines
\put(0,12){\line(1,0){3}}
\put(3,12){\line(0,-1){1}}
\put(3,11){\line(1,0){4}}
\put(7,11){\line(0,-1){2}}
\put(7,9){\line(1,0){2}}
\put(9,9){\line(0,-1){4}}
\put(9,5){\line(1,0){1}}
\put(10,5){\line(0,-1){3}}
\put(10,2){\vector(1,0){1.5}} \put(10.6,1.1){$\zeta_1$}
\put(0,12){\vector(0,1){2}} \put(-1,13.3){$\zeta_2$}
 \put(0.2, 7){\scriptsize sn} \put(5, 2.2){\scriptsize sn}
 \put(1.5, 12.1){\scriptsize hc} \put(10.1, 3.1){\scriptsize hc}
 \put(3.1, 11.4){\scriptsize pd} \put(9.1, 5.2){\scriptsize pd}
 \put(5, 11.2){\scriptsize pd} \put(9, 7){\scriptsize pd}
 \put(7.1, 10.2){\scriptsize sn} \put(7.6, 9.1){\scriptsize sn}
  \put(6.5,8.2){\scriptsize$pf$}
\thinlines
\put(14.3,1.2){\footnotesize$\B_1$}
\put(18.3,1.2){\footnotesize$\B_2$}
\put(23.3,1.2){\footnotesize$\B_1$}
\put(27.3,1.2){\footnotesize$\B_2$}
\put(12.3,11.5){\scriptsize 1}
\put(13,10){\resizebox{1.5cm}{1.5cm}{\includegraphics{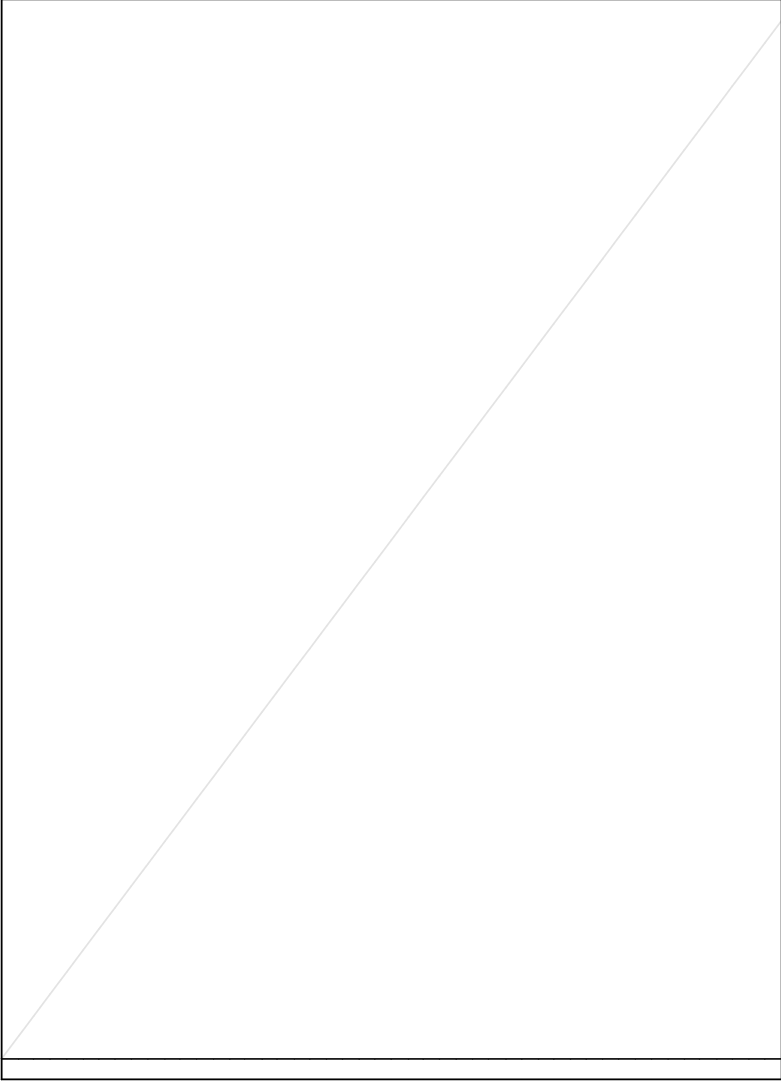}}}
\put(17,10){\resizebox{1.5cm}{1.5cm}{\includegraphics{ssm0_0}}}
\put(12.1,7.5){\scriptsize 2}
\put(13,6){\resizebox{1.5cm}{1.5cm}{\includegraphics{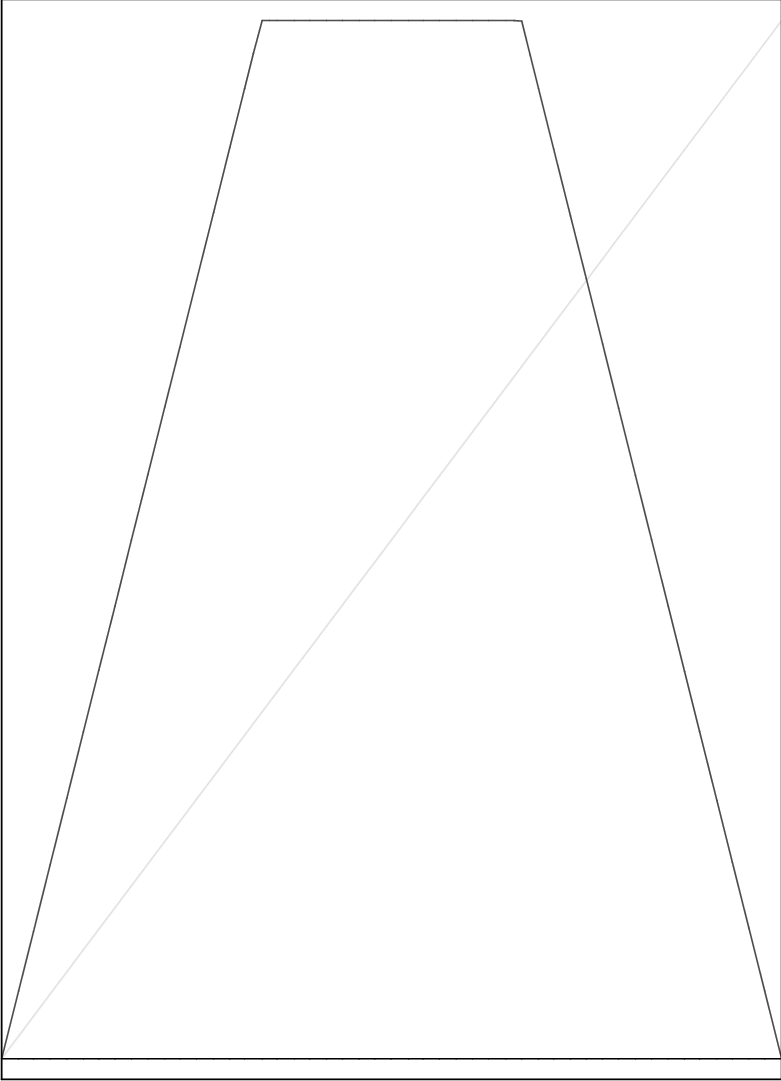}}}
\put(17,6){\resizebox{1.5cm}{1.5cm}{\includegraphics{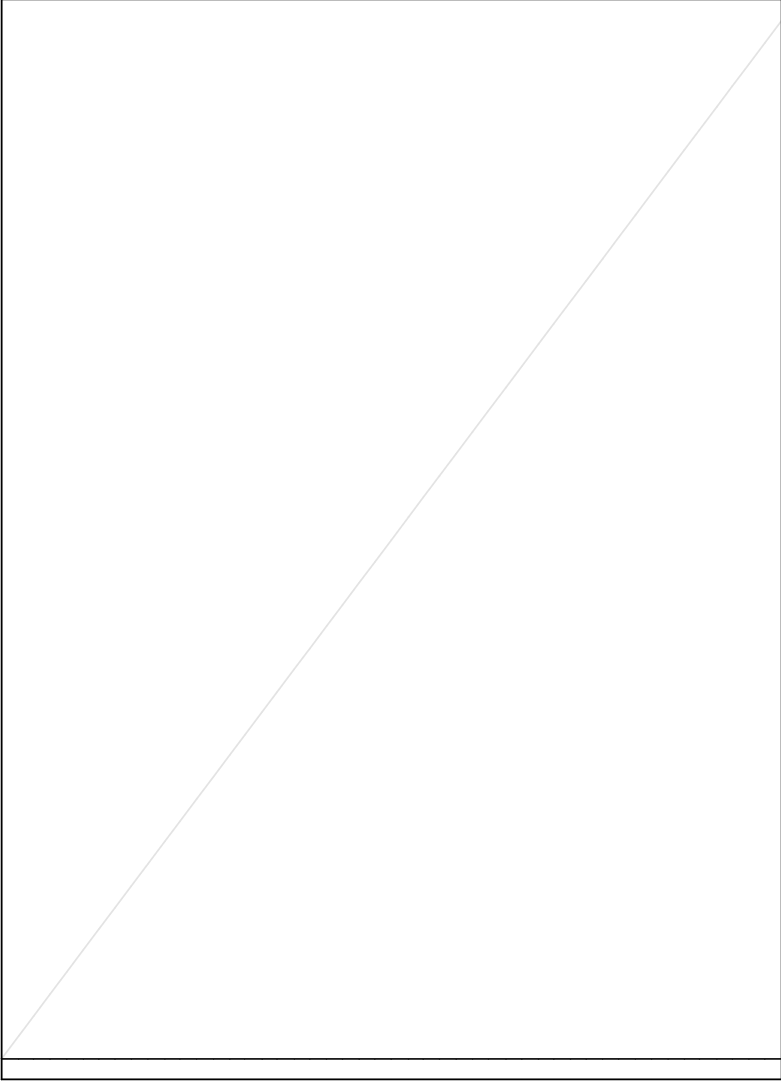}}}
\put(12.3,3.5){\scriptsize 3}
\put(13,2){\resizebox{1.5cm}{1.5cm}{\includegraphics{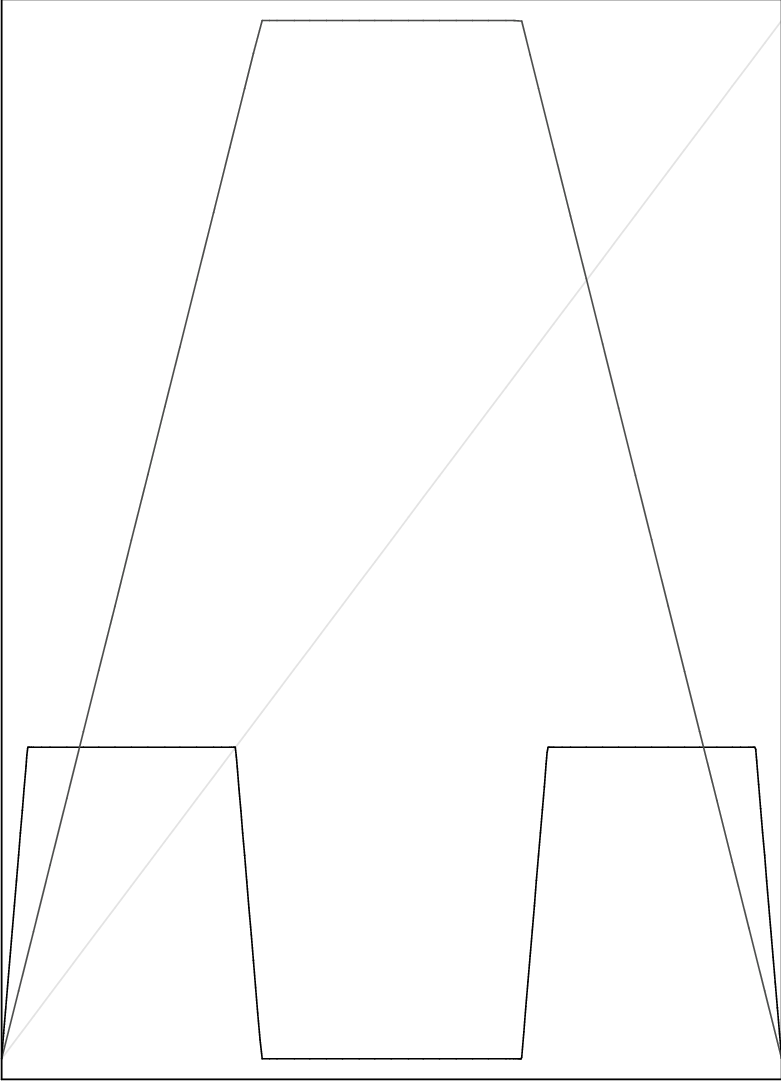}}}
\put(17,2){\resizebox{1.5cm}{1.5cm}{\includegraphics{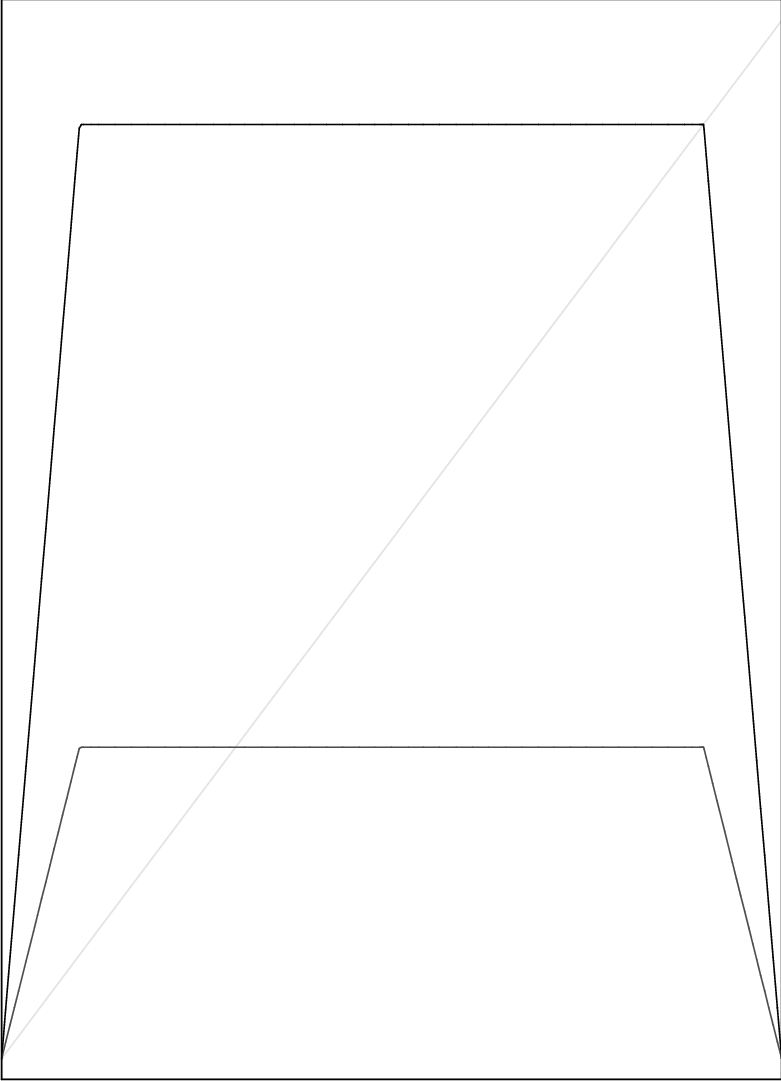}}}
\put(21.3,11.5){\scriptsize 4}
\put(22,10){\resizebox{1.5cm}{1.5cm}{\includegraphics{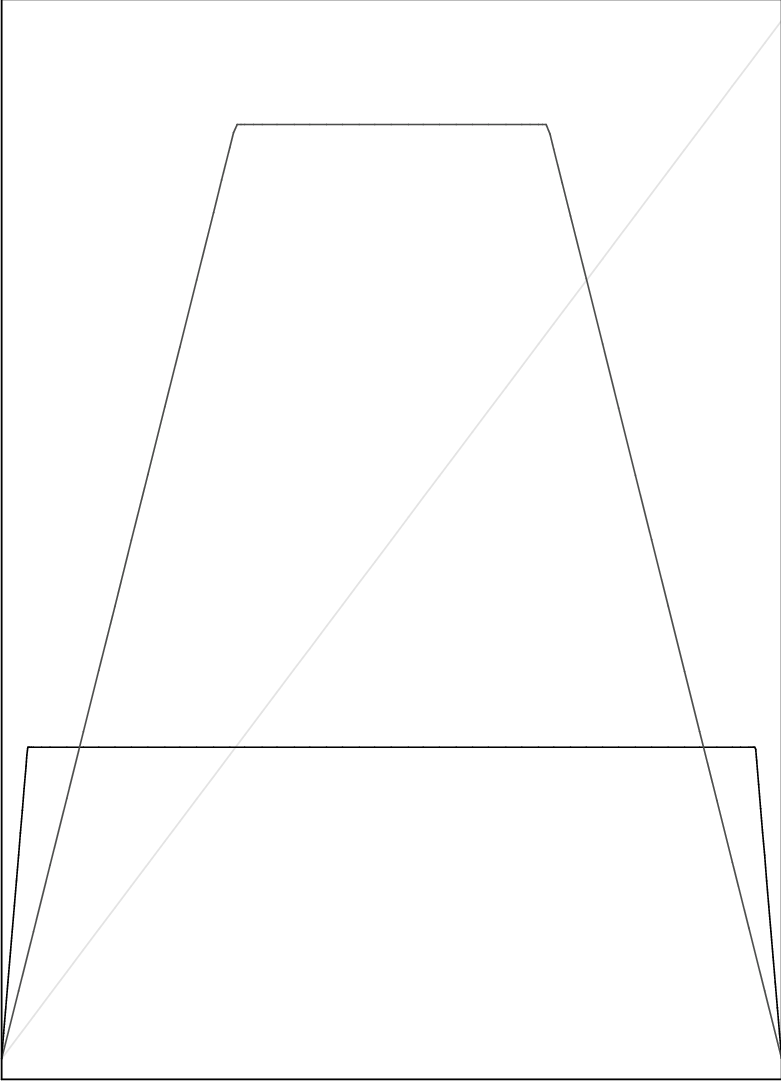}}}
\put(26,10){\resizebox{1.5cm}{1.5cm}{\includegraphics{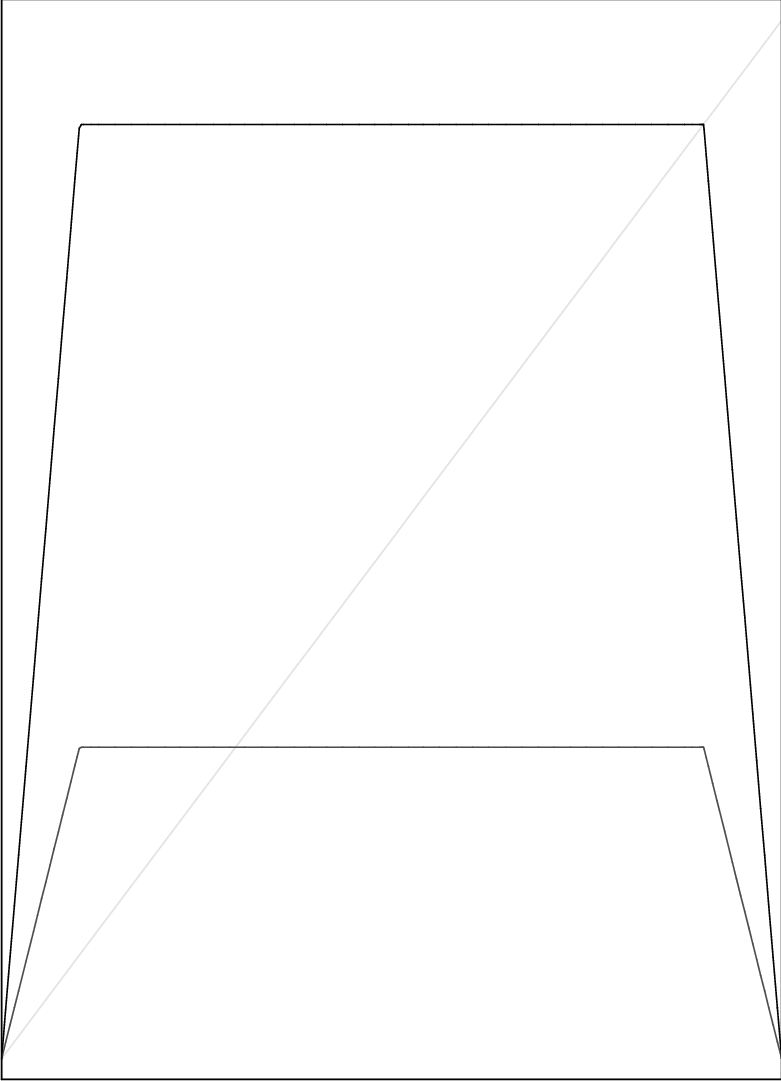}}}
\put(21.3,7.5){\scriptsize 5}
\put(22,6){\resizebox{1.5cm}{1.5cm}{\includegraphics{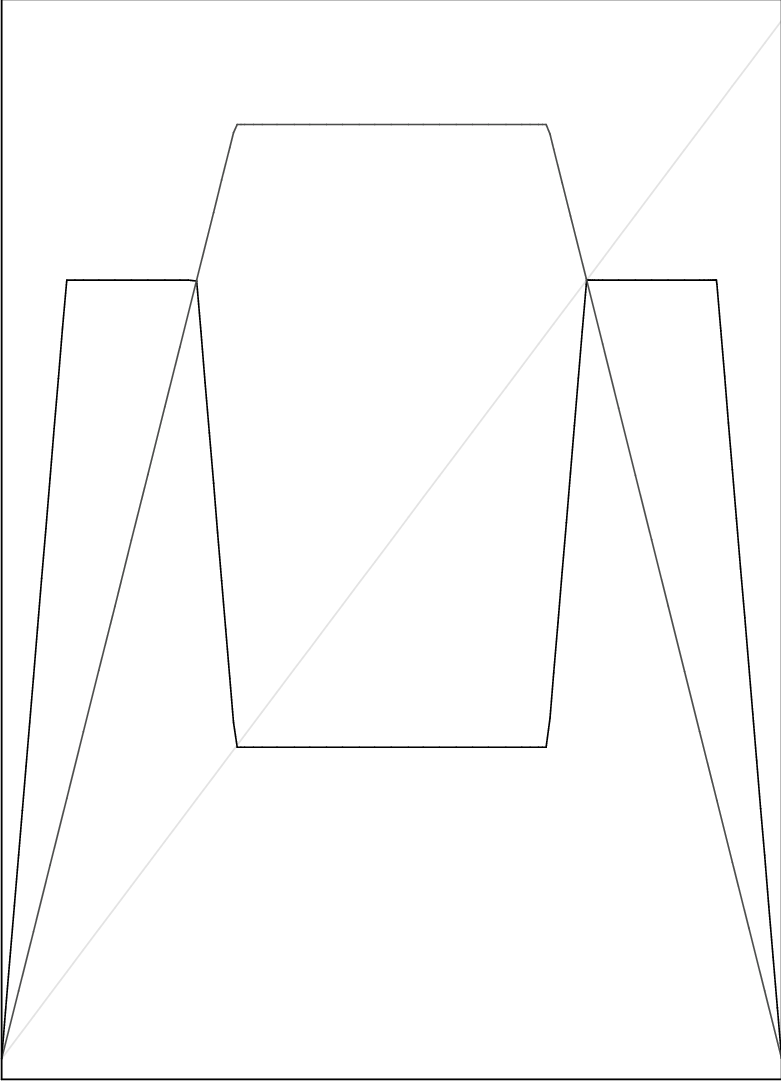}}}
\put(26,6){\resizebox{1.5cm}{1.5cm}{\includegraphics{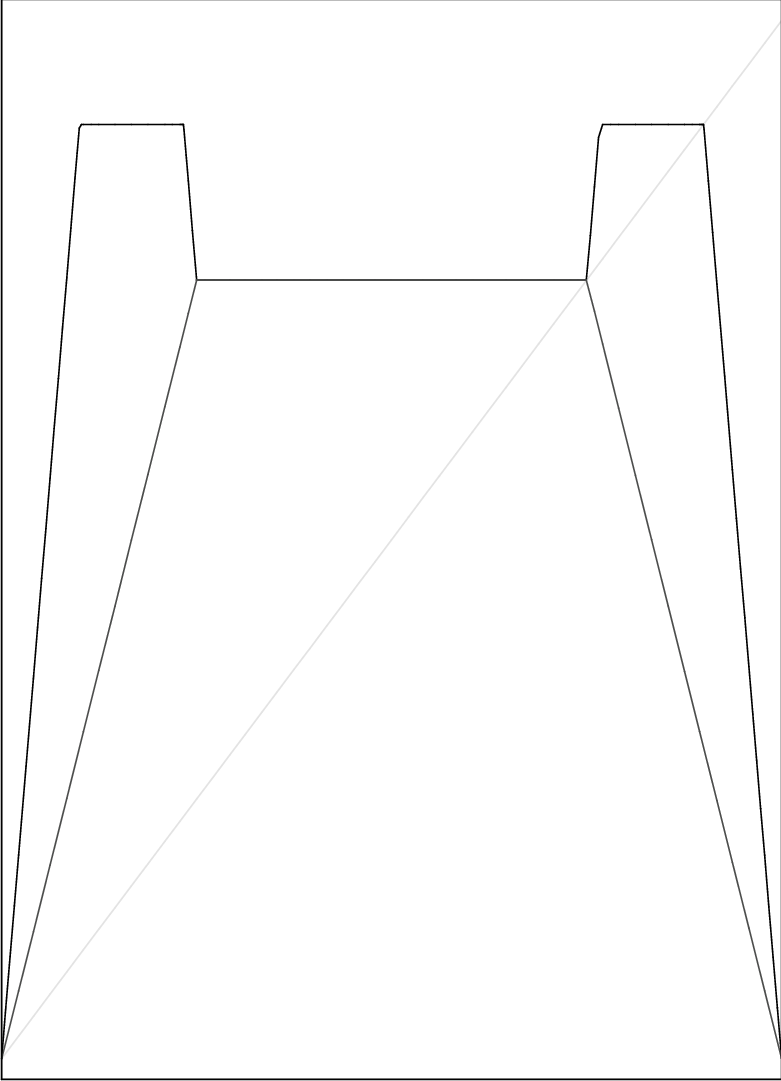}}}
\put(21.3,3.5){\scriptsize 6}
\put(22,2){\resizebox{1.5cm}{1.5cm}{\includegraphics{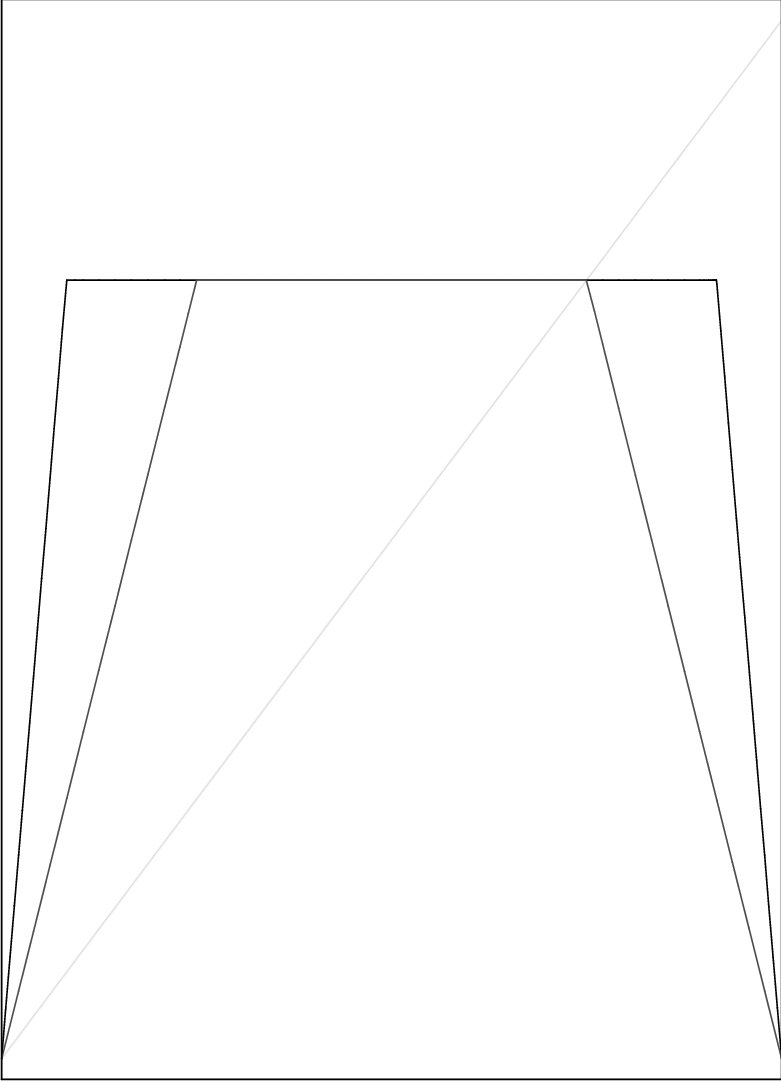}}}
\put(26,2){\resizebox{1.5cm}{1.5cm}{\includegraphics{ssm0p75_0p75}}}
\end{picture}
\end{center}
\caption{\label{fig:<T>}
The set $\cell{T}$ in the bimodal case when an attractor contains two plateaus
in its basin, see Example~\ref{ex:bimodal} is depicted on the left. 
For each of the 6 indicated parameters on the left,  
we draw on  the right side, the corresponding graphs of $T^{s_1}$ and
return map $T^{s_2} \circ T^{s_1}$ to $\B_1$ 
(respectively $T^{s_2}$ and the return map $T^{s_1} \circ T^{s_2}$ to $\B_2$).
Here the graph with the larger slope corresponds to the first return map.
Of course the diagonal has only meaning for the maps
$T^{s_2}\circ T^{s_1}$ and $T^{s_1}\circ T^{s_2}$. The part of the boundary 
of the polygon which is contained in $\cell{T}$ is marked by  (sn). 
}
\end{figure}

\subsection{Wandering pairs and the space $\SS_*^b$}\label{sec:SS*}
The space $\SS^b$ contains maps that are of no use to us because they 
possess wandering intervals, a phenomenon that does not occur in $\PB$.
For this reason, we define a subset $\SS_*^b$ to be used as a more faithful
parameter space of $\PB$ than $\SS^b$.

An important property of $\SS^b_*$, which will be used in the proof of the Main Theorem,
is that isentropes $\{T \in \SS^b_{\shape,*} \st h_{top}(T) = h\}$ are contractible in $ \SS^b_*$.
The proof of this is deferred to Section~\ref{sec:connS*}.
In this section, we will define $\SS^b_*$ and show some of its basic properties.

\begin{definition}\label{def:wander}
A {\em pair} of plateaus $(Z_i,Z_j)$ is called {\em wandering} if  there exists $n \ge 1$
 such that $T^n(\J)$ is a point, where
 $\J:=[Z_i,Z_j]$ is the convex hull of $Z_i$ and $Z_j$.
We say that $T$ is {\em non-degenerate} if for every wandering
pair $(Z_i, Z_j)$, the corresponding interval $\J$ 
belongs to the closure of a component of the basin of 
a periodic plateau. 
Let $\SS_*^b$ denote the set of non-degenerate maps 
$T \in \SS^b$.
\end{definition}

\begin{remark} If $T \in \SS^b_*$ then $[T] \subset \SS^b_*$.\end{remark}

\begin{remark}
Blocks of touching plateaus occur
only at the boundary of the parameter space, and bimodal
maps with touching plateaus have an attracting fixed point.
For this reason, wandering pairs don't occur in Milnor \& Tresser's
paper \cite{MTr}, and $\SS_*^b = \SS^b$ for $b \le 2$.
\end{remark}

\begin{remark}
For $b \ge 3$, $\SS^b_*$ is clearly not closed, but it is not open either.
Indeed, take $T$ with a pair of adjacent plateaus $Z_i,Z_{i+1}$
so that $[Z_i,Z_{i+1}]$ is mapped into the interior of a plateau 
$Z_j$ and $T(U) \cap Z_j \subset \partial Z_j$ for some small open neighborhood $U$ of $Z_j$. Since $Z_j$ is in the basin of an attracting fixed
point, such a map $T$ exists in $\SS^b_*$.  Moreover, there exist maps $\tilde T$ arbitrary close to $T$ so that
$\tilde Z_j$ is no longer contained in the basin of a periodic attractor (just choose $\tilde \zeta_j<\zeta_j$ appropriately). 
It follows that there exists maps $\tilde T\notin \SS^b_*$ arbitrarily close to $T$.
\end{remark}

Nevertheless, the space $\SS^b_*$ has the following useful property:

\begin{lemma}\label{lem:s*open}
(a) Take $T\in \SS^b$ and an interval $U$ 
 which is not eventually mapped into a plateau and is also  
not contained in the basin of a periodic attractor. Then there exists $n>m$ so that
$T^n(U)\cap T^m(U)\ne \emptyset$ and $\liminf_{j}|T^j(U)|>0$.

(b) Take $T\in \SS^b_*$ and consider adjacent plateaus $Z_i,Z_{i+1}$ so that
the convex hull $[Z_i,Z_{i+1}]$ is not contained in the closure of a component of the basin of a periodic attractor.
Then for each map $\tilde T\in \SS^b$ sufficiently close to $T$, the 
adjacent plateaus $\tilde Z_i,\tilde Z_{i+1}$ do not form a wandering pair either.
\end{lemma}

\begin{proof}
Take $T\in \SS^b_*$ and interval $U$ as in statement (a).
Since $T$ is expanding outside its plateaus, 
there exists a sequence $n_i \to \infty$ so that $T^{n_i}(U)$ intersects one of the plateaus
of $T$ (but is not contained in a plateau). It follows  $T^{n_i}(U)$ contains a neighborhood
 of one of the endpoints of a plateau for infinitely many $i$. 
This means that there exist $n>m$ so that
$T^{n}(U)$ and $T^{m}(U)$ intersect, and therefore $T^{n+(j+1)k}(U)\cap  T^{n+jk}(U)\ne \emptyset$  
for $k:=n-m$ and all $j\ge 0$. It follows that $V=\cup_{j\ge 0}T^{n+j k}(U)$ is an interval and $T^k(V)\subset V$.
So either $V$ contains a repelling fixed point of $T^k|V$ or the interval $V$ is the finite union 
of basins of basins of plateaus, separated by one-sided fixed points of $T^k|V$.
Since $U$ is not eventually mapped into the basin of a periodic attractor,
 $V$ contains in its interior a  periodic point $p$ of period $k$ 
 which is not attracting from both sides. Hence $U$ contains in its interior 
 a point $x$ so that $T^l(x)=p$ for some $l$.   The conclusion of statement (a) of the lemma follows. 

Next consider  $\J :=[Z_i,Z_{i+1}]$ as in statement (b). 
By definition of $\SS^b_*$, the interval $U$ is never mapped into another plateau. Hence, by the proof of statement (a), 
some iterate of $\J$ contains a repelling periodic point $p$ of $T$ in its interior, where
$p$ is either (i) repelling from both sides or (ii) $p$ is attracting from one side and separates the immediate basin of 
two adjacent fixed points of $T^k$.
Therefore, if (i) holds and $\tilde T$ is sufficiently close to $T$, then $p$ is still contained in 
the interior of some iterate of $[\tilde Z_i,\tilde Z_{i+1}]$ and $p$ is still repelling for $\tilde T$. 
If (ii) holds then the two basins could merge, but will still contain the iterate of $U$.
Hence in any case $[\tilde Z_i,\tilde Z_{i+1}]$  also does not form a wandering pair for $\tilde T$.
\end{proof}

\subsection{\boldmath Further properties of the sets $\cell{T}$, $[T]$ and $\cellN{T}$.\unboldmath}
\label{sec:propertiesTfurther}

Given $T\in \SS^b$, we define the following equivalence class $\sim_T$ on $\{1,2,\dots,b\}$:
 $i\sim_T j$ if and only  if $Z_i$ and $Z_j$ are both in the immediate basin of the same periodic attractor.
 (We do not require that $Z_i$ and $Z_j$ are in the same component of the immediate basin.)
 Let $J_1,J_2,\dots,J_s\subset \{1,\dots,b\}$ be the corresponding equivalence classes.
For $J=\{i_1,\dots,i_j\}\subset \{1,\dots,b\}$  define
$\pi_{J}\colon \SS^b \to \R^{\# J}$ be the projection
of $\zeta=(\zeta_1,\dots,\zeta_b)\in \SS^b$ to  $(\zeta_{i_1},\dots,\zeta_{i_j})\in \R^{\# J}$. 

\begin{theorem}\label{thm:<T>}
There exists an affine space $V_T$ 
so that, with respect to the coordinates $(\zeta_1,\dots,\zeta_b)\in \R^b$, 
$\cell{T}$ is a connected polygonal region in $V_T$.
Moreover, the following properties hold:
\begin{enumerate}
\item If $[T_1]=[T_2]$ then $\cell{T_1}=\cell{T_2}$.
\item\label{order} If $T_1<T_2<T_3$ and $T_1,T_3\in \cell{T}$ then 
$T_2\in \cell{T}$;
\item \label{prod}
 $\cell{T}$ has a product structure: if we take
$J_1,\dots,J_s\subset \{1,\dots,b\}$ as above the statement of the theorem, then there exist connected 
(polygonal) sets $A_i\subset \R^{\# J_i}$, $i=1,\dots,s$ so that 
$$
\cell{T}=\{\zeta; \, \pi_{J_i}(\zeta)\in A_i \mbox{ for each }i=1,\dots,s\}.
$$
If $\# J_i =1$, then the corresponding set $A_i$ is an interval $[a_i,b_i)$ or a point $\{a_i\}$
whereas if $\# J_1>1$, then $A_i$ has a polygonal shape (see Figures~\ref{fig:adjacent_plateaus} and \ref{fig:<T>} for representative examples). 
\item\label{inf}\label{exist}
For each $T\in \SS^b_*$, 
there exists $T' \in  \SS^b_*$ so that $T'\in \cellN{T'}$ and 
$T\in [T']$.
\item\label{phintersect} If $[T_1] \ne [T_2]$ and $[T_1] \cap [T_2]\ne \emptyset$,
then there exists $T' \in \cellN{T'}$ with $T'\in \min ([T_1]\cap [T_2])$.
\item \label{bifurcation}
If $\tilde T\in [T]\setminus \cell{T}$, then one of the following properties holds:
\begin{itemize}
\item $\tilde T$ has a saddle-node merging two basins;
\item $\tilde T$ has a period doubling orbit;
\item $\tilde T$ has a pitch-fork orbit;
\item $\tilde T$ has a homoclinic orbit. 
\end{itemize}
\end{enumerate}
\end{theorem}
\begin{proof} 
Let $T=T_\zeta$ be the map corresponding to $\zeta=(\zeta_1,\dots,\zeta_b)$,
let $\tilde \zeta=(\tilde \zeta_1,\dots,\tilde \zeta_b)$, 
and assume that $\tilde T=T_{\tilde \zeta}$ is so that $\cell{\tilde T}=\cell{T}$, \ie $\B(\tilde T)=\B(T)$.
Consider a plateau $Z_k$ of $T$ and in order to be definite assume that 
$S_0$ has a maximum in $Z_k$.  
Let $W_k$ be the component of $\B(T)$ intersecting $Z_k$.
We denote the $k$-th plateau of $\tilde T$ by $\tilde Z_k$. 

{\bf Step 1:} First consider the case that $T(Z_k)$  is {\em not} contained in the closure of a component of
$\B(T)$. Let us show that in this case $\tilde \zeta_k=\zeta_k$.
Indeed,  $W_k$ is equal to the interior of $Z_k$
(or equal to the interior of the union of all plateaus which touch $Z_k$). 
If $\tilde \zeta_k<\zeta_k$ then $\tilde Z_k$ strictly 
contains $W_k$ and in particular the corresponding component of $\B(\tilde T)$ 
strictly contains $W_k$, contradicting $\cell{\tilde T}=\cell{T}$.  Hence $\tilde \zeta_k\ge \zeta_k$. 
By assumption, there is a sequence of points converging from the right  to $T(Z_k)$
which are not in $\B(T)$. Hence if $\tilde \zeta_k> \zeta_k$ then the component of $\B(T_{\tilde \zeta})$
intersecting $\tilde Z_k$ is equal to $\tilde Z_k$ and so is strictly
inside $Z_k$, contradicting 
$\B(\tilde T)=\B(T)$. It follows that in this case $\tilde \zeta_k=\zeta_k$.
Note that the freedom of choice of $\zeta_k$ does {\bf not} 
depend on any of the other coordinates.

{\bf Step 2.} Next consider the case $W_k$ is not periodic
and $T(Z_k)$ is contained in the closure of a component $W=(a,b)$ of $\B(T)$. 

{\bf Step 2a.} Let us first assume that $T(Z_k)\subset [a,b)$. 
If $T(Z_k)=\{a\}$ then $W_k$ is equal to the interior of $Z_k$ and otherwise
$W_k$ is equal to the component of $T^{-1}(a,b)$ containing $Z_k$.
In either case, $W_k$ is equal to the component of $S_0^{-1}(a)$ containing
$Z_k$. If $\tilde T(\tilde Z_k)\ge b$ then there exists
$x\in Z_k\setminus \tilde Z_k$ so that $\tilde T(x)=S_0(x)=b\notin W$ and hence $x\notin \B(T)$,  
contradicting that $\B(\tilde T)=\B(T)$. 
If $\tilde T(\tilde Z_k)<a$, then the interior of $\tilde Z_k$ strictly contains $W_k$ which is not possible either. 
On the other hand, since $W_k$ is not periodic, changing $\tilde \zeta_k\in [a,b)$ 
does not change the component of $\B(\tilde T)$ containing $\tilde T(\tilde Z_k)$. 
It follows that in this case a necessary and sufficient condition on  $\tilde \zeta_k$ for 
$\B(\tilde T)=\B(T)$ to hold is that  $\tilde \zeta_k\in [a,b)$ 
and $\tilde T\in \SS_*^b$.

{\bf Step 2b.} If $W_k$ is not periodic
and $T(Z_k)=\{b\}$ where $W=(a,b)$ is a component of 
$\B(T)$ then $W_k$ is the interior of $Z_k$ and as in Step 1 we get  $\tilde \zeta_k=\zeta_k$. 
Again, the freedom of choice of $\zeta_k$ does {\bf not} depend on any of the other coordinates.

{\bf Step 3.} Now consider the case that $W_k$ is periodic, \ie  $T^s(W_k)\subset \overline{W_k}$ for some $s>0$.
By Lemma~\ref{lem:descriptionWT}
either $T^s(W_k)\subset \partial W_k$ or $T^s\colon W_k\to W_k$. In the latter case $T^s\colon W_k\to W_k$
has a unique fixed point $p\in W_k$ (and $T^j(p)\in  \interior ( \cup_{i=1}^b Z_{k,T})$ for some $0\le j<s$)
and $T^s(\partial W_k)\subset \partial W_k$.  Because of  
Lemma~\ref{lem:descriptionWT}\eqref{item:basin}-\eqref{item:basin2}, the component of $\B(T_{\tilde \zeta})$ 
intersecting $\tilde Z_k$ is equal to $W_k$ if and only if 

(i) $\tilde T^s(W_k)\subset \overline{W_k}$;

(ii) $\tilde T^s(W_k)\cap \partial W_k\ne \emptyset$ implies that $T^s|W_k$ is constant, and 

(iii) $\tilde T^{2s}|W_k$ has at most one fixed point (which is attracting).
 
Let $(a,b)$ be the component of $\B(T)$ which contains $T(W_k)$ in its closure.
Properties (i), (ii) and (iii) persist while {\em decreasing} $\tilde \zeta_k\in [a,b)$, by 
Lemma~\ref{lem:descriptionWT}\eqref{item:basin}-\eqref{item:basin2}. 
While {\em increasing} $\tilde \zeta_k$ in $[a,b)$ these properties are preserved
until  $\partial \tilde Z_k$ first hits a fixed point $q$ of $S^{2s}_0$.
If $S^j_0(q)\in \interior ( \cup_{i=1}^b Z_{i,T})$  for at least one $0\le j<s$, 
then one can continue to increase $\tilde \zeta_k$ until  one  $\partial \tilde Z_k$ hits another fixed point $q$ of $S^{2s}_0$.
In this way, we can keep increasing $\zeta_k$ (\ie shrink the width of the plateau), until either $\partial Z_i$
contains a fixed point $q'$ of $S^{2s}_0$ for which $S^j_0(q')\notin \interior ( \cup_{i=1}^b Z_{i,T})$ for all $0\le j<s$
or until we no longer have  $\tilde T^s(Z_k)\subset W_k$.
 
If follows that, fixing all $\zeta_j$ with $j\ne k$, the
set $\cell{T}$ is equal to a line segment of the form  $a_k\le \zeta_k < b_k$.
Here the left boundary $a_k$ does {\bf not} depend on the choice for the other $\zeta_j$'s,
because one can decrease $\tilde \zeta_k\in [a_k,b_k)$ and the latter interval
does not depend on $\zeta_j$, $j\ne k$. 
However,  the right hand boundary $b_k$ in general {\bf will} depend on parameters
$\zeta_j$, $j\ne k$ for which $Z_j$ is contained in (the closure of) one of the components of $\B(T)$
containing $W_k,T(W_k),\dots,T^{s-1}(W_k)$. The location is determined by the position of the fixed points of 
$S^{2s}_0$. 

Thus we have proved that $\cell{T}$ is contained in  a polygonal 
region in some hyperplane $V_T$ and that properties (1), (2) and (3) in the
theorem are satisfied. 

{\bf Step 4.} We claim that we can choose  $T'\in \cellN{T}$ 
(by only changing $T$ inside the basin of periodic attractors) so that  $T\in [T']$
and $T'\in \cellN{T'}$. 
Indeed,  if the number of plateaus within a component of the basin  is even, then 
we choose $T'$ analogously to the map $T_1$ in Example~\ref{ex:adjacent_plateaus} 
and when the number of plateaus is odd then all these plateaus of $T'$ necessarily touch and 
the attracting periodic point is at the boundary point of the union of the touching plateaus.
This implies that, for the coordinates corresponding to attracting plateaus, 
$T\in [T']$ and $T'\in \cellN{T'}$.
If $Z_i$ is contained in the basin, but not in the immediate basin of a
periodic attractor, then for any map $\hat T\in \cellN{T}$ the corresponding plateau
$\hat Z_i$ is either mapped into the boundary of this component, or $Z_i$ touches
with one of its neighbouring plateaus. From this 
description it follows that $T\in [T']$ and $T'\in \cellN{T'}$. Since $T$ and $T'$
only differ on the basin of periodic attractors, we will still have $T'\in \SS^b_*$. 

{\bf Step 5.} To prove Property \eqref{phintersect}, 
assume that $[T_1] \ne [T_2]$ and $[T_1] \cap [T_2]\ne \emptyset$.
Property \eqref{order} gives that  $\min([T_1] \cap [T_2])\subset [T_1] \cap [T_2]$.
It is possible that $T\in \min([T_1] \cap [T_2])$ has  
two or more touching plateaus $Z_i,Z_{i+1}$ which are  mapped by some iterate $T^s$
into the interior of another plateau 
$Z_j$.  In this case choose a continuous deformation  $T_t$ with $T_0=T$ so that for each 
such pair of touching plateaus,  $T^s_t([Z_i,Z_{i+1}])\subset Z_j$ 
for all $t\in [0,1]$ (leaving all other plateaus unchanged). 
Then $T':=T_1\in \min([T_1] \cap [T_2])$ has the required properties.

{\bf Step 6.} Property \eqref{bifurcation} holds because $T'\in [T]\setminus \cell{T}$ 
implies that at least one periodic orbit is in the boundary of a plateau. By Property 
\eqref{order} this periodic orbit cannot be a saddle-node, unless it corresponds
to a situation where two basins are merged.
\end{proof}

\section{\boldmath The map $\ST\colon\PB \to \SS^b$. \unboldmath}
\label{sec:Psi}

Let us review some basic kneading theory, see  \cite{MT}, and
also \cite{MS, MTr}.
Given an interval $I$ and piecewise monotone
$b$-modal map $f\colon I\to I$ with turning points
$c_1<\dots<c_b$ in the interior of $I$, 
one can associate
to each point $x\in I$ an {\em itinerary} $\ii_f(x)$ consisting
of a sequence $(i_0,i_1,\dots)$
of symbols  from the alphabet $\{I_0,c_1,I_1,c_2,\dots,c_b,I_{b}\}$.
Here $I_0,\dots,I_b$ are the components of $I\setminus \{c_1,\dots,c_b\}$
ordered from left to right. 
It is well-known that $x\mapsto \ii_f(x)$ is monotone
w.r.t.\ the signed lexicographic ordering and that therefore
the $i$-th {\em kneading sequence}
$$
\nu_i:=\lim_{x\downarrow c_i}\ii_f(x)
$$
is well-defined. Note that the sequence $\nu_i$ does not contain any
of the symbols $c_1,\dots,c_b$. Let $\sigma$ be the left shift
on the space of symbol sequences.
The kneading invariant $\nu(f)$ of $f$ is
defined as
$$
\nu(f):=(\nu_1,\dots,\nu_{b}).
$$
Any kneading invariant which is realized by some piecewise monotone
$b$-modal map is called {\em admissible}.

To each  map $f\in \PB$ one can associate {\em uniquely}
a stunted sawtooth map as follows.
Let $\nu(f)=(\nu_1,\dots,\nu_b)$  be the
kneading invariant of $f$,
and let $s_i$ be the unique point in the $(i+1)$-th lap $I_i$
of $S_0$  such that
\begin{equation}\label{eq:kneadingdef}
\lim_{y\downarrow s_i} \ii_{S_0}(y)=\nu_i := \lim_{x\downarrow c_i} \ii_f(x).
\end{equation}
Let $Z_i$ be the symmetric interval around the $i$-th
turning point of $S$ with right endpoint $s_i$.
Let us define 
\begin{equation}
\ST\colon\PB \to \SS^b, \quad f \mapsto \ST(f), \label{eq:defST}
\end{equation}
by associating to $f$ the unique stunted sawtooth map $\ST(f)$
which agrees with $S_0$ outside $\cup_{i=1}^b Z_i$ and
which is constant on $Z_i$ with value $S_0(s_i)$.

\subsection{Some good properties of $\ST$}

The main reason for working with the map $\ST$ is that it allows
us to work with the Euclidean space $\SS^b$ rather than with the space of kneading invariants.

\begin{lemma} \label{lem:phpsi}
The map $\ST\colon \PB \to \SS^b$ 
\begin{enumerate}[topsep=-0.2cm,itemsep=0.05ex,leftmargin=1.2cm]
\item is well-defined;
\item the kneading invariant of $f$ and $T:=\ST(f)$ are the same in the sense that
$\lim_{y\downarrow Z_i}i_T(y)=\nu_i$.
\item $f$ and $\ST(f)$ have the same topological entropy;
\item $\ST(\PB_\shape) \subset \SS^b_{\shape,*}$. 
\end{enumerate}
\end{lemma} 

\begin{proof}
Since $S_0$ allows every sequence in $\{I_0,\dots,I_b\}^{\N}$ as itinerary, 
we can always find a stunted version $T$ with the required kneading sequences.
In fact, because $S_0$ is expanding and so distinct points have different
itineraries, the stunted version is unique, so $\ST$ is well-defined.
It also follows that the orbits of the boundary points of $Z_i$ under 
$T$ and the sawtooth map $S_0$ agree and therefore statement (2) holds.
Entropy is fully determined by kneading sequences, so $\ST$ preserves entropy.
For the last statement, assume by contradiction that $\ST(f)\in \SS^b\setminus \SS^b_*$.
Since $f$ has no wandering intervals, $\ST(f)\in \SS^b\setminus \SS^b_*$
implies that there exists an interval  connecting two adjacent critical points, so that 
the $n$-th iterate of this interval is another critical point. Clearly this is impossible. 
\end{proof}

\subsection{Some bad properties of $\ST$}
The next example shows that $f\mapsto \ST(f)$ is neither continuous, nor injective
nor surjective, but later on we shall see that this map 
is {\lq}almost{\rq} continuous, injective and surjective.  
It also shows that $\ST(f)=\ST(\tilde f)$ does {\em not} imply that $f,\tilde f$ are partially conjugate.

\begin{example}\label{ex:STdisc} Consider the family $f_\lambda(x)=\lambda
x(1-x)$ and let $T_\zeta$ be as in Example~\ref{ex:seesawunimodal}.  
Then there are parameters $0<\lambda_1'=2<\lambda_1=3<\lambda_2'<\lambda_2<\lambda_3'$
so that $\lambda_1,\lambda_2$ are the first  two  
period doubling parameters, and $\lambda_1',\lambda_2',\lambda_3'$ are the first three parameters
at which the critical point of $f_\lambda$ is periodic.  Then 
$$\ST(f_\lambda)=\left\{\begin{array}{ll}
T_{-e}&\mbox{\,\,for }\lambda\in [0,\lambda_1'],\\[1mm]
T_{p}& \mbox{\,\,for }\lambda\in (\lambda_1',\lambda_2'],\\[1mm]
T_{q}& \mbox{\,\,for }\lambda\in (\lambda_2',\lambda_3'],
\end{array}\right.  $$ 
see Figure~\ref{fig:uni}. As in Example~\ref{ex:seesawunimodal}, $T_p$ and $T_q$ are the maps for which
 the right endpoint of the plateau is a fixed point  and has period two respectively.
 The discontinuities of $f\mapsto \ST(f)$
occur when the critical point becomes periodic, rather than when a period doubling
bifurcation occurs. In particular, for $\delta>0$ small, 
$\ST(f_{\lambda_1+\delta})=\ST(f_{\lambda_1-\delta})=T_p$ 
but $f_{\lambda_1+\delta}$ and $f_{\lambda_1-\delta}$ are not partially conjugate.
On the other hand,   for each $\lambda \in [0,\lambda_1]$,\,\, $f_\lambda\in \EC(f_0)$
and  $\ST(f_{\lambda})\in \{T_{-e},T_p\}\subset  [T_{-e}]=[\ST(f_{\lambda_1'})]$; note that 
$T_{-e}$ and $T_p$ are partially conjugate.
\end{example}

\begin{figure}[h]
\begin{center}
\unitlength=7.8mm
\begin{picture}(30,5.5)(-1,0.7)
\thinlines \put(2,4.3){\line(1,0){12}}
\put(2,4.9){0}\put(2,4.25){[}
\put(3.7,4.9){\small $\lambda'_1=2$}\put(4,4.25){]}
\put(4.07,4.25){(}
\put(3.2,4.9){\small $1$}
\put(6.4,4.9){\small $\lambda_1$ p.d.} \put(6.6,4.05){*}
\put(8.6,4.9){\small $\lambda'_2$}\put(8.6,4.25){]}
\put(8.65,4.25){(} \put(13.25,4.25){]}\put(13.25,4.9){\small $\lambda'_3$}
\put(10.4,4.9){\small $\lambda_2$ p.d.} \put(10.6,4.05){*}
\put(2.1, 3.9){\vector(0, -1){2}}
\put(4.1, 3.9){\vector(-1, -1){1.8}}
\put(4.3, 3.9){\vector(0, -1){2}}
\put(8.4, 3.9){\vector(-2, -1){3.5}}
\put(2,1){$T_0$}\put(4.2,1){$T_p$}
\put(8.9, 3.9){\vector(0, -1){2}}
\put(13.2, 3.9){\vector(-2, -1){3.5}}
\put(8.6,0.8){$T_q$}
\put(2,5.5){\small $\overbrace{\hspace{0.9cm} }$}
\put(2.2,6){{$\EC$}}
\put(3.35,5.5){\small $\overbrace{\hspace{2.45cm} }$}
\put(4.5,6){{$\EC$}}
\put(4.2,4){\small $\underbrace{\hspace{3.45cm} }$}
\put(6.2,3.2){{$\T$}}
\put(6.8,5.5){\small $\overbrace{\hspace{2.9cm} }$}
\put(8.8,4){\small $\underbrace{\hspace{3.45cm} }$}
\put(8.3,6){{$\EC$}}
\put(10.7,3.2){{$\T$}}
\put(10.75,5.5){\small $\overbrace{\hspace{2.9cm} }$}
\put(12.3,6){{$\EC$}}
\put(13,3){$\ST$}
\put (6.65,4.4){\line(1,0){2.0}}
\put (2,4.4){\line(1,0){2.1}}
\put (10.7,4.4){\line(1,0){2.7}}
\end{picture}
\end{center}
\caption{\label{fig:uni} The map $\lambda \mapsto \ST(f_\lambda)$ is discontinuous at parameters $\lambda'_i$ where the critical point
of $f_\lambda$ is periodic, see Example~\ref{ex:STdisc}.
These parameters are alternated with period doubling parameters $\lambda_i$,
and at $\lambda = 1$, the stability of $0$ changes, so $\EC$ changes too.
The equivalence classes $\EC$ (of partially conjugate maps) and $\T$ (of maps
with the same kneading invariant) are also shown. 
The segments with an additional 
line correspond to maps  in the set $\parabolic$ defined in Section~\ref{subsec:parabolic}.}
\end{figure}
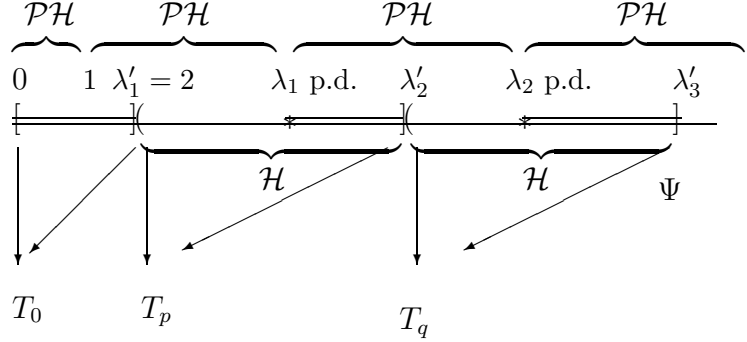

\subsection{\boldmath The definition of $\parabolic$ \unboldmath}
\label{subsec:parabolic}

\begin{definition}\label{def_A}
The set $\parabolic$ is the collection of polynomials $f\in \PB$
such that for each component $B$  of its basin $B(f)$ containing a critical point
the following holds:
\begin{enumerate}[topsep=-0.2cm,itemsep=0.05ex,leftmargin=1.2cm]
\item if $f(\partial B)$ consists of a single point, 
then each points in $f(B)$ has the same itinerary. 
\item if $f(\partial B)$ consists of two points,
then  (i)  each point in $f(B')$ has the same itinerary
where $B'$ be the convex hull of the critical points in $B$
 and (ii) if  $B$  contains an attracting periodic point $p$, then $p$ is in the interior of $B'$.
\end{enumerate}
\end{definition}

Note that $f(\partial B)$ consists of a single point if and only if the interior of $B$ contains an odd number of critical points.
The reason for introducing $\parabolic$ is:

\begin{proposition}\label{prop:PsiPH}
If $f \in \parabolic$ then $\ST(\EC(f)) \subset [\ST(f)]$. 
\end{proposition}

Before proving this proposition we will 
motivate the definition of $\parabolic$ by considering three examples.

\begin{example}\label{example:explainA*}
 If $f_\lambda(x)=\lambda x(1-x)$ is a quadratic map
with an attracting fixed point, then $\ST(f_\lambda)$ is a stunted sawtooth map
which is either equal to the constant map $T_0$ corresponding to the parameter $\zeta=-e$
or to the map $T_p$ which has plateau $[-p,p]$, see
Example \ref{ex:seesawunimodal} and  Figure~\ref{fig:seesawunimodal}. Now 
$T_p\in [T_{-e}]$ while $T_{-e}\notin [T_p]$.
Moreover $\EC(f_{3/2})=\{f_\lambda; \lambda \in (1,3]\}$
and  $\ST(\EC(f_{3/2}))\subset  \{T_{-e},T_p\}\subset [T_{-e}]$.
Note that $f_\lambda\in \parabolic$ when $\lambda\in [0,2]$ and then 
$\ST(f)=T_{-e}$  and so $\ST(\EC(f))\subset [\ST(f)]$.
If $f_\lambda\notin \parabolic$ then this inclusion does {\bf not} hold,
so the assumption that $f \in \parabolic$ is essential for Proposition~\ref{prop:PsiPH} to hold.
Note that it also {\bf not} true that $\ST(\EC(f))\subset \EC(\ST(f))$;
Example~\ref{ex:seesawunimodal} (see also Figure~\ref{fig:uni})
is the simplest counter-example.
\end{example}

The next example, shows why in the definition of $\parabolic$ we add condition 2(ii)
if $f(B)$ consists of two points,   

\begin{example}\label{example:explainA2*}
Assume that $f$ is a cubic map with an attracting fixed point which
attracts both critical points (say with the left critical point a maximum).
Then $\ST(f)$ is equal to one of the following five maps
$T_0, T_1, T_2, T_3, T_4$
determined by $(\zeta_1,\zeta_2)$ equal to $(-e,e), (0,0), (e,-e)$, $(e,0)$ or  $(0,e)$ 
as indicated in Example~\ref{ex:adjacent_plateaus}
and  Figure~\ref{fig:adjacent_plateaus} (when taking $a=-e$ and $b=e$).
Note that  $T_0, T_1, \dots, T_4 \in [T_i]$ when $i=1$ but not when $i=0,2,3,4$.
Also note that $\ST(\EC(f))\subset \{T_0,T_1,T_2\} \subset [T_1]$.
If $f\in \parabolic$ then $\ST(f)=T_1$
and so $\ST(\EC(f))\subset [\ST(f)]$. 
\end{example}

\begin{example}
Finally consider the example of a bimodal map such that there exist  
$s_1, s_2 \in \N$ so that $f^{s_1}(B_1)\subset B_2$ and 
$f^{s_2}(B_2)\subset B_1$ where $B_i$ are distinct  components of 
$B(f)$  and so that $B_1$ and $B_2$ both contain exactly one critical point.
Then the situation is as in Example~\ref{ex:adjacent_plateaus} and 
$\ST(f)$ is contained in the polygon drawn in Figure~\ref{fig:<T>}. 
Maps in $\{ \ST(\tilde f) \st \tilde f \in \EC(f) \}$
correspond to the maps indicated by $1, 2, 4, 6$ in Figure~\ref{fig:<T>}
and their symmetric counterpart under reflexion in
the diagonal of the $(\zeta_1,\zeta_2)$-space.
There are six such maps, all belonging to $\partial [\ST(f)]$.
The fact that $f\in \parabolic$ ensures that $\ST(f)$ corresponds to the lower corner of the region,
\ie the map $T$ denoted by $1$ in Figure~\ref{fig:<T>}.
This map has the property that $[T]$ is equal to this polygon
(this is false for maps denoted by  {\bf hc} and {\bf pd}).
\end{example}

\begin{proofof}{Proposition~\ref{prop:PsiPH}}
Let $T = \ST(f)$ for an arbitrary $f \in \PB$.
The definition of $\ST$ ensures that the orbit of $T(Z_i)$ under $T$ 
is the same as 
an orbit of $S_0$, and therefore no plateau can be mapped into the interior
of a plateau by $T$. 
On the other hand, if $T^k(Z_i) \in \partial Z_j$ for some minimal $k$,
then a small changes of $\zeta_i$ can move $T^k(Z_i)$ into the interior of $Z_j$.
Therefore $T \in \partial \cell{T}$, and in particular,
$\ST(f) \in \cellN{\ST(f)}$ for $f \in \parabolic$.

Take $\tilde f \in \EC(f)$. 
For each $i$ so that $c_i$ is not in the basin of a periodic attractor, $c_i$ and $\tilde c_i$ have the same kneading invariant 
and so the $i$-th component of $\ST(f)$ and $\ST(\tilde f)$ agree.
 
Now let us consider critical points in the  basin of a periodic attractor.
Although kneading sequences within $\EC(f)$
are not constant, all itineraries in the basin of a periodic attractor
(in the limit sense of \eqref{eq:kneadingdef}) are (pre)periodic
to the same periodic sequence in $\{ I_0, \dots , I_b\}^{\N}$.
Since every periodic attractor of $\tilde f$ has a critical point in its immediate
basin, there is $\tilde T \in \SS^b$ that realizes the corresponding
periodic itineraries by orbits that intersect the interior
of a plateau. Additionally, $\tilde T$ can be chosen such that
all kneading sequences of $c_i \in B(f)$ are indeed achieved by 
points in the interior of plateaus.
For this $\tilde T$ and the semiconjugacy $h$ between 
$\tilde T$ and $\tilde f$, we have $h^{-1}(B(\tilde f)) \subset \B(\tilde T)$.
Also $\B(\tilde T) = \B(T)$, so $\tilde T \in \cell{T}$.
Naturally, the periodic attractors of $\ST(\tilde f)$ lie
on the boundary of plateaus, but that still means that $\ST(\tilde f) \in [\tilde T] = [T]$. Since $\tilde f \in \EC(f)$ is arbitrary,  
$\ST(\EC(f)) \subset [\ST(f)]$.
\end{proofof}

\subsection{\boldmath Almost surjectivity of $\ST$. \unboldmath}
\label{subsec:surjectivity}

In Proposition~\ref{prop:realize} below, we shall prove that $\ST$ is almost surjective.
In order to prove this we need  a result from
\cite{MS} concerning full families.  Let us say
that a periodic attractor of a piecewise monotone interval map
$g\colon I\to I$ is  {\em essential} if it contains a turning point in its immediate basin.
We say that $g$ has {\em no wandering intervals},
if each interval $J$ for which $J,g(J),g^2(J),\dots$ are
all pairwise disjoint necessarily intersects the basin of some
periodic attractor. If $g$ has no wandering interval,
then each interval $J$ for which $g^n|J$ is a homeomorphism
for all $n$ is necessarily contained in the basin of periodic
attractor, see \cite{MS}.
It is well-known, see Theorem IV.A in \cite{MS}, that maps in $\PB$ do not have
wandering intervals and that all their attractors are essential.

\begin{theorem}[Fullness of Families]\label{thm:fullness}
Each piecewise monotone map $g$ with $b$ turning points
is topologically conjugate to a polynomial in $\PB$, provided the following two properties are met:
\begin{enumerate}[topsep=-0.2cm,itemsep=0.05ex,leftmargin=1.2cm]
\item $g$ has no wandering intervals
and no inessential attractors;
\item each periodic turning point is an attractor (this is
automatically satisfied if $g$ is $C^1$).
\end{enumerate}
Moreover, assume that $g$ has an attracting periodic point, then one can find $f,\tilde f\in \PB$ 
which are both topologically conjugate to $g$ 
so that the corresponding attracting periodic point is hyperbolic for $f$ and
parabolic for $\tilde f$.
\end{theorem}
\begin{proof}  The first part of this theorem is Theorem II.4.1 in \cite{MS}. 
The second part requires a slightly modifying  the proof
in \cite{MS} on page 124-125.
\end{proof}

The next proposition gives the required surjectivity: 

\begin{proposition}[$\ST$ is almost surjective]\label{prop:realize}
For each  $T\in \SS^b_*$ there exists a polynomial $f\in \PB\cap \parabolic$
such that $T\in [\ST(f)]$. 
\end{proposition}

\begin{proofof}{Proposition~\ref{prop:realize}} 
By Theorem~\ref{thm:<T>}\eqref{exist}, for each $T \in \SS^b_*$ there exists $T' \in \SS^b_*$ so that
 $T' \in \cellN{T'}$ and $T\in [T']$. Therefore,
if we can prove that there exists a polynomial $f\in \PB\cap \parabolic$
such that $T'=\ST(f)$, then the proposition follows as well.

Since $T$ is not piecewise monotone (because of its plateaus), we cannot apply Fullness Theorem~\ref{thm:fullness} directly. 
In order to obtain a piecewise monotone map, we first
replace $T$ on each component $B$ of its basin which contains plateaus,
by an affinely scaled copy of a map $L_q$ as in Figure~\ref{fig:fourshapes} of the appropriate type. 
Here $q$ is  the number of plateaus in $B$.
Let us call the resulting map $T''$. 
We can choose $L_q$ so that $T'$ and $T''$ have the same kneading invariants
(and hence $\cellN{T'} =\cellN{T''}$),
 and so that 
if $T \in [T'']$ has an attracting periodic point 
in the common boundary point of two plateaus, then $T''$ has an attracting
periodic point between the two corresponding turning points. 

\begin{figure}[h]
\begin{center}
\unitlength=7.8mm
\begin{picture}(20,4.7)(1,0.7)
\thinlines \put(1,1){\line(1,0){4}}\put(1,1){\line(0,1){4}}
\put(1,5){\line(1,0){4}}\put(5,1){\line(0,1){4}} 
\put(2.5, 3.5){\tiny $q=3$}
\put(6,1){\line(1,0){4}}\put(6,1){\line(0,1){4}}
\put(6,5){\line(1,0){4}}\put(10,1){\line(0,1){4}} 
\put(7.5, 3.5){\tiny $q=3$}
\put(11,1){\line(1,0){4}}\put(11,1){\line(0,1){4}}
\put(11,5){\line(1,0){4}}\put(15,1){\line(0,1){4}} 
\put(12.5, 3.5){\tiny $q=4$}
\put(16,1){\line(1,0){4}}\put(16,1){\line(0,1){4}}
\put(16,5){\line(1,0){4}}\put(20,1){\line(0,1){4}} 
\put(17.5, 3.5){\tiny $q=4$}
\thicklines
\put(1,1){\line(5,1){1.2}}\put(5,1){\line(-5,1){1.2}}
\put(2.2,1.23){\line(5,-1){0.8}}\put(3,1.09){\line(5,1){0.8}}
\put(6,5){\line(5,-1){1.2}}\put(10,5){\line(-5,-1){1.2}}
\put(7.2,4.75){\line(5,1){0.8}}\put(8,4.91){\line(5,-1){0.8}}
\put(11,1){\line(1,5){0.42}}\put(15,5){\line(-1,-5){0.416}}
\put(11.4,3.1){\line(5,-1){1.1}} \put(12.5,2.9){\line(5,1){1.06}}
\put(14.56,2.9){\line(-5,1){1.0}}
\put(16,5){\line(1,-5){0.42}}\put(20,1){\line(-1,5){0.416}}
\put(16.45,2.9){\line(5,1){1.1}} \put(17.55,3.1){\line(5,-1){1.06}}
\put(19.58,3.08){\line(-5,-1){1.}}
\end{picture}
\end{center}
\caption{\label{fig:fourshapes} The map $L_q$ where $q$ is the 
number of touching plateaus in the component $B$ of the basin of $T\in \cellN{T''}$.
We choose $L_q$ so that it is continuous, piecewise affine and so that 
the slope between its turning points is at most $1/4$.}
\end{figure}
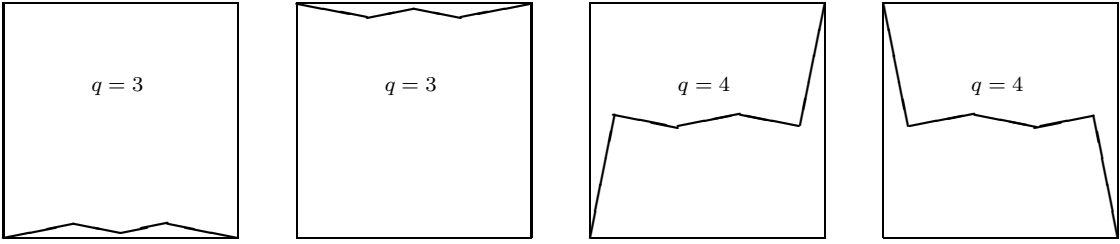

Since $T''$ may still have plateaus, we next define $x\sim y$ for $x,y\in [-e,e]$, if there exists $n\ge 0$ so that
${T''}^n$ maps the convex hull $[x,y]$ into one of the (remaining) plateaus of $T''$. 
Collapse each such interval $[x,y]$ to a point and let $T'''$ be the corresponding map.
From the definition it follows that $T'''$ is continuous and since 
$T\in \SS^b_*$,
it follows that $T'''$ is $b$-modal and has no wandering intervals. 
It also has no inessential attractors (since $T$ does not have these either). 
Hence we can apply Theorem~\ref{thm:fullness} 
 and there exists $f\in \PB$ that is topologically conjugate to $T'''$, 
and therefore have the same kneading invariants. 
The choice of the $L_q$'s corresponds exactly with the definition
of $\parabolic$, so indeed $f \in \parabolic$. 
\end{proofof}

\subsection{\boldmath Almost injectivity of $\ST$. \unboldmath}
\medskip

\begin{proposition}[$\ST$ is almost injective]\label{prop:inj}
The map $\ST\colon \PB \to \SS^b_*$
is `almost injective' in the sense that
if $f,\tilde f \in \parabolic$ and $[\ST(f)]\cap [\ST(\tilde f)]\ne \emptyset$,
then at least one
of $\EC(f) \cap \overline{\EC(\tilde f)}$ and 
$\overline{\EC(f)}\cap\EC(\tilde f)$ is non-empty.
\end{proposition}

\begin{remark}
Here the assumption that $f,\tilde f\in \parabolic$ is needed. Indeed, using the notation from
 Example~\ref{ex:STdisc}, take $f=f_{\lambda_1'+\epsilon}$ and 
$\tilde f=f_{\lambda_3'}$.  Then $\ST(f)=T_p$ and $\ST(\tilde f)=T_q$, and therefore
$[\ST(f)]\cap [\ST(\tilde f)]\ne \emptyset$ while $\overline{\EC(f)}\cap \overline{\EC(\tilde f)} = \emptyset$.
\end{remark}

\begin{proof} If $\ST(f)=\ST(\tilde f)$ then  $f,\tilde f\in \PB$ have the same kneading invariants.
Hence there exists an order preserving bijection 
$h\colon \cup_{c} \cup_{n\in \Z} f^n(c) \to \cup_{\tilde c}\cup_{n\in \Z}\tilde f^n(\tilde c)$
(where the outer union runs over the critical points $c$ of $f$ and $\tilde c$ of $\tilde f$), so that $h\circ f=\tilde f\circ h$.
Here $h$ maps each critical points of $f$ to the corresponding critical point of $\tilde f$.
It follows that if $f$ has no periodic attractors, then $\tilde f$ also has no periodic attractors
and so $f$ and $\tilde f$ are topologically conjugate. Rigidity Theorem~\ref{KSS} then gives that $f=\tilde f$.

If $f$ has a periodic attractor $p$, then define $H \owns p$ to be the largest interval 
such that $f^s(H) \subset H$ and $f^s|H$ preserves orientation.
That is, $s$ is either the period of $p$ if $p$ is orientation preserving and twice the period
otherwise.
Since $f \in \PB$, by taking an iterate of $p$ is necessary, we can assume that $H$ contains a 
critical point $c$, and $( f^{ks}(c) )_{k \ge 1}$ is a monotone sequence in $H$ converging to $p$.
In this case $\tilde f$ also has a periodic attractor in a corresponding interval $\tilde H$. 
However, it is possible that $f^s|H$ has a unique attracting fixed point, while $\tilde f^s|\tilde H$ has two 
attracting fixed points, or vice versa.  It follows that either $\EC(f)=\EC(\tilde f)$ or 
 there exists a map $g\in \PB$ with at least one parabolic periodic point 
such that $g \in \EC(f) \cap \overline{\EC(\tilde f)}$ or $g \in \overline{\EC(f)}\cap\EC(\tilde f)$, see  the last part of 
Theorem~\ref{thm:fullness}. 
Since $\EC(f)$ and $\EC(\tilde f)$ are connected, in particular it follows that the space $\T(f)$ from Theorem~\ref{thm:A} is connected.

Assume now that $[\ST(f)]\cap [\ST(\tilde f)]\ne \emptyset$ and $[\ST(f)]\ne [\ST(\tilde f)]$.
 By Theorem~\ref{thm:<T>}(6), there exists a map  $T_*\in \cellN{T_*}$ so that 
$T_*\in \min([\ST(f)] \cap [\ST(\tilde f)])$. According to Theorem~\ref{thm:<T>}\eqref{bifurcation}
there are four possibilities:
\begin{enumerate}[topsep=-0.2cm,itemsep=0.05ex,leftmargin=0.8cm]
\item $T_*$ has a saddle-node merging two basins (or splitting one into two);
\item $T_*$ has a period doubling orbit;
\item $T_*$ has a pitch-fork orbit;
\item $T_*$ has a homoclinic orbit.
\end{enumerate}
Proposition~\ref{prop:realize} produces a map $f_* \in \PB$
such that $\ST(f_*) = T_*$ and with the corresponding bifurcations.

\noindent
{\bf Claim:} In all these cases, $f_* \in \EC(f) \cap \overline{\EC(\tilde f)}$ or $f_* \in \overline{\EC(f)}\cap\EC(\tilde f)$.
  
\noindent
{\bf Proof of Claim.} This  follows from Theorem~\ref{thm:defspace2}.
Indeed, let $i=1,\dots,b$ and assume that the coordinate $\zeta_i(\ST(f))<\zeta_i(\ST(\tilde f))$. 
  Since $f\in \parabolic$,  by Proposition~\ref{prop:PsiPH},  $\ST(\EC(f))\subset [\ST(f)]$.

{\bf Claim:} One can find a 
 continuous path $f_t$ in $\PB(f)$, $t\in [-1,0]$ so that  $f_t\in \EC^o(f)$ for $t\in (-1,0)$,
 $f_{-1}=f$, $f_0=f_*$. 

\noindent
{\bf Proof of Claim:} This follows from Theorem~\ref{thm:defspace2}. 
Indeed, consider $T=\ST(f)$ and for each of its non-hyperbolic basins, 
consider what type of bifurcations $T_*\in [T]$ undergoes: merging or splitting components of touching basins, see (1)-(4) above.  Then choose the sign $\sigma_i$ for each neutral orbit of $f_t$ 
so that Theorem~\ref{thm:defspace2} ensures that $f_t$    undergoes the corresponding bifurcation as $t\uparrow 0$:
it splits into two components by a saddle-node, a period doubling bifurcation, 
 a pitch-fork bifurcation or a critical point moves from the boundary into the interior of one of the components of the basin. 

Similarly, since $\tilde f\in \parabolic$ we have $\zeta_i(\ST(\tilde f))=\zeta_i(T_*)=\zeta_i(\ST(f_*))$
(in fact  $\zeta_{i'}(\ST(\tilde f))=\zeta_{i'}(T_*)=\zeta_{i'}(\ST(f_*))$ holds for each $i'\in \{1,\dots,b\}$
when $c_{i'}$ is a critical point attracted to the same periodic orbit as $c_i$.)
 Applying Theorem~\ref{thm:defspace2} again, one can find a continuous path 
$f_t$ in $\PB(f)$, $t\in [0,1]$ so that $f_t\in \EC^o(\tilde f)$ for $t\in (0,1)$, $f_0=f_*$ and $f_1=\tilde f$. 
 Since $\zeta_i(\ST(\tilde f))=\zeta_i(T_*)=\pi_i(\ST(f_*))$, we can even make sure that
 $\zeta_i(\ST(f_t))=\zeta_i(T_*)=\zeta_i(\ST(f_*))$ for each $t\in [0,1]$ (so the kneading invariant 
 of the $i$-th critical point does not change as $t$ varies in $[0,1]$). 
 If $\zeta_i(\ST(f))>\zeta_i(\ST(\tilde f))$, these bifurcations occur in reverse.
 If $\zeta_i(\ST(f))=\zeta_i(\ST(\tilde f))$, then the $i$-th kneading invariant of $f$ and $\tilde f$
 are the same, and the argument from the beginning of the proof applies for this critical point.
Since one can apply this argument for each periodic attractor simultaneously,
 the proposition follows.
\end{proof}

The previous proof in particular showed: 

\begin{theorem}\label{thm:kneadconn}
Given a map $f\in \PB$, the set $\T(f)$ of maps $\tilde f\in \PB$ with the same
kneading invariant as $f$ forms a connected set. Moreover, if $\tilde f\in \T(f)$
then $\overline{\EC(f)}\cap \overline{\EC(\tilde f)}\ne \emptyset$.
\end{theorem}

\subsection{\boldmath Almost continuity of $\ST$. \unboldmath}

\begin{proposition}[$\ST$ is almost continuous]\label{prop:cont}
$\ST\colon \PB \to \SS^b_*$ is `almost continuous' in the following sense.
Assume that $f_n\to f$ where $f_n,f\in \PB$ and $f_n\in \parabolic$. Then there exists $T_* \in \SS^b_*$
so that any limit of $T_n\in [\ST(f_n)]$ is contained in $[T_*]$ and so that $\ST(f)\in [T_*]$. 
\end{proposition}

\begin{remark}\label{remark:cont}  It is not necessarily true that the
limit of $T_n\in [\ST(f_n)]$ is contained in $[\ST(f)]$. 
Indeed, let $f$ be a cubic map so that its left critical point
is a fixed point (and is a maximum).  Then $\ST(f)$ is equal to the 
map $T_0$ from Figure~\ref{fig:adjacent_plateaus}. Since
$[T_0]=\{T_0\}$ whereas for maps $f_n\to f$ with $f_n\in \parabolic$ one has that
$[\ST(f_n)]$ is equal to the union of the two triangles in the figure. Note that $f\notin \parabolic$. 
\end{remark}

\begin{proofof}{Proposition~\ref{prop:cont}}
Take $T_n=\ST(f_n)$ and $T=\ST(f)$. 
By taking a subsequence, we can assume that $T_n$ converges to some map $\tilde T$.
  Let $Z_i, Z_{i,n}$ be the plateaus associated to 
$T$ and $T_n$ respectively. Note that $\cell{T}$ has a product structure, 
see Theorem~\ref{thm:<T>}\eqref{prod} and let $\pi_i$ and $\pi_J$
be the projections as defined in that theorem. 
 If $c_i$ is not eventually mapped onto another critical
point, then $\nu_i(f_n)\to \nu_i(f)$ as $n\to \infty$
(in the usual topology on sequence spaces) and so
$Z_{i,n}\to Z_{i}$ as $n\to \infty$. 
That is, $\pi_i(\ST(f_n))\to \pi_i(\ST(f))$. 
In this case define $\zeta_i:=\zeta_i(\ST(f))$.
If  $c_i$ is mapped onto another critical point, say $f^k(c_i)=c_j$
but is not in the immediate basin of a periodic attractor,  then $T^k(Z_i)\subset \partial Z_j$ 
and for a sufficiently small neighborhood $U_i$ of $Z_i$ 
one has $T^k(U_i)\cap Z_j\subset \partial Z_j$. 
That is, if $T$ has a maximum (minimum) at $Z_i$ then $T(Z_i)$ is the left (respectively right) endpoint
of a component of $\B(T)$.  Since $f_n\to f$ and $c_i$ is not in the basin of a periodic attractor,
one has that $\tilde T^k(\tilde Z_i)\subset \partial \tilde Z_j$
where $\tilde T^k(\tilde Z_i)$ is possibly the {\lq}other{\rq} endpoint of $\tilde Z_j$.
It follows that $\pi_i([\ST(f)])$ is equal to an interval of the form
$[a_i,b_i]$ and that $\pi_i(\ST(f_n))$ converges to an endpoint of this interval.
In this case define $\zeta_i:=a_i$. 
If $c_i$ is periodic, then $f_n$ also has an attracting periodic point 
$p_n$ near $c_i$. Let $B_n$ be the component of the immediate basin 
containing $p_n$ and let $B$ be the component of the immediate basin
containing $c_i$ and we define $\zeta_i:=\zeta_i(\ST(f))$. 
If $f_n(\partial B_n)$ consists of one point, then $f_n\in \parabolic$
implies that the itinerary w.r.t. $f_n$ of each critical point in $B_n$ agrees with 
the itinerary w.r.t. $f$ of the corresponding critical point in $B$.
In this case $\pi_i(\ST(f_n))=\pi_i(\ST(f))$. If  $f_n(\partial B_n)$ consists
of two points, then this no longer needs to be the case, see 
Example~\ref{example:explainA2*} and Remark~\ref{remark:cont}.
In this case $\pi_i([\ST(f_n)])$ corresponds to a set as in 
Figure~\ref{fig:adjacent_plateaus}
and Figure~\ref{fig:<T>}
and $\pi_i(\ST(f))$ is in the closure of this set. Since  $f_n\in \parabolic$
it follows that $\ST(f_n)$ is equal to the point marked 1 in these figures.
Since $\ST(f_n)\in  \cell{\ST(f_n)}_\flat$, we get that
 $[\ST(f_n)]$ is equal to the closure to this set
 and we set $\zeta_i=\ST(f_n)$.
If $c_i$ is mapped to another critical point and 
in the immediate basin of a periodic attractor, then the same argument goes through. 
The map $T_*$ for which $\zeta_t(T_*)=\zeta_i$ for $i=1,\dots,d$ 
and where $\zeta_i$ is chosen as above,  has the required properties.
\end{proofof}

\section{Proof of the Main Theorem
 }\label{sec:proofofthm}

In this section we shall prove the Main Theorem, assuming Theorem~\ref{Thm:ConnectedI}  
(which will be proved as Theorem~\ref{Thm:Connected} in the next section). First let us prove the following

\begin{theorem}[{\lq}Fibers{\rq} of $\ST$ are connected]
\label{thm:Psiproper} Assume that $K$ is a closed and connected
subset of $\SS^b_*$ with the property that
if $T\in K$ then $[T]\subset K$.
Then $\ST^{-1}(K)$ is connected.
\end{theorem}

\begin{remark}\label{rem:LimitTn} Note that $\SS^b_*$ is not a closed
subset of $\SS^b$.  Therefore we merely assume that $K$ is a closed subset
in the relative topology of  $\SS^b_*$ meaning that if $T_n\in K$
converges to $T\in \SS^b_*$  then $T\in K$.
\end{remark}

\begin{proof}
Take a closed connected set $K\subset \SS^b_*$, and assume by
contradiction that $\ST^{-1}(K)$ is not connected.
This means that there are
disjoint open sets $U_1,U_2 \subset \PB$
so that $U_1\cup U_2 \supset \ST^{-1}(K)$ and
$C_i:=U_i\cap \ST^{-1}(K)\ne \emptyset$, $i=1,2$.
Write $[\ST(C_i)]:=\cup_{f\in C_i}[\ST(f)]$.\\
{\bf Claim 1:} $[\ST(C_1)]\cup [\ST(C_2)]\supset K$.
Indeed, it  follows from surjectivity (Proposition~\ref{prop:realize}) that for every
$T \in K$ there exists  $f \in \PB$ such that
$T \in [\ST(f)]$. Since $[\ST(f)]\cap K\ne \emptyset$
we have by assumption that $[\ST(f)]\subset K$
and therefore $f\in \ST^{-1}(K)$. Therefore $f\in C_1\cup C_2$ and since
$T\in [\ST(f)]$, the claim follows.
\\
{\bf Claim 2:} $[\ST(C_i)] \cap K$ is closed (again in the relative
topology of $\SS^b_*$).
To see this, take a sequence $T_n \in [\ST(f_n)] \cap K$ with $f_n\in C_i$.
By Theorem~\ref{thm:<T>}\eqref{exist} and Proposition~\ref{prop:realize}
we can assume that $f_n\in \parabolic$. 
By considering subsequences we may assume that
$T_n \to T$ for some $T \in K$ and
$f_n \to f$ for some $f\in U_i$. Since $[\ST(f_n)]\cap K\ne \emptyset$
we have  $[\ST(f_n)]\subset K$.  Because of this and because  $f_n\in \parabolic$,
continuity (Proposition~\ref{prop:cont}) implies that  $[\ST(f)]\subset K$.
Hence $f\in \ST^{-1}(K)\subset U_1\cup U_2$ and, since $f_n\in C_i\subset U_i$ converges to $f$,
also $f\in U_i\cap \ST^{-1}(K)=C_i$.  This completes the proof of Claim 2. \\
{\bf Claim 3:} $[\ST(C_1)]\cap [\ST(C_2)]\ne \emptyset$.
This follows from the connectedness of $K$ and Claims 1 and 2.

\medskip
By Proposition~\ref{prop:realize}, there exist therefore $f_i\in C_i \cap \parabolic$ such that
$[\ST(f_1)]\cap [\ST(f_2)]\ne \emptyset$.
By injectivity (Proposition~\ref{prop:inj}), this implies that
$$
\EC(f_1) \cap \overline{\EC(f_2)}\ne \emptyset \quad \text{ or } \quad 
\overline{\EC(f_1)} \cap \EC(f_2)\ne \emptyset.
$$
Moreover, by Proposition~\ref{prop:PsiPH},\,
$\ST(\EC(f_i))\subset [\ST(f_i)]\subset K$.
Hence  $\EC(f_i)\subset \ST^{-1}([\ST(f_i)])\subset C_i$.
Since $\EC(f_1)$ and $\EC(f_2)$ are both connected,
this contradicts that $C_i\subset U_i$ with $U_1,U_2$ disjoint.
\end{proof}

\begin{proofof}{Main Theorem}
By Theorem~\ref{Thm:Connected} (and the remark below) it follows that level sets
of $h_{top}\colon\SS^b_* \to \R$ are connected.
Moreover, $h_{top} \colon \PB\to \R$ agrees
with $h_{top}\circ \ST$. Because the topological entropy
of each map in $[T]$ is the same,
Theorem~\ref{thm:Psiproper} shows that the isentropes 
lift to connected sets in $\PB$. Similarly, the set 
$I(h_0^)$ is connected.
\end{proofof}

\section{\boldmath Isentropes in
$\SS_*^b$ are contractible. \unboldmath}
\label{sec:connS*}

Recall from Definition~\ref{def:wander}
that $\SS^b_*$ is the collection of  {\em non-degenerate}
stunted sawtooth maps $T \in \SS^b$.  
That is, by definition, if $\J:=[Z_i,Z_j]$ is the convex hull of $Z_i$ and $Z_j$, and
there is $n\ge 0$ such that $T^n(\J)$ is a point
(so $(Z_i, Z_{i+1})$ forms a wandering pair), 
then $T \in \SS^b_*$ means that $T^n(\J)$ is eventually mapped into the closure
of the immediate basin a periodic plateau.

Of course, if $(Z_i,Z_j)$ is a wandering pair, then all
plateaus between $Z_i$ and $Z_j$ form wandering pairs as well.
The subset $\SS_*^b \subset \SS^b$   is chosen because
$\ST\colon \PB \to \SS^b$ fails to be  surjective in a serious way
(whereas $\ST\colon \PB \to \SS^b_*$ is
almost surjective in the sense of Proposition~\ref{prop:realize}).
Indeed, if $T\in \SS^b\setminus \SS^b_*$ has a non-preperiodic
wandering pair $(Z_i,Z_j)$ and $\ST(f) \in [T]$,
then $f$ has a wandering interval $[c_i,c_j]$.
However, it is well-known (see \eg \cite{MS}) that polynomials, and in fact
$C^2$ interval maps with non-flat critical points, have no wandering
intervals.

This section is devoted to proving Theorem~\ref{Thm:ConnectedI}, which we restate as

\begin{theorem}\label{Thm:Connected}
Let $L(h) = \{ T \in \SS^b_\shape \st h_{top}(T) = h \}$ and $L_*(h) = L(h) \cap
\SS_{\shape,*}^b$. Then 
\begin{itemize}
\item for every $h_0\in [0, \log(b+1)]$, the level set $L_*(h_0)$
is a contractible subset of $L(h_0)$;
\item 
$L_*(h_0^+):= L_*(h_0)\cap  \mbox{closure}( \{ T \in \SS^b_* \st h_{top}(T)) > h_0\})$ is contractible. 
\end{itemize}
\end{theorem}

\begin{remark} The sets $L_*(h_0)$ has the property that if $T$ is contained
in one of these sets, then $[T]$ is contained also in this set.
This property holds for $L_*(h_0)$ since each map in $[T]$
has the same topological entropy. 
Since $L_*(h_0^+)$ is connected and each set of the form $[T]$
is connected,  the set
$$[L_*(h_0^+)]:=\cup_{T\in L_*(h_0^+)}[T]$$
is also connected.
\end{remark}

Throughout this section  we fix $h_0\in [0, \log(b+1)]$, although we separate the easier cases $h_0 = 0$ and $h_0 = \log(b+1)$,
see Sections~\ref{subsec:h0=logb+1} and \ref{subsec:h0=0}.

That $L(h_0)$ is contractible was already proved in Theorem 6.1 in \cite{MTr}.
Contractibility of $L_*(h_0)$ is much more difficult, and we have to adjust
the proof of \cite{MTr} in a delicate way.
The proof involves the construction of a retract $R$ composed of 
entropy decreasing deformations (to contract $L(h)$ to a single point)
and entropy increasing deformations (to keep $L(h)$ within itself).
The problem is to keep $L_*(h)$ within itself under continuous action of
the retract.
To this end we are forced to compose $R$ of altogether
five deformations, with some auxiliary deformations.
We use the letters $\Ei, \ei, \Eih$ to indicate
entropy increasing deformations, and  $\ed, \hed,  \Edh$
for entropy decreasing deformations. The deformation $\beta$ will not change
entropy.
The letters $R$ and $r$ stand for retract.

Before we are able to give the proof of this theorem we will
develop the necessary ingredients.

\subsection{The piecewise affine case.}
An interval $K$ is a \emph{renormalization interval} an interval map $f$ if
$f^n(K)\subset K$ for some $n \ge 1$ and $f^n(\partial K)\subset  \partial K$.
If $n = 1$, and $K  = I$, then this is a renormalization interval only in a 
trivial sense, but we still want to consider it as such.
The set  $\orb(K)=K\cup f(K)\cup \dots \cup f^{n-1}(K)$ is called
a \emph{renormalization cycle}.

It is well-known \cite{MT}, that every interval map of entropy $h > 0$
is semi-conjugate to a piecewise affine interval map with slope $\pm e^h$.
The semi-conjugacy is a monotone map, and collapses every interval
that doesn't contribute to the exponential growth rate of the lapnumber;
these are wandering intervals, basins
of periodic attractors and possibly renormalization intervals, as well
as intervals that map into those.

The following lemma will be used at several places in the rest of the proof.

\begin{lemma}\label{lem:decrentr}
Assume that  $F\colon [0,1]\to [0,1]$ is a continuous, piecewise
affine map with at most finitely many plateaus $Z_i$,
and slope $|F'| > 1$ outside these plateaus.
Suppose that $Z$ is a turning plateau (or point) in a minimal\footnote{\ie $K$ contains no strictly smaller renormalization interval.} renormalization cycle $\orb(K)$,
such that no neighborhood of $Z$ is ever mapped into a plateau. 
Let $J$ be a neighborhood of $Z$ 
so that $F(\partial J)$ is a single point, then the function  $\tilde F$ defined by
$$
\tilde F(x) = \left\{ \begin{array}{ll}
F(\partial J) &\mbox{ if } x \in \overline{J},\\
F(x) &\mbox{ if } x \notin \overline{J},
\end{array}\right.
$$
satisfies $h_{top}(\tilde F|\orb(K)) < h_{top}(F|\orb(K))$.
\end{lemma}

\begin{remark}
The origins of the following proof are somewhat nebulous to
us. Jozef Bobok drew our attention to the argument, ascribing it
to Sasha Blokh, but we haven't been able to locate a precise source.
Related results were proved by Boyland \cite{boyland}
and Block \& Ledis \cite{BlockLedis}.
\end{remark}

\begin{proof}
The inequality $\tilde h := h_{top}(\tilde F) \le h := h_{top}(F)$ follows directly
from the definition of $\tilde F$; in fact
$|J| \mapsto h_{top}(\tilde F)$ is a decreasing function.
However, we need to prove strict inequality.
Let $Y = \orb(K) \setminus \cup_n F^{-n}(\interior(\cup_i Z_i))$.
Since $\orb(K)$ is a minimal cycle and the derivatives $|F'| > 1$ on $Y$,
the restriction $F:Y \to Y$ is transitive and supports a unique measure 
of maximal entropy, see \cite{Hof}.
The assumption on $Z$ implies that $\partial Z \subset Y$ and in fact $\mu(J \setminus Z) > 0$.

Now $\tilde F|\orb(K)$ is entropy-preservingly semi-conjugate (say via $\psi$)
to a map with slope $\pm e^{\tilde h}$.
Let $\tilde \nu$ be the measure of maximal entropy of this map, and
$\nu = \tilde \nu \circ \psi$.
Then $0 = \nu(\tilde F(J)) \ge \nu(J)$, because $\nu$ is non-atomic.
It follows that $\supp(\nu) \cap \interior(J) = \emptyset$, and definitely
$\nu \neq \mu$, whilst at the same time $\nu$ is not only 
$\tilde F$-invariant, but $F$-invariant as well.
Since $\mu$ is the unique measure of maximal entropy of $F|\orb(K)$,
it follows that $\tilde h < h$.
\end{proof}

\subsection{\boldmath Increasing  the entropy of maps in $\SS$:
$\ei_t$ and  $\Ei_t$.\unboldmath }\label{subsec:increase}
The stunted seesaw map $T:[-e,e] \to [-e,e]$ is uniquely determined by the parameters $(\zeta_1,\dots,\zeta_b)$, and so we can define the norm
\begin{equation}\label{eq:dist}
\dist(T, \tilde T)=\max_{i \in \{ 1, \dots, b \}} |\zeta_i-\tilde \zeta_i|.
\end{equation}
Write $T \le \tilde T$ if the parameters satisfy $\zeta_i \le \tilde \zeta_i$
for all $i$. Similarly $T < \tilde T$ if $T \le \tilde T$ and 
$\zeta_i < \tilde \zeta_i$ for at least one $i$. 
Notice that 
\begin{equation} \label{eq:order}
T \le \tilde T \mbox{ implies } h_{top}(T)\le h_{top}(\tilde T).
\end{equation}

{\bf Construction of \boldmath $\ei_t$: \unboldmath}
Let $\ei_t$ linearly increase all
parameters: $\ei_t\colon \zeta_i \mapsto \zeta_i + 2e t$, as long as
they do not map to $\pm e$. 

Let $W^o(T)$ be the components of the domain of the first entry map to 
the interior of plateaus of $T$:
$$
\begin{array}{rl}  W^o(T) := 
&\left\{x; \, \exists j\in \{1,\dots,b\} \mbox{ and }k\ge 0\mbox{ such that }\right. \\
&\left.  \quad T^k(x)\in \interior(Z_j) \mbox{ and }x,T(x),\dots,T^{k-1}\notin Z_j\right\} .\end{array}
$$ 
We say that $T$ satisfies the {\bf $\beta$-property} if 
\begin{equation}\label{eq:nointerior}
\begin{array}{rl} &\,\, \mbox{ no interval of the form
$\J_{i}=[Z_{i},Z_{i+1}]$ is contained in }W^o(T).
\end{array} 
\end{equation} 

\begin{lemma} Assume that $T$ satisfies the $\beta$-property (\ref{eq:nointerior}).
Then $\ei_t(T)\in \SS_*^b$  for all $t>0$.
\end{lemma} 
\begin{proof} For every fixed integer $u\ge 0$,
as $t$ increases, the image of the interval
$\J_{i,t}=[Z_{i,t},Z_{i+1,t}]$ under the $u$-th iterate of the map $\ei_t(T)$
becomes larger while
the sizes of plateaus shrink. 
If the $\beta$-property (\ref{eq:nointerior}) holds  then it follows that
$\J_{i,t}$ cannot be mapped into a non-periodic plateau for $t>0$ and therefore no new
wandering pairs can be created by the deformation $\ei_t$
as $t$ increases. Also $\ei_t(T)$ has no non-trivial blocks 
of touching plateaus for $t>0$. This implies the lemma.
\end{proof}

{\bf  Construction of \boldmath $\Ei_t$: \unboldmath}
The deformation $\Ei_t$ uses $\ei_t$
and the following observation. If $\dist(T,\tilde T)<\eps/(2e)$, then
$\dist(\ei_t(T), \ei_t(\tilde T))<\eps$ for every $t>0$.
It follows that
\[
\ei_{t-\eps}(T)<\ei_t(\tilde T) < \ei_{t+\eps}(T)
\]
and so by \eqref{eq:order},
\[
h_{top}(\ei_{t-\eps}(T)) \le
h_{top}(\ei_t(\tilde T)) \le h_{top}(\ei_{t+\eps}(T)).
\]
Hence the function 
\begin{equation}\label{eq:tmax}
t_{max}(T) := \max\{ t \ge 0\st h_{top}(\ei_t(T)) = h_0\}
\end{equation}
is continuous in $T$ provided $h_{top}(T)\le h_0$.
In particular, $\Ei_t(T) := \ei_{t\cdot t_{max}(T)}(T)$ is continuous in $t$ and $T$ as well. 
By construction, 
\begin{equation} \label{eq:makegood}
\begin{array}{rl}
&\left\{T\in \SS^b \ ; \ h_{top}(T)<h_0,
T \mbox{ satisfies the $\beta$-property \eqref{eq:nointerior}}\right\} \implies \\
& \\
&  \quad \quad \quad \Ei_t(T)\in  \cup_{s \le h_0} L_*(s),  \forall  t\in (0,1].\end{array}
\end{equation}
Moreover,  if  $T$ satisfies the $\beta$-property \eqref{eq:nointerior}
then $\Ei_t(T)$ also satisfies the $\beta$-property \eqref{eq:nointerior} for $t\in (0,1]$.

\subsection{\boldmath Decreasing entropy of maps in $\SS^b$: $\ed_t$, $\hed_t$ and $r_t$. \unboldmath}
\label{subsec:deltat}
In this section we define the basic operations to decrease entropy, although later we 
will need refined versions of them.

{\bf  Construction of \boldmath $\ed_t$: \unboldmath}   Define the `sign'
\begin{equation}\label{eq:sgn}
\sgn(Z_i) = \left\{ \begin{array}{ll}
0 & \text{ if } Z_i \text{ touches another plateau, }\\
1 & \text{ otherwise.}
\end{array} \right.
\end{equation}
Now deform $T$ according to the flow
defined by the system of differential equations on the parameters,
where we will now indicate the $t$-dependence by $\zeta_{i,t}(T)$.
\[
\begin{array}{ll}
\dfrac{d\zeta_{i,t}}{dt}(T) &= \quad \left\{
\begin{array}{ll}
-\, 2\, \sgn(Z_i) e &  \mbox{ if } \zeta_{i,t}(T)\in (-e,e), \\[1mm]
0   & \mbox{ otherwise,}\end{array} \right. \\[2mm]
\zeta_{i,t}(T)|_{t=0} &= \quad \quad \zeta_i(T). \end{array}
\]
Let us denote the resulting deformation by $\ed_t$;
it continuously decreases/increases the height of a plateau if
it is local maximum/minimum of $T$ until this plateau
touches a neighboring plateau, or reaches the boundary of $I$.
Furthermore, if $T$ has no touching plateaus then 
$h_{top}( \gamma_s \circ \ed_t(T) )\le h_{top}(T)$ for 
$s\in (0,t)$ and $t>0$ sufficiently small.
Note, however, that $\ed_t(T)=\ed_s(T)$ for
$s,t\ge 1$ but this might mean that each plateau of $\ed_t(T)$
touches another plateau, and therefore this is not a guarantee that
$h_{top}(\ed_t(T))=0$. Therefore this deformation, although necessary 
(see Section~\ref{subsec:careful}), is not sufficient for
our purposes.

{\bf  Construction of \boldmath $\hed_t$: \unboldmath}
A natural variant is the deformation $\hed_t$ which will widen plateaus in order to decrease entropy to $0$.
The difference in the deformations $\ed_t$ and $\hed_t$ is
that $\ed_t$ decreases the parameter $\zeta_i$ until the
corresponding plateau $Z_i$ touches another plateau (or reaches $e$);
blocks (consisting of more than one plateau) do not move under $\ed_t$.
By contrast, the deformation $\hed_t$ will also move blocks of plateaus, provided they form
a local extremum,
which happens whenever the block consists of an odd number of plateaus.
Blocks of an even number of plateaus are not moved by $\hed_t$ (unless
an extra plateau joins the block).
As it turns out, this may introduce new wandering pairs.

Define the `sign' for plateaus that are part of a block of plateaus:
\begin{equation}\label{eq:hsgn}
\hsgn(Z_i) = \left\{ \begin{array}{rl}
0 & \text{ if } Z_i \text{ is part of a block of an even number of
plateaus,}\\
1 &  \text{ if } Z_i \text{ touches no other  plateau or is an odd-numbered}\\
& \qquad \text{plateau in a block of an odd number of plateaus,}\\
-1 &  \text{ if } Z_i \text{ is an even-numbered plateau in a block of
an odd}\\
& \qquad \text{number of plateaus.}
\end{array} \right.
\end{equation}
Note that $\hsgn(Z_i)$ depends not only
on $i$ but also on $T$, and $\hsgn(Z_i) = \pm 1$ means that $T$ has
a local extremum at the block of plateaus that $Z_i$ is part of.
We deform $T$ according to the flow
defined by the system of differential equations on the parameters.
\[
\begin{array}{ll}
\dfrac{d\zeta_{i,t}}{dt}(T) &= \quad \left\{
\begin{array}{ll}
-2\cdot  \hsgn(Z_i) \cdot e &  \mbox{ if } \zeta_{i,t}(T)\in (-e,e),\\[1mm]
0   & \mbox{ otherwise,}\end{array} \right. \\[2mm]
\zeta_{i,t}(T)|_{t=0} &= \quad \zeta_i(T). \end{array}
\]
The differential equation defines
a continuous deformation $\hed_t$
with the property that $t \mapsto \hed_t(T)$
(not necessarily strictly) decreases the topological entropy.
During the deformation blocks
can collide, and then the combined larger blocks are deformed
according to the same rule. (As a result $\hsgn(Z_i)$ can change
during the deformation.)

{\bf  Construction of the retract \boldmath $r_t$: \unboldmath}
If $b$ is odd and $t\ge 1$, then all plateaus of $\hed_t(T)$
will touch and the map $\hed_{t}(T)$ is constant $\pm e$.
If $b$ is even and $t\ge 1$,
then each map  $\hed_{t}(T)$ will be monotone
(with some  blocks of touching  plateaus).
More precisely,  if $t\ge 1$ then  $\hed_t(T)\in \Sigma^b_\shape$
where
$$
\Sigma_\shape^b = \left\{ \begin{array}{ll}
\{ T_0(x) \equiv \pm e \} & \mbox{ if $b$ is odd and } \shape = \mp 1; \\
\{ \mbox{monotone maps in } \SS_\shape^b \}  & \mbox{ if $b$ is even.}
\end{array} \right.
$$
Since $\Sigma_\shape^b$ is a singleton in the first case and a simplex in the second case, there exists a continuous retract
$r_t\colon \Sigma^b_\shape\to \Sigma^b_\shape$
with $r_0=id$ and $r_1 \equiv T_0$ for $T_0$ some map in $\Sigma^b_\shape$.

{\bf Construction of a retract $R_t$ of an isentrope of $\SS^b$:} 
If we only had to construct a retract of an isentrope of $\SS^b$ then we
could finish the construction of a deformation $R_t$ as follows. 
Define 
\[
R_t = \left\{ \begin{array}{ll}
\Gamma_{3t}&\mbox{ for }t\in [0,1/3],\\
\Gamma_1\circ \hed_{3t-1}&\mbox{ for }t\in [1/3,2/3],\\
\Gamma_1\circ r_{3t-2}\circ \hed_1&\mbox{ for }t\in [2/3,1].
\end{array}
\right.
\]
%
%
Obviously this defines the required retract of an isentrope with the
space $\SS^b$ to a point.
However, as we will see, 
this is still insufficient for our purposes because it is not a retract of an isentrope
within the smaller  space $\SS_*^b$. In the remainder of this section we 
will show how to modify this construction to obtain a retract of an isentrope in 
$\SS_*^b$.

\subsection{\boldmath The case $h_0=\log(b+1)$. \unboldmath}\label{subsec:h0=logb+1}

There is only one stunted sawtooth map with entropy $h_{top}(T) = \log(b+1)$
(just as there is only one polynomial of give $\shape$ with entropy
$h_{top}(f) = \log(b+1)$, namely the Chebyshev polynomial.
Hence this level is trivial.

\subsection{\boldmath The retract $R_t$ for the case $h_0=0$. \unboldmath}\label{subsec:h0=0}

Let us give the proof in the case that $h_0=0$, as the construction will be
much easier in this case. Consider a map 
$T\in \SS^b_*$ with zero topological entropy.   Let us first review some results
on the renormalization structure of such maps.

We say that $T\in \SS^b$ has a {\em $2$-renormalizable}, if there exists 
an interval $K$ so that $K$ and $T(K)$ have disjoint interiors
and $T^2(K)\subset K$.

\begin{lemma}\label{lem:2periodic} 
Let $T\in \SS^b$ and assume that $h_{top}(T) = 0$. 
If $T$ is not $2$-renormalizable, then the $\omega$-limit set of 
each point in $I$ is a fixed point of $T$.
\end{lemma}

\begin{proof}  Since $T$ has zero topological entropy, it follows that there exists no interval $J$ 
so that  $T^2(\partial J)\subset \partial J$ and $T^2(J)\supset J$.
From this, and since $T$ is not $2$-renormalizable, 
this well-known lemma easily follows. 
\end{proof}

\noindent
{\bf Remark:} If $T$ is $2$-renormalizable then one can apply 
the lemma again to $T^2 \colon K\to K$. If one can repeat this infinitely often, then the map is {\em  infinitely renormalizable}. In this case, for each $k\ge 0$, the map $T$
has one or more periodic points of period $2^k$  and no other periodic points.
If $T$ is not infinitely often renormalizable, then each point is eventually
mapped into a periodic point of $T$ or is in the basin of a periodic orbit of period $2^k$.

\begin{lemma}\label{lem:gammas}
Let $T \in \SS^b_*$ with $h_0=h_{top}(T) = 0$ and assume that $t>0$.
Then there exists $k_0<\infty$ such that
all periodic attractors of $\hed_t(T)$ have period $2^k$, $k\le k_0$.
Moreover, each point is either (pre-)periodic or in the basin of one
of the periodic attractors of $\hed_t(T)$.
\end{lemma}
\begin{proof}
From the previous lemma it follows that each interval map of zero entropy
and finite modality has only periodic points of period $2^k$
for $k \in \N$.
Take $T \in \SS^b_*$ such that $h_{top}(T) = 0$.
If $Z_i$ is a plateau with an infinite orbit, then
$T$ must be infinitely renormalizable, \ie there exists
a sequence of periodic intervals $K_u$, $u \in \N$, with period $2^u$
such that $\cap_u K_u \supset Z_j$ for some $j$, and
$\omega(Z_i) = \omega(Z_j)$.
In fact, $\orb(K_u)$ can contain more plateaus, but since the period of
$K_u$ tends to infinity as $u \to \infty$, and there are only $b$ plateaus,
we can assume (by an appropriate choice of $K_u$)
that there exists $n_u \to \infty$ as $u \to \infty$
such that $T^{n}(K_u)$ does not intersect any plateau for $0 < n < n_u$.
Therefore $|T(K_u)| \to 0$ as $u \to \infty$.

Since $T \in \SS^b_*$ there exists $t_1 \in (0, t)$ and $\eta > 0$
such that all plateaus of $T$ move at least $\eta$ when
$t'$ moves from $0$ to $t_1$ (for $t'$ small $\hed_{t'}$ agrees with $\ed_{t'}$).
For $u$ so large that $|T(K_u)| < \eta$, this
means that $K_u$ is no longer invariant under $\hed_{t_1}(T)$
and so this map is not infinitely renormalizable anymore.
Instead, there is $k_0$ such that
every plateau of $\hed_{t_1}(T)$ is (eventually) periodic with period
$2^k$ for some $k \le k_0$. If we increase $t'$ further from $t_1$ to $t$,
each periodic attractor remains but  can undergo period halving bifurcations.
So all periodic orbits of $\hed_t(T)$ have period $2^k$, $k \le k_0$.
\end{proof}

Now define a retract $R_t$ of the
zero-entropy level set of $\SS^b_*$ as follows: 
\[
R_t = \left\{ \begin{array}{ll}
\hed_{2t}&\mbox{ for }t\in [0,1/2],\\
r_{2t-1}\circ \hed_1&\mbox{ for }t\in [1/2,1].
\end{array} \right.
\]
Lemmas~\ref{lem:gammas} and \ref{lem:2periodic} (and the remark above)
imply that under $\hed_{2t}(T)$ (resp.\ $r_{2t-1}\circ \hed_1(T)$),
each plateau is contained in the closure of a component of the
basin of one of the periodic attractors of $\hed_{2t}(T)$  (resp.\ $r_{2t-1}\circ \hed_1(T)$).
Hence $\hed_{t}(T),r_t  \circ \hed_1(T)\in \SS_*^b$.
Thus we obtain a retract of $\SS_*^b$ and proved Theorem~\ref{Thm:Connected} in the zero entropy case.

The remainder of this section will deal with the case $h_0>0$,
which is plagued with additional difficulties.

\subsection{\boldmath The retract $R_t$ when $h_0 > 0$ and
the trouble with $\hed_t$.\unboldmath}\label{subsec:trouble}
As mentioned at the end of Subsection~\ref{subsec:deltat}
the retract is insufficient for our purposes: 
we need to construct a retract of an isentrope of $\SS^b_*$
(so the deformation is not allowed to leave the space $\SS^b_*$).
The hurdle we have to overcome is that if $T\in \SS^b_*$,
then $\hed_t(T)$ need no longer be in $\SS^b_*$ for $t>0$,
because the deformation $t\mapsto \hed_t(T)$ can create wandering pairs $(Z_i,Z_j)$.
To resolve this issue, the aim is to ensure that the deformation $\Gamma_t$ (or a similar deformation)
will be able to `undo' these wandering pairs. In view of property 
\eqref{eq:makegood} on page \pageref{eq:makegood}
we will construct a deformation $\beta_t$
with the property that if $T\in \SS^b$ then $\beta_1(T)\in \overline{\SS^b_*}$.
It does this by deforming $T$ in such a way that $\beta_1(T)$ never eventually maps
an interval of the form $[Z_i, Z_{i+1}]$  into the interior of another plateau.
However, \eqref{eq:makegood} only applies to maps
$T\in \overline{\SS^b_*}$ with topological entropy $h_0$. For this reason
we need to define a more subtle way of `decreasing' and `increasing' 
the map $T$ while the topological entropy remains equal to $h_0$. 
These analogues of $\edh$
and $\Gamma_1$ may act on some of the plateaus while leaving some others
alone. The challenge will be to define this as a continuous deformation.
To achieve this, we will introduce three additional deformations.
\begin{itemize}
\item[$\Eih_t$:]  To increase the topological entropy more carefully by increasing some (but possibly not all)
$\zeta_i$'s, so we essentially increase each $\zeta_i$ `as far as possible'.
This is the purpose of $\Eih_t$ defined in Section~\ref{subsec:eih}.
\item[$\hat \Delta_t$:]  To decrease the topological entropy more carefully in such a way that if
$\hat \Delta_t(T')$ or $\Gamma_t(T')$ does not move certain plateaus (because otherwise the entropy would become too large),
then we `can assume' that $T'\in \SS_*^b$. This is the purpose of the deformations  $\Edh_t$ and $\hat \Delta_t$ defined in Section~\ref{subsec:careful}.
\item[$\beta_t$:]
Finally, we want to ensure that we only need to apply $\Eih_s$  to maps
$T'\in  \SS^b$ with the property that a convex hull  $[Z_i, Z_{i+1}]$
is never eventually mapped into the interior of another plateau (\ie only to maps
with $T'\in \overline{\SS^b_*}$).  This means that $\Eih_s(T')\in \SS^b_*$  for {\em every} $s>0$.
Unfortunately, this may not be enough because it may happen that $h_{top}(\Eih_s(T'))>h_0$ for any $s>0$.
The deformation $\beta_t$, defined in Section~\ref{subsec:betat}, 
prevents this, but will not change the entropy.
\end{itemize}

\subsection{\boldmath The construction of $\beta_t$. \unboldmath }
\label{subsec:betat}
Consider a map $T'=\hed_t(T)$ so that the convex hull $\J_i:=[Z_i,Z_{i+1}]$ of
two neighboring plateaus is eventually mapped
into the interior of another plateau. If this happens then for $s>0$ small,
$\gamma_s(T')$ will still have this property and so in general
$\gamma_s(T') \notin\SS_*^b$.
To overcome this problem we define another deformation $\beta_t$.
This deformation does not change topological entropy, and
only moves (certain) plateaus which are mapped into other plateaus.

As before, let 
$$
\begin{array}{rl}  W^o(T) = 
&\left\{x; \, \exists j\in \{1,\dots,b\} \mbox{ and }k\ge 0\mbox{ such that }\right. \\
&\left.  \quad T^k(x)\in \interior(Z_j) \mbox{ and }x,T(x),\dots,T^{k-1}\notin Z_j\right\} .\end{array}
$$ 
Let $\I(T)$ be the  collection
of integers $i\in \{1,\dots,b\}$ such that $T(Z_i)\in W^o(T)$ and $Z_i$ is not periodic.
Next we say that $T$ has a {\em local maximum (resp. minimum)}
at $Z_i$ if the sawtooth map $S_0$ has a local maximum (resp. minimum) at the midpoint of $Z_i$.
Moreover, given an interval $J$, we define  $\partial _l J$ and $\partial_rJ$ to be the left and right endpoint of $J$ respectively. 
For $i\in \I(T)$ define $v_i=T(Z_i)$, let $W_i=W_i(T)$ be the component of $W^o(T)$ containing $v_i$ and 
\begin{equation}\label{eq:rho}
\tau_i(T)= \left \{ \begin{array}{rl} 
\dfrac{\dist(v_i,\partial_l W_i)}{|W_i|} & \mbox{ if } T \mbox{ has a maximum at $Z_i$,} \\[4mm]
\dfrac{\dist(v_i,\partial_r W_i)}{|W_i|} & \mbox{ if } T \mbox{ has a minimum at $Z_i$.} 
 \end{array}\right.
\end{equation}
We define the deformation $T_t=\beta_t(T)$ with parameters $(\zeta_{1,t},\dots,\zeta_{b,t})$ 
of $T$ as the flow of the following differential equation:
$$
\dfrac{d\zeta_{i,t}}{dt} = \left\{ 
\begin{array}{ll}
4\cdot \tau_i(T_t)\cdot e  & \mbox{ if }i\in \I(T_t), \\
0 & \mbox{ if }i\notin \I(T_t), 
\end{array}\right. \quad  i=1,\dots,b.
$$
Note that if $Z_i$ contains a periodic attractor or $Z_i$ is never mapped
into another plateau, then $Z_{i,t}$ does not vary with $t$. As $W^o(T)$
consists of preimages of the interiors of such plateaus, it follows that 
$W^o(T_t)$ is independent of $t$ except if some plateau $Z_{i,t}$ is mapped 
by $T_t$ into the boundary of a component $W$ of $W^o(T_t)$ as
in the right panel of Figure~\ref{fig:begaclaim2a}. Also note that $W^o(T)$ depends 
continuously on $T$ except if some plateau $Z_i$ maps
into the boundary of a component of $W^o(T)$, \ie  eventually maps to the boundary
of another plateau.  In fact only if it is only discontinuous if $Z_i$
{\lq}arches over{\rq} this component as in the right panel of Figure~\ref{fig:begaclaim2a}. 

Even though $W_i$ does not necessarily depend continuously on $T$,
the deformation $(T,t)\mapsto \beta_t(T)$ turns out to be 
well-defined and continuous, see the lemma below.
Since $T_t$ and $T$ agree  outside the union of the closures of components of $W^o(T)$, we have $h_{top}(\beta_t(T)) = h_{top}(T)$.

\begin{lemma}\label{le:betat}
The deformation $(T,t)\mapsto \beta_t(T)$ is well-defined. Furthermore:
 \begin{itemize}
 \item[(a)] $(T,t) \mapsto \beta_t(T)$ is continuous. 
\item[(b)] 
The map $\beta_1(T)$ satisfies the $\beta$-property (\ref{eq:nointerior}).
%
\end{itemize}
\end{lemma}

\begin{proof}
{\bf Part (a): Well-defined and continuity.} If $i \in \I(T)$ and plateaus 
are mapped into the interior of components of $W_i(T)$, 
then $T\mapsto \tau_i(T)$ is locally Lipschitz
and there is a unique solution. Otherwise, if $i\notin \I(T)$ the following considerations show
that $t\mapsto \beta_t(T)$ is well-defined. Note that
it suffices to consider $t\approx 0$ and maps $\tilde T$ near some $T\in \SS^b$. 
Let $\tilde \zeta_{i,t}$ denote the $i$-th parameter of $\beta_t(\tilde T)$. 
If $T(Z_i)$ is contained in a component of $W^o(T)$, then continuity of $(t,\tilde T) \mapsto \tilde \zeta_{i,t}$ obviously holds.
Next assume that $v_i:=T(Z_i)$ is contained in the boundary of a component 
$W$ of $W^o(T)$. 
By definition this means that $t\mapsto \zeta_{i,t}$ is constant for $t \ge 0$. 
Moreover,  for a nearby map $\tilde T$, there are two possibilities:
(i)  $\tilde T(\tilde Z_i)$ is not contained in $W^o(\tilde T)$, which means that 
$t\mapsto \tilde  \zeta_{i,t}$ is also constant, or 
(ii)   $\tilde T(\tilde Z_i)$ is contained in a component $\tilde W$ of $W^o(\tilde T)$.
In this case, since $\tilde T$ is close to $T$, either 
(iia) $\tilde T(\tilde Z_i)$ is near the boundary of a 
component $\tilde W$ of $W^o(\tilde T)$ or 
(iib) there exists  $k$ so that  $T^k(v_i)\in \partial Z_j$ and 
$T^{k+1}(v_i)\in \partial W'$ where $W'$ is a component
of $W^o(T)$.  
In the latter case  $\tilde T(\tilde Z_j)$ is near 
the boundary of a component $\tilde W'$ of $W^o(\tilde T)$ and $\tau_j(\tilde T)\approx 1$. 
These two situations are illustrated in Figures~\ref{fig:begaclaim2a}
and \ref{fig:begaclaim2b}. 

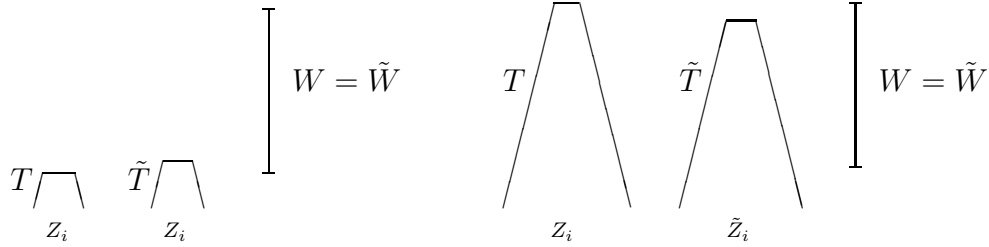
\begin{figure}[h]
\begin{center}
\unitlength=7.8mm
\begin{picture}(20,4.7)(-2.7,0.7)
\thinlines 
\put(1,1){\line(1,4){0,2}}
\put(1.2,1,8){\line(1,0){0.5}}
\put(1.7,1.8){\line(1,-4){0,2}}
\put(3,1.6){\line(0,1){2.8}}
\put(2.9,1.6){\line(1,0){0.2}}\put(2.9,4.4){\line(1,0){0.2}}
\put(0.6, 1.3){$\tilde T$}
\put(1.2,0.5){\tiny $Z_i$}
\put(3.4, 3){$W = \tilde W$}
\put(-1,1){\line(1,4){0.15}}
\put(-0.85,1,6){\line(1,0){0.55}}
\put(-0.3,1.6){\line(1,-4){0.15}}
\put(-0.8,0.5){\tiny $Z_i$}
\put(-1.4, 1.3){$T$}
\put(7,1){\line(1,4){0.87}}
\put(7.86,4.5){\line(1,0){0.42}}
\put(8.3,4.5){\line(1,-4){0.87}}
\put(7.8,0.5){\tiny $Z_i$}
\put(7, 3){$T$}
\put(10,1){\line(1,4){0.8}}
\put(10.8,4.2){\line(1,0){0.5}}
\put(11.3,4.2){\line(1,-4){0.8}}
\put(13,1.7){\line(0,1){2.8}}
\put(12.9,1.7){\line(1,0){0.2}}\put(12.9,4.5){\line(1,0){0.2}}
\put(10.8,0.5){\tiny $\tilde Z_i$}
\put(10, 3){$\tilde T$}
\put(13.4, 3){$W = \tilde W$}
\end{picture}
\end{center}
\caption{\label{fig:begaclaim2a} Case (iia) in the proof of continuity. On the left the situation
where $\tilde T(\tilde Z_i)$ is contained in a component $W$ of $W^o(T)$ 
with $\tau_i(\tilde T) \approx 0$. In this case, $\tilde \zeta_{i,t}$ 
will increase only very slowly with $t$.
On the right the situation where $\tilde T(\tilde Z_i)$ is close to the 
boundary of an endpoint of $W^o(T)$
when $\tau_i(\tilde T)\approx 1$. In this case $\tilde \zeta_{i,t}$ 
increases with speed $\approx 4e$ 
which means that there exists $t>0$ close to $0$ so that $\tilde T_t(\tilde Z_i)$ is contained in the boundary  of $W$ and then stops.
In this figure we set $W = \tilde W$, but also when these are different intervals, the same argument holds. Note that in the situation on the right, the component of $W^o(T)$ containing $Z_i$ does not depend continuously on $T$.}
\end{figure}

If (iia) holds, then either
$\tau_i(\tilde T)\approx 0$ which means that  $\tilde \zeta_{i,t}$ remains close to $\tilde \zeta_{0,i}$ for all $t\in [0,1]$ 
(see the left panel in Figure~\ref{fig:begaclaim2a})
or $\tau_i(\tilde T)\approx 1$ which means that $\tilde \zeta_{i,t}$ is moving with speed $\approx 4e$
towards the nearest boundary point of $\tilde W$ (see the right panel in Figure~\ref{fig:begaclaim2a}). 
Therefore there exists $\tilde t>0$ close to zero, so that 
$\tilde T_{\tilde t}(\tilde Z_i)\in \partial \tilde W$  and therefore $\tilde \zeta_{i,t}$ remains constant for $t\ge \tilde t$.

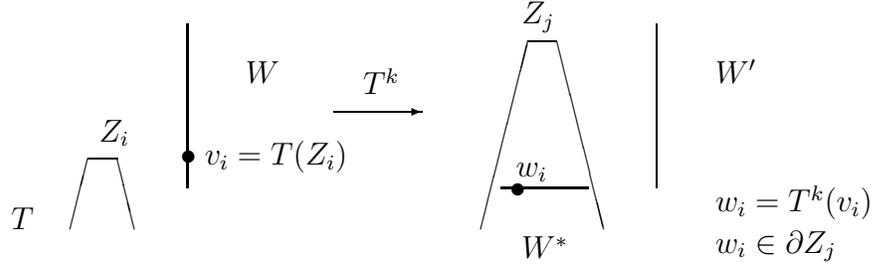
\begin{figure}[h]
\begin{center}
\unitlength=7.8mm
\begin{picture}(20,4.7)(-3,0.7)
\thinlines 
\put(1,1){\line(1,4){0,3}}
\put(1.3,2.2){\line(1,0){0.5}}
\put(1.8,2.2){\line(1,-4){0,3}}
\put(3,1.7){\line(0,1){2.8}}
\put(0, 1){$T$}
\put(1.5,2.5){$Z_i$}
\put(4, 3.5){$W$}
\put(2.88,2.1){$\bullet$}
\put(3.3,2.1){$v_i=T(Z_i)$}
\put(5.5,3){\vector(1,0){1.5}}
\put(6,3.3){$T^k$}
\put(8,1){\line(1,4){0.8}}
\put(8.8,4.2){\line(1,0){0.5}}
\put(9.3,4.2){\line(1,-4){0.8}}
\put(8.7,4.5){$Z_j$}
\put(8.7,0.5){$W^*$}
\put(8.34,1.7){\line(1,0){1.5}}
\put(8.5,1.55){$\bullet$}
\put(8.6,1.9){$w_i$}
\put(12,1.3){$w_i=T^k(v_i)$}
\put(12,0.6){$w_i\in \partial Z_j$}
\put(11,1.7){\line(0,1){2.8}}
\put(12, 3.5){$W'$}
\end{picture}
\end{center}
\caption{\label{fig:begaclaim2b} Case (iib) in the proof of continuity. In this case $W$ is a component of $T^{-k}(W^*)$ where $W^*$ is a component 
of $W^\circ(T)$ containing a plateau $Z_j$ 
and $T(Z_j)$ is close to the boundary of the component $W'$ 
containing $T(Z_j)$.
There exists $t>0$ close to zero, so that  $T_t(Z_{j,t})$ hits the 
boundary of $W'$  and 
then $W^*_t$ splits into three components: to the left and right of $Z_{j,t}$ and the interior of $Z_{j,t}$. 
This means that $W_t$ also splits into three components, and at this moment $Z_{i,t}$ moves
until $T_t(Z_{i,t})$ belongs to the boundary of one of these plateaus (which in this situation means close to $v_i$).}
\end{figure}
If (iib) holds, see Figure~\ref{fig:begaclaim2b}, 
then there exists $\tilde t>0$ small 
so that $\tilde T_{\tilde t}(\tilde Z_{j,t})\in \partial \tilde W'$ and  $\tilde W^*$ no longer
is a component of $W^o(\tilde T_{\tilde t})$ (the interval $\tilde W^*$ is split into 
three components of $W^o(\tilde T_{\tilde t})$).
This means that  $\tilde T_{\tilde t}(Z_i)$ is near a boundary of 
a component $\tilde W_-$ of $W^o(\tilde T_{\tilde t})$ and we can argue as before.

Finally, assume that $T(Z_i)$ is not contained in the closure of a component of $W^o(T)$.
If $\tilde T$ is near $T$ while $\tilde T(\tilde Z_i)$ is contained in a component $\tilde W$
of $W^o(\tilde T)$, then this component $\tilde W$ is small and so 
for $t>0$ small $\beta_t(\tilde T)(\tilde Z_i)$ is contained in the boundary of $\tilde W$
(and for $t'>t$ this plateau no longer moves). 

It follows that in all cases, $(T,t)\mapsto \beta_t(T)$ is continuous.

{\bf Part (b).}  If $Z_i$ is non-periodic and $\J_i$ is mapped into a component  $W$
of $W^o(T)$, then 
either $\tau_i(T)\ge 1/2$ or $\tau_{i+1}(T)\ge 1/2$, see Figure~\ref{fig:begat}.
By construction, $\tau_i(\beta_t(T))\ge 1/2$ or $\tau_{i+1}(\beta_t(T))\ge 1/2$
for all $t\ge 0$. Hence $\frac{d\zeta_i}{dt}(\beta_t(T)) \ge 2e$ or 
$\frac{d\zeta_{i+1}}{dt}(\beta_t(T))\ge 2e$,
for all $t\in [0,t_0]$ where $t_0$ is chosen so that $\beta_{t}(T)(\J_{i,t})$ is still contained in $W$
for all $t\in [0,t_0]$.
Since for each map $T\in \SS^b$, the corresponding $\zeta_i$ can be at most $2e$, 
there exists $t\le 1$ so that  $\beta_{t}(T)(\J_{i,t})\in \partial W$. Note that $x\in \partial W$ 
implies that no iterate of $x$ can be mapped into the interior of a plateau. 
Hence the claim follows. 
\end{proof}

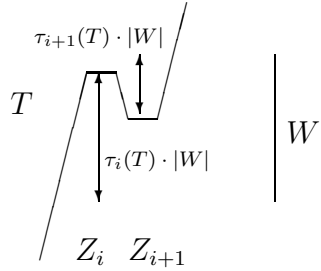
\begin{figure}[h]
\begin{center}
\unitlength=7.8mm
\begin{picture}(20,4.7)(-3,0.7)
\thinlines 
\put(1,1){\line(1,4){0.8}}
\put(1.8,4.2){\line(1,0){0.5}}
\put(2.3,4.2){\line(1,-4){0.2}}
\put(2.5,3.4){\line(1,0){0.5}}
\put(3,3.4){\line(1,4){0.5}}
\put(5,2){\line(0,1){2.5}}
\put(2,2){\vector(0,1){2.2}}\put(2,4.2){\vector(0,-1){2.2}}
\put(2.1,2.6){\tiny $\tau_i(T) \cdot |W|$}
\put(2.7,4.5){\vector(0,-1){1}}
\put(2.7,3.5){\vector(0,1){1}}\put(0.9,4.7){\tiny $\tau_{i+1}(T) \cdot |W|$}
\put(1.6,1){$Z_i$}
\put(2.5,1){$Z_{i+1}$}
\put(0.5, 3.5){$T$}
\put(5.2, 3){$W$}
\end{picture}
\end{center}
\caption{\label{fig:begat}  In this example, $T(Z_i)$ and $T(Z_{i+1})$ are both 
contained in the same component $W$ of $W^o(T)$.
We show a situation where $\tau_i(T)\approx 1$ 
whereas $\tau_{i+1}(T)$
is much smaller. As a result $\zeta_{i,t}$ changes much faster than $\zeta_{i+1,t}$ under the deformation, but as soon as
$\beta_t(T)(Z_{i,t})$ hits the boundary of $W$ then $\zeta_{i,t}$ stops moving.
The component of $W^o(T_t)$ containing $Z_i$ splits in three as soon as
$T_t(Z_{i,t})$ hits the boundary of $W$: one to the left of $Z_i$, one to the right of $Z_{i}$ and the interior of $Z_i$.}
\end{figure}

\subsection{\boldmath Increasing entropy of maps more carefully:
$\Eih_t$. \unboldmath}\label{subsec:eih}

We use the entropy increasing deformation $(T,t) \mapsto \Ei_t(T) = \ei_{t \cdot t_{max}(T)}(T)$ (with $t_{max}$ as in \eqref{eq:tmax} on page \pageref{eq:tmax})
until
$h_{top}(\Ei_t(T)) = h_0$. But it is possible that only part of the
phase space is responsible for reaching this entropy bound, while in
other parts (namely in renormalization cycles),
plateaus have not been lifted \lq sufficiently\rq yet.
It is essential to perform some version of $\ei_s$ for at least some time $s > 0$, so as to
resolve (destroy) wandering pairs that may have been created by $\ed_t$.
Thus in the presence of renormalization intervals, we may need to lift some
plateaus faster than others. This subsection explains how this is done.

\begin{remark}\label{Rem:bifurK} 
Recall that $T\in \SS^b$ has renormalization interval $K$ if
there is $n \ge 1$ such that
$T^n(K)\subset K$ and $T^n(\partial K)\subset  \partial K$.
The orbit $\orb(K)=K\cup T(K)\cup \dots \cup T^{n-1}(K)$ is called
a \emph{renormalization cycle}.
Note that $\partial K$ consists of (pre)periodic points which do not
depend on $T$, unless they disappear in a saddle node bifurcation.
The only other way by which $K$ can disappear is when $T^n(Z) \subset \partial
K$ for some plateau $Z$ compactly contained in $K$.
\end{remark}

Fix $h_0 \in (0, \log(b+1))$. 
Let $K_i(T)$ be the smallest renormalization
interval for $T$ which contains the omega-limit set
$\omega(Z_i)$. If there exists no smallest renormalization interval
then we take $K_i(T)=\emptyset$.
Let
$$
\left\{ \begin{array}{l}
L_i(\lh0) = \mbox{closure}(\{ T \in \SS^b \st h_{top}(T|\orb(K_i(T))) \le h_0 \}), \\[3mm]
L_i(\gh0) = \mbox{closure}( \{ T \in \SS^b \st h_{top}(T|\orb(K_i(T))) > h_0\}).
\end{array} \right.
$$
Note that in this definition $\orb(K_i(T))$ does indeed depend on $T$.
Consequently, the common boundary of these sets contains maps $T$ for which 
$h_{top}(T|\orb(K_i(T))) \le h_0$ but which can be increased by an
arbitrarily small change in parameter $\zeta_j$ for some $Z_j \subset
\orb(K_i(T))$.
It does not follow, however, that
$h_{top}(T|\orb(K_i(T)) = h_0$, see Figure~\ref{fig:noth0}. In fact,
$T \mapsto h_{top}(T|\orb(K_i(T))$ is discontinuous because
if $Z_i$ is not in the basin of an attractor, 
a small change in $T$ can make $Z_i$ periodic itself, and 
then $h_{top}(T|\orb(K_i(T)) = 0$.

\begin{figure}[h!]
\begin{center}
\unitlength=1mm
\begin{picture}(30,30)(10,-15)
\thinlines
\put(-15,-15){\line(1,0){30}}
\put(-15,-15){\line(0,1){30}}
\put(15,15){\line(-1,0){30}}
\put(15,15){\line(0,-1){30}}
\put(-15,-15){\line(1,1){30}}
\put(0,0){\line(1,0){12}}
\put(0,0){\line(0,1){12}}
\put(0,12){\line(1,0){12}}
\put(12,0){\line(0,1){12}}
\put(2,-3){\tiny $K_2(T)$}
\thicklines
\put(-15,15){\line(1,-5){6}}
\put(-3,-15){\line(1,5){5.4}}
\put(15,-15){\line(-1,5){5.4}}
\put(-9,-15){\line(1,0){6}}
\put(2.52,12){\line(1,0){7}}
\put(-8,-18){\tiny $Z_1$}\put(4,-18){\tiny $Z_2$}
\put(-12,7){\small $T$}

\put(25,9){$T \in L_2(\lh0)\cap L_2(\gh0)$ for} 
\put(25,3){$h_0 = \log(1+\sqrt2)$, but}
\put(25,-3){$h_{top}(T|\orb(K_2(T)) = \log 2 \ne h_0$.}
\end{picture}
\end{center}
\caption{\label{fig:noth0} For this $T$ with  $h_{top}(T) = h_0 = \log(1+\sqrt{2})$ we have
$h_{top}(T|\orb(K_2(T))) = \log 2 < h_0$, but lifting $Z_2$ by any amount pushes $h_{top}(T) = h_{top}(T|\orb(K_2(T)))$ above $h_0$.}
\end{figure}

If the period of $K_i(T)$ is $m$, then 
since the intervals $K_i(T),\dots, T^{m-1}(K_i(T))$ have disjoint interiors,
the first return map of $\orb(K_i(T))$ has at most
$2^b$ branches. It follows that if $h_{top}(T)=h_0$ then
\begin{equation}\label{eq:upperm}
0 < h_0 = h_{top}(T|\orb(K)) \le (b \log 2)/m
\end{equation}
which gives the upper bound $m \le (b \log 2)/h_0$.

Define
$$
\Phi_j(T)=\mbox{dist}(T,L_j(\lh0)\cap L_j(\gh0))
$$
where $\dist$ is as in \eqref{eq:dist} on page \pageref{eq:dist}.
Define the following deformation of maps $T\in \SS^b$:
\begin{equation}\label{eq:eih}
\Eih_t\big(\zeta_1,\dots,\zeta_b) = (\min(\zeta_1+\Phi_1(T) t,e),
\dots,\min(\zeta_b+\Phi_b(T) t,e)\big).
\end{equation}

Let us say that $T\in \SS^b_{*,j}$ if whenever $\J=[Z_{j-1},Z_j]$ or $\J=[Z_j,Z_{j+1}]$ (assuming $1\le j-1$ and $j+1\le b$ respectively)
is eventually mapped into a plateau, then $\J$ is contained in the closure of a component
of the basin of $T$.

\begin{lemma}\label{lem:propU}
Let $\Eih_t$ be as above and
let $T\in \SS^b$ be such that $h_{top}(T)\le h_0$.
Then the following hold:
\begin{enumerate}
\item The deformation $(T,t)\mapsto \Eih_t(T)$
is continuous in $T$ and $t$, and $t \mapsto h_{top}(\Eih_t(T))$ is non-decreasing.
\item $h_{top}(\Eih_t(T))\le h_0$ for all $0\le t\le 1$.
\item  Assume that $T$ satisfies 
\begin{enumerate}
\item  the $\beta$-property \eqref{eq:nointerior} on page \pageref{eq:nointerior}  and
\item for any $j=1,\dots,b$,
\begin{equation}
T\in L_j(\lh0) \cap L_j(\gh0)\mbox{ implies }T\in \SS^b_{*,j}.
\label{eq:s*j}
\end{equation}
\end{enumerate}
Then
$\Eih_t (T)\in \SS_*^b$ for each $t>0$.
\end{enumerate}
\end{lemma}

\begin{proof}
The continuity and monotonicity of statement (1) are obvious.

For statement (2), take $t \in [0,1)$ and $T \in \SS^b$ with $h_{top}(T) \le h_0$.
Let $j_1$ be such that $m_1 := \Phi_{j_1}(T)$ is maximal among
$\{ \Phi_1(T), \dots, \Phi_b(T)\}$, and let $M_1$ be the
open $b$-dimensional $m_1$-cube centered at $T$, parallel to the
coordinate hyperplanes and of side length $2m_1$.
Then $M_1$ is disjoint from
$L_{j_1}(\lh0) \cap L_{j_1}(\gh0)$, and in particular disjoint from
$L_{j_1}(\gh0)$.
Therefore  $h_{top}(T'|\orb(K_{j_1}(T'))) \le h_0$ for any $T' \in M_1$
and in particular for $\Eih_t(T)$.

Now let $j_2$ be such that  $m_2 := \Phi_{j_2}(T)$ is second largest among
$\{ \Phi_1(T), \dots, \Phi_b(T)\}$.
The corresponding $m_2$-cuboid $M_2$
is the set of $T'$ with parameters $\{ \zeta_1, \dots, \zeta_b \}$
such that $|\zeta_j - \zeta_j(T) | < m_2$ for all $j \neq j_1$ and
$|\zeta_{j_1} - \zeta_{j_1}(T) | < m_1$.
(This is the Cartesian product of a $b-1$-dimensional cube and an arc of
length $2m_1$ in the $\zeta_{j_1}$-direction.)

\noindent
{\bf Claim 1:}  $M_2$ is disjoint  from $L_{j_2}(\gh0)$. To prove  this claim,  consider $T' \in M_2$.
Let $M_2''$  be the $m_2$-cube centered at $T$, and choose 
$T''\in M_2''$ be so that $T'$ and  $T''$ agree except at $Z_{j_1}$.
Let $T_t$, $t\in [0,1]$ be the one-parameter family of maps connecting $T''$ to $T'$ 
corresponding to maps for which the parameter $\zeta_{t,j_1}$ varies linearly  
and so that  $\zeta_{t,i}=\zeta_{0,i}$ for all $t\in [0,1]$ and $i \ne j_1$.
Denote $K_{j_2,t}^*:=\orb(K_{j_2}(T_t))$ and $K_{j_1,t}^*:=\orb(K_{j_1}(T_t))$.
By definition  of $m_1$ and $m_2$, we have 
$h_{top}(T_t|K^*_{j_1, t})\le h_0$ for each $t\in [0,1]$
and $h_{top}(T_0|K^*_{j_2, 0})\le h_0$. 

Next let 
$$
X_1=\{t\in [0,1]; K^*_{j_2,t}\subseteqq K^*_{j_1,t}\}\ ,\
X_2=\{t\in [0,1]; K^*_{j_2,t}\supsetneqq K^*_{j_1,t}\}
$$
and 
$$
X_3=\{t\in [0,1]; K^*_{j_2,t}\cap K^*_{j_1,t}=\emptyset\}.
$$
From the above properties we obtain 
$h_{top}(T_t|K^*_{j_2,t})\le h_{top}(T_t|K^*_{j_1,t})\le h_0$ 
for each $t\in X_1$. 
Consider a component $C$ of $X_3$ or of $X_2$.  If there exists $t\in C$ so that
$h_{top}(T_t|K^*_{j_2,t})\le h_0$ then 
$h_{top}(T_s|K^*_{j_2,s})\le h_0$ for each $s\in C$; here we use that 
$h_{top}(T_t|K^*_{j_1,t})\le h_0$ for each $t\in [0,1]$ if $C$ is a component
of $X_2$.  

Claim 1 follows if there exists a component of $X_1$, $X_2$ or 
$X_3$ containing both $0$ and $1$.
So from now on we assume that this is not the case, and we can assume that
$t'$ in the next claim is not $0$ or $1$.

\noindent 
{\bf Subclaim 1:} for each boundary point $t'$ of $C$
there exists $t_n\in X_1$ so that $t_n\to t'$. In fact, 
$K_{j_1,t_n}\supset K_{j_1,t_n}=K_{j_1,t'}$.

To prove this, let $\tilde K_{j_1,t'}$ be the
{\em maximal} renormalization interval containing $K_{j_1,t'}$ which is either contained in 
$K_{j_2,t'}$ (if $t'\in X_2$) or which is disjoint form $K_{j_2,t'}$ (if $t'\in X_3$). 
Observe that the maximality of $\tilde K_{j_1,t'}$ implies that there are backwards iterates of
$K_{j_2,t'}$ accumulating to the boundary points of $\tilde K_{j_1,t'}$.
That $t'$ is a boundary point of $C$ means that some iterate of $Z_{j_1,t'}$
under $T_{t'}$ is mapped into the boundary of a component of $\tilde K_{j_1,t'}$. 
Because of the above observation this implies that there exists a sequence of points $x_n$ converging to either boundary point  $\tilde K_{j_1,t'}$ such that 
the omega-limit set $\omega(x_n) \subset K_{j_2,t'}^*$. One can choose 
$x_n$ even so that  $K_{j_2,t'}$ is the smallest renormalization interval 
whose orbit contains $\omega(x_n)$. 
It follows that there exists a sequence $t_n\to t'$ with 
$x_n \in \orb(Z_{j_1, t_n})$ so that
$K_{j_1,t_n}\supseteqq K_{j_2,t_n} = K_{j_2,t'}$.
This completes the proof of Subclaim 1.

{\bf Proof of Claim 1 continued.}
Let us say that a renormalization $K$ of $T_t$ is {\em created} at parameter $t$,
if $K$ is not a renormalization interval for $T_s$ for all $s\in (t-\delta,t)$ with $\delta>0$ small. Since $\zeta_{j_1}$ is the only parameter moving, 
this implies that 
\begin{equation}\label{eq:remark}
Z_{j_1,t} = K \text{ has a one-sided attracting periodic boundary point.}
\end{equation}
Again, since $\zeta_{j_1}$ is the only parameter moving, $Z_{j_1,t}$ cannot 
be part of block of plateaus if $K$ is created.

First consider the case that  $\zeta_{j_1,1}>\zeta_{j_1,0}$. 
If $t\in (X_2\cup X_3)$, then $Z_{j_2,t}$ is never mapped into 
$Z_{j_1,t}$.  It follows that for each $s\in [t,1]$
no iterate under $T_s$ of $Z_{j_2,s}=Z_{j_2,t}$ is mapped into $Z_{j_1,s}$.
Hence $\omega(Z_{j_2,s})$ remains the same for each $s\in [t',1]$ where we can
take $t^*:=\inf(X_2\cup X_3)$.  
If $K$ is a renormalization interval for $T_{s'}$ which is created at 
parameter $s'$ and $K$ intersects $\omega(Z_{j_2, s'})$, 
then by remark \eqref{eq:remark} 
it follows that $\omega(Z_{j_2,s'})$ would have to intersect $Z_{j_1,s'}$,
which is impossible when $s\ge t'$. Hence, if the 
renormalization interval $K_{j_2,s}$ 
changes at some $s\in [t^*,1]$, then it
is replaced by some larger renormalization interval $K_{j_2,s'}$ for $T_{s'}$ 
with $s'>s$ close to $s$. Thus the set $K_{j_2,s}$ only gets larger 
as $s$ increases from $t^*$ to $1$.
Let $C$ be a component as before and let $t'$ be an endpoint 
of $C$ (with $t'\ne 0,1$). By Subclaim 1 there exists $t_n\to t'$ with $t_n\in X_1$
and so $K_{j_1,t_n}\supset K_{j_1,t_n}=K_{j_1,t'}$. This, together with 
the fact that $h_{top}(T_{t_n}|K^*_{j_1, t_n})\le h_0$, implies that 
$h_{top}(T_{t'}|K^*_{j_2, t'})\le h_0$.
Using the first part of the proof (above Subclaim 1), Claim 1 follows.

Now consider the case that $\zeta_{j_1,1} < \zeta_{j_1,0}$. In this case, 
$K_{j_2,s}$ can shrink as $s$ increases, namely when $Z_{j_1,s}$ is contained in 
$K^*_{j_2,s}$ and the image of $Z_{j_1,s}$ is mapped
to a boundary point of $K^*_{j_2,s}$. Since in this case $K^*_{j_2,s}$ can only 
shrink as $s$ increases, and $Z_{j_2,s}$ only gets wider as $s$ increases, 
it follows that  $h_{top}(T_{t'}|K^*_{j_2, s})$ is a decreasing function of $s$ 
in this case. 
The set $K^*_{j_2,s}$ can also change if at some parameter $s'$ it is
{\em destroyed}
(\ie created in reverse direction), but remark \eqref{eq:remark} 
implies that some iterate of $Z_{j_2, s'}$ is contained in $Z_{j_1, s'}$,
so $\omega(Z_{j_2, s'}) = \omega(Z_{j_1, s'})$ and we are in the parameter 
set $X_1$ and in particular 
$h_{top}(T_{s''}|K_{j_2, s''}^*)=h_{top}(T_{s''}|K_{j_1, s''}^*) \le h_0$
for all $s'' \ge s'$.
Thus Claim 1 holds also if $\zeta_{j_1,1} < \zeta_{j_1,0}$.

\medskip

Continuing inductively, we see that if $T' \in \cap_{k=1}^b M_k$,
then $h_{top}(T'|\orb(K_j(T'))) \le h_0$
for each $j$, and this holds in particular for $\Eih_t(T)$ for each $t\in [0,1]$.
If $K_j(\Eih_t(T)) = [-e,e]$ for some $j$ (\ie  if $\hat \Gamma_t(T)$
has no renormalization interval), then this proves statement (2).

If, however, every plateau belongs to some renormalization cycle and
the entropy of $\Eih_t(T)$ is carried by the Cantor set of points that
never enter these renormalization cycles, then we argue as follows.
Write $T_t=\Eih_t(T)$ and  assume by contradiction that $h_{top}(T_{t_1}) > h_0$
for some $t_1 \in [0,1)$. Take $t_0 \in [0,t_1)$ maximal such that $h_{top}(T_{t_0}) \le h_0$.
Since $T_t\in \cap_{k=1}^b M_k$ for each $t \in [0,1)$,
the first part of the proof gives
\begin{equation}
h_{top}(T_t| \orb(K_j(T_t)) \le h_0\mbox{ for each }t\in [0,1]\mbox{ and each }j=1,\dots,b.
\label{eq:lem77}
\end{equation}
Now we need the following

\noindent
{\bf Claim 2:} There exists $j\in \{1,\dots,b\}$ and $t'\in (t_0,t_1)$ so that 
$h_{top}(T_{t'}|\orb(K_j(T_{t'})))>h_0$ for the minimal interval of renormalization
$K_j(T_{t'})$ containing $\omega(Z_{j,T_{t'}})$.
\\[3mm]
{\bf Proof of Claim 2:} For each $t\in (t_0,t_1)$ consider the semi-conjugacy of $T_t$
with the map $F_t$  with slope $\pm \exp(h_{top}(T_t))$ as above Lemma~\ref{lem:decrentr}.
Since $h_{top}(F_t) = h_{top}(T_t)$ depends continuously on $t$, and
is not constant on $(t_0, t_1)$, $h_{top}(F_t)$ assumes uncountably many 
values as $t$ moves through $(t_0, t_1)$.
But there are only countably many slopes for which all turning points of $F_t$ are periodic.
Therefore there exists $t'\in (t_0,t_1)$ so that at least one of the turning points of $F_{t'}$, say $c_j$, is non-periodic.
Let $X_{j,t'}$ be the smallest renormalization interval of $F_{t'}$
containing this turning point. Since $F_{t'}$ has constant slope,
$h_{top}(F_{t'}|\orb (X_{j,t'})) = h_{top}(F_{t'}) > h_0$. Since the $j$-th critical
point of $F_{t'}$ is not periodic, the smallest renormalization $K_j(T_{t'})$ of $T_{t'}$ containing
$\omega(Z_{j,t'})$ is mapped by the semi-conjugacy onto $X_{j,t'}$. It follows that
$h_{top}(T_{t'}|\orb(K_j(T_{t'})))>h_0$, completing the proof of the claim.

Obviously the claim contradicts \eqref{eq:lem77}, so we can conclude that $h_{top}(T_1)\le h_0$.
Statement (2) follows.

Finally we turn statement to statement (3). For this we need to show that 
$\Eih_t(T)\in \SS^b_{*,j}$ for each $j$ and each $t>0$. If  $T \in L_j(\lh0) \cap L_j(\gh0)$
then this holds by assumption \eqref{eq:s*j}.  So assume that $T\notin L_j(\lh0) \cap L_j(\gh0)$.
Then the $\beta$-property \eqref{eq:nointerior} implies that whenever $T^n([Z_j, Z_{j+1}])$ is contained in 
a plateau $Z_k$ then $T^n([Z_j, Z_{j+1}])\cap \partial Z_k\ne \emptyset$.
Since the plateau $Z_{j,t}$ shrinks at $t$ increases (as $T\notin L_j(\lh0) \cap L_j(\gh0)$),
this means that for $t>0$,  $(\Eih_t(T))^n$ maps $[Z_j, Z_{j+1}]$ at least partially 
outside $Z_k$.  So again $\Eih_t(T)\in \SS^b_{*,j}$. 
This proves statement (3).
\end{proof}
 
\def\zyk{\mbox{cycle}}
\subsection{\boldmath Decreasing the entropy more carefully: $\Edh_t$. \unboldmath }
\label{subsec:careful}

Take $T\in \SS_*^b$ with $h_{top}(T)=h_0 > 0$.
Even though $t\mapsto h_{top}(\hed_t(T))$
is non-increasing,  it is possible that for fixed $t > 0$,
$h_{top}(\ei_s \circ \hed_t(T)) > h_0$ for all $s>0$.
The reason is that (in the notation of Section~\ref{subsec:deltat})
$\hsgn(Z_i)$ can change from $1$ to $-1$ (or vice versa)
during the deformation.
To explain what can happen, let us discuss two examples.

\begin{example} Consider the map 
from Figure~\ref{fig} on page \pageref{fig}.
Although $T\in \SS_*^b$, the map   $T'=\hed_t(T)$ has a wandering pair that
does not map into a periodic basin
(so $T'$ is no longer in $\SS_*^b$). There is a periodic interval $\hat K$
(here of period $1$) and $T'$ maps the convex hull $\J = [Z_1', Z_3']$
into $\partial \hat K$.
(Note that, in this example, $\hed_t(T)$ first decreases $\zeta_2$ and then, 
after the plateaus $Z_1$ and $Z_3$ touch $Z_2$, increases
it again. Even though $\ei_t$ initially is `the inverse' of the deformation
$\hed_t$, the map $T'=\hed_t(T)$ will have some touching plateaus.)
Because the entropy within the renormalization interval $\hat K$
is $\le h_0$, the movements of plateaus $Z_1, Z_2, Z_3$ under $\hed_t$ have no
effect on the global entropy.
Therefore  $h_{top}(T') = h_{top}(T)$,
whereas $h_{top}(\ei_s(T')) > h_{top}(T)$ for any $s>0$,
because $\ei_s$ decouples the plateaus $Z_1, Z_2, Z_3$ again and
$\ei_s(T')([Z_1, Z_3])$ is a closed neighbourhood of the left
endpoint of (and therefore not entirely contained in) $\hat K$.
The effect is that within $\Eih_s$,
the deformation $\ei_s$ will not be applied at all, and hence it
will not be able to remove the wandering pair created by $\hed_t$.
\end{example} 

\begin{example} In fact, a similar problem can occur even when we consider the family
$\beta_1\circ \ed_t(T)$. It is possible that for some $t\in (0,1)$
a pair of plateaus $[Z_{1,t},Z_{2,t}]$ of $\ed_t(T)$
is mapped by $\ed_t(T)$ into a plateau $Z_{3,t}$, which in turn is mapped into a renormalization
interval $\hat K_t$. As $t$ increases, the parameters $\zeta_t$ associated to $\beta_1\circ \ed_t(T)$ no longer decrease
with $t$ and a similar situation as in Figure~\ref{fig} can arise (but with $Z_{3,t}$ the image of $[Z_{1,t},Z_{2,t}]$
and $Z_{3,t}$ mapped into the boundary of a periodic plateau).
\end{example}

\begin{figure}[htp]
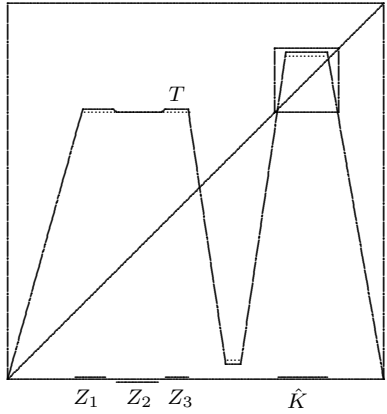
 \hfil
\beginpicture
\dimen0=0.5cm
\setcoordinatesystem units <\dimen0,\dimen0>
\setplotarea x from 0 to 10, y from 0 to 10
\setlinear
\plot 0 0 10 0 10 10 0 10 0 0 10 10 /
\plot 0 0  2 7.2
2.8 7.2  2.9 7.1 4.1 7.1 4.2 7.2 4.8 7.2  5.8 0.4
6.2 0.4  7.4 8.7 8.5 8.7 10 0  / %
\plot 7.1 7.1 8.8 7.1 8.8 8.8  7.1 8.8 7.1 7.1 /
\setdots <0.5mm>
\plot 2.05 7.1 4.9 7.1 /
\plot 7.4 8.6 8.4 8.6 /
\plot 5.85 0.5 6.2 0.5 /
\put {\tiny $T$} at 4.5 7.6
\setsolid
\plot 1.8 0.05 2.6 0.05 /
\plot 4.2 0.05 4.8 0.05 /
\plot 2.9 -0.07 4 -0.07 /
\plot 7.2 0.05 8.5 0.05 /
\put {\tiny $Z_1$} at  2.1 -0.5
\put {\tiny $Z_2$} at  3.5 -0.5
\put {\tiny $Z_3$} at  4.6 -0.5
\put {\tiny $\hat K$} at  7.7 -0.5
\endpicture
\caption{\label{fig}
{\small
The maps $T$ and $T' := \hed_t(T)$ (in dotted lines).  For $t>0$ small, $t\mapsto \hed_t(T)$
increases the height of the plateau $Z_2$ and decreases those of $Z_1,Z_3$.
Once they merge, this deformation decreases the height of all of them together.
In this example, the plateaus $Z_1$, $Z_2$ and $Z_3$ are mapped
into $\partial \hat K$, \ie $T'(Z_1)=T'(Z_2)=T'(Z_3) \in \partial \hat K$.
The map is constructed so that $T'|\hat K$ is unimodal with entropy
$ h_{top}(T'|\hat K) < h_{top}(T)$. In this case, $h_{top}(\ei_s(T')) > h_{top}(T)$ for any $s>0$, because points near 
$Z_2$ will then be 
mapped outside (\ie to the left of) $\hat K$.
Therefore $\hed_t(T)\in L_{2}(\lh0) \cap L_{2}(\gh0)$ where $h_0=h_{top}(T)$
and $\hed_t(T) \notin \SS_*^b$ unless the left boundary point of $\hat K$ is in the boundary
of a component of the basin of a periodic attractor.}}
\end{figure}

To overcome the issues caused by such examples, we introduce the deformation $\Edh_t$. 

{\bf Construction of $\Edh_t$:}
Let us define a modification $\Edh_t$ of the
deformation $\hed_t$, which allows some
of the plateaus (namely those within renormalization intervals
of `low entropy') to move before others.

Fix $h_0 \in (0, \log(b+1))$. 

\begin{definition}\label{def:zyk}
Given a periodic interval $K$ of period $m$,
we can find intervals $K_i \supset T^i(K)$ such that $T(\partial K_i) \subset \partial K_{i + 1 \bmod m}$ for all $0 \le i < m$.
For the minimal choice of such intervals $K_i$,
we write $\zyk(K) = \cup_{i=0}^{m-1} K_i$. 
Given a renormalization interval $K$ of $T$, we define
$$
\begin{array}{rl}\Omega(K)=\left\{z\in \zyk(K)\, ; \, \right.  & z,T(z),T^2(z),\dots \notin \hat K   \mbox{ for any }\\
& \left.  \mbox{ renormalization interval }\hat K\subsetneqq K  \right\}.\end{array}
$$
We say that $K$ has {\em entropy $h_0$} 
(for $T$) if $h_{top}(T|\Omega(K))=h_0$.
\end{definition}

It is possible that $h_{top}(T|\Omega(K))<h_0$ but that there exists a renormalization interval $\hat K\subsetneqq K$ so that $h_{top}(T|\Omega(\hat K))=h_{top}(T|\zyk(\hat K))=h_0$.

\medskip

\begin{definition}\label{def:arching}
Given an interval $L$, we say that a plateau $Z_j$ {\em  arches over } 
$\partial L$ if there exists $m$ so that $T^m(Z_j) \subset \partial L$ and 
if $k < m$ is maximal such that $T^k(Z_j)$ is contained in a plateau
 $Z$, then $T^{-(m-k)}(L)$ strictly contains the maximal block of touching plateaus containing $Z$, see Figure~\ref{fig:arching}.
\end{definition}

\begin{figure}[ht]
\begin{center}
\unitlength=1mm
\begin{picture}(30,30)(10,-15)
\thinlines
\put(-32,-11){\line(0,1){22}}
\put(-22,-15){\tiny $\J$}
\put(-35,0){\tiny $L$}
\put(-18,0){\tiny $T^m$}
\thicklines
\put(-25.5,11){\line(-1,-4){4.5}}
\put(-14.5,11){\line(1,-4){4.5}}
\put(-25.5,11){\line(1,0){11}}
\put(-22,10.5){\line(0,1){1}}
\put(-19,10.5){\line(0,1){1}}
\thinlines
\put(12,-11){\line(0,1){22}}
\put(23,-15){\tiny $\J$}
\put(28,0){\tiny $T^m$}
\put(9,0){\tiny $L$}
\thicklines
\put(13,-15){\line(1,4){1}}
\put(30.5,-11){\line(1,4){4.5}}
\put(14,-11){\line(1,0){16.5}}
\put(22,-11.5){\line(0,1){1}}
\thinlines
\put(58,-11){\line(0,1){22}}
\put(68,-15){\tiny $\J$}
\put(83,-12){\tiny $T^m$}
\put(55,0){\tiny $L$}
\thicklines
\put(58,-15){\line(1,4){1}}
\put(82,-15){\line(-1,4){1}}
\put(59,-11){\line(1,0){22}}
\put(67,-11.5){\line(0,1){1}}
\put(74,-11.5){\line(0,1){1}}
\end{picture}
\end{center}
\caption{\label{fig:arching} 
Three basic possibilities how a block of plateaus can map onto 
$\partial L$. In the situations described in the left two panels, 
the interval $\J$ arches over $\partial L$.}
\end{figure}
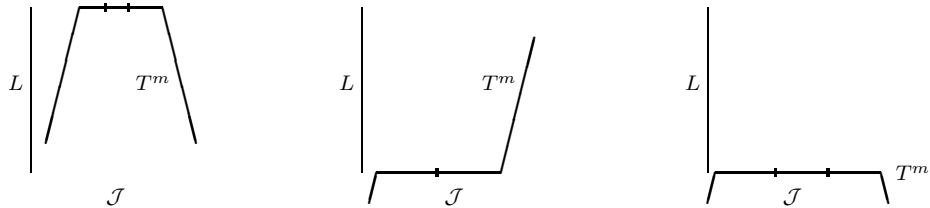

\begin{definition}\label{def:xi}
We say that $T\in \xi_i$ if there exists renormalization interval $K$ of entropy $h_0$
and a convex hull $\J \subset K$ of plateaus  which is 
{\bf non-trivial, \ie $T(\J)$ is not a singleton} and so that
\begin{enumerate}[topsep=-0.1cm,itemsep=0.05ex,leftmargin=0.8cm]
\item there exists a maximal renormalization interval $\hat K \subsetneqq K$ so that $T^m(\J)\subset \partial \hat K$ for some $m \ge 2$ (in particular
$T^m(\J)$ is a singleton); 
\item the first return map to $\hat K$ is non-monotone; 
\item  $T^j(\J)$
intersects $Z_i$ for some $0 \le j < m-1$;
\item $Z_i\cap \zyk(\hat K)=\emptyset$.
 \end{enumerate}
\end{definition}

It will be useful to stratify the space $\SS^b$, dividing the boundary of this space
 into subspaces according to which  plateaus touch. To do this, let 
 $\Xi$ be a partition $\{1,\dots,b\}$.
Then we define the {\em stratum} $\SS^b(\Xi)\subset \SS^b$ as follows:
$T\in \SS^b(\Xi)$  if and only if for each $i\in \{1,\dots,b-1\}$
the plateaus $Z_i$ and $Z_{i+1}$
touch whenever $i,i+1$ are in the same partition element of $\Xi$.
For example $\SS^b(\{1\},\{2\},\dots,\{b\})$ consists of all maps for 
which none of the plateaus touch, and $\SS^b(\{1,2\},\{3\},\dots,\{b\})$
consists of the space of maps for which the first two plateaus
touch (and no other two plateaus do). Note that when $\Xi_1,\Xi_2$ are two distinct partitions,
then $\SS^b(\Xi_1)$ and $\SS^b(\Xi_2)$ are disjoint. Clearly $\SS^b$ is
the disjoint union of $\SS^b(\Xi)$ where the union runs over all partitions $\Xi$ of 
$\{1,\dots,b\}$, and each $T\in \SS^b$ is associated to a partition $\Xi(T)$
of $\{1,\dots,b\}$. 

Note that in Definition~\ref{def:xi}, a non-trivial convex hull $\J$ 
always contains two plateaus $Z_i,Z_j$ where $i,j$ are in distinct
subsets from the partition $\Xi(T)$. Also note that property (2) Definition~\ref{def:xi}
in particular implies   
that the first return map to $\hat K$ is non-constant.

For each partition $\Xi$ of $\{1,\dots,b\}$,
choose a $C^\infty$ function $\rho_i^\Xi\colon \SS^b(\Xi)\to [0,1]$ which is zero 
on $\overline{\xi_i\cap \SS^b(\Xi)}$ and positive elsewhere.
Next define $\rho_i\colon \SS^b\to [0,1]$ by 
$\rho_i(T)=\rho_i^\Xi(T)$ whenever $T\in \SS^b(\Xi)$
for some partition $\Xi$ of $\{1,\dots,b\}$.

Define the modification $\Edh_t$ of $\hed_t$ as the flow of the 
differential equation
\begin{equation}\label{eq:diffeq}
\begin{array}{ll}
\dfrac{d\zeta_{i,t}}{dt} &=\left\{
\begin{array}{ll}
-\rho_i(T_t)
\cdot \hsgn(Z_{i,t})&  \mbox{ when } \zeta_{i,t}\in (-e,e),\\[2mm]
0   & \mbox{ otherwise,}\end{array} \right. \\[2mm]
\zeta_{i,t}|_{t=0}&= \quad \zeta_i(T). \end{array}
\end{equation}
Here $\hsgn$ is defined as in \eqref{eq:hsgn}
on page \pageref{eq:hsgn} and $T_t$ is the 
map corresponding to $\zeta_{i,t}$ and so $T_t=\Edh_t(T)$.
 
\begin{proposition}\label{prop:deltadeform}
For each $T\in \SS^b$, $\Edh_t(T)$ exists for all $t>0$ and 
$$\SS^b_*\times \R^+ \ni (T,s)\mapsto \Edh_s(T)$$ is continuous. 
Moreover: 
\begin{enumerate}[topsep=0cm,itemsep=0.05ex,leftmargin=1cm]
\item[(a)] 
For each $T\in \SS^b_*$ there exists $t_\Delta(T)\in (0,\infty)$ so that 
the map $\Edh_{t_\Delta(T)}(T)$ is trivial (\ie monotone).
\item[(b)]   $\SS^b_*\ni T\mapsto t_\Delta(T)$ is continuous.
\item[(c)] For each $t\in [0,t(T)]$, 
$\Edh_t(T)\in L_j(h_0^-) \cap L_j(h_0^+)$ 
implies that 
if $Z_j$ is part of a wandering pair then $Z_j$
  is contained in the closure 
 of a component of the basin of a periodic attractor of $\Edh_t(T)$.
 In particular, $\Edh_t(T)\in L_j(\lh0) \cap L_j(\gh0)$ implies 
that $\Edh_t(T)\in \SS^b_{*,j}$.
\end{enumerate}
\end{proposition}
\begin{proof}
Note that the right hand side of differential equation \eqref{eq:diffeq} is 
smooth on each stratum $\SS^b(\Xi)$. The discontinuities of the right hand side
occur when two plateaus start to touch, and the nature of the
equation is that once they touch they remain touching. From this and 
the existence and uniqueness theorem of differential equations (applied to each stratum
separately),  it follows that the flow of the differential equation is well-defined. However, 
continuity of $\SS^b_*\times \R^+ \ni (T,t)\to \Edh_t(T)$ still needs to be proved.

For simplicity write $T_s=\Edh_s(T)$ and let
$\Xi_s$ be the partition associated to $T_s$.
As $s$ increases, each plateau only widens under the flow $T_s$, until 
it touches another plateau in which case these plateaus widen jointly unless
the corresponding block has an even number of touching plateaus (or touches $\pm e$).
In this sense each coordinate of $T_s$ depends monotonically 
on $s>0$, and  therefore the limit
$\tilde T := \lim_{s\to \infty} T_s \in \SS^b$ 
exist.
Also $s\mapsto h_{top}(T_s)$ is non-increasing in $s$.

Now take $T\in \SS^b_*$ with $h_{top}(T)=h_0$, 
and assume by contradiction that property (a) does not hold.

%
{\bf Step 1.} Assume that $K$ is a renormalization interval of entropy $h_0$ for $T_t$ so that there exists no renormalization interval $K'\supsetneqq K$ of the same period. We claim that $K$ is a renormalization interval for $T_s$
for each $s\in [0,t]$. Indeed, otherwise there would exist $0\le s_0<t$ so that
one of the plateaus $Z_{j,s}\subset K$ of $T_{s_0}$ arches over 
$\partial K$ under the first return map of $T_{s_0}$ to $K$. 
If $Z_{j,s}$ is independent  
of $s\in [0,s_0]$, then $K$ is still a renormalization interval for $T_s$
and there is nothing to show. If $Z_{j,s}$ does depend on $s$, then there 
exists $s'\in [0,s_0)$ so that the 
first return map  to $K$ under $T_{s'}$ has another branch, see the left 
panel in Figure~\ref{fig:Zjboundary}.
Here we use that if for $T_{s_0}$ there exists a plateau outside $K$, 
but which touches an endpoint of $K$ (as is shown in the right panel of Figure~\ref{fig:Zjboundary}), then there would have existed a renormalization 
interval $K'\supsetneqq K$  with the same return time, contradicting the choice of $K$.
Since the first return map  to $K$ under $T_{s'}$ has another branch, 
it follows by Lemma~\ref{lem:decrentr} that $h_0\ge h_{top}(T_{s'}|\Omega(K))>
h_{top}(T_t|\Omega(K))=h_0$, contradicting the assumption made in this step.

\begin{figure}[ht]
\begin{center}
\unitlength=1mm
\begin{picture}(30,30)(-6,-17)
\thinlines
\put(-31,-11){\line(1,0){22}}
\put(-31,-11){\line(0,1){22}}
\put(-9,11){\line(-1,0){22}}
\put(-9,11){\line(0,-1){22}}
\put(-22,13){\tiny $K$}
\thicklines
\put(-32,-15){\line(1,4){6.5}}
\put(-23.5,11){\line(1,-4){3}}
\put(-20.5,-1){\line(1,0){2}}
\put(-18.5,-1){\line(1,4){2}}
\put(-16.5,7){\line(1,0){2.7}}
\put(-8,-15){\line(-1,4){5.5}}
\thinlines
\put(-26,-11){\line(1,6){1}}
\put(-25,-5){\line(1,0){1}}
\put(-24,-5){\line(1,-6){1}}
\put(-22,-9){\tiny $\Edh_{s'}(T)$}
\thinlines
\put(29,-11){\line(1,0){22}}
\put(29,-11){\line(0,1){22}}
\put(51,11){\line(-1,0){22}}
\put(51,11){\line(0,-1){22}}
\put(38,13){\tiny $K$}
\thicklines
\put(28,-15){\line(1,4){6.5}}
\put(36.5,11){\line(1,-4){3}}
\put(39.5,-1){\line(1,0){2}}
\put(41.5,-1){\line(1,4){2}}
\put(43.5,7){\line(1,0){2.7}}
\put(51,-11){\line(-1,4){4.5}}
\put(51,-11){\line(1,0){4}}

\end{picture}
\end{center}
\caption{\label{fig:Zjboundary} The situation considered in Step 1 of the proof of Proposition~\ref{prop:deltadeform}. The first return map to $K$ for 
$\Edh_s(T)$  and (schematically) for $\Edh_{s'}(T)$ for $s'<t$ is drawn in the left panel.
The situation as in the right cannot occur, see the proof in Step 1.}
\end{figure}
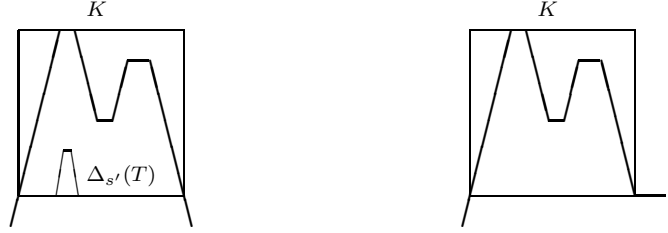

{\bf Step 2.}
Take $t>0$. 
We claim that there exists $\epsilon>0$  so that for each $s\in [t-\epsilon,t+\epsilon]$ there exists $i\in \{1,\dots,b\}$ with $T_s\notin \overline{\xi_i\cap \SS^b(\Xi_s)}$ and
$Z_i\subset \zyk(K)$.

We may assume that $K$ is a renormalization interval of entropy $h_0$ for $T_t$
because otherwise this claim holds trivially.
To prove this claim,  we first note that by Step 1, $K$ is a renormalization interval for $T_s$ for each $s\in [0,t]$.    Let us consider all the possible situations. 

{\bf Case A.}   $T_t$ does not have a renormalization interval 
$\hat K\subsetneqq K$.  We claim that in this case $h_{top}(T_t|\Omega(K))\le h_{top}(T_s|\Omega(K))<h_{top}(T_0|\Omega(K))\le h_0$. By the definition of $\xi_i$ this implies 
that $T_s\notin  \overline{\xi_i\cap \SS^b(\Xi_s)}$ for each $0\le s\le t$ and each $i$ so that $Z_{i,t}\subset K$; therefore, in this case Step 2 follows from the claim.
To prove this, first note that $T_s$ has no renormalization interval $\hat K\subsetneqq K$ 
for any $s\in [0,t]$. Indeed, if there exists $s \in [0,t]$ 
so that $T_{s}$ does have a renormalization interval $\hat K\subsetneqq K$, 
then by Lemma~\ref{lem:decrentr} we obtain $h_{top}(T_t|\zyk(K))<h_{top}(T_s|\zyk(K))\le h_0$ for any $s<t$, which gives a contradiction.
Next take $T_s$ with $s>0$ small, and let $T_s(n)\in \SS^b(\Xi_s)$ be a sequence of maps
so that $T_s(n)\to T_s$. 
Since $T_t$  has no renormalization intervals $\hat K\subsetneqq K$,
the map $T_t$ and therefore 
$T_0=T$ has no periodic attractors in $K$ either (and since $T\in \SS^b_*$, 
therefore no two plateaus of $T$ touch).  Since $T\in \SS^b_*$,
the convex hull $\J_n$ of  two  adjacent plateaus $Z_{i,n},Z_{i+1,n}$ 
for $T_s(n)$ for $n$ large and $s>0$ small,  does not form a wandering pair, see 
Lemma~\ref{lem:s*open}(b).  It follows that $T_s\notin  \overline{\xi_i\cap \SS^b}$ for each $i$ and each $s\ge 0$ small, and therefore for each $i$ so that $Z_i\subset K$ we have that 
$\rho_i(T_s) \cdot \hsgn(Z_{i,s})\ne 0$ for each $s\ge 0$ small.
But this implies by Lemma~\ref{lem:decrentr}  that $h_{top}(T_t|\Omega(K))\le h_{top}(T_s|\Omega(K))<h_{top}(T_0|\Omega(K))$ as claimed. 

%


{\bf Case B}.  $T_t$ has a maximal renormalization interval $\hat K\subsetneqq K$
on which the map $T_t$ is non-monotone. We claim that 
$T_s\notin \overline{\xi_i\cap \SS^b(\Xi_s)}$ for each $s\in [0,t]$ and for each $i$ so that
$Z_i\subset \hat K$. To see this, first note that we may assume that 
each maximal renormalization interval $\hat K\subsetneqq K$
is a renormalization interval for $T_s$ for all $s\in [0,t]$. Indeed,  otherwise by 
Lemma~\ref{lem:decrentr}, $h_{top}(T_t|\Omega(K))<h_{top}(T_0|\Omega(K))\le h_0$, 
and then the claim follows immediately. 
To prove the claim, we consider the following cases:

{\bf (i)} Each plateau $Z_{i,t}$ in $\hat K$ is mapped by $T_t$ 
 into the interior of $\zyk(\hat K)$. Then a nearby map $\tilde T$
 also has $\tilde Z\subset \zyk(\hat K)$ and so 
part (4) of Definition~\ref{def:xi} fails, and $\tilde T \notin \xi_i$. 
 It follows $T_s\notin \overline{\xi_i\cap \SS^b(\Xi_s)}$ for each $i$ associated to a plateau  $Z_{i,s}\subset \zyk(\hat K)$ and for each $s$ near $t$. 

{\bf (ii)}  One of the plateaus $Z_{i,t}$ in $\hat K$ arches by $T_t$ over the boundary of a 
component of $\zyk(\hat K)$. Then consider the following two subcases:

{\bf (iia)} There exists $\epsilon>0$ so that the  cardinality of the
block of plateaus touching  $Z_{i,s}$ does  not change for $s\in [t-\epsilon,t]$.
Next take a convex hull  $\J_s$ connecting $Z_{i,s}$ and another plateau (outside the block of plateaus touching  $Z_{i,s}$). (By Definition~\ref{def:xi} we need to take $\J_s$
non-trivial, so $\J_s$ contains two plateaus $Z_{i,s},Z_{j,s}$ from distinct
subsets of the partition $\Xi_s$.) Because $Z_{i,s}$ arches, either $\J_s$
is contained in $\hat K$ or the block of plateaus touching $Z_{i,s}$ (and that of
$Z_{i,t}$) contains an even number of plateaus. In the former
case for $s\in [t-\epsilon,t)$ the width of this block is smaller, and in the latter case, 
$\hsgn(Z_{i,s})=0$ for each $s\in [t-\epsilon,t]$. It follows that in both cases 
$T_s(\J_s)$ contains a repelling (pre-)periodic boundary point of $\hat K$ for each 
$s\in [t-\epsilon,t]$. The same holds for each $\tilde T\in \Xi(T_s)$ near $T_s$
and therefore $\tilde T\notin \xi_i\cap \SS^b(\Xi_s)$ for each $s\in [t-\epsilon,t]$.
It follows that $T_s\notin \overline{\xi_i\cap \SS^b(\Xi_s)}$ for each $s\in [t-\epsilon,t]$.
Provided $\epsilon>0$ is small, for each $s\in [t,t+\epsilon]$ the  cardinality of the
block of plateaus touching  $Z_{i,s}$ does not change either, and for the same reason
$T_s\notin \overline{\xi_i\cap \SS^b(\Xi_s)}$ for each $s\in [t,t+\epsilon]$.

{\bf (iib)} There exists no such $\epsilon>0$. Then there exists a plateau $Z_{j,s}\subset \hat K$ 
(so that $Z_{j,t}$ is in the block of plateaus touching $Z_{i,t}$) which creates an extra 
branch for the return map of $T_s$ to $K$ (compared to the return map of 
$T_t$). Hence, by Lemma~\ref{lem:decrentr}, 
$h_{top}(T_t,\Omega(K))<h_{top}(T_s,\Omega(K)) \le h_0$, which gives a contradiction. 

{\bf (iii)}  $T_t$ is constant on one of the components of $\zyk(\hat K)$ and for simplicity
assume that $\hat K$ is this component.  Let $Z_{i,t}$
be contained in $\hat K$. 

{\bf (iiia)} There exists $\epsilon>0$ so that the cardinality of the
block of plateaus touching  $Z_{i,s}$ (in $\hat K$) 
does  not change for $s\in [t-\epsilon,t]$.
In this case the number of plateaus in this block is odd
 and so this block of plateaus is mapped into the interior of $\hat K$ for 
 $s\in [t-\epsilon,t)$. By case (i) it follows that for $s\in [t-\epsilon,t)$ one has that 
$T_s\notin \overline{\xi_i\cap \SS^b(\Xi_s)}$. To prove this conclusion also 
for $s=t$, consider a sequence of maps $T_t(n)\to T_t$ where $T_t(n)\in \SS^b(\Xi_t)$.  
By maximality of $\hat K\subsetneqq K$
it follows that each boundary point of $\hat K$ is a repelling periodic or 
pre-periodic 
point which is not the common boundary point of two basins. Hence
there exists a sequence of repelling periodic points accumulating on $\partial \hat K$
(from outside $\hat K$). It follows that any non-trivial convex hull $\J(n)$ connecting $Z_{i,t}$ with another  plateau (outside this component of $\hat K$) will contain a repelling periodic point of $T_t(n)$ and therefore  iterates of $\J(n)$ are not singletons. It follows that
$T_t(n)\notin \xi_i$ and therefore $T_t\notin \overline{\xi_i\cap \SS^b(\Xi_t)}$. Moreover, for
$s\in (t,t+\epsilon]$ one has $h_{top}(T_s|\orb(K))<h_{top}(T_t|\orb(K))=h_0$,
where we choose $\epsilon>0$ so that $K$ remains periodic for $T_s$ for all 
$s\in [t,t+\epsilon]$. It again follows that $T_s\notin \overline{\xi_i\cap \SS^b(\Xi_s)}$
for $s\in (t,t+\epsilon]$ for each $i$ with $Z_i\subset K$. 

{\bf (iiib)}  There exists no such $\epsilon>0$. Then there exists a plateau $Z_{j,s}\subset \hat K$ 
(so that $Z_{j,t}$ is in the block of plateaus touching $Z_{i,t}$) which creates an extra 
branch for the return map of $T_s$ to $K$ (compared to the return map of 
$T_t$), and so Lemma~\ref{lem:decrentr} yields $h_{top}(T_t,\Omega(K))<h_{top}(T_s,\Omega(K))\le h_0$, which again shows that $T_s\notin \overline{\xi_i\cap \SS^b(\Xi_s)}$
for $s\in [t,t+\epsilon]$. 

{\bf Case C.}  $T_t$ is monotone on each component of $\zyk(\hat K)$ for each 
maximal renormalization interval $\hat K\subsetneqq K$. We claim that in this case
there exists $\epsilon>0$ so that $T_s\notin   \overline{\xi_i\cap \SS^b(\Xi_s)}$ 
for $s\in [t,t+\epsilon]$ and for each $i$ so that $Z_i\subset K$. Indeed, in this 
setting there exists $\epsilon>0$ so that the  
cardinality of the block of plateaus touching $Z_{i,s}$ in $\zyk(\hat K)$
is constant for $s\in [t,t+\epsilon]$. Moreover, 
for each sequence $T_s(n)\to T_s$ with $T_s(n)\in \SS^b(\Xi_s)$,
the map $T_s(n)$ is also monotone on each component of  $\zyk(\hat K)$.
It follows that for each plateau $Z_{i,s}$ which is not contained in a
renormalization interval $\hat K\subsetneqq K$ one has 
$T_s(n)\notin \xi_i$ (because of Definition~\ref{def:xi}(2)).
Hence
 $T_s\notin \overline{\xi_i\cap \SS^b(\Xi_s)}$
for each such $i$ and $s\in [t,t+\epsilon]$. Note that for $s\in [t-\epsilon,t]$
either $T_s$ is also monotone on each component of $\zyk(\hat K)$ 
(and therefore $T_s(n)\notin  \overline{\xi_i\cap \SS^b(\Xi_s)}$ for each $i$ as above) 
or one of the plateaus $Z_{i,s}\subset \hat K$ corresponds to a local extremum and then
$T_s(n)\notin  \overline{\xi_i\cap \SS^b(\Xi_s)}$.

{\bf Step 3.}  We claim that if $h_{top}(T_t|\Omega(K))=h_0$  for some $t>0$,
then for each $Z_i\subset K$ and $s\in [0,t]$ some iterate of $Z_{i,s}$ 
is contained in a renormalization interval $\hat K\subsetneqq K$
(and this renormalization interval $\hat K$ does not depend on $s$). 
Indeed, if $Z_{i,t}$ is not eventually mapped into some renormalization interval 
$\hat K\subset K$ but this is the case for some $s\in [0,t]$ then Lemma~\ref{lem:decrentr} 
implies that  $h_0\ge h_{top}(T_{s}|\Omega(K))> h_{top}(T_t|\Omega(K))$,
and so we are done.  Therefore it suffices to consider the case that no iterate of
$Z_{i,s}$, $s\in [0,t]$, is contained in  a renormalization interval $\hat K$. 
Let us show that this implies that $Z_{i,s}$ moves when $s$ small. Indeed, consider a sequence of maps $T_s(n)\to T_s$
as $n\to \infty$ and a non-trivial convex hull 
$\J_s(n)$ containing the plateau corresponding to $Z_i$. Since $T\in \SS^b_*$,  
Lemma~\ref{lem:s*open}(b) implies that if $\J_s(n)$  forms a wandering pair for  $T_s$ for $s>0$ small,   then the corresponding convex hull $\J$  is contained in the basin of a periodic attractor for $T$. Since we assumed that $Z_i$ is not eventually mapped into a
renormalization interval $\hat K$, this is impossible. It follows that $T_s(n)\notin \xi_i\cap \SS^b(\Xi_s)$ and therefore $T_s\notin \overline{\xi_i\cap \SS^b(\Xi_s)}$ when $s$ is small.
Hence $Z_{i,s}$ moves when $s$ small, and so Lemma~\ref{lem:decrentr}  implies again 
that  $h_0\ge h_{top}(T_{s}|\Omega(K))> h_{top}(T_t|\Omega(K))$.

%
%
%
%
%

{\bf Step 4.} We claim that for each $T\in \SS^b_*$ there exists $t>0$ 
so that $T_t$ is trivial. Indeed, consider a renormalization interval
$K$ for $T$. By Step 3, either $h_{top}(T_t|\Omega(K))<h_0$ 
or each plateau $Z_{i,s}$ in $K$ is eventually mapped into a renormalization
interval $\hat K\subsetneqq K$ for each $s\in [0,t]$. 
If $h_{top}(T_t|\Omega(K))<h_0$ for each $t>0$,
then each plateau in $K$ will move with positive speed and since
$h_{top}(T_t|\Omega(K))$ is decreasing, this speed will not tend to zero as $t$ increases
(unless this plateau becomes part of a block consisting of an even number of plateaus). 
If each plateau $Z_{i,s}$ in $K$ is eventually mapped into a renormalization interval 
$\hat K\subsetneqq K$ for each $s\in [0,t]$, then the period of these intervals $\hat K$
will only depend on $T$ (and not on $t$). It follows by Step 2, Case B that the speed of at least one plateau in $\hat K$ is bounded away from zero (until all plateaus are in blocks consisting of an even number plateaus), and so in finite time the first return map to 
$\hat K$ will be either monotone or constant. In the former case other plateaus in $K$ start moving, see Step 2, Case C. In the latter case the renormalization interval 
$\hat K$ disappears, see Step 2 Case C(iii), and $h_{top}(T_t|\Omega(K))$ becomes $<h_0$.

Note that $h_{top}(T_t|\Omega(K))=h_0$  implies that the period of $K$
is bounded from below, and so there are only a bounded number of intervals
$K$ to be considered in the previous paragraph.  Step 4 and therefore  
part (a) of the proposition follow.

{\bf Step 5.} From Step 4 it follows that if $T\in \SS^b_*$ then $T_t$ never
enters the set where the speed of a plateau is actually zero, unless 
plateaus touch. It follows that if $T$ and $T'$ are nearby maps in $\SS^b_*$
so that for at time $s=t$, two plateaus of $T_s$ start to touch, then 
the same two plateaus will start touching for the map $T'_{s'}$ for some $s'\approx t$.  
It follows that the map $(T,t) \to \Edh_t(T)$ is continuous and that
the map $\SS^b_*\ni T\mapsto t_\Delta(T)$ is continuous. This finished part (b)
of the proposition.

{\bf Step 6.} Let us now prove part (c) of the proposition and assume 
that  $t>0$ and
$\Edh_t(T)\in L_j(h_0^-) \cap L_j(h_0^+)$.  Amongst other things, 
this implies that there exists
a renormalization interval $K$ for  $\Edh_t(T)$ on which the 
entropy is $h_0$. By Lemma~\ref{lem:decrentr}, this  implies that each plateau is either contained in a maximal renormalization interval $\hat K\subsetneqq K$ or is mapped into such an interval. 
Since $\Edh_t(T)\in L_j(h_0^-) \cap L_j(h_0^+)$, the plateau $Z_{j,t}$ is eventually
mapped into the boundary of such a renormalization interval $\hat K$.
If $Z_{j,t}$ maps in an arching way over $\partial\hat K$, 
then this implies that either
there exists $s\in [0,t)$ so that $Z_{j,s}$ is not mapped into $\hat K$,
contradicting that the entropy of $T_t$ on $K$ is $h_0$ (using Lemma~\ref{lem:decrentr}),
or $Z_{j,s}$ maps to $\partial\hat K$ for each $s\in [0,t]$.
Since $T\in \SS^b_*$, this implies that $Z_{j,t}$ maps to the boundary of the
basin of a periodic attractor of $T_t$, and so we are done.
Next assume that $Z_{j,t}$ maps in a non-arching way over $\partial\hat K$ 
and that $Z_{j,t}$ is part of a wandering pair. 
In this case,
either the first return map to $\hat K$ is monotone or $T_t\in \xi_j$, 
in which case the speed of $Z_{j,s}$ at $s=t$ is zero. If the former holds, 
each point in $\hat K$ is in the boundary of a component of the basin 
of a periodic attractor and we are done. If the latter holds, 
the speed of $Z_{j,s}$ is zero for each $s\in [0,t]$ and
$T_s\in \xi_j$ for  {\em each} $s\in [0,t]$. But since $T\in \SS^b_*$, this 
implies that a boundary point of $\hat K$ for $T$ (and therefore for 
$T_s$ for each $s\in [0,t]$) is in the boundary of a component of the basin of a
periodic attractor, and again we are done. 
\end{proof}

Let us define 
$$\hat \Delta_t(T)=\Delta_{t_\Delta(T)\cdot t}(T)$$
so that $\hat \Delta_1(T)$ is a trivial map for each $T\in \SS^b_*$. 

\subsection{The proof of Theorem~\ref{Thm:Connected}.}

Now that we have developed the ingredients of the proof, we can
define the retract for a fixed $h_0 \in (0,\log(b+1))$.
(The cases $h_0=\log(b+1)$ and $h_0 = 0$ were dealt with in Sections~\ref{subsec:h0=logb+1} and \ref{subsec:h0=0}.)
\[
R_t= \left\{ \begin{array}{ll}
\beta_{5t} & \mbox{ for } t \in [0,\frac15], \\[0.1cm]
\Ei_{5t-1} \circ  \beta_{1} & \mbox{ for } t \in [\frac15,\frac25],\\[0.1cm]
\Ei_1\circ \Eih_{5t-2} \circ  \beta_{1} & \mbox{ for }
t \in [\frac25,\frac34],\\[0.1cm]
\Ei_1 \circ \Eih_1\circ
\beta_1\circ \hat \Delta_{(5t-3)}  & \mbox{ for } t \in [\frac35,\frac45],
\\[0.1cm]
\Ei_1 \circ \Eih_1\circ
\beta_1\circ r_{5t-4}\circ \hat \Delta_1  & \mbox{ for } t \in [\frac45,1].
\end{array} \right.
\]
Obviously, $R_0(T) = T$, and since for $t = 1$, the retract $r_{6t-5}$
has been carried out completely, $R_1(T)$ is the same map for
each $T \in L_*(h_0)$ of the same shape $\epsilon$.
All components of $R_t$ are continuous in $t$ and $T$, so
the same holds for $R_t$.

Let us show that $R_t$ keeps maps within 
$\SS_*^b$. First note that the only deformation which takes a map
outside the space $\SS^b_*$ is $\hat \Delta_t$.
Take $T'$ of the form $T'=\hat \Delta_t(T)$ or $T'= r_t\circ \hat \Delta_1(T)$.
The deformation $\beta_t(T')$ moves plateaus
$Z_i,Z_{i+1}$ whose convex hull is mapped into other plateaus. It does
so in such a way that $\beta_1(T')$ never eventually maps $[Z_i, Z_{i+1}]$
into the interior of another plateau and so the $\beta$-property 
\eqref{eq:nointerior} on page \pageref{eq:nointerior} will hold.

If $h_{top}(T')<h_0$ then $h_{top}(\beta_1(T'))<h_0$
and  of course $\beta_1(T)$ satisfies the $\beta$-property \eqref{eq:nointerior}.
Because of \eqref{eq:makegood} this gives that
$\Eih_1 \circ \beta_1(T')\in \SS^b_*$.
In particular, we are done if $T'=r_t\circ \hat \Delta_1(T)$ satisfies $h_{top}(T')<h_0$.

Let us now consider the case that $h_{top}(T')=h_0$.
By the third part of Lemma~\ref{lem:propU},
$\Ei_1 \circ \Eih_1\circ  \beta_1 (T')\in \SS^b_*$ provided that 
$T'=\beta_1(T')\in L_j(\lh0)\cap L_j(\gh0)$ implies
$T'\in \SS^b_{*,j}$.  But in Proposition~\ref{prop:deltadeform}
 it is shown that any map $T'$ of the form
$T'=\beta_1\circ \hat \Delta_t(T)$ 
with $t>0$ and $T\in \SS_*^b$ has indeed this property, and so again the resulting map belongs to $\SS_*^b$.
This concludes the proof of the first part of Theorem~\ref{Thm:Connected}.
The 2nd part follows from the construction of $\Gamma_t$ and $\beta_t$.

\end{document}